\newtheorem{thm}{Theorem}
\newtheorem{prop}[thm]{Proposition}
\newtheorem{cor}[thm]{Corollary}
\newtheorem{lem}[thm]{Lemma}
\newtheorem{rem}[thm]{Remark}
\newtheorem{defn}[thm]{Definition}
\newtheorem{fact}[thm]{Fact}
\newtheorem*{fact*}{Fact}
\newcommand{\opcit}{{\it op.cit.\/}\ }
\newcommand{\ie}{{\it i.e.\/}\ }
\def\no{\noindent}
\def\C{{\mathbb C}}
\def\Q{{\mathbb Q}}
\def\R{{\mathbb R}}
\def\Z{{\mathbb Z}}
\def\Tr{{\rm Tr}}
\def\tr{{\rm Tr}}
\def\rep{\vartheta}
\def\vrep{\vartheta_{\rm m}}
\def\qd{^-\!\!\!\!\!d}
\def\Mod{{\rm Mod}}
\def\cH{{\mathcal H}}
\def\F{{\mathbb F}}
\def\fourier{\F}
\def\dirac{\underline \delta}
\def\id{{\mbox{Id}}}
\def\cP{{\mathcal P}}
\def\cS{{\mathcal S}}
\def\cJ{{\mathcal J}}
\def\cL{{\mathcal L}}
\def\cU{{\mathcal U}}
\def\cV{{\mathcal V}}
\def\kf{{\bf K}}
\def\cD{{\mathcal D}}
\def\nf{{\bf N}}
\def\epss{{\omega}}
\def\cQ{{\mathcal Q}}
\def\cT{{\mathcal T}}
\def\bp{{\bf e}}
\def\Spec{{\rm Spec\,}}
\begin{document}

\title{Weil positivity and Trace formula\\ the archimedean place}
\author{Alain Connes}
\email{alain@connes.org}
\address{Coll\`ege de France, IHES and Ohio State University}
\author{Caterina Consani}
\email{kc@math.jhu.edu}
\address{Dept. of Mathematics, Johns Hopkins University}

\dedication{\today}
\classification{11M55 (primary), 11M06, 46L87, 58B34  (secondary)}
\keywords{Semi-local, Trace formula, Scaling, Hamiltonian, Weil positivity, Riemann zeta function, Sonin space }

\begin{abstract}
We provide a potential conceptual reason for the positivity of the Weil functional using the Hilbert space framework of the semi-local trace formula of \cite{Co-zeta}. We explore in great details the simplest case  of the single archimedean place. The root of the positivity is the trace of the scaling action compressed onto the orthogonal complement of the range of the cutoff projections associated to the cutoff in phase space, for $\Lambda=1$. 
  We express the difference between the Weil distribution and the Sonin trace (coming from the above compression of the scaling action) in terms of prolate spheroidal wave functions, and use as a key device the theory of hermitian Toeplitz matrices to control the difference.
   All the  ingredients and tools used above make sense in the general semi-local case, where Weil positivity implies RH.
\end{abstract}

\maketitle


\section*{Introduction}

It was shown  by A. Weil  \cite{Weil}  that the Riemann Hypothesis (RH) is equivalent to the negativity of the right-hand  side of the Riemann-Weil explicit formula \cite{EB} (Appendix~\ref{appendix2}: \eqref{bombieriexplicit}) 
\begin{equation}\label{explicit form}
\tilde f(0)-\sum_{\rho\in Z}\tilde f(\rho)+\tilde f(1)=\sum_v {\mathcal W}_v(f), \qquad  \tilde f(s):=\int_0^\infty f(x)x^{s-1}dx
\end{equation}
 where $Z$ is the multi-set of non-trivial zeros of the Riemann zeta function and for a precise class of complex-valued test functions on the positive half-line, of the form 
 $$
f(x)=\int_0^\infty g(xy)\overline{g(y)}dy,\qquad \tilde g(0)=0,~ \tilde g(1)=0.
$$
In fact, following \cite{yoshida}, it is enough to prove the negativity of the right-hand  side of \eqref{explicit form} for  functions $g$ with compact support in the (locally compact) multiplicative group $\R_+^*=(0,\infty)$.  Furthermore, given any finite set of complex numbers $F\supset \{0,1\}$, $F\cap Z=\emptyset$, using the notation  $g^\sharp(x):=x^{-1}g(x^{-1}), \ \bar g(x)=\overline{g(x)}$, one has (Appendix \ref{apppositivity}, Proposition~\ref{mainprop})  
\begin{equation}\label{weilnegav}
RH \iff \sum_v {\mathcal W}_v(g*\bar g^\sharp)\leq 0, \quad \forall g\in C_c^\infty(\R_+^*)\mid \tilde g(z)=0,\ \forall z\in F.
\end{equation}
The key point being that the  right-hand side of the explicit formula, when evaluated on a test function $f$ with compact support, involves only finitely many primes. Thus, even though the Riemann Hypothesis is about the asymptotic distribution of the primes, this equivalent formulation only involves finitely many  primes at a time.  For finitely many places, inclusive of the archimedean one, the semi-local trace formula of \cite{Co-zeta}  provides  a canonical Hilbert space theoretic set-up where the local distributions ${\mathcal W}_v$ and their sum appear in their precise form, when computing the trace of the scaling action on the semi-local version of the  adele class space.  In order to reflect the unitarity of the scaling action it is convenient to use the automorphism of $C_c^\infty(\R_+^*)$,  $f\mapsto \Delta^{1/2}f$, $\Delta^{1/2}f(x):=x^{1/2}f(x)$ which replaces the involution $f\mapsto \bar f^\sharp$ by the  involution $f\mapsto f^*$ of the convolution $C^*$-algebra and the restriction of the Mellin transform to the critical line by the Fourier transform (see Appendix \ref{appenmellinapp}). We let, for any place $v$,  
$W_v(f):={\mathcal W}_v(\Delta^{-1/2}f)$, $\forall f\in C_c^\infty(\R_+^*)$.\newline 
 In \cite{scalingH} we explained  the relation between the simplest instance of the semi-local operator theoretic framework-- the case of the single archimedean place-- and the so-called Berry-Keating Hamiltonian. We showed that if one removes from the ``white light" the absorption spectrum (as in \cite{Co-zeta}) one obtains  the emission spectrum (as suggested  in \cite{BKe0,BKe}), thus proving that there is no mismatch in the semiclassical approximations.  We also noticed (see \cite{scalingH} Figures 5 and 6) that there is a single ``quantum cell" arising from the absorption spectrum that was not accounted for by the cutoff suggested in  \cite{BKe0,BKe}. The  basic problem posed by the choice of the cutoff of \cite{BKe0,BKe} is its lack of invariance under the scaling action  $\rep$ introduced in \cite{Co-zeta},  as the one-parameter group generated by the scaling Hamiltonian giving the spectral realization of the zeros of the Riemann zeta function. This issue  makes problematic restricting the Hamiltonian. On the other hand, there is straightforward interpretation of this cutoff in terms of   the orthogonal projection $\bf S$ of the Hilbert space $L^2(\R)_{\rm ev}$ of square integrable even functions on the subspace of functions, which, together with their Fourier transform, vanish identically in the interval $[-1,1]$. This subspace is the well-known infinite dimensional Sonin's space  whose discovery  goes back to the work of N. Y. Sonin in the XIX-th century \cite{Sonin}. Even though the scaling action $\rep$ does not restrict to this subspace, one can associate to a test function $f\in C_c^\infty(\R_+^*)$ the trace $\Tr(\rep(f)\, {\bf S})$, and one sees that this functional is positive definite by construction, since when evaluated on $f=g*g^*$ it is the trace $\Tr(\rep(g)\, {\bf S}\,\rep(g)^*)$  of a positive operator. 
 Moreover we prove that  this positive functional differs from the opposite of the archimedean distribution $W_\R$  by an infinitesimal in the sense of  quantized calculus (\ie the associated operator  is compact).  It is thus natural to implement this fundamental positivity in the  semi-local framework  applied to the finite set of places $\{\infty, 2,3,\ldots ,p\}$, and provide a conceptual reason for Weil's negativity for  functions $f$ fulfilling the support condition Support$(f)\subset (p^{-1},p)$. In this paper we  consider  the simplest instance of this strategy, namely when the support of the test function is contained in the  interval $(1/2, 2) \subset \R^*_+$ so that  rational primes are not involved (see \eqref{bombieriexplicit1} in Appendix \ref{appendix2}). In this case, the geometric side of the explicit formula  (in contrast to the spectral side involving the zeros of  the Riemann zeta function) is given by an elusive distribution $W_\R$  which coincides, outside  $1\in \R_+^*$, with a locally rational, positive function that  tends to $+\infty$ as the variable tends to $1$ (see formula \eqref{bombieriexplicit2} of Appendix \ref{appendix2}). The distribution $W_\R$ is then defined as a principal value. Weil's  inequality in this context is the following statement (note the change of sign) 
  
 \textit{For any smooth,  positive definite function $f$ with support 	in the  interval $(1/2,2)$ and whose Fourier transform vanishes at $\pm \frac i2$ one has: $W_\infty(f)\geq 0$ where $W_\infty:=-W_\R$.}\vspace{.02in}
 
  This result was proved in \cite{yoshida} by reducing it to an explicit computation. It is equivalent to the positivity of the quadratic form  \begin{equation}\label{mainquadform}
 	 QW(g):=W_\infty(g*g^*)
 \end{equation}
 defined on the vector space $V$ of smooth functions with support in the interval $[2^{-1/2},2^{1/2}]$ and whose Fourier transform  vanishes at $ \frac i2$.  The main result of the present paper is the following strengthening of the above positivity which provides an operator theoretic conceptual reason for Weil's positivity, rooted in the compression of the scaling action $\rep$ (of $\R_+^*$ in the Hilbert space $L^2(\R)_{\rm ev}$ of square integrable even functions) on the Sonin's space.

 \begin{thm}\label{mainthmintro} Let $g\in C_c^\infty(\R_+^*)$  have  support in the interval $[2^{-1/2},2^{1/2}]$ and Fourier transform  vanishing at $\frac i2$ and $0$. Then  one has 
\begin{equation}\label{maininequintronew}
	W_\infty(g*g^*)\geq \Tr(\rep(g)\, {\bf S}\,\rep(g)^*).
	\end{equation}	
\end{thm}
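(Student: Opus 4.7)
My plan is to write the difference
$$D(g) := W_\infty(g*g^*) - \Tr(\rep(g)\,{\bf S}\,\rep(g)^*)$$
as an operator trace against a fixed compact operator, and then to reduce its non-negativity to a finite-dimensional Toeplitz positivity problem using the support and vanishing constraints on $g$.

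First, by cyclicity of the trace and commutativity of the convolution algebra $C_c^\infty(\R_+^*)$,
$$\Tr(\rep(g)\,{\bf S}\,\rep(g)^*) = \Tr(\rep(g*g^*)\,{\bf S}),$$
so both sides of \eqref{maininequintronew} are linear in $f := g*g^*$. Combining the archimedean semi-local trace formula of \cite{Co-zeta} with the identification in \cite{scalingH} of the Sonin projection as the orthogonal complement of the phase-space cutoff $E$ (the join of the projection onto functions supported in $[-1,1]$ and of its Fourier conjugate), I would produce an explicit compact self-adjoint operator $A$ on $L^2(\R)_{\rm ev}$ such that $D(g) = \Tr(\rep(f)\,A)$; the statement from the introduction that $W_\infty$ and the Sonin functional differ by an ``infinitesimal'' is precisely the compactness of $A$.

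Next, I would diagonalize $E$ in the orthonormal basis of prolate spheroidal wave functions $\psi_n$, which are the simultaneous eigenvectors of the two projections generating $E$. Expanding $\rep(g)$ in this basis converts $D(g)$ into a sum indexed by pairs of prolates with coefficients given by explicit Mellin-type integrals of $g$. Because $g$ is supported in $[2^{-1/2},2^{1/2}]$, only finitely many modes contribute significantly, the remaining tail being absolutely convergent and manifestly non-negative by the Hilbert-Schmidt positivity of $\rep(g)\,{\bf S}^{\perp}\rep(g)^*$ applied to the high-frequency block.

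The last and hardest step is to prove positivity of the resulting finite hermitian form. I would arrange it as a hermitian Toeplitz matrix whose entries are controlled by the prolate eigenvalues, and then observe that the two linear constraints $\tilde g(0) = \tilde g(i/2) = 0$ precisely kill the two lowest prolate modes, whose contributions would otherwise be of the wrong sign. Non-negativity of the reduced block then follows from the Carath\'eodory-Toeplitz positivity criterion, once one has sufficiently sharp quantitative bounds on the first few prolate eigenvalues at spectral parameter $c = 2\pi$. This final Toeplitz positivity verification is where I expect the main obstacle to lie, since it requires both fine asymptotic estimates on prolate eigenvalues and a careful accounting of the cross terms produced when the vanishing conditions are resolved against the support restriction $[2^{-1/2},2^{1/2}]$.
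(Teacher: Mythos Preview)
Your proposal has the right overall shape---write the difference as $-E(g*g^*)$ for an explicit functional $E$ built from prolate coefficients, then reduce to a finite positivity check---but the mechanism by which the two vanishing conditions act is misidentified, and this is a genuine gap rather than a detail.

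The conditions $\widehat g(\tfrac i2)=0$ and $\widehat g(0)=0$ are Mellin-type linear functionals on $g$; they have no direct relation to the prolate eigenfunctions $\psi_n$, and they do \emph{not} kill the two lowest prolate modes. In the paper, the condition $\widehat g(\tfrac i2)=0$ is used via Lemma~\ref{vanishing1} to write $g*g^*=Q(k*k^*)$ for some $k$ with the \emph{same} support, where $Q=-(\rho\partial_\rho)^2+\tfrac14$. The point is then that $E\circ Q$, viewed as a quadratic form on $L^2(\sqrt I)$, is represented by $-2\epsilon'(1^+)(\id-\kf_I)$ with $\kf_I$ compact (Proposition~\ref{propcompact}): the dominant $-2\epsilon'(1^+)\id$ comes from the jump of $\epsilon'(\rho)$ at $\rho=1$, which $Q$ turns into a Dirac mass. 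Your proposal has no analogue of this step, and without it there is no reason the form should be essentially of one sign.

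The second condition $\widehat g(0)=0$ then enters in a quite different way: the compact operator $\kf_I$ turns out to have exactly one eigenvalue $\lambda_{\max}\approx 1.05>1$, and the corresponding eigenvector has inner product $\approx 0.95$ with the constant function $\xi_0$. The condition $\widehat g(0)=0$ forces $\langle\xi_0\mid\xi\rangle=0$, which (via the rank-one perturbation Lemma~\ref{first}) is enough to push the form back to non-positive. The Toeplitz machinery in the paper is used not to invoke a Carath\'eodory--Toeplitz criterion, but to \emph{compute} the spectrum of $\kf_I$ well enough to isolate this single bad eigenvector and verify its large overlap with $\xi_0$. So your instinct that Toeplitz structure and prolate asymptotics are the endgame is correct, but the route there---through $Q$, the derivative jump of $\epsilon$, and the identification of the one obstructing eigenvector---is what is missing.
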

Equivalence \eqref{weilnegav} shows that, since $\zeta(\frac 12+i\,s)\neq 0$ for $s=0, \frac i2$, the imposed vanishing conditions do not compromise our strategy towards RH.
In Theorem \ref{mainthmfine} we show, more precisely, that there exists a finite constant $c$ (with $13<c<17$)   such that, for any $g\in C_c^\infty([2^{-1/2},2^{1/2}])$ whose Fourier transform vanishes at $\frac i2$ ($\widehat g(\frac i 2)=0$) one has
\begin{equation}\label{maininequintronew1}
	W_\infty(g*g^*)\geq \Tr(\rep(g)\, {\bf S}\,\rep(g)^*) - c \,  \vert \widehat g(0)\vert^2 .
	\end{equation}
	Since the evaluation of the Fourier transform at $0$ defines a character of the convolution algebra, 
it follows that $W_\infty(f)\geq 0$ for any smooth positive definite function $f$ with support 	in the  interval $(1/2,2)\subset \R_+^*$, whose Fourier transform vanishes at $\pm \frac i2$ and at $0$.
 The meaning of the vanishing condition  at $\pm \frac i2$ is well understood.  In the function field case, it  corresponds to  focus on the key contribution  of $H^1$ in Lefschetz's trace formula. Inequality \eqref{maininequintronew} together with \eqref{explicit form} imply
\begin{cor}
Let $g\in C_c^\infty([2^{-1/2},2^{1/2}])$ with $\widehat g(\frac i 2)=0$ and let $Z=\frac 12 +i S$ be the multi-set of  non-trivial zeros of the Riemann zeta function. Then  the following inequality holds
\begin{equation}\label{maininequintro1}
c \,  \vert \widehat g(0)\vert^2+	\sum_{s \in S} \widehat g(s) \overline{ \widehat g(\bar s)}
\geq \Tr(\rep(g)\, {\bf S}\,\rep(g)^*). 
	\end{equation}	
\end{cor}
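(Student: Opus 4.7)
The plan is to combine Theorem \ref{mainthmfine} with the Riemann--Weil explicit formula \eqref{explicit form}. The only substantive task is to verify that, under the two hypotheses on $g$, the explicit formula collapses to the identity
\begin{equation*}
W_\infty(g*g^*) \;=\; \sum_{s\in S}\widehat g(s)\overline{\widehat g(\bar s)},
\end{equation*}
at which point the corollary is an immediate rearrangement of inequality \eqref{maininequintronew1}.

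First I would apply the explicit formula to $f := g*g^*$. Since $g$ is supported in $[2^{-1/2},2^{1/2}]$ and the convolution is multiplicative, $\mathrm{supp}(f) \subset [2^{-1},2]$; moreover, smoothness of $g$ together with its vanishing outside the interior force $f$ to vanish at the endpoints $2^{\pm 1}$ to all orders. Consequently, on the geometric side every finite-prime local distribution $\mathcal{W}_p(f)$ vanishes: for $p\geq 3$ the support of $\mathcal{W}_p$ is concentrated on prime powers $p^k$ with $k\neq 0$, none of which lie in $[1/2,2]$, while for $p=2$ the only candidate points $x=2^{\pm 1}$ are killed by the boundary vanishing of $f$. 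Hence only the archimedean contribution $W_\infty(f)$ survives on that side.

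Next I would analyze the spectral side. After the $\Delta^{1/2}$ substitution (Appendix \ref{appenmellinapp}), the polar terms $\tilde f(0)+\tilde f(1)$ translate into $\widehat f(\tfrac{i}{2}) + \widehat f(-\tfrac{i}{2})$. The convolution factorization $\widehat{g*g^*}(s) = \widehat g(s)\overline{\widehat g(\bar s)}$ together with the hypothesis $\widehat g(\tfrac{i}{2})=0$ forces both polar terms to vanish. The sum over the non-trivial zeros $\rho = \tfrac12+is$ with $s\in S$ collapses by the same factorization to $\sum_{s\in S}\widehat g(s)\overline{\widehat g(\bar s)}$, establishing the displayed identity.

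Inserting this identity into \eqref{maininequintronew1} and rearranging yields \eqref{maininequintro1}. The hard work --- the comparison of $W_\infty(g*g^*)$ with the Sonin trace $\Tr(\rep(g)\,{\bf S}\,\rep(g)^*)$ up to the correction $c|\widehat g(0)|^2$ --- is already absorbed in Theorem \ref{mainthmfine}; the only care required at this stage is the bookkeeping of signs in the Mellin-to-Fourier substitution and the verification of boundary vanishing that wipes out the $p=2$ contribution.
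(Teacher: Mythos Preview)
Your proposal is correct and follows the same route the paper indicates: the corollary is obtained by combining Theorem \ref{mainthmfine} (inequality \eqref{maininequintronew1}) with the explicit formula \eqref{explicit form}, after checking that for $f=g*g^*$ the finite-prime terms vanish by the support condition and the two polar terms $\widehat f(\pm\tfrac{i}{2})=\widehat g(\pm\tfrac{i}{2})\overline{\widehat g(\mp\tfrac{i}{2})}$ vanish by the single hypothesis on $\widehat g$. The paper gives no further detail beyond the one-line attribution ``Inequality \eqref{maininequintronew} together with \eqref{explicit form} imply'', so your write-up is in fact more explicit than the original.
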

 This shows that Sonin's trace requires, besides the zeros of the Riemann zeta function, an additional  contribution (irrelevant for RH) on the critical line, in between the two first zeros $\sim \frac 12 \pm 14.1347\, i$, as if one would multiply $\zeta(z) $ by $(z-\frac 12)^{17}$.

 This paper is motivated by the desire to understand the link between the analytic  Hilbert space operator theoretic   strategy first proposed in \cite{Co-zeta}, and the geometric approach pursued in the joint work of the two authors \cite{Crh,CCsurvey}. 
The latter unveiled a novel geometric landscape   still in development for an intersection theory of divisors (on the square of the Scaling Site), thus  not yet in shape to handle  the delicate principal values involved in the Riemann-Weil explicit formula. 
The first contribution of this paper is to make explicit  the relation between the two  approaches,  thus overcoming the above problem. 
The connection between the operator theoretic and the geometric viewpoints  is effected by the Schwartz kernels associated to operators. By implementing the additive structure of the adeles,  one sees that the Schwartz kernel of the scaling operator corresponds geometrically to the divisor of the Frobenius correspondence. \vspace{.03in}


\no\textbf{Fact} \emph{The  additive structure of the adeles  allows one to write the Schwartz kernel $k(x,y)$ of the scaling action $f(x)\mapsto f(\lambda x)$, $\lambda\in\R^*_+$, in the form 
$$
k(x,y)=\dirac(\lambda x-y).
$$
}

\no Indeed, one has
 $$
 \int k(x,y)f(y)dy=\int \dirac(\lambda x-y) f(y)dy=f(\lambda x).
 $$
The  geometric interpretation of the  term   
$$
\frac{1}{\vert 1-\lambda\vert}=\int \dirac(\lambda x-x)dx=\int k(x,x)dx
$$
as the trace of the scaling operator remains formal  until one handles  the singularity at $\lambda =1$. 
This is what we achieve in Section \ref{sectdirectcomp}. The main   idea      developed in this section (through the use  of Schwartz kernels) is to translate geometrically the quantized calculus. This means that, rather than viewing its basic constituent: the operator $H$ of square $1$ (see \eqref{HilbTrsf} in Appendix \ref{appquantized}) as the Hilbert transform, we   pass in Fourier and  interpret $H$ as a multiplication operator. Then the quantized differential, namely  the commutator with $H$, acquires an explicit geometric meaning. In this way  we  obtain  a direct geometric proof of a key equality in the local trace formula of \cite{Co-zeta} as revisited in \cite{CMbook}, namely 
 \begin{equation}\label{intersection}
 	 W_\infty (f)=-\frac 12 \tr(\widehat f  u^*\,\qd u)
 \end{equation}
 where  $\widehat f$ is the Fourier transform  of a test function, $u= u_\infty$ is the unitary classically  associated to  the Fourier transform  composed with the inversion (\!\cite{tate} and Appendix \ref{appendixsigns}) and $\,\qd u$ is the quantized differential of $u$ (Appendix \ref{appquantized}). The geometric counterpart of the multiplication by $\widehat f$ is the  convolution operator $\vrep(f)$. We compute the Schwartz kernel of the geometric counterpart  $(u^*\,\qd u)^g$ of the operator $u^*\,\qd u$.   It is well known that the trace of an operator with Schwartz kernel $k(x,y)$ is given by the integral of the diagonal values $k(x,x)$ and it corresponds geometrically to the intersection with the diagonal. Moreover, the trace of a product of two operators is given by an integral in the full square. Then the geometric description of the quantized differential  splits   the right hand side of \eqref{intersection} as the sum of two contributions corresponding to the two ``squares" $\Delta$ and $\Sigma$ inside the first quadrant: see  Figure \ref{littlesqu}. 

 In Section \ref{sectlittlesq}
we prove the important fact that the obstruction to get   Weil's positivity in the local archimedean case is due to the specific contribution of the small square $\Delta$. To isolate  this discrepancy we introduce the function ``trace-remainder"  
\begin{equation}\label{chiremintro}
\delta(\rho):=\tr\left(\left(\vrep(\rho^{-1})-P \vrep(\rho^{-1})P\right)\frac 12\, (u^*_\infty\,\qd u_\infty)^g\right)
\end{equation}	
where $\vrep$ is the scaling action, and $P$ is the orthogonal  projection given by the multiplication operator by the characteristic function of $[1,\infty)$. The compression $ P T P$ of an operator $T$ associated to a Schwartz kernel $k(x,y)$    reduces  it   to the big square $\Sigma$. The cyclicity of the  trace then shows that \eqref{chiremintro} measures the difference between the Schwartz kernel of $(u^*_\infty\,\qd u_\infty)^g$ and its compression.  We prove, following an idea of \cite{scalingH}, the positivity  of the following functional without any restriction on the  support of the test functions $f\in C_c^\infty(\R_+^*)$ 
\begin{equation}\label{sch19intro}
L(f)=  D(f)+ W_\infty(f), \qquad  \ D(f):= \int f(\rho^{-1})\delta(\rho)d^*\rho.
\end{equation}
The proof  (Proposition \ref{proplittlesq} and Corollary \ref{corlittlesq}) is abstract and conceptual  and rests on Hilbert space operators. 
	This result is also checked numerically using  the equivalence between positivity in the  convolution algebra $C_c^\infty(\R_+^*)$ and pointwise positivity after Fourier transform. The Fourier transform of the distribution $L$ in \eqref{sch19intro} is  the function $\hat \delta(t)+2\theta'(t)$, where $\hat \delta(t):=\int_{\R_+^*}\delta(\rho)\rho^{-it}d^*\rho $ and $\theta'(t)$ is the derivative of the Riemann-Siegel angular function. As shown in Figures \ref{poscheck0} and \ref{poscheck}  the function $\hat \delta(t)+2\theta'(t)$  is non-negative.  The two graphs  displayed in Figures \ref{poscheck2}  and \ref{poscheck3} are very striking: they  show that the  term $\hat \delta(t)$ compensates almost exactly the part of the graph of $\theta'(t)$  where this function is negative. This derivative  is an even function which tends to $+\infty$ as the variable tends to $\pm \infty$. Such behavior accounts for the singularity of the distribution $W_\infty$ at $\rho=1$ and the  use of a principal value in its definition. The Fourier transform $\hat \delta(t)$  is also even but tends to $0$ as $t\to\pm \infty$. In fact the function $\delta(\rho)$ can be written  explicitly for $\rho\geq 1$ as follows 
	\begin{equation}\label{sch18intro}
\delta(\rho)=2\rho^{\frac 12}\left( \frac{\text{Si}(2  \pi (1+\rho))}{2 \pi (1+\rho)}+\frac{\text{Si}(2  \pi (\rho-1))}{2 \pi (\rho-1)}\right)
\end{equation}
where $\text{Si}$ is the Sine Integral function ( \ie the primitive of $\frac{\sin(x)}{x}$ vanishing at $0$). Moreover one also has $\delta(\rho^{-1})=\delta(\rho)$. The behavior of  $\delta(\rho)$  at $\rho=1$  plays a decisive  role in what follows. The graph (see Figure \ref{deltarho})  displays the jump in  the derivative $\delta'(\rho)$ from the value $-1$ as  $\rho\to 1^-$ to the value $1$ as  $\rho\to 1^+$. The reason why this behavior plays a decisive role is discussed in Section \ref{sectsupport}  where we prove  that imposing on the test functions the  vanishing conditions
\begin{equation}\label{vanishingintro}
\widehat f (\pm \frac i2)=\int f(\rho)\rho^{\pm \frac 12}d^*\rho=0
\end{equation}
amounts, without changing the support condition for test  functions, to replace  $\delta(\rho)$ by $Q\delta(\rho)$, where\footnote{The role of the operator $Q$ is to multiply the Fourier transforms of the test functions by $z^2+\frac 14$ so that they then vanish at $z=\pm \frac i2$, thus imposing the boundary conditions while keeping positivity and support restrictions} $Q=-(\rho\partial_\rho)^2+\frac 14$. Thus the jump in the derivative $\delta'(\rho)$ at $\rho=1$ generates  $-2\dirac_1$  where $\dirac_1$ is the Dirac distribution at $\rho=1$. This fact, together with the smoothness of  $\delta(\rho)$ outside $1$ suffices to show, on abstract grounds, that the 
functional $D\circ Q(f):=\int f(\rho^{-1})Q\delta(\rho) d^*\rho$ is essentially negative (in the sense of convolution) on the subspace $C_c^\infty(I)\subset C_c^\infty(\R_+^*)$ of functions with   support  contained in a fixed compact interval $I$. The  essential negativity of $D\circ Q$ follows from the decomposition  $-2 \,\id+K$  (Theorem \ref{thmqkey1}) of the  operator in the Hilbert space $L^2(\sqrt I,d^*\rho)$ associated to $D\circ Q(g*g^*)$  for $g$ with support in $\sqrt I$, where $K$ is a compact operator of Hilbert-Schmidt class.  Hence, by imposing finitely many linear conditions (\ie by restricting to a subspace of $L^2(\sqrt I,d^*\rho)$ of finite codimension), one obtains a negative quadratic form. Thus, by applying the same conditions one obtains the positivity of the Weil distribution $W_\infty=L-D$, using the already proven positivity of $L$  and \eqref{sch19intro}. 
As a preliminary test we prove, using a simple estimate (see Corollary~\ref{qeasy} and  Remark \ref{improve})  that for  small enough intervals $I$ the positivity holds.\vspace{.2pt}

Section \ref{sectlsqmove} describes the next main step. We perform a  construction whose effect is that to ``move" the small square $\Delta$ inside  $\Sigma$, in order to  use the (infinite) reservoir of positivity  attached to $\Sigma$. For this part we apply the technique of pairs of projections and of prolate functions as introduced in \cite{Co-zeta}. The key ingredients  are the orthogonal projections $\cP_\Lambda$ and $\widehat \cP_\Lambda$, associated to the cutoff parameter $\Lambda$  (\!\cite{CMbook} Chapter 2, \S 3.3) for  $\Lambda=1$. 
In Lemma \ref{onequantum} we show that for $\rho\geq 1$ one has
\begin{equation}\label{chirem1intro}
\delta(\rho)=\tr\left(\vrep(\rho^{-1})\widehat \cP_1\,\cP_1\right)
\end{equation}
where the right hand side is expressed as a sum of coefficients\footnote{in the sense of representation theory} of the scaling action on  prolate spheroidal functions.  The equality \eqref{chirem1intro} provides in particular the link between the geometric  square $\Delta$ and the single ``quantum cell" that was protruding  in Figure 5 of \cite{scalingH} in relation to the absorption-emission discussion. The process of moving $\Delta$ inside $\Sigma$  is performed by implementing  the decomposition into irreducible components of the unitary representation of the infinite dihedral group $\Z \ltimes \Z/2\Z$ associated to the pair of projections $\widehat \cP_1,\,\cP_1$.  In the non-trivial part given by the range of $\cP_1\vee \widehat \cP_1$, these irreducible components  are two dimensional and labeled by the eigenvalues of the prolate differential operator (see \cite{Co-zeta} and \cite{CMbook}).  The orthogonal  space to the range of $\cP_1\vee \widehat \cP_1$  is the infinite dimensional Sonin's space  where  the representation is trivial. The outcome of  moving  $\Delta$  inside $\Sigma$   is to reduce the negative contribution of $D$ (see \eqref{sch19intro}) using the positivity of $\tr(\rep(h){\bf S})$, where 
$\bf S$ is the  projection onto the  orthogonal complement of  the range of $\cP_1\vee \widehat \cP_1$. The main outcome is the following  (Theorem \ref{devil}) 

\begin{thm}\label{devilintro}  The functional $\tr(\rep(f){\bf S})$ is positive and one has
\begin{equation}\label{sonine1.0}
\tr(\rep(f){\bf S})=W_\infty(f)+\int f(\rho)\epsilon(\rho) d^*\rho,\qquad \forall f \in C_c^\infty(\R_+^*),
\end{equation}	
where $\epsilon(\rho^{-1})=\epsilon(\rho)$,
 $\rho\in \R_+^*$,  and with $\epsilon(\rho)$ given, for $\rho \geq 1$,  by 
\begin{equation}\label{sonine0intro}
\epsilon(\rho)=\sum \frac{\lambda(n)}{\sqrt{1-\lambda(n)^2}}\langle \xi_n\mid  \rep(\rho^{-1})\zeta_n\rangle.
\end{equation}
\end{thm}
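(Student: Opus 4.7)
The positivity is immediate from general principles: for $f=g*g^*$ with $g\in C_c^\infty(\R_+^*)$, compact support of $g$ makes $\rep(g)$ a smoothing operator, so that $\rep(g){\bf S}\rep(g)^* = (\rep(g){\bf S})(\rep(g){\bf S})^*$ is trace class and non-negative. Cyclicity of the trace then gives $\tr(\rep(f){\bf S})=\tr(\rep(g){\bf S}\rep(g)^*)\geq 0$, and polarization extends the conclusion to arbitrary test functions.

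For the identity, the plan is to analyze $\tr(\rep(f)(I-{\bf S}))=\tr(\rep(f)(\cP_1\vee\widehat\cP_1))$ using the standard structure theorem for a pair of projections, combined with the prolate spheroidal spectral analysis recalled from \cite{Co-zeta}. The pair $(\cP_1,\widehat\cP_1)$ generates a unitary representation of the infinite dihedral group $\Z\ltimes\Z/2\Z$; its restriction to $(\cP_1\vee\widehat\cP_1)\cH$ decomposes as an orthogonal direct sum $\bigoplus_n V_n$ of 2-dimensional irreducible subspaces indexed by $n$, on each of which $\cP_1$ and $\widehat\cP_1$ are rank-one projections whose angle is governed by the prolate eigenvalue $\lambda(n)\in(0,1)$. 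In each $V_n$ fix an orthonormal frame $(\xi_n,\zeta_n)$ adapted to this pair, so that the matrix of $\widehat\cP_1\cP_1$ restricted to $V_n$ has diagonal entry $\lambda(n)^2$ and off-diagonal entry $\lambda(n)\sqrt{1-\lambda(n)^2}$, this off-diagonal entry being the origin of the weights appearing in \eqref{sonine0intro}.

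Starting from Lemma \ref{onequantum}, namely $\delta(\rho)=\tr(\vrep(\rho^{-1})\widehat\cP_1\cP_1)$ for $\rho\geq 1$, expand the right-hand side summand by summand in the frames $(\xi_n,\zeta_n)$; matching these 2D contributions with those of $\tr(\rep(f)(\cP_1\vee\widehat\cP_1))$ and using the already established decomposition $L(f)=D(f)+W_\infty(f)$ from \eqref{sch19intro}, one obtains
\begin{equation*}
\tr(\rep(f){\bf S})=W_\infty(f)+\int f(\rho)\epsilon(\rho)d^*\rho
\end{equation*}
with $\epsilon(\rho)$ given for $\rho\geq 1$ by the series \eqref{sonine0intro}; the coefficients $\lambda(n)/\sqrt{1-\lambda(n)^2}$ arise as the combination of the inverse Gram determinant $1-\lambda(n)^2$ with the off-diagonal matrix entry recorded above. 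The symmetry $\epsilon(\rho^{-1})=\epsilon(\rho)$ reflects the $\Z/2\Z$ generator of the dihedral group which exchanges $\cP_1$ with $\widehat\cP_1$ and intertwines $\rep(\rho)$ with $\rep(\rho^{-1})$ through the self-dual unitary $u_\infty$.

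The main obstacle is controlling the convergence of the series \eqref{sonine0intro}: the weights $\lambda(n)/\sqrt{1-\lambda(n)^2}$ are unbounded as $\lambda(n)\to 1$, a regime occupied by the lowest prolate eigenfunctions. Convergence relies on the fact that, by construction, $\xi_n$ and $\zeta_n$ are essentially localized in the regions $[-1,1]$ and its complement (up to Fourier duality exchanged by the prolate decomposition), so that for $\rho$ bounded away from $1$ the overlap $\langle \xi_n\mid\rep(\rho^{-1})\zeta_n\rangle$ is exponentially small in $n$. The delicate book-keeping as $\rho\to 1$ is precisely what matches the principal-value singularity of $W_\infty$ at $\rho=1$ and isolates $\epsilon$ as a smooth correction on $\R_+^*\setminus\{1\}$; this is the step where the "moving $\Delta$ into $\Sigma$" construction of Section \ref{sectlsqmove} does the decisive work.
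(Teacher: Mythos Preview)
Your outline follows the right architecture (dihedral decomposition of the range of $\cP_1\vee\widehat\cP_1$ into two-dimensional prolate blocks, connection to $L=D+W_\infty$), and the positivity argument is fine. But two points are genuinely off.

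First, the convergence discussion is misconceived. The sequence $\lambda(n)/\sqrt{1-\lambda(n)^2}$ is \emph{not} unbounded: only $\lambda(0)\approx 0.9999$ and $\lambda(1)\approx -0.979$ are close to $\pm 1$, and by \eqref{rapid-decay} the $\lambda(n)$ decay super-exponentially to $0$. So the weights are large for one or two small indices and then vanish rapidly; convergence of \eqref{sonine0intro} follows immediately from this decay together with the uniform bound $\vert\langle\xi_n\mid\rep(\rho^{-1})\zeta_n\rangle\vert\leq 1$. No overlap estimate is needed, and nothing delicate happens as $\rho\to 1$ on the $\epsilon$ side.

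Second, and more seriously, the step you summarize as ``matching these 2D contributions'' hides the actual content of the proof. Expanding $\delta(\rho)=\tr(\rep(\rho^{-1})\widehat\cP_1\cP_1)$ in the frames gives (see \eqref{chirem1}) diagonal terms $\lambda(n)^2\langle\xi_n\mid\rep(\rho^{-1})\xi_n\rangle$, i.e.\ coefficients of $\rep$ on the $\xi$-side. On the other hand, the spectral decomposition $P\widehat PP=\sum\lambda(n)^2\vert\zeta_n\rangle\langle\zeta_n\vert+{\bf S}$ contributes $\lambda(n)^2\langle\zeta_n\mid\rep(\rho^{-1})\zeta_n\rangle$, coefficients on the $\zeta$-side. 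These are different functions of $\rho$, and the whole point is to convert one into the other. The mechanism is the identity (Proposition \ref{devil0}\,(iv))
\[
\langle\xi_n\mid\rep(\rho^{-1})\xi_n\rangle=\langle\zeta_n\mid\rep(\rho^{-1})\zeta_n\rangle+\tau(n)\bigl(\langle\xi_n\mid\rep(\rho^{-1})\zeta_n\rangle+\langle\zeta_n\mid\rep(\rho^{-1})\xi_n\rangle\bigr),
\]
proved via the Fourier conjugation $\rep(\rho)=\fourier_{e_\R}^{-1}\rep(\rho^{-1})\fourier_{e_\R}$ and the decomposition $\eta_n=\fourier_{e_\R}\xi_n=\psi_n+\lambda(n)\xi_n$. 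One then needs the support-disjointness vanishing $\langle\zeta_n\mid\rep(\rho^{-1})\xi_n\rangle=0$ for $\rho\geq 1$ (see \eqref{sym1}) to collapse the cross terms, after which the scalar identity $\lambda(n)\sqrt{1-\lambda(n)^2}+\lambda(n)^2\tau(n)=\tau(n)$ packages everything into \eqref{sonine0intro}. None of this is visible in your sketch; the abstract dihedral $\Z/2\Z$ does not supply it, because the relevant involution is the global Fourier transform on $L^2(\R)_{\rm ev}$ intertwining $\rep(\rho)$ with $\rep(\rho^{-1})$, not an operator acting block-diagonally on the $V_n$.
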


We refer to Section \ref{sectlsqmove} for the notations and the precise definition of the vectors $\zeta_n, \xi_n$ in terms of prolate functions. The scaling action $\vrep$ is now represented in the Hilbert space $L^2(\R)_{\rm ev}$ and denoted by $\rep$.
In Section \ref{sectcompactK} we analyze the functional $E\circ Q(h)=\int h(\rho^{-1})Q\epsilon(\rho) d^*\rho$. The function $\epsilon(\rho)$  behaves, at $\rho=1$,  similarly to the function $\delta(\rho)$. One has $\epsilon(\rho^{-1})=\epsilon(\rho)$ and the value of the derivative $\epsilon'(1^+)$ is  approximately $22.9965$.  
It follows,   likewise for $\delta(\rho)$, that the operator, in the Hilbert space $L^2(\sqrt I,d^*\rho)$, associated to the functional $E\circ Q$,   decomposes as $-2\epsilon'(1^+) \,(\id-K)$, where $K$ is a compact operator associated to a Hilbert-Schmidt kernel. 
The  functional $E\circ Q$ is thus  essentially negative on  the space of functions with   support  contained in a fixed compact interval $I$.
The next task  is  to compute  the spectrum of the compact operator $K$ when $I=[\frac 12,2]$. Section \ref{sectspectrum} is entirely devoted to this goal. 
The strategy developed in this part is summarized by the  following steps
\begin{enumerate}
\item[1.] Discretize the group $\R_+^*$ by approximating it with $q^\Z$, where $q\to 1^+$. \item[2.] Identify the approximating operator $K_q$ used in 1. as a Toeplitz matrix and compute numerically its eigenvalues. 
\item[3.] Apply the general theory of Toeplitz matrices to rewrite $K_q$ in canonical form.	
\item[4.] Guess a formula for the operator $K$ independently of $q$, by comparing its approximate behavior  for different values of $q\to 1^+$. 
\item[5.] Construct a finite rank operator $T$ that provides a good approximation of $K$ on $L^2(\sqrt I,d^*\rho)$ for $I=[\frac 12,2]$.
\item[6.] Compute the spectrum of $T$ and identify the single eigenvector which, after conditioning, makes the  functional $E\circ Q$ negative.
\end{enumerate}
The first step is in line with the results of \cite{CC1, CC2}  providing  a Hasse-Weil formula, in   the limit of $\F_q$ as $q\to 1^+$,  for the complete Riemann zeta function. 
The key observation in  2. is that, for $q\sim 1$, the operator $K_q$ has  only one eigenvalue $>1$.
The method used in 2. and 3. is closely related to the technique applied in \cite{toulouse} in the context of  RH for curves over finite fields.
Step 4.  provides  an explicit approximation of the function $Q\epsilon(\rho)$ on the relevant interval $I=[\frac 12,2]$ which in turns gives, in 5., an approximation of $K$ on $L^2(\sqrt I,d^*\rho)$ by a finite rank operator. 
Step 6. keeps track of the eigenvector associated to the only eigenvalue of $K$ which is $>1$. Finally, this process  allows one, by conditioning on the orthogonality to this vector (or a sufficiently nearby one), to lower the value of the maximal eigenvalue of $K$ to $<1$,  thus getting the positivity of $\id -K$  that in turns yields  Weil's positivity and the stronger inequality \eqref{maininequintronew1}.

\numberwithin{thm}{section}
 
 \section{Geometric interpretation of $\tr(\widehat f \, u^*\,\qd u) $}\label{sectdirectcomp}

In this section we give a geometric proof of the local trace formula at the archimedean place  \cite{Co-zeta,CMbook}, namely the equality for the local term ($ W_\R=-W_\infty$)
 \begin{equation}\label{intersectiongeom}
 	 W_\R (f)=\frac 12 \tr(\widehat f \, u^*\,\qd u)
 \end{equation}
 where $\widehat f$ is the Fourier transform of a  test function $f\in C_c^\infty(\R_+^*)$,  $u= u_\infty$ is the unitary classically associated with the Fourier transform composed with the inversion (see \cite{tate} and Appendix \ref{appendixsigns}) and $\,\qd u$ is the quantized differential of   $u$.   
 
 The main  effect of this geometric proof is  to decompose the right hand side of  \eqref{intersectiongeom} in two terms corresponding   to the contributions of the two squares $\Delta$ and $\Sigma$ of Figure \ref{littlesqu}.  
 We normalize the inner product in $L^2(\R)_{\rm ev}$ as follows
 \begin{equation}\label{innerltwoeven}
 	\langle\eta\mid \xi\rangle:=\frac 12\int_\R \overline {\eta(x)}\xi(x)dx=\int_0^\infty \overline {\eta(x)}\xi(x)dx.
 \end{equation}
  We first  relate the trace computations performed in $L^2(\R)_{\rm ev}$ with those done in the isomorphic Hilbert space $L^2(\R_+^*,d^*\lambda)$ using the unitary isomorphism 
\begin{equation}\label{isow}
 	 w:L^2(\R)_{\rm ev}\to L^2(\R_+^*,d^*\lambda), \qquad (w\xi)(\lambda):=\lambda^{\frac 12}\xi(\lambda).
 \end{equation}
 Given an operator $T$ in $L^2(\R)_{\rm ev}$, we shall denote the corresponding operator in $L^2(\R_+^*)$ by 
 \begin{equation}\label{nota}
     T^w:=wTw^{-1}.
 \end{equation}

 The following lemma provides the description (of the multiplicative version) of the Schwartz kernel of an operator in $L^2(\R)_{\rm ev}$  after implementing the isomorphism $w$
 

\begin{lem}\label{add2mult} Let $T$ be an operator in $L^2(\R)_{\rm ev}$ of the form 
\begin{equation}\label{sch0}
T\xi(x)=\int_{y\geq 0}k(x,y)\xi(y)dy,\qquad \forall x\geq 0, \quad\forall\xi \in L^2(\R)_{\rm ev}.
\end{equation}
Then the Schwartz kernel of $T^w=wTw^{-1}$ in $L^2(\R_+^*)$ takes the following form 
\begin{equation}\label{sch1}
k^w(\lambda,\mu)=\lambda^{\frac 12}\mu^{\frac 12}k(\lambda,\mu).
\end{equation}
Moreover, the trace of $T$, namely $\int_{x\geq 0}k(x,x)dx$, is equal to $\int k^w(\lambda,\lambda)d^*\lambda$. 	
\end{lem}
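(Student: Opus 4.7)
The proof is a direct, short computation; there is no real obstacle, only bookkeeping of measures and factors of $\lambda^{1/2}$. Here is how I would organize it.

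First I would record the explicit form of $w^{-1}$: since $(w\xi)(\lambda)=\lambda^{1/2}\xi(\lambda)$, the inverse acts by $(w^{-1}\eta)(x)=x^{-1/2}\eta(x)$ for $x>0$. It is also worth checking at the outset that $w$ is indeed isometric with the chosen normalizations, because this fixes the measure conventions used in the rest of the section:
\begin{equation*}
\langle w\eta\mid w\xi\rangle_{L^2(\R_+^*,d^*\lambda)}
=\int_0^\infty \overline{\lambda^{1/2}\eta(\lambda)}\,\lambda^{1/2}\xi(\lambda)\,\frac{d\lambda}{\lambda}
=\int_0^\infty\overline{\eta}\,\xi\, d\lambda=\langle\eta\mid\xi\rangle.
\end{equation*}

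Next I would compute $T^w\eta$ directly by substitution. For $\eta\in L^2(\R_+^*,d^*\lambda)$ and $\lambda>0$,
\begin{equation*}
(T^w\eta)(\lambda)=\lambda^{1/2}(Tw^{-1}\eta)(\lambda)
=\lambda^{1/2}\int_0^\infty k(\lambda,y)\, y^{-1/2}\eta(y)\, dy.
\end{equation*}
To identify the multiplicative Schwartz kernel I would rewrite $dy=y\, d^*y$, giving
\begin{equation*}
(T^w\eta)(\lambda)=\int_0^\infty \lambda^{1/2}\mu^{1/2}\,k(\lambda,\mu)\,\eta(\mu)\, d^*\mu,
\end{equation*}
which by inspection yields the formula $k^w(\lambda,\mu)=\lambda^{1/2}\mu^{1/2}k(\lambda,\mu)$ of \eqref{sch1}.

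Finally, for the trace assertion, the normalization \eqref{innerltwoeven} gives $\tr(T)=\int_0^\infty k(x,x)\,dx$, and substituting the formula for $k^w$ on the diagonal,
\begin{equation*}
\int_0^\infty k^w(\lambda,\lambda)\,d^*\lambda
=\int_0^\infty \lambda\, k(\lambda,\lambda)\,\frac{d\lambda}{\lambda}
=\int_0^\infty k(\lambda,\lambda)\,d\lambda=\tr(T),
\end{equation*}
which is exactly what unitary equivalence $T^w=wTw^{-1}$ would predict. The only point to be a little careful about is the measure conversion $dy=y\, d^*y$ (and the matching $d\lambda=\lambda\, d^*\lambda$ on the diagonal); once those are tracked, both assertions follow in a line each.
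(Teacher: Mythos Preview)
Your proof is correct and follows essentially the same direct substitution as the paper's argument; the only cosmetic difference is that the paper starts from the kernel expression for $T^w$ and works back to $k$, whereas you start from $k$ and read off $k^w$. Your added unitarity check and the explicit trace computation are pleasant bonuses but not required.
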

\begin{proof} Since the two Hilbert spaces are isomorphic,  one can start with the expression of $wTw^{-1}$ in $L^2(\R_+^*)$ as 
$$
wTw^{-1}(\eta)(\lambda):=\int_{\R_+^*} k^w(\lambda,\mu)\eta(\mu)d^*\mu
$$
Then, with $\eta=w\xi$ one has $\eta(\mu)d^*\mu=\xi(\mu)\mu^{-1/2}d\mu$, for $x=\lambda > 0$ and $y=\mu$ one  gets
$$
T\xi(x)=x^{-1/2}\int_{y> 0}k^w(x,y)y^{-1/2}\xi(y)dy,\qquad \forall x> 0, \ \xi \in L^2(\R)_{\rm ev}.
$$
This shows that $k(x,y)=x^{-1/2}k^w(x,y)y^{-1/2}$  and \eqref{sch1} holds.\end{proof}

\begin{rem}\label{sch2} In general an operator $T$ in $L^2(\R)$ restricts to $L^2(\R)_{\rm ev}$ if it commutes with the symmetry $s\xi(x)=\xi(-x)$. For $T$ with Schwartz kernel $t(x,y)$, \ie $T\xi(x)=\int_{\R}t(x,y)\xi(y)dy$, this means that $t(-x,-y)=t(x,y)$. The restriction $T_{\rm ev}$ of $T$ to $L^2(\R)_{\rm ev}$ is of the form \eqref{sch0} with $k(x,y)=t(x,y)+t(x,-y)$. Its trace is 
$$
\Tr(T_{\rm ev})= \int_{x\geq 0}k(x,x)dx=\int_{x\geq 0}(t(x,x)+t(x,-x))dx=\frac 12 \int_{\R}(t(x,x)+t(x,-x))dx
$$
which is the trace of the composition of $T$ with the projection $\frac 12 (1+s)$. 
\end{rem}
We define  the duality $\langle \R^*_+, \R \rangle$ by the
bi-character of $\R^*_+\times \R$
\begin{equation}\label{FwIPhi0}
\mu(v,s)=v^{-is} \,,\ \ \forall v\in \R^*_+, s\in \R.
\end{equation}
In the following part we  make systematic use of the Fourier transform 
\begin{equation}\label{PhiFourier}
\fourier_\mu   :
L^2(\R^*_+)\to L^2(\R),\qquad \fourier_\mu   (f)(s):=\int_0^{\infty} f(v) v^{-is}d^*v\,.
\end{equation}
Given an operator $T$ in $L^2(\R)$, we denote the corresponding\footnote{the upper index $g$ stands for ``geometric"} operator in $L^2(\R_+^*)$ by 
\begin{equation}\label{notamu}
T^g:=\fourier_\mu   ^{-1}  \circ T
\circ   \fourier_\mu.
\end{equation}
Next, we use the quantized calculus technique as in \cite{CMbook} (Chapter 2, \S 5.1). 

We first recall   Lemma 2.20 of \opcit which uses the unitary inversion operator $I$  that performs the change of variable $ \lambda\to \lambda^{-1}$, in $L^2(\R^*_+)$. We let $\fourier_{e_\R}$ denote the Fourier transform  with respect
to the basic character $e_\R(x)=e^{-2\pi ix}$: it defines the unitary in $L^2(\R)_{\rm ev}$
\begin{equation}\label{cFalpha}
\fourier_{e_\R}(\xi)(y):=\int_{-\infty}^\infty\,\xi(x)\,e^{-2\pi ixy}\,dy.
\end{equation}
On implementing the isomorphism $w$ we obtain
\begin{lem}\label{technfourier}
One has: $\fourier_{e_\R}^w=I\circ u_\infty^g$. Equivalently, the following equality holds in  $L^2(\R)_{\rm ev}$
\begin{equation}\label{FwIPhi}
\fourier_{e_\R}= w^{-1}\circ I \circ  \fourier_\mu   ^{-1}  \circ u_\infty
\circ   \fourier_\mu   \circ  w,
\end{equation}
where $u_\infty$ is the multiplication operator by the function
\begin{equation}\label{RieSiegel}
u_\infty(s):= e^{2\, i\, \theta(s)},
\end{equation}
and $\theta(s)$ is the Riemann-Siegel\index{Riemann-Siegel} angular function recalled in equation \eqref{riesie} of Appendix~\ref{appendix2}. 
\end{lem}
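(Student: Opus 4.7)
The plan is to convert \eqref{FwIPhi} into a scalar identity on the Mellin (multiplicative Fourier) side and to recognize the resulting phase as $u_\infty$ via Tate's local functional equation at the real place. Concretely, the composite $\fourier_\mu\circ w$ applied to an even Schwartz function $\xi$ is the Mellin transform evaluated on the critical line:
$$
(\fourier_\mu\circ w)(\xi)(s) \;=\; \int_0^\infty \xi(\lambda)\,\lambda^{-1/2-is}\,d\lambda \;=\; \tilde\xi_1\!\bigl(\tfrac12-is\bigr),\qquad \tilde\xi_1(z):=\int_0^\infty\xi(\lambda)\lambda^{z-1}d\lambda.
$$
Conjugating \eqref{FwIPhi} by $w$ and then by $\fourier_\mu$ reduces the claim to the statement that $\fourier_\mu\circ\fourier_{e_\R}^w\circ\fourier_\mu^{-1}$ equals $M_{u_\infty(-\cdot)}\circ R$, where $R$ is the reflection $s\mapsto-s$ and $M_{u_\infty(-\cdot)}$ denotes pointwise multiplication. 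This is the operator form of Tate's functional equation.

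The key input is the identification of the archimedean local $\gamma$-factor. For every even Schwartz $\xi$,
$$
\tilde\xi_1(z) \;=\; \gamma(z)\,\widetilde{\fourier_{e_\R}\xi}_1(1-z),
$$
and testing against the self-dual Gaussian $\xi(x)=e^{-\pi x^2}$ (for which $\tilde\xi_1(z)=\tfrac12\,\pi^{-z/2}\,\Gamma(z/2)$) pins down $\gamma(z)=\pi^{1/2-z}\,\Gamma(z/2)/\Gamma((1-z)/2)$. Setting $z=\tfrac12-is$ and invoking the standard identity $\theta(s)=\arg\Gamma(\tfrac14+\tfrac{is}{2})-\tfrac{s}{2}\log\pi$ for the Riemann-Siegel angle gives
$$
\gamma\!\left(\tfrac12-is\right)^{-1} \;=\; \pi^{-is}\,\frac{\Gamma(1/4+is/2)}{\Gamma(1/4-is/2)} \;=\; e^{2i\theta(s)} \;=\; u_\infty(s).
$$

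Finally I translate the scalar equality into the operator equation. With $\phi=w\xi$, the scalar equality reads $\fourier_\mu(\fourier_{e_\R}^w\phi)(-s)=u_\infty(s)\,\fourier_\mu(\phi)(s)$. Because the reflection $R:f(s)\mapsto f(-s)$ coincides with $\fourier_\mu\circ I\circ\fourier_\mu^{-1}$, this amounts to $\fourier_\mu\circ I\circ\fourier_{e_\R}^w=M_{u_\infty}\circ\fourier_\mu$, i.e., $I\circ\fourier_{e_\R}^w=u_\infty^g$; applying $I$ once more and using $I^2=\id$ yields $\fourier_{e_\R}^w=I\circ u_\infty^g$, which is \eqref{FwIPhi} after unravelling the definitions \eqref{isow}, \eqref{nota}, \eqref{notamu}. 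The identity is first established on the dense subspace of even Schwartz functions and then extended to $L^2(\R)_{\rm ev}$ by continuity of the unitaries involved. I expect the only nontrivial bookkeeping to be keeping the reflection $s\mapsto-s$ (coming from $z\leftrightarrow 1-z$) and the inversion $I$ consistently aligned with the multiplication by $u_\infty$; once Tate's scalar equation and the identification of the $\gamma$-factor with $e^{2i\theta}$ are in place, the operator identity essentially writes itself.
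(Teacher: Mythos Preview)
Your proof is correct and follows essentially the same route as the paper (which defers to \cite{CMbook}, Lemma 2.20): both identify $\fourier_{e_\R}^w$ on the Mellin side via Tate's archimedean local functional equation, recognizing the resulting phase $\gamma(\tfrac12+is)=e^{2i\theta(s)}$ as $u_\infty(s)$. The paper merely extracts the Schwartz-kernel computation of $I\circ\fourier_{e_\R}^w$ and leaves the identification with $u_\infty^g$ to the reference, whereas you carry out the Mellin-side argument explicitly; your bookkeeping of the reflection $R=\fourier_\mu\circ I\circ\fourier_\mu^{-1}$ against the $z\leftrightarrow 1-z$ symmetry is consistent, and the density-plus-unitarity extension to all of $L^2(\R)_{\rm ev}$ is the right way to close.
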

From the proof of the lemma given in \opcit, we extract and include the following part  since it plays  a key role in this paper. The Fourier transform $\fourier_{e_\R}$ preserves globally $L^2(\R)_{\rm ev}$.
For $\xi \in L^2(\R^*_+)$, one has
\begin{align*}
(w\, \circ \fourier_{e_\R}\circ w^{-1} )(\xi)(v)&= v^{1/2}\,
\int_\R|x|^{-1/2}\,\xi(|x|)\,e^{-2\pi ixv}\,dx\\
&= v^{1/2}\, \int_{\R^*_+} u^{1/2}\,\xi(u)\,(e^{2\pi iuv}+e^{-2\pi
iuv})\,d^*u \,.
\end{align*}
Using the inversion $I$ in $L^2(\R^*_+)$, this gives
\begin{align*}
(I\circ w\, \circ \fourier_{e_\R}\circ w^{-1} )(\xi)(\lambda)&= (w\,
\circ \fourier_{e_\R}\circ w^{-1} )(\xi)(\lambda^{-1})\\
&=\lambda^{-1/2}  \int_{\R^*_+} (e^{2i\pi \mu \lambda^{-1}}  +
e^{-2i\pi \mu \lambda^{-1}}) \,\mu^{1/2} \xi(\mu)d^*\mu.
\end{align*}
The following lemma gives the description of the Schwartz kernels in $L^2(\R^*_+)$  of several relevant operators
\begin{lem}\label{sch3} $(i)$~The Schwartz kernel in $L^2(\R^*_+)$ of the convolution  operator $u_\infty^g$ is given by
\begin{equation}\label{sch4}
k^u(\lambda,\mu)=2\lambda^{-\frac 12}\mu^{\frac 12}\cos(2\pi \mu/\lambda)	
\end{equation}
$(ii)$~The Schwartz kernel in $L^2(\R^*_+)$ of the   operator  $(\frac 12\, \qd u_\infty)^g=[P,u_\infty^g]$ is 
\begin{equation}\label{sch5}
(P(\lambda)-P(\mu))k^u(\lambda,\mu), \quad \text{with}\quad P(v)=\begin{cases}1 & \text{if $v\geq 1$,} \\ 0 & \text{if $v<1$.}\end{cases}
\end{equation}
$(iii)$~The Schwartz kernel in $L^2(\R^*_+)$ of the   operator  $(\frac 12\, u^*_\infty\,\qd u_\infty)^g$ is 
\begin{equation}\label{sch6.0}
\ell(\nu,\mu)=\int k^u(\lambda,\nu) (P(\lambda)-P(\mu))k^u(\lambda,\mu)d^*\lambda.\end{equation}
\end{lem}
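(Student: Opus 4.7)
Part (i) can essentially be read off from the explicit computation displayed just above the statement of the lemma. Since $\fourier_{e_\R}$ restricts to an involution on $L^2(\R)_{\rm ev}$, Lemma~\ref{technfourier} rewrites as $u_\infty^g=I\circ(w\circ \fourier_{e_\R}\circ w^{-1})$; combining this with the identity
\[
(I\circ w\circ \fourier_{e_\R}\circ w^{-1})(\xi)(\lambda)=\lambda^{-1/2}\int_{\R^*_+}(e^{2i\pi\mu/\lambda}+e^{-2i\pi\mu/\lambda})\,\mu^{1/2}\,\xi(\mu)\,d^*\mu
\]
already exhibited in the excerpt identifies the Schwartz kernel of $u_\infty^g$ with respect to $d^*\mu$ as $k^u(\lambda,\mu)=2\lambda^{-1/2}\mu^{1/2}\cos(2\pi\mu/\lambda)$, which is exactly~\eqref{sch4}.

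For (ii), the essential input from Appendix~\ref{appquantized} is that the quantized differential of a unitary is the commutator $\,\qd v=[H,v]$ with $H$ a self-adjoint involution on $L^2(\R)_{\rm ev}$, and, as emphasized in the introduction, the geometric counterpart of $H$ is a multiplication operator on $L^2(\R^*_+)$, concretely multiplication by $\operatorname{sgn}(\lambda-1)$; hence the geometric counterpart of the spectral projection $\tfrac12(1+H)$ coincides with the projection $P$ appearing in the statement. Since conjugation by the unitary $\fourier_\mu$ preserves commutators, this gives $(\tfrac12\,\qd u_\infty)^g=[P,u_\infty^g]$, and the elementary kernel formula for a commutator with a multiplication operator,
\[
k_{[P,T]}(\lambda,\mu)=(P(\lambda)-P(\mu))\,k_T(\lambda,\mu),
\]
applied to $T=u_\infty^g$, produces~\eqref{sch5}.

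Finally, for (iii), since $\fourier_\mu$ is unitary the transform $T\mapsto T^g$ is a $*$-homomorphism, so $(\tfrac12\,u^*_\infty\,\qd u_\infty)^g=(u_\infty^g)^*\circ(\tfrac12\,\qd u_\infty)^g$. The Schwartz kernel of $(u_\infty^g)^*$ with respect to the multiplicative Haar measure $d^*\lambda$ is $\overline{k^u(\mu,\lambda)}$, which equals $k^u(\mu,\lambda)$ because $k^u$ is real-valued; equivalently, at the argument pair $(\nu,\lambda)$ this kernel is $k^u(\lambda,\nu)$. Composing the two integral kernels on $L^2(\R^*_+,d^*\lambda)$ then immediately produces
\[
\ell(\nu,\mu)=\int k^u(\lambda,\nu)\,(P(\lambda)-P(\mu))\,k^u(\lambda,\mu)\,d^*\lambda,
\]
which is~\eqref{sch6.0}. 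The only step that I expect to carry content beyond bookkeeping is the identification in (ii) of $H^g$ as multiplication by $\operatorname{sgn}(\lambda-1)$, i.e.\ the reinterpretation of the Hilbert transform under the Mellin-type Fourier $\fourier_\mu$ advertised in the introduction and recorded in Appendix~\ref{appquantized}; everything else is a formal kernel computation requiring care only with the integration variable, with adjoints relative to $d^*\lambda$, and with the reality of $k^u$.
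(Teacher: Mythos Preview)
Your proof is correct and follows essentially the same approach as the paper's: part~(i) via the identification $u_\infty^g=I\circ w\circ\fourier_{e_\R}\circ w^{-1}$ from Lemma~\ref{technfourier} together with the displayed integral formula, part~(ii) via $H^g=2P-1$ and the standard commutator-kernel identity, and part~(iii) via the adjoint and composition rules for Schwartz kernels. The only superfluous remark is the appeal to $\fourier_{e_\R}$ being an involution in~(i); all you actually use is $I^2=\id$ to invert the relation $\fourier_{e_\R}^w=I\circ u_\infty^g$.
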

\begin{proof} $(i)$~Equality \eqref{sch4} follows  from the fact that the convolution  operator $\fourier_\mu   ^{-1}  \circ u_\infty
\circ   \fourier_\mu$ associated to $u_\infty$ can be written as  $I\circ w\, \circ \fourier_{e_\R}\circ w^{-1}$, by implementing \eqref{FwIPhi}.\newline
$(ii)$~ follows from the equality $H^g=2P-1$, where $H$ is the operator defining the quantized differential \ie the Hilbert transform. Moreover,  recall that for $T$ with Schwartz kernel $k$, the kernel of the commutator $[P,T]$ is  $(P(\lambda)-P(\mu))k(\lambda,\mu)$.\newline 
$(iii)$~One derives \eqref{sch6.0}  by recalling that the Schwartz kernel of the adjoint of an operator with kernel $k(\lambda,\mu)$ is $\overline{k(\mu,\lambda)}$, and that the kernel of the composite $T_1T_2$ is $\int k_1(\nu,\lambda)k_2(\lambda,\mu)d^*\lambda $. \end{proof} 
Next, we  change variables in \eqref{sch6.0} by letting $y=1/\lambda$. The geometric meaning of this change of variables is reflected in the appearance of the two squares $\Delta$ and $\Sigma$ as in Figure~\ref{littlesqu}.  We obtain
\begin{align*}
\ell(\nu,\mu)&=\int k^u(\lambda,\nu) (P(\lambda)-P(\mu))k^u(\lambda,\mu)d^*\lambda=\\ 
&=4\mu^{\frac 12}\nu^{\frac 12}\int \lambda^{-\frac 12}\cos(2\pi \nu/\lambda)\lambda^{-\frac 12}\cos(2\pi \mu/\lambda)(P(\lambda)-P(\mu))d^*\lambda=\\
&=4\mu^{\frac 12}\nu^{\frac 12}\int \cos(2\pi \nu y)\cos(2\pi \mu y)(P(1/y)-P(\mu))dy.
\end{align*}

Let $\vrep$ be the regular representation of $\R^*_+$ on
$L^2(\R^*_+)$
\begin{equation}\label{vrepdefnfirst}
(\vrep(\lambda)\,\xi)(v):=\,\xi(\lambda^{-1}\,v),\qquad \forall\xi \in
L^2(\R^*_+)\,.
\end{equation}

The following result  provides the description of the distribution $W_\R$ as stated at the beginning of this section
\begin{prop}\label{lemsch7} $(i)$~The Schwartz kernel in $L^2(\R^*_+)$ of the operator  $\vrep(\rho^{-1})(\frac 12\, u^*_\infty\,\qd u_\infty)^g$ is 
\begin{equation}\label{sch6}
\ell_\rho(\nu,\mu)=4\rho^{\frac 12}\mu^{\frac 12}\nu^{\frac 12}\int \cos(2\pi\rho \nu y)\cos(2\pi \mu y)(P(1/y)-P(\mu))dy\end{equation}
$(ii)$~The trace of the operator  $\vrep(\rho^{-1})(\frac 12\, u^*_\infty\,\qd u_\infty)^g$ is formally given by
\begin{equation}\label{sch6.1}
\tau(\rho)=4\rho^{\frac 12}\int_{x>0\atop y>0} \cos(2\pi\rho x y)\cos(2\pi x y)(P(1/y)-P(x))dydx\end{equation}	
$(iii)$~The equality \eqref{sch6.1} defines a distribution $\tau$  equal to $W_\R=-W_\infty$.\newline
$(iv)$~For $f\in C_c^\infty(\R^*_+)$ the operator  $\vrep(f)\frac 12 (u^*_\infty\,\qd u_\infty)^g$ is of trace class and its trace is given by 
\begin{equation}\label{traceequa}
\Tr\left(\vrep(f)\frac 12 (u^*_\infty\,\qd u_\infty)^g\right)=\int f(\rho^{-1})\tau(\rho)d^*\rho.
\end{equation}
\end{prop}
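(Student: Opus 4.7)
The plan is to handle parts (i) and (ii) by direct kernel manipulation, to establish (iv) by verifying the trace-class property and unwinding the composite kernel via Fubini, and then to identify the distribution $\tau$ of (iii) with $W_\R$ via an explicit evaluation of the triple integral \eqref{sch6.1} after pairing against a test function.

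For (i), whenever an operator $T$ in $L^2(\R_+^*, d^*\lambda)$ has Schwartz kernel $k(\nu,\mu)$, the identity $(\vrep(\rho^{-1})\xi)(\nu) = \xi(\rho\nu)$ forces the Schwartz kernel of $\vrep(\rho^{-1})T$ to be $k(\rho\nu,\mu)$. Substituting $\nu\mapsto\rho\nu$ in the expression
$$\ell(\nu,\mu)=4\mu^{1/2}\nu^{1/2}\int \cos(2\pi\nu y)\cos(2\pi\mu y)(P(1/y)-P(\mu))\,dy$$
displayed just before the Proposition (obtained from Lemma~\ref{sch3}(iii) by the change of variables $y=1/\lambda$) immediately yields \eqref{sch6}. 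For (ii), the formal trace is $\int \ell_\rho(x,x)\,d^*x$; setting $\nu=\mu=x$ gives $\ell_\rho(x,x)=4\rho^{1/2}x\int\cos(2\pi\rho xy)\cos(2\pi xy)(P(1/y)-P(x))\,dy$, and the factor $x$ is absorbed by $d^*x=dx/x$ to produce \eqref{sch6.1}.

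For (iv), the Schwartz kernel of $\vrep(f)$ relative to $d^*u$ is $f(v/u)$, so the composite $\vrep(f)\cdot(\tfrac12 u^*_\infty \qd u_\infty)^g$ has kernel $K(v,\mu)=\int f(v/u)\ell(u,\mu)\,d^*u$. Because $f\in C_c^\infty(\R_+^*)$, this inner integration runs over a fixed compact set in $u$, providing smoothing in $v$; together with the on-diagonal cancellation encoded in the factor $P(1/y)-P(\mu)$ of $\ell$, this produces enough regularity and decay of $K$ to establish the trace-class property. The trace formula \eqref{traceequa} then follows from $\Tr=\int K(x,x)\,d^*x$: applying Fubini and the change of variables $\rho=x/u$ rewrites the resulting double integral as $\int f(\rho)\int\ell(x/\rho,x)\,d^*x\,d^*\rho=\int f(\rho)\tau(\rho^{-1})\,d^*\rho=\int f(\rho^{-1})\tau(\rho)\,d^*\rho$.

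For (iii), once (iv) is available the pairing $f\mapsto \int f(\rho^{-1})\tau(\rho)\,d^*\rho$ defines a tempered distribution $\tau$ on $\R_+^*$, and the remaining task is to identify $\tau$ with $W_\R=-W_\infty$. I would compute the triple integral directly: expand $\cos(2\pi\rho xy)\cos(2\pi xy)=\tfrac12\bigl(\cos(2\pi(\rho-1)xy)+\cos(2\pi(\rho+1)xy)\bigr)$ and decompose $(P(1/y)-P(x))$ into the indicator of the small square $\Delta=(0,1)\times(0,1)$ (with sign $+1$) and the indicator of the big square $\Sigma=[1,\infty)\times(1,\infty)$ (with sign $-1$), matching the picture of Figure~\ref{littlesqu}. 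Over $\Delta$ the $y$-integration yields Sine-integral expressions and the $x$-integration is convergent, giving a smooth contribution; over $\Sigma$ the integrals are only conditionally convergent and must be regularized (for instance by a smooth cutoff or an analytic parameter) before interchange with the $\rho$-integration. After pairing with $f(\rho^{-1})$, the surviving expression is then to be matched with the explicit form of $W_\R$ recalled in \eqref{bombieriexplicit2} of Appendix~\ref{appendix2}. I expect the hard part to be precisely the control of the oscillatory contribution of $\Sigma$ and the bookkeeping of the principal-value singularity at $\rho=1$: the principal value in $W_\R$ should emerge from the delicate cancellation between the two regions, and verifying this cancellation rigorously is what turns the manipulation into a genuine geometric derivation of \eqref{intersectiongeom} rather than a reformulation of it.
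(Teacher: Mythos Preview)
Your treatment of (i) and (ii) is fine and matches the paper. The substantive gaps are in (iv) and (iii).

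For (iv), ``compact support of $f$ provides smoothing in $v$'' together with the sign pattern of $P(1/y)-P(\mu)$ is not an argument for trace class: the kernel $\ell(u,\mu)$ is highly oscillatory in both variables, and a compact convolution in one variable does not by itself produce the decay you need in the other. The paper does not estimate kernels at all here. It passes to the Fourier side, writes $\vrep(f)\,(\qd u_\infty)^g=(\hat f\,\qd u_\infty)^g=\bigl(\qd(\hat f u_\infty)-\qd(\hat f)\,u_\infty\bigr)^g$, and then invokes the general fact (Lemma~\ref{quantsmooth}) that $\qd h$ is of infinite order whenever $h\in\cS(\R)$. The actual work is to check that $h=\hat f\,u_\infty\in\cS(\R)$, which requires controlling all derivatives of the Riemann--Siegel angle $\theta$; this is done via Binet's integral formula for $\log\Gamma$. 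Your proposal contains none of this mechanism.

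For (iii), you have the right decomposition into $\Delta$ and $\Sigma$, but the route you sketch (Sine-integral expressions on $\Delta$, ad hoc regularization on $\Sigma$, then matching against \eqref{bombieriexplicit2}) is not how the computation closes, and you correctly sense that your $\Sigma$ step is where it would stall. The paper's key move is the substitution $t=xy$: on $\Delta$ the level-set area is $\alpha(t)=t-t\log t$, so $d\alpha=-\log t\,dt$ and $I(\Delta)=\int_0^1\cos(2\pi\rho t)\cos(2\pi t)(-\log t)\,dt$; on $\Sigma$ the area is $\beta(t)=t\log t-(t-1)$, so $d\beta=\log t\,dt$ and $I(\Sigma)=\int_1^\infty\cos(2\pi\rho t)\cos(2\pi t)\log t\,dt$. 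The two pieces combine into the single integral
\[
\tau(\rho)=4\rho^{1/2}\int_0^\infty\cos(2\pi\rho t)\cos(2\pi t)(-\log t)\,dt,
\]
which is then recognized as built from the Fourier transform of the distribution $-\log|t|$, namely $\tfrac12\,\tfrac{1}{|x|}$ in the principal-value sense. This gives $\tau(\rho)=\tfrac{\rho^{1/2}}{2}\bigl(\tfrac{1}{1+\rho}+\tfrac{1}{|1-\rho|}\bigr)$ directly, with the principal value at $\rho=1$ appearing for free from the distributional identity rather than from a delicate cancellation you would have to engineer by hand.
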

\begin{proof} $(i)$~ follows from the equality $\ell_\rho(\nu,\mu)=\ell(\rho\nu,\mu)$. More generally,
for $T$ with Schwartz kernel $k$ in $L^2(\R^*_+)$, the Schwartz kernel of $(\vrep(\rho^{-1})\circ T)$ in $L^2(\R^*_+)$ is $  k(\rho a,b)$ since
$$
((\vrep(\rho^{-1})\circ T)\xi)(a)=(T\xi)(\rho a)=\int k(\rho a,b)\xi(b)d^*b.
$$ 
$(ii)$~follows from $(i)$, using the definition of $\tau(\rho)=\int\ell_\rho(\mu,\mu)d^*\mu$ and $\mu d^*\mu=d\mu$. 

$(iii)$~For the computation of the integral \eqref{sch6.1} we note that, in its domain of integration \ie  the positive quadrant, the term $P(1/y)-P(x)$ vanishes except in the  subset $\Delta\cup \Sigma$ where
$$
\Delta:=\{(x,y)\mid 0\leq x\leq 1, \ 0\leq y \leq 1\}, \qquad \Sigma:=\{(x,y)\mid x >1, \  y > 1\}.
$$
Moreover one has:
\[
P(1/y)-P(x)=\begin{cases}1 &\text{for $(x,y)\in \Delta$},\\ -1 & \text{for $(x,y)\in \Sigma$.}\end{cases}
\]
We thus derive
\begin{equation}\label{sch7}
\tau(\rho)=4\rho^{\frac 12}\left(\int_\Delta \cos(2\pi\rho x y)\cos(2\pi x y)dydx
-\int_\Sigma \cos(2\pi\rho x y)\cos(2\pi x y)dydx\right).
\end{equation}	
Let $\alpha(t)$ be the area of the subset $\{(x,y)\in \Delta\mid xy\leq t\}\subset \Delta$. For $t<1$ one has:  $\alpha(t)=t+\int_t^1\frac tx dx=t-t\log t$. Thus  with $I(\Delta):=\int_\Delta \cos(2\pi\rho x y)\cos(2\pi x y)dydx$ one obtains
\begin{equation}\label{sch8}
I(\Delta)
=\int_0^1 \cos(2\pi\rho t)\cos(2\pi t)d\alpha(t)
=\int_0^1 \cos(2\pi\rho t)\cos(2\pi t)(-\log t)dt.
\end{equation}
 \begin{figure}[H]	\begin{center}
\includegraphics[scale=0.55]{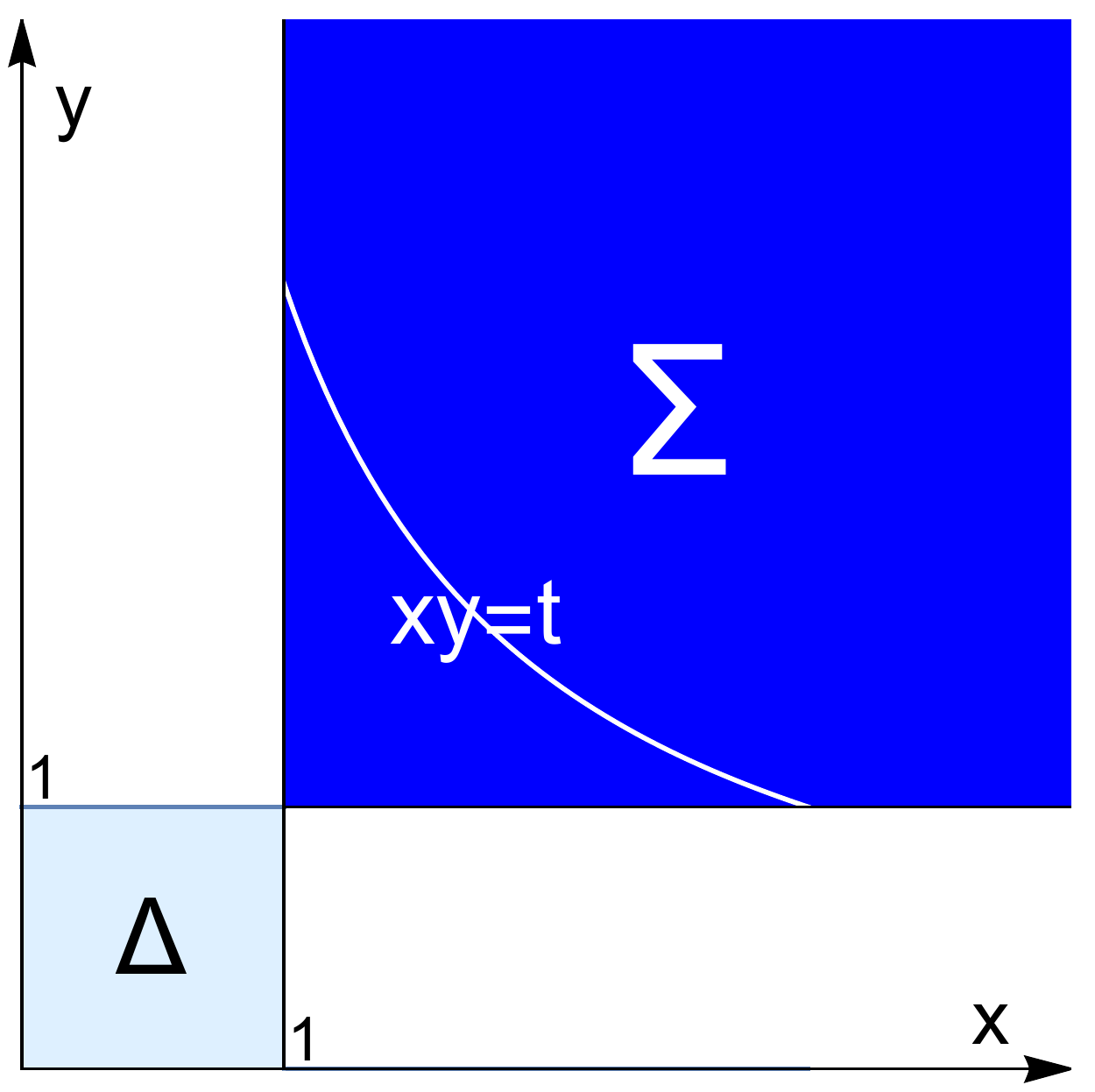}
\end{center}
\caption{The small square $\Delta$ and the big (infinite) square $\Sigma$ \label{littlesqu} }
\end{figure}
Let $\beta(t)$ be the area of the subset $\{ (x,y)\in \Sigma\mid xy\leq t\}\subset \Sigma$. For $t>1$ one has:  $\beta(t)=\int_1^t\frac tx dx-(t-1)=t\log t-(t-1)$. Thus,  with  $I(\Sigma):=\int_\Sigma \cos(2\pi\rho x y)\cos(2\pi x y)dydx$, one obtains
\begin{equation}\label{sch9}
I(\Sigma)=\int_1^\infty \cos(2\pi\rho t)\cos(2\pi t)d\beta(t)
=\int_1^\infty \cos(2\pi\rho t)\cos(2\pi t)\log tdt.
\end{equation}
This gives in turn
\begin{equation}\label{sch10}
\tau(\rho)=4\rho^{\frac 12}\left(I(\Delta)-
I(\Sigma)\right)=4\rho^{\frac 12}\int_0^\infty \cos(2\pi\rho t)\cos(2\pi t)(-\log t)dt
\end{equation}
 that can be rewritten as 
\begin{equation}\label{sch11}
\tau(\rho)=\rho^{\frac 12}\int_\R \left(\exp(2\pi i(1+\rho) t)+\exp(2\pi i(1-\rho) t)\right)(-\log \vert t\vert)dt.
\end{equation}
The Fourier transform of the distribution $-\log \vert t\vert$ is the distribution $W=\fourier_{e_\R}(-\log \vert t\vert)=\frac 12 \frac{1}{\vert x\vert}$ (with Weil's normalization). To  understand the presence of the factor $\frac 12 $, note that one has the equalities 
$$
(\fourier_{e_\R}(\partial_x f))(y)=2\pi i y (\fourier_{e_\R}( f))(y), \qquad
(\fourier_{e_\R}(2\pi i x f))(y)=-\partial_y (\fourier_{e_\R}( f))(y) 
$$
and hence one obtains
$$
(\fourier_{e_\R}(x\partial_x f))(y)=-\partial_y(y\fourier_{e_\R}( f))(y).
$$
Since $-t\partial_t(-\log \vert t\vert)=1$,  the distribution $W$ fulfills the equation $\partial_y(y W)=\dirac_0$,  so that  one obtains: $yW=\frac 12 {\rm sign}(y)$  (since $y W$ is odd) and $W(y)=\frac 12 \frac{1}{\vert y\vert}$. With this definition of the principal value,  \eqref{sch11} gives the Weil distribution  (\!\cite{CMbook} Section 4.2)
\begin{equation}\label{sch12}
\tau(\rho)=\frac {\rho^{\frac 12}}{2}\left(\frac{1}{1+\rho}+\frac{1}{\vert 1-\rho\vert} \right).
\end{equation}
$(iv)$~Let us  show that $\vrep(f)\frac 12 (u^*_\infty\,\qd u_\infty)^g$ is of trace class. It is enough, by using the commutativity of $\vrep(f)$ with the bounded operator $(u^*_\infty)^g$, to show that $\vrep(f)\,(\qd u_\infty)^g$ is of trace class.  Using the Fourier transform $\hat f$ and its associated multiplication operator one has
 $$
 \vrep(f)\,(\qd u_\infty)^g=(\hat f \,\qd u_\infty)^g=\,\left(\qd (\hat f u_\infty)-\,\qd (\hat f) u_\infty\right)^g.
 $$
 Thus it suffices to show that for $f\in \cS(\R_+^*)$, the function $h=\hat f u_\infty$ is in the Schwartz space  $\cS(\R)$ (see Lemma \ref{quantsmooth}). To prove this one needs to control the behavior  at $\pm \infty$ of the function $h$.  We know that it is of rapid decay since $u_\infty$ is of modulus one. Moreover, one has
$$
\partial_s(h)=\partial_s(\hat f)  u_\infty+\hat f \partial_s(  u_\infty)
=\partial_s(\hat f)  u_\infty+2i \hat f u_\infty \partial\theta
$$
where $\theta$ is the Riemann-Siegel angular function (see \eqref{riesie}) and its derivative $\partial\theta(s)$ is $O(\log \vert s\vert)$ when $\vert s\vert \to \infty$.  Thus $\partial_s(h)$ is of rapid decay.  One controls the growth of the higher derivatives of $\theta$ by using  Binet's first formula  
 $$\log(\Gamma(z))=(z-\frac 12)\log z-z+\frac 12 \log(2\pi) + \int_0^\infty  \left(\frac{1}{\exp (t)-1}-\frac{1}{t}+\frac{1}{2}\right)e^{-tz}\frac{dt}{t}
 $$
which, applied at $z=\frac{1}{4} + i \frac{s}{2}$,  gives for the first derivative
$$
\partial_s\log(\Gamma(\frac{1}{4} + i \frac{s}{2}))=\frac{i}{2}  \log \left(\frac{1}{4} + i \frac{s}{2}\right)-\frac{1}{2 s-i}-\frac i2 \int_0^\infty  \left(\frac{1}{\exp (t)-1}-\frac{1}{t}+\frac{1}{2}\right)e^{-t(\frac{1}{4} + i \frac{s}{2})}dt
$$
and shows that all the higher derivatives are bounded.  The higher derivatives of $h$ involve sums of products of $\partial_s^k(\hat f) u_\infty$ by products of derivatives of $\partial\theta$. Since the latter are of tempered growth, one concludes that all the higher derivatives $\partial_s^k(h)$ are of rapid decay so that the function $h$ is in  $\cS(\R)$.  This shows that $\vrep(f)\frac 12 (u^*_\infty\,\qd u_\infty)^g$ is of trace class, its trace is then given by the integral of the diagonal values of its Schwartz kernel which proves \eqref{traceequa}. \end{proof} 
 One checks that \eqref{sch12} is coherent with the equality $W_\R(f)={\mathcal W}_\R(\Delta^{-1/2}f)$, we compare with     \eqref{bombieriexplicit2} of Appendix \ref{appendix2}. One has (assuming for simplicity $f(1)=0$)
$$
W_\R(f)=\int_0^\infty f(\rho^{-1})\tau(\rho)d^*\rho=\int_0^\infty f(\rho)\tau(\rho)d^*\rho
$$ 
For $\rho>1$ one has $\tau(\rho)=\frac {\rho^{\frac 32}}{\rho^2-1}$ by \eqref{sch12} and with $k=\Delta^{-1/2}f$, $k(u)=u^{-1/2}f(u)$,
$$
\int_1^\infty\tau(\rho)f(\rho)d^*\rho=\int_1^\infty\frac {\rho^{\frac 32}}{\rho^2-1}f(\rho)d^*\rho =\int_1^\infty\frac {\rho^{\frac 12}}{\rho^2-1}f(\rho)d\rho =\int_1^\infty\frac {1}{x-x^{-1}}k(x)dx
$$
For $\rho\leq 1$ one has, by \eqref{sch12}, $$\tau(\rho)=\frac {\rho^{\frac 12}}{2}\left(\frac{1}{1+\rho}+\frac{1}{ 1-\rho} \right)=\frac {\rho^{\frac 12}}{1-\rho^2},$$
and thus 
$$
\int_0^1\tau(\rho)f(\rho)d^*\rho=\int_0^1\frac {x^{\frac 12}}{1-x^2}f(x)dx/x=\int_0^1\frac {x}{1-x^2}k(x)dx/x
$$
Moreover,  since $k^\sharp(x):=\frac 1x k(x^{-1})$, one obtains
$$
\int_0^1\frac {x}{1-x^2}k(x)dx/x=\int_1^\infty\frac {1}{x-x^{-1}}k(x^{-1})dx/x=\int_1^\infty\frac {1}{x-x^{-1}}k^\sharp(x)dx
$$
which fits with $W_\R(f)={\mathcal W}_\R(k)$ using  \eqref{bombieriexplicit2} of Appendix \ref{appendix2}. The relation between  the Mellin transform $\tilde k(z):=\int_0^\infty k(u)u^zd^*u $  and the (multiplicative) Fourier transform $\hat f$: 
\begin{equation}\label{fouriermellin}
\tilde k(\frac 12+is)=\int_0^\infty k(u)u^{\frac 12+is}d^*u =\int_0^\infty f(u)u^{is}d^*u=\hat f(-s).
\end{equation}
The negative sign in $-s$  is due to the convention for the multiplicative Fourier transform \eqref{PhiFourier}. We specifically note that this direct computation of the trace $\tr(\hat f \frac 12 u^*\,\qd u) $ does not depend on the above normalization (for the sign) since it is done without the use of the multiplicative Fourier transform. What matters is instead the  minus sign in front of the distribution $-\log \vert t\vert$ in \eqref{sch11}. The convention for the multiplicative Fourier transform intervenes twice both in $u_\infty$ and in the operator $H$ defining the quantized calculus and the signs cancel in the expression for  $u^*\,\qd u$. 

\section{The square $\Delta$ and the  trace-remainder}\label{sectlittlesq}

In this section we show that the obstruction to get  Weil's positivity at  the  archimedean place  is due to the specific contribution of the small square $\Delta$  displayed in Figure \ref{littlesqu}. 
We first single out the contribution of  $\Delta$  by introducing a precise  definition of  ``trace-remainder". We use the same notations of Proposition~\ref{lemsch7} and the definitions of the operators given in  \eqref{nota} and \eqref{notamu}. With  $\vrep$  we denote (see \eqref{vrepdefnfirst}) the regular representation of $\R^*_+$ on
$L^2(\R^*_+)$. As in \eqref{sch5} of  Lemma~\ref{sch3}, the projection $P$ is associated  to the multiplication operator by the characteristic function of $[1,\infty)\subset \R^*_+$.  

\begin{defn}\label{chinese} The trace-remainder is  the function of $\rho\in \R_+^*$ given by
\begin{equation}\label{chirem}
\delta(\rho):=\tr\left(\left(\vrep(\rho^{-1})-P \vrep(\rho^{-1})P\right)\frac 12\, (u^*_\infty\,\qd u_\infty)^g\right).
\end{equation}	
\end{defn}
Next proposition  provides an explicit formula for $\delta(\rho)$. In particular it  shows that, unlike the distribution $\tau(\rho)$ in \eqref{sch6.1} (corresponding to $W_\R$)  that is  is not a function because of the  divergency at $\rho=1$,  $\delta(\rho)$ is a function. Moreover it  fulfills the symmetry $\delta(\rho)=\delta(\rho^{-1})$.  This fact allows one to extend the explicit formula (valid for $\rho\geq 1$) to all of $\R^*_+$. Next equality    \eqref{sch13} will play a crucial role in the proof of  Corollary \ref{corlittlesq}.
\begin{prop} \label{proplittlesq}
$(i)$~For $\rho\geq 1$ one has
\begin{equation}\label{rhoform}
\delta(\rho)=4\rho^{\frac 12}\int_\Delta \cos(2\pi\rho x y)\cos(2\pi x y)dydx.
\end{equation}
$(ii)$~For all $\rho \in \R_+^*$, one has: $\delta(\rho)=\delta(\rho^{-1})$.\newline
$(iii)$~For $f\in C_c(\R_+^*)$, and with $\hat{ P}=(\fourier_{e_\R}^w)^{-1}P\fourier_{e_\R}^w $, one has 
	\begin{equation}\label{sch13}
\tr\left(\vrep(f)P\widehat PP\right)= \int_0^\infty f(\rho^{-1})\left(\delta(\rho)-\tau(\rho)\right) d^*\rho.
\end{equation}
\end{prop}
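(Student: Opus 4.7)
The plan is built around the identity
\begin{equation*}
\delta(\rho) = \tr(\vrep(\rho^{-1}) A) - \tr(P\vrep(\rho^{-1})PA) = \tau(\rho) - \tr(P\vrep(\rho^{-1})PA), \quad A := \tfrac{1}{2}(u_\infty^* \qd u_\infty)^g,
\end{equation*}
obtained by splitting the trace in Definition~\ref{chinese} and invoking Proposition~\ref{lemsch7}(ii). To prove \textbf{(i)}, I use cyclicity to rewrite $\tr(P\vrep(\rho^{-1})PA) = \tr(\vrep(\rho^{-1})\, PAP)$; the formula for the kernel of $\vrep(\rho^{-1})T$ from the proof of Proposition~\ref{lemsch7}(i) then shows this trace equals $\int P(\rho\nu)\,\ell(\rho\nu,\nu)\,P(\nu)\, d^*\nu$. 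For $\rho \geq 1$ the two support conditions collapse to $\nu \geq 1$, giving $\int_1^\infty \ell_\rho(\nu,\nu)\, d^*\nu$. On this range $P(1/y) - P(\nu) = -\chi_{\{y>1\}}$, so the integral is exactly $-4\rho^{1/2} I(\Sigma)$ in the notation of equation~\eqref{sch9}. Subtracting from $\tau(\rho) = 4\rho^{1/2}(I(\Delta) - I(\Sigma))$ (equation~\eqref{sch7}) yields $\delta(\rho) = 4\rho^{1/2} I(\Delta)$, which is \eqref{rhoform}.

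For \textbf{(ii)}, set $R_\rho := \vrep(\rho^{-1}) - P\vrep(\rho^{-1})P$; since $\vrep(\rho^{-1})^* = \vrep(\rho)$ and $P^* = P$, one immediately has $R_\rho^* = R_{\rho^{-1}}$. The operator $A$ is self-adjoint because $u_\infty^* \qd u_\infty = u_\infty^* H u_\infty - H$ is manifestly so ($H^* = H$ and $u_\infty$ unitary), and conjugation by the unitary $\fourier_\mu$ preserves self-adjointness. All Schwartz kernels appearing in Lemma~\ref{sch3} are real-valued, hence $\delta(\rho) \in \R$. Combining these facts with cyclicity of the trace,
\begin{equation*}
\delta(\rho) = \tr(R_\rho A) = \overline{\tr((R_\rho A)^*)} = \overline{\tr(A R_{\rho^{-1}})} = \overline{\tr(R_{\rho^{-1}}A)} = \overline{\delta(\rho^{-1})} = \delta(\rho^{-1}).
\end{equation*}

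For \textbf{(iii)}, the crux is the operator identity $P\hat P P = -PAP$. Lemma~\ref{technfourier} gives $\fourier_{e_\R}^w = I \circ u_\infty^g$, so $\hat P = (u_\infty^g)^*\, IPI\, u_\infty^g$. Since $I$ is inversion on $\R^*_+$, the operator $IPI$ is multiplication by $P(\lambda^{-1}) = 1 - P(\lambda) =: Q$, whence $\hat P = 1 - (u_\infty^g)^* P u_\infty^g$. On the other hand, $H^g = 2P - 1$ gives $(\qd u_\infty)^g = [H^g, u_\infty^g] = 2[P, u_\infty^g]$, hence $A = (u_\infty^g)^* P u_\infty^g - P$. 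Substituting the second identity into the first yields $\hat P = Q - A$, and therefore $P\hat P P = PQP - PAP = -PAP$. Writing $\vrep(f) = \int_0^\infty f(\rho^{-1})\vrep(\rho^{-1})\, d^*\rho$ and using cyclicity,
\begin{equation*}
\tr(\vrep(f)\, P\hat P P) = -\int_0^\infty f(\rho^{-1})\tr(P\vrep(\rho^{-1})P A)\, d^*\rho = \int_0^\infty f(\rho^{-1})(\delta(\rho) - \tau(\rho))\, d^*\rho,
\end{equation*}
the last equality being the definition of $\delta$. The principal technical hurdle is convergence: the operator $A$ is not itself trace class, so one must verify (adapting Proposition~\ref{lemsch7}(iv)) that compositions such as $\vrep(f) PAP$ and $\vrep(f) P\hat P P$ are trace class, and that the distributional kernel $\ell$ can be integrated on the diagonal in the ranges used in (i); these issues are benign away from $\rho = 1$ and can be handled by smearing with $\vrep(f)$ throughout.
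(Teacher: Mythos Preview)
Your proposal is correct and follows essentially the same approach as the paper. The only organizational difference is in part~(i): the paper first observes that for $\rho\geq 1$ one has $\vrep(\rho^{-1})-P\vrep(\rho^{-1})P=(1-P)\vrep(\rho^{-1})$ and then computes the trace of $(1-P)\vrep(\rho^{-1})A$ directly, whereas you split $\delta(\rho)=\tau(\rho)-\tr(P\vrep(\rho^{-1})PA)$ and identify the second term as $-4\rho^{1/2}I(\Sigma)$; both routes land on $4\rho^{1/2}I(\Delta)$ in one step. In part~(iii) your derivation of $P\widehat PP=-PAP$ via the clean identity $\widehat P=Q-A$ is arguably tidier than the paper's commutator manipulation, but the content is identical. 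The paper spells out the trace-class justification in~(iii) a bit more explicitly (using that $[P,\vrep(f)]$ is trace class together with Proposition~\ref{lemsch7}(iv)), which is exactly the ``smearing with $\vrep(f)$'' you allude to.
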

\begin{proof} $(i)$~For $\rho \in \R_+^*$ and $\xi \in L^2(\R^*_+)$, one has
$$
(\vrep(\rho^{-1})P\vrep(\rho)\xi)(\lambda)=(P\vrep(\rho)\xi)(\rho\lambda)=P(\rho\lambda)
(\vrep(\rho)\xi)(\rho\lambda)=P(\rho\lambda)\xi(\lambda),\qquad \forall\lambda \in \R_+^*.
$$
When $\rho\geq 1$ one  obtains:    $P(\lambda)P(\rho\lambda)=P(\lambda)$ for all $\lambda \in \R_+^*$, since both sides are $0$ for $\lambda<1$ and $1$ for $\lambda\geq 1$. Thus, for $\rho\geq 1$ one has 
$$
P\vrep(\rho^{-1})P\vrep(\rho)=P
$$
and  so
\begin{equation}\label{sch13.5}
\vrep(\rho^{-1})-P \vrep(\rho^{-1})P=\left(1-P \vrep(\rho^{-1})P\vrep(\rho)\right)\vrep(\rho^{-1})=(1-P)\vrep(\rho^{-1}).
\end{equation}
By Proposition~\ref{lemsch7}, the Schwartz kernel  of the operator  $(1-P)(\vrep(\rho^{-1})\frac 12\,( u^*_\infty\,\qd u_\infty)^g$ is 
\begin{equation}\label{sch14}
(1-P)(\nu)\ell_\rho(\nu,\mu)=(1-P)(\nu)\ell(\rho\nu,\mu).
\end{equation}
 This equality  together with \eqref{sch6}  gives the formula  
\begin{equation}\label{sch15.0}
(1-P)(\nu)\ell_\rho(\nu,\mu)=4\rho^{\frac 12}\mu^{\frac 12}\nu^{\frac 12}(1-P)(\nu)\int \cos(2\pi\rho \nu y)\cos(2\pi \mu y)(P(1/y)-P(\mu))dy.\end{equation}
Its trace is given by the integral of the diagonal values: $\int (1-P)(\mu)\ell(\rho\mu,\mu)d^*\mu$. This gives using 
$\mu d^*\mu=d\mu$, the following expression for $\mu=\nu=x$ 
\begin{equation}\label{sch15}
4\rho^{\frac 12}\int_{x>0\atop y>0} \cos(2\pi\rho x y)\cos(2\pi x y)(1-P)(x)(P(1/y)-P(x))dydx.\end{equation}
Moreover one has: $(1-P)(x)(P(1/y)-P(x))=(1-P)(x)P(1/y)$ and this product is $0$ unless $x<1$ and $y\leq 1$, in which case it is equal to $1$. Thus  \eqref{sch15}  reduces to \eqref{rhoform}.\newline
$(ii)$~The operator $\frac 12\,( u^*_\infty\,\qd u_\infty)^g=(u_\infty^g)^* P u_\infty^g-P$ is self-adjoint. The adjoint of the operator $R(\rho):=\left(\vrep(\rho^{-1})-P \vrep(\rho^{-1})P\right)$ is $
\left(\vrep(\rho)-P \vrep(\rho)P\right)=R(\rho^{-1})$, thus  for any $\rho \in \R_+^*$ one has, using Proposition \ref{lemsch7} $(iv)$ to justify the permutation under the trace
\begin{align*}
\overline{\delta(\rho)}&=\tr\left(\left(R(\rho)\frac 12\, ( u^*_\infty\,\qd u_\infty)^g\right)^*\right)=\tr\left(\frac 12\, ( u^*_\infty\,\qd u_\infty)^g R(\rho^{-1})\right)=\\ &=\tr\left(R(\rho^{-1})\frac 12\, ( u^*_\infty\,\qd u_\infty)^g \right)=\delta(\rho^{-1}).
\end{align*}
By $(i)$,  $\delta(\rho)$ is real for $\rho \geq 1$, and the above equality shows that it is real for all $\rho \in \R_+^*$, and that $(ii)$ holds. \newline 
$(iii)$~One has $\hat{ P}=(\fourier_{e_\R}^w)^{-1}P\fourier_{e_\R}^w $ and it follows from Lemma~\ref{technfourier} that $\fourier_{e_\R}^w=I\circ u_\infty^g$. Thus since $u^*_\infty= u_\infty^{-1}$  one obtains
$$
\hat{ P}=(u_\infty^{-1})^g\circ I\circ P\circ I\circ u_\infty^g=(u^*_\infty)^g(1-P)u_\infty^g
$$ 
and that, using $[P,u_\infty^g ]=\frac 12(\qd u_\infty)^g $, (see \eqref{sch5})  the  positive operator $P\hat{ P}P $ is equal to 
$$
P\hat{ P}P=P(u^*_\infty)^g(1-P)u_\infty^g P= P(u^*_\infty)^g[(1-P),u_\infty^g ]P=-P\frac 12 (u^*_\infty\,\qd u_\infty)^g P.
$$
Thus, working first at the formal level, one  obtains for $f\in C_c^\infty(\R_+^*)$
\begin{align*}
\tr\left(\vrep(f)P\widehat PP\right)&=-\tr\left(\left(\int f(\rho^{-1})\vrep(\rho^{-1})d^*\rho\right)P\frac 12 (u^*_\infty\,\qd u_\infty)^g P    \right)=
\\
&=-\tr\left(\left(\int f(\rho^{-1})\vrep(\rho^{-1})d^*\rho\right)\frac 12 (u^*_\infty\,\qd u_\infty)^g  \right)+\\ &+\int f(\rho^{-1})\tr\left(\left(\vrep(\rho^{-1})-P \vrep(\rho^{-1})P\right)\frac 12\, (u^*_\infty\,\qd u_\infty)^g\right)d^*\rho=\\
&=\int f(\rho^{-1})\left(\delta(\rho)-\tau(\rho)\right) d^*\rho.
\end{align*}
One needs to exert care to justify the formal manipulation of replacing 
 $$\tr\left(\vrep(f)P\frac 12 (u^*_\infty\,\qd u_\infty)^g P\right)\to \tr\left(P\vrep(f)P\frac 12 (u^*_\infty\,\qd u_\infty)^g \right). $$
 In order to justify  this step one has to show that $\vrep(f)P\frac 12 (u^*_\infty\,\qd u_\infty)^g$ is of trace class. For $f$ in the Schwartz space $\cS(\R_+^*)$, its (multiplicative) Fourier transform is in $\cS(\R)$ and thus the commutator $[P,\vrep(f)]$ is of trace class. Since $(u^*_\infty\,\qd u_\infty)^g$ is bounded, it is thus enough to show that $\vrep(f) (u^*_\infty\,\qd u_\infty)^g$ is of trace class which follows from Proposition \ref{lemsch7} $(iv)$. This justifies the above formal manipulation. \end{proof} 

It follows from  Proposition \ref{proplittlesq} $(i)$ and \eqref{sch8}, that for $\rho\geq 1$, 
\begin{equation}\label{sch17}
\delta(\rho)=4\rho^{\frac 12} I(S)
=4\rho^{\frac 12}\int_0^1 \cos(2\pi\rho t)\cos(2\pi t)(-\log t)dt.
\end{equation}
One can check that
$$
\int_0^1  \cos (2 \pi  a t) \,(-\log t) dt=\frac{\text{Si}(2  \pi a )}{2 \pi  a}
$$
where $\text{Si}(z)$ is the Sine Integral function $\text{Si} (z):=\int _0^z\frac{ \sin (t)}{t}dt$:  an entire function of $z\in \C$. Using the  formula $2 \cos (x) \cos (y)=\cos (x+y)+\cos (x-y)$, one thus obtains, for $\rho\geq 1$ 
\begin{equation}\label{sch18}
\delta(\rho)=2\rho^{\frac 12}\left( \frac{\text{Si}(2  \pi (1+\rho))}{2 \pi (1+\rho)}+\frac{\text{Si}(2  \pi (\rho-1))}{2 \pi (\rho-1)}\right).\end{equation}
Near $\rho=1$ one obtains the following expansion for $\rho\geq 1$ 
\begin{equation}\label{sch18.5}
\delta(\rho)=2 \left(\frac{\text{Si}(4 \pi )}{4 \pi }+1\right)+(\rho-1)-\frac{\left(9 \text{Si}(4 \pi )+64 \pi ^3\right) }{144 \pi }(\rho-1)^2+O\left((\rho-1)^3\right)\end{equation}
Since $\delta(\rho)=\delta(\rho^{-1})$  (Proposition \ref{proplittlesq} $(ii)$), this  shows that the function $\delta(\rho)$ has a jump in its first derivative at $\rho=1$. This fact will play a key role in Chapter~\ref{sectlsqmove} (Theorem~\ref{thmqkey1}). 

Since the Sine Integral function $\text{Si} (z)$ is positive for $z\geq 0$,  $\delta(\rho)$ is positive. Its graph is  shown in Figure \ref{deltarho}.
 \begin{figure}[H]	\begin{center}
\includegraphics[scale=0.65]{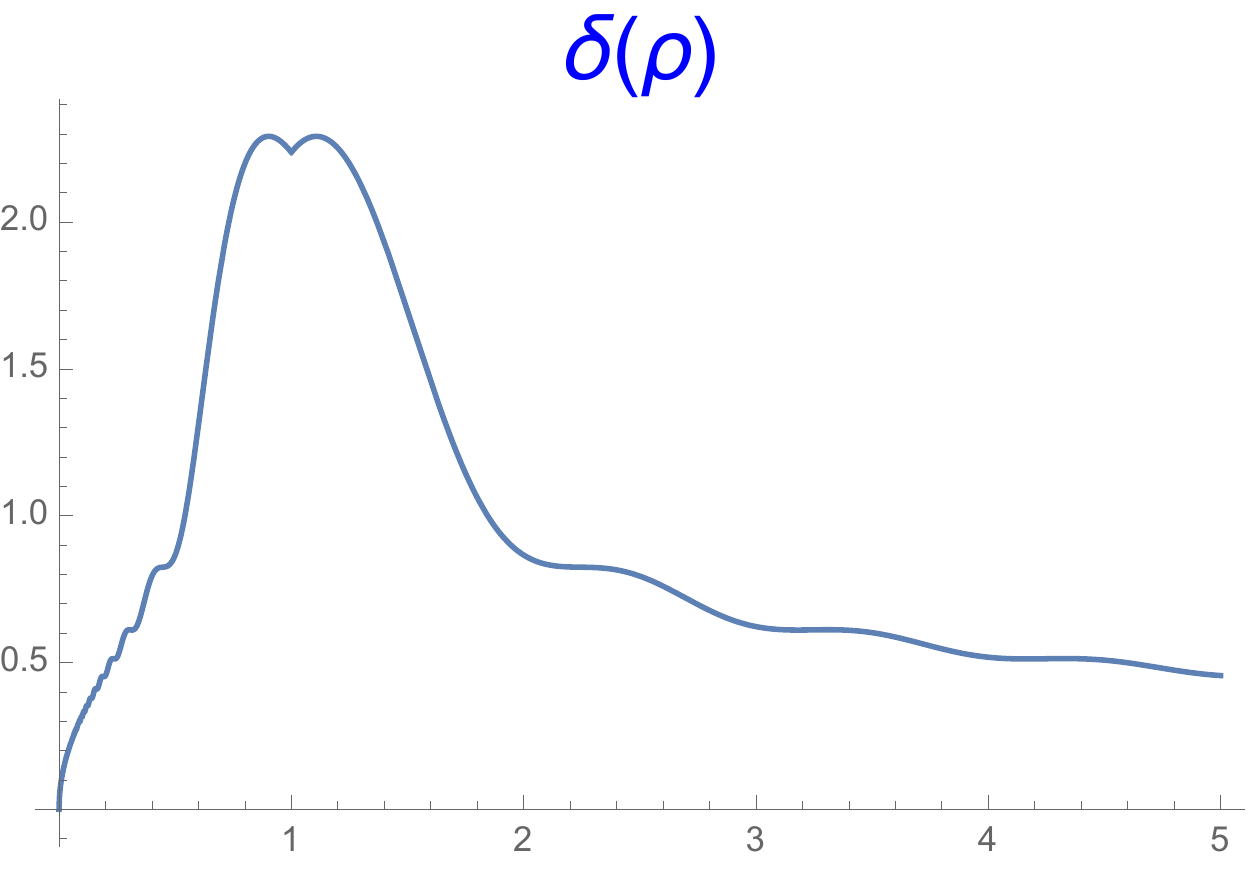}
\end{center}
\caption{Graph of $\delta(\rho)$ \label{deltarho} }
\end{figure} 
Proposition \ref{proplittlesq} also provides the following corollary which  represents  a crucial step towards the positivity of $W_\infty=-W_\R$, by expressing $W_\infty$ (\ie the functional defined by $-\tau$) as the difference between  a positive functional $L$ and the functional defined by $\delta$. In the second statement the positivity of $L$ is translated in terms of Fourier transforms.
\begin{cor}\label{corlittlesq} 
$(i)$~The following functional is positive on the convolution algebra $C_c^\infty(\R_+^*)$
\begin{equation}\label{sch19}
L(f)= \int f(\rho^{-1})\left(\delta(\rho)-\tau(\rho)\right) d^*\rho.
\end{equation}
	$(ii)$~The function $2\theta'(t)+\hat \delta(t)$ is non-negative, where $\hat \delta(t):=\int_{\R_+^*}\delta(\rho)\rho^{-it}d^*\rho $.
\end{cor}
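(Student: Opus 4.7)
The plan is to derive (i) directly from the trace identity of Proposition~\ref{proplittlesq}(iii), and then to deduce (ii) from (i) by passing to the multiplicative Fourier transform side.

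For (i), equation \eqref{sch13} identifies $L(f) = \tr(\vrep(f)\,P\widehat P P)$, with the trace-class property of $\vrep(f)\,P\widehat P P$ for $f \in C_c^\infty(\R_+^*)$ already established in the proof of Proposition~\ref{proplittlesq}(iii) via Proposition~\ref{lemsch7}(iv). The bounded operator $P\widehat P P = (\widehat P P)^*(\widehat P P)$ is manifestly non-negative, and for $f = g*g^*$ with $g \in C_c^\infty(\R_+^*)$ the operator $\vrep(f) = \vrep(g)\vrep(g)^*$ is also non-negative. Cyclicity of the trace (justified by the trace-class hypothesis) then gives
\[
L(g*g^*) = \tr\bigl(\vrep(g)\vrep(g)^*(\widehat P P)^*(\widehat P P)\bigr) = \tr\bigl((\widehat P P)\vrep(g)\,((\widehat P P)\vrep(g))^*\bigr) = \|\widehat P P\,\vrep(g)\|_{HS}^2 \geq 0,
\]
which yields the positivity of $L$ on the convolution algebra.

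For (ii), I apply the multiplicative Fourier transform $\fourier_\mu$ from \eqref{PhiFourier}. By Proposition~\ref{proplittlesq}(ii) one has $\delta(\rho^{-1}) = \delta(\rho)$, and the analogous symmetry $\tau(\rho^{-1})=\tau(\rho)$ holds by direct inspection of \eqref{sch12}. The distribution $\delta - \tau$ is thus even under inversion, and the change of variable $\rho\mapsto\rho^{-1}$ rewrites $L(f) = \int f(u)\bigl(\delta(u) - \tau(u)\bigr)\,d^*u$. Parseval's formula for $\fourier_\mu$, combined with $\widehat{g*g^*}(t) = |\hat g(t)|^2$, yields
\[
L(g*g^*) = \frac{1}{2\pi}\int_\R |\hat g(t)|^2\bigl(\hat\delta(t) - \hat\tau(t)\bigr)\,dt.
\]
The Fourier transform of the archimedean Weil distribution $\tau = W_\R = -W_\infty$ is, by the Riemann-Weil explicit-formula computation (Appendix~\ref{appendix2} and \cite{CMbook}), the function $\hat\tau(t) = -2\theta'(t)$, where $\theta$ is the Riemann-Siegel angular function of \eqref{riesie}. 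Combining with (i), $\int|\hat g(t)|^2\bigl(\hat\delta(t)+2\theta'(t)\bigr)\,dt \geq 0$ for every $g \in C_c^\infty(\R_+^*)$, and since the squares $|\hat g|^2$ are dense in the cone of non-negative continuous functions of rapid decay on $\R$, the pointwise inequality $\hat\delta(t) + 2\theta'(t) \geq 0$ follows.

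The main obstacle is the rigorous identification $\hat\tau = -2\theta'$: the distribution $\tau$ is defined only via a principal value because of the $|1-\rho|^{-1}$ singularity at $\rho=1$, yet its Fourier transform is a smooth function encoding the logarithmic derivative of the archimedean Euler factor $\pi^{-s/2}\Gamma(s/2)$. One must verify that the distributional Fourier transform of this principal value indeed produces $-2\theta'(t)$ under the convention \eqref{fouriermellin}; this is precisely the archimedean local term in the Riemann-Weil explicit formula. The secondary technical point, that $\hat\delta + 2\theta'$ is a genuine locally integrable function (so that pointwise positivity is meaningful), follows from the $O(\rho^{-1/2})$ decay of $\delta(\rho)$ visible from \eqref{sch18} together with the symmetry $\delta(\rho^{-1}) = \delta(\rho)$, which place $\delta$ in $L^1(\R_+^*, d^*\rho)$ and render $\hat\delta$ bounded and continuous.
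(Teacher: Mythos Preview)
Your proof is correct and follows essentially the same route as the paper: part (i) via the trace identity \eqref{sch13} and positivity of $\tr(\vrep(g)\vrep(g)^*\,P\widehat P P)$, part (ii) by passing to the Fourier side with Parseval and the identification $\hat\tau=-2\theta'$. The paper invokes the quantized-calculus identity \eqref{qdofu1} directly for that last identification rather than citing the explicit formula, and it phrases the final step simply as ``positivity of $L$ is equivalent to pointwise positivity of $2\theta'+\hat\delta$'' without your density remark (which is slightly loosely worded---the correct statement is that the $|\hat g|^2$ separate points, not that they are dense in the cone---but the conclusion is the same).
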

\begin{proof} $(i)$~By Proposition \ref{proplittlesq} $(iii)$  $L(f):=\tr\left(\vrep(f)P\widehat PP\right)$  is positive for $f=g*g^*$ as the trace of a product of two positive operators.\newline
$(ii)$~By applying \eqref{qdofu1}, one has
$$-\int f(\rho^{-1})\tau(\rho) d^*\rho=-\Tr \left (\hat f\left(\frac{1}{2} u^{-1}\,  \qd \,u\right)\right)=\int \hat f(t)\frac{2\partial_t\theta(t)}{2 \pi}dt.
$$ 
Moreover, by Parseval's formula,
$$
\int f(\rho^{-1})\delta(\rho) d^*\rho=\int f(\rho)\delta(\rho) d^*\rho=\frac{1}{2\pi}\int \hat f(t)\hat \delta(t)dt.
$$
Thus one obtains 
\begin{equation}\label{sch20}
L(f)= \int \hat f(t)\left(2\partial_t\theta(t)+\hat \delta(t)\right)\frac{dt}{2 \pi}
\end{equation}
and the positivity of  $L$ implies (in fact it is equivalent to) the positivity of the function $2\theta'(t)+\hat \delta(t)$.\end{proof} 

\begin{figure}[H]
\begin{minipage}[b]{0.43\linewidth}
\centering
\includegraphics[width=\textwidth]{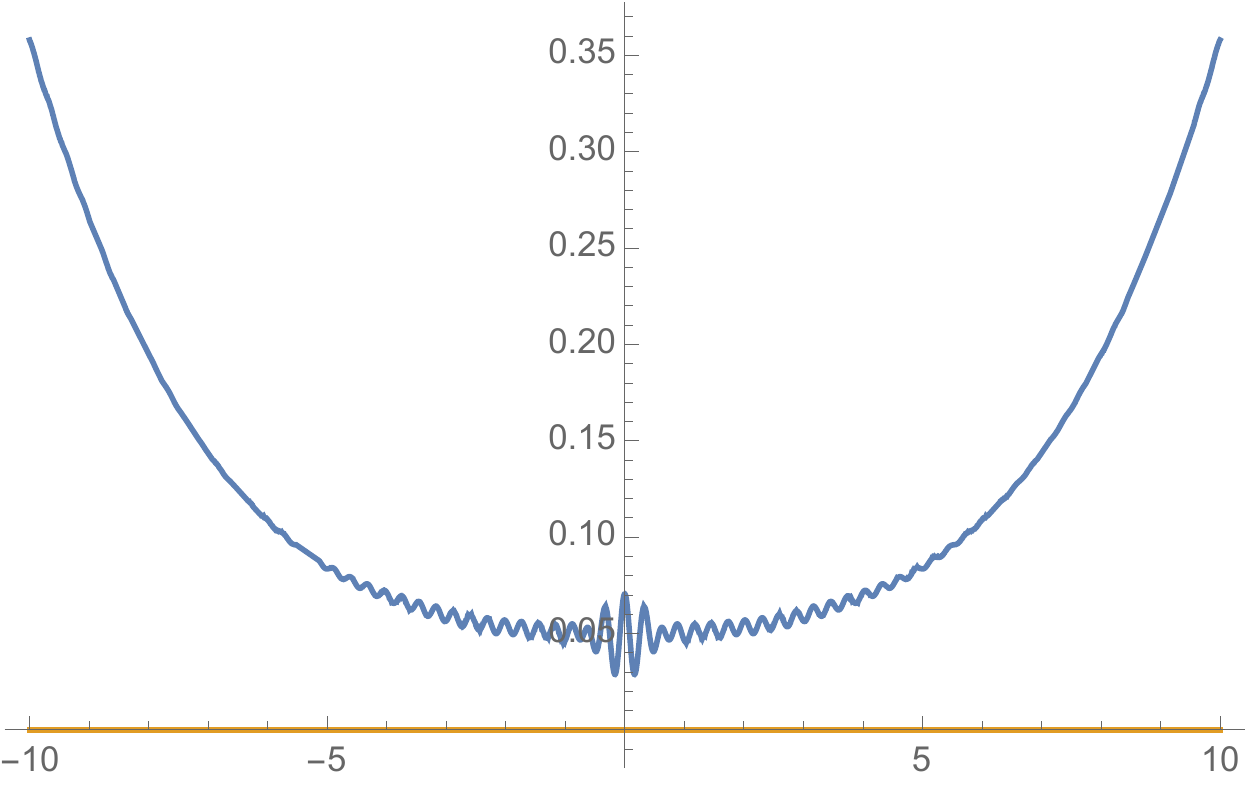}
\caption{\small{Graph of $2\theta'(t)+\hat \delta(t)$ in $[-10,10]$}}
\label{poscheck0}
\end{minipage}
\hspace{0.5cm}
\begin{minipage}[b]{0.45\linewidth}
\centering
\includegraphics[width=\textwidth]{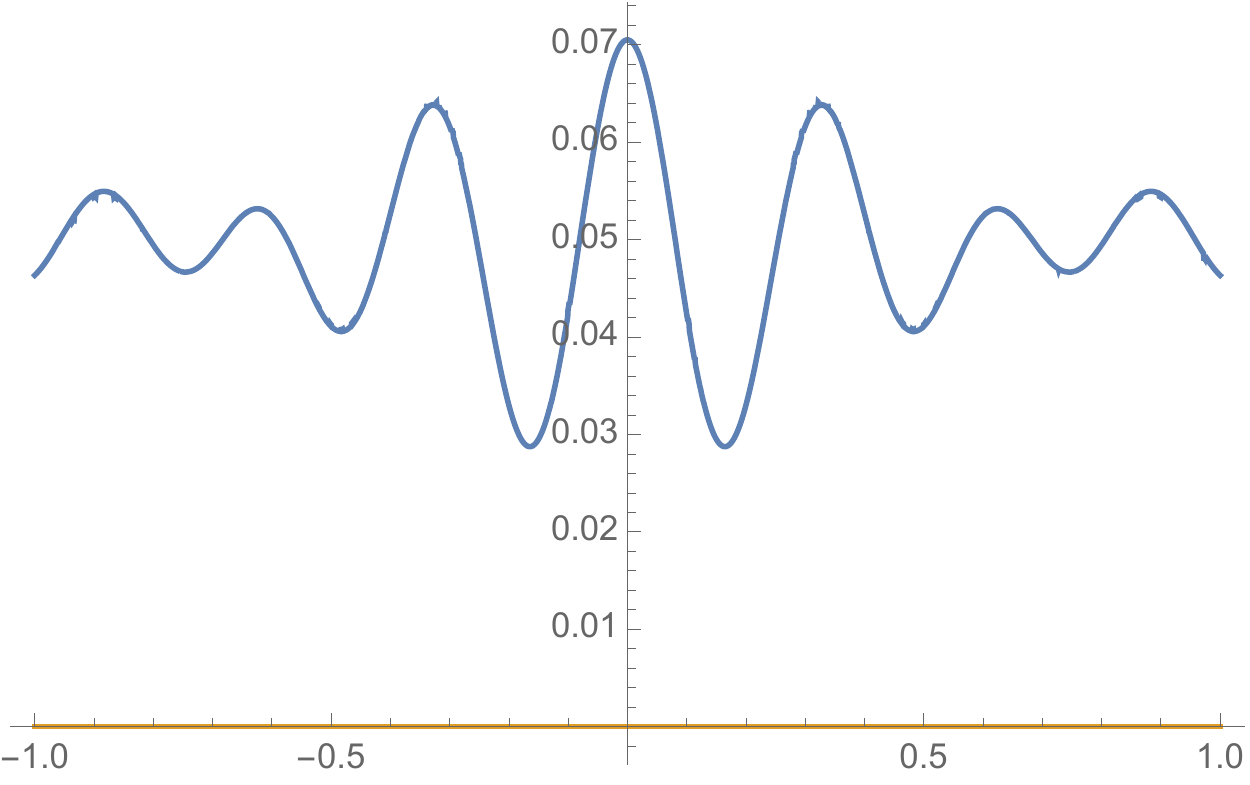}
\caption{\small{Graph of $2\theta'(t)+\hat \delta(t)$ in $[-1,1]$}}
\label{poscheck}
\end{minipage}
\end{figure}



\vspace{.1in}

Figures \ref{poscheck0} and \ref{poscheck} show the graph of $2\theta'(t)+\hat \delta(t)$  in two versions (the second is zoomed near the origin) displaying that this function is positive. We use the formula (obtained from the symmetry  $\delta(\rho)=\delta(\rho^{-1})$)
$$
\hat \delta(t)=\int_1^\infty \delta(\rho)2\cos(t \log \rho)d^*\rho.
$$
One knows that  $\frac{\text{Si}(x)}x\leq 1$ for $x>0$, while $\text{Si}(x)\leq 2$ for $x>0$ and $\text{Si}(x)\to \pi/2$ as $x\to \infty$. This provides    the estimate, as $\rho\to \infty$,
$$
\delta(\rho)=\rho^{-1/2}+O\left(\rho^{-3/2}\right)
$$
showing that the function $\hat \delta(t)$ is smooth since the derivatives in $t$ only involve powers of $\log \rho$ which do not alter the absolute convergence of the integral. 

\begin{figure}[H]
\begin{minipage}[b]{0.43\linewidth}
\centering
\includegraphics[width=\textwidth]{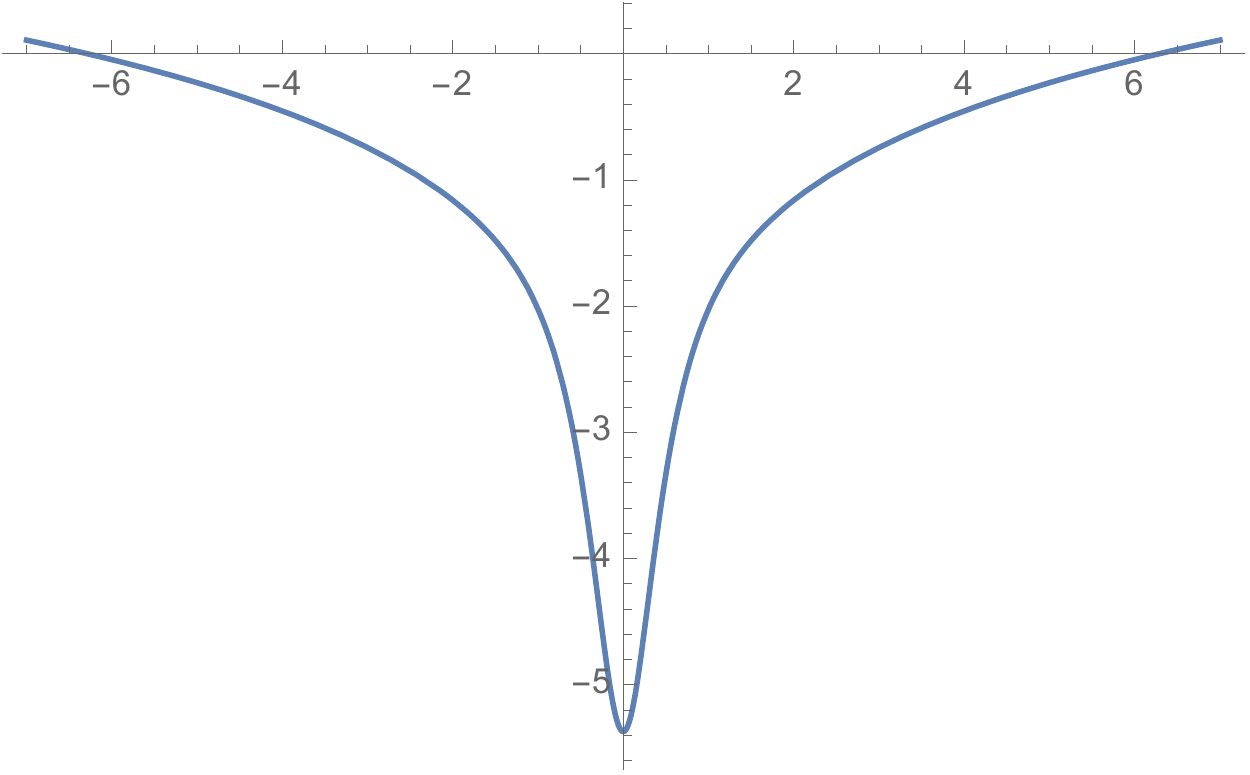}
\caption{\small{Graph of $2\theta'(t)$ in $[-7,7]$}}
\label{poscheck2}
\end{minipage}
\hspace{0.5cm}
\begin{minipage}[b]{0.45\linewidth}
\centering
\includegraphics[width=\textwidth]{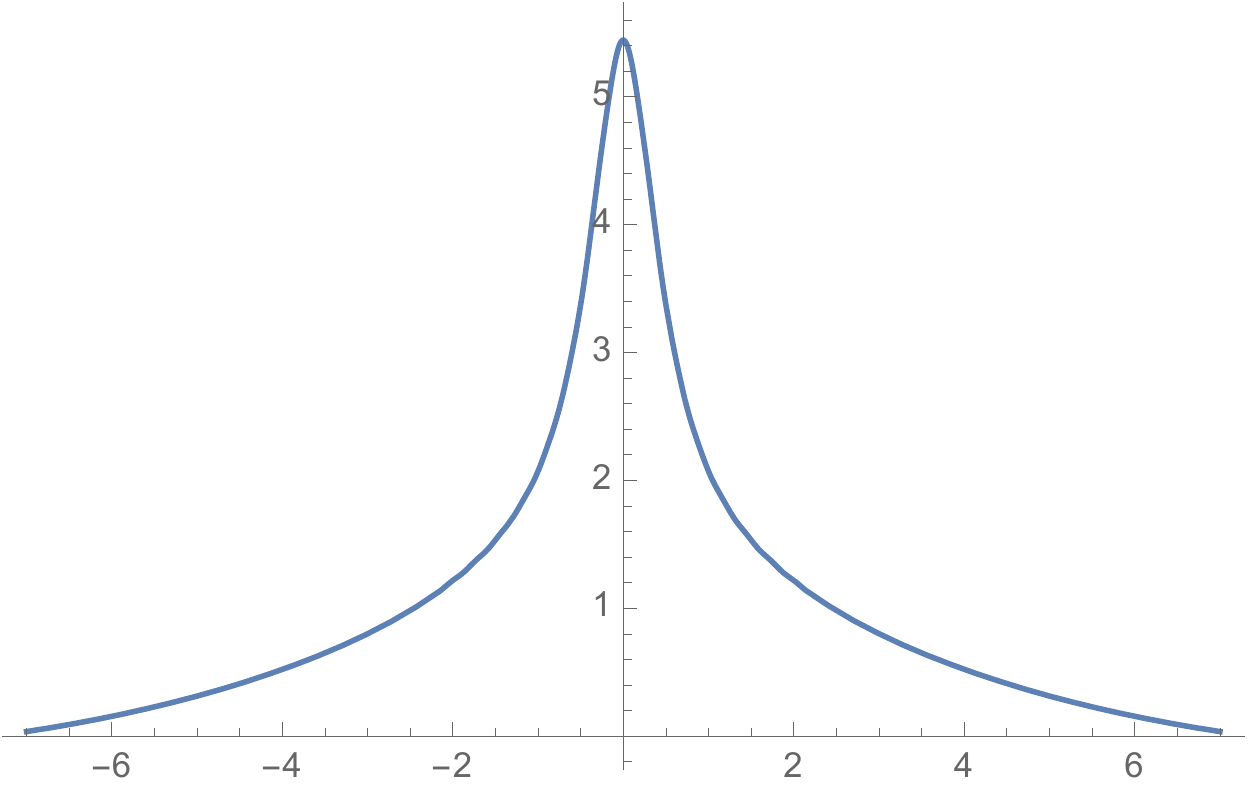}
\caption{\small{Graph of $\hat \delta(t)$ in $[-7,7]$}}
\label{poscheck3}
\end{minipage}
\end{figure}


The graphs in Figures~\ref{poscheck2} and \ref{poscheck3} show the striking precision with which  $\hat \delta(t)$ imitates the function $-2\theta'(t)$ in the interval $[-7,7]$. However, unlike $2\theta'(t)$ which tends to infinity when $\vert t\vert \to \infty$, the function $\hat \delta(t)$ tends to $0$ when $\vert t\vert \to \infty$, being the Fourier transform of an integrable function.

\section{Support and boundary conditions}\label{sectsupport} 

Throughout this section we use the following terminology 
\begin{defn} \label{defnposdef}
Let $G$ be a locally compact abelian group and $f\in L^1(G,dg)$. We say that $f$ is positive definite  when its Fourier transform is pointwise positive, \ie $\widehat f(t)\geq 0 $,  $\forall t\in \widehat G$. 	
\end{defn}

We  recall the following result of  Boas and Kac (\!\cite{BK} Lemma 5.1 and \cite{EGR})

\begin{prop}\label{boaskac}
Let $f\in C_c^\infty(\R)$ have support in the interval $[-A,A]$ ($A>0$). The following conditions are equivalent
\begin{enumerate}
\item The Fourier transform	$\widehat f$ is pointwise positive.
\item There exists $g\in C_c^\infty(\R)$ with support in $[-A/2,A/2]$ such that $f=g*g^*$.
\end{enumerate}
\end{prop}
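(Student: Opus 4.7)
The implication $(2)\Rightarrow(1)$ is immediate: if $f=g*g^*$ with $\mathrm{supp}(g)\subset[-A/2,A/2]$, then $\mathrm{supp}(f)\subset[-A,A]$ by the standard support formula for convolutions, and $\widehat f=|\widehat g|^2\geq 0$ pointwise. I would dedicate the bulk of the proof to $(1)\Rightarrow(2)$, whose strategy is to extract a ``square root'' of $\widehat f$ at the level of entire functions and then invert Paley--Wiener.

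First I translate the hypothesis into complex analysis. By the Paley--Wiener theorem, $\widehat f$ extends to an entire function $F(z):=\int f(x)e^{-ixz}dx$ of exponential type $A$ whose restriction to $\R$ lies in $\cS(\R)$. The pointwise positivity $F(t)\geq 0$ on $\R$ forces $F$ to be real on the real line, so by analytic continuation $\overline{F(\bar z)}=F(z)$ throughout $\C$. Consequently the non-real zeros of $F$ occur in complex-conjugate pairs, and each real zero has even multiplicity (otherwise $F$ would change sign).

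The central step is the classical Akhiezer--Krein factorization, which I would use to produce an entire function $G$ of exponential type $A/2$ such that $F(z)=G(z)G^\sharp(z)$, where $G^\sharp(z):=\overline{G(\bar z)}$. Concretely, $G$ is built as a Hadamard product whose zero set consists of the zeros of $F$ in the open upper half-plane together with half the multiplicity of each real zero, corrected by an exponential factor $e^{\alpha z}$. The main obstacle is verifying that $G$ ends up with exponential type exactly $A/2$, not merely at most $A$: this amounts to choosing the free parameter $\alpha$ so that the indicator function satisfies $h_G(\pi/2)=h_G(-\pi/2)=A/2$, using the identity $h_F(\theta)=h_G(\theta)+h_G(-\theta)$ together with $h_F(\pm\pi/2)=A$. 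Convergence of the associated Blaschke-type sum is guaranteed by the fact that $F$ is of exponential type with Schwartz restriction to $\R$.

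Having produced such a $G$, let $g$ denote its inverse Fourier transform (of the restriction $G|_\R$). Since $|G(t)|^2=F(t)\in\cS(\R)$, the restriction $G|_\R$ is itself Schwartz, and $G$ being entire of exponential type $A/2$ the Schwartz version of Paley--Wiener delivers $g\in C_c^\infty(\R)$ with $\mathrm{supp}(g)\subset[-A/2,A/2]$. Finally $\widehat{g*g^*}(t)=|\widehat g(t)|^2=G(t)G^\sharp(t)=F(t)=\widehat f(t)$ for all $t\in\R$, and Fourier inversion yields $f=g*g^*$ as required.
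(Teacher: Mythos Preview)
Your argument is essentially the one the paper gives: the paper simply invokes Lemma~5.1 of Boas--Kac \cite{BK} for the factorization step you sketch via Akhiezer/Fej\'er--Riesz, obtaining $g\in L^2$ with support in $[-A/2,A/2]$, and then bootstraps smoothness from $|\widehat g|^2=\widehat f=O(|t|^{-N})$. One point to tighten: your sentence ``since $|G(t)|^2=F(t)\in\cS(\R)$, the restriction $G|_\R$ is itself Schwartz'' is not immediate---rapid decay of $|G|$ follows, but rapid decay of the derivatives $G^{(k)}$ does not drop out of $|G|^2\in\cS$ alone. The clean (and non-circular) order is the paper's: use the $L^2$ Paley--Wiener theorem first to get $g\in L^2$ with compact support, then observe that $\widehat g=G$ has rapid decay (from $|G|=\sqrt{F}$), which forces $g\in C^\infty$; only then does $G=\widehat g\in\cS$ follow. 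Invoking the ``Schwartz Paley--Wiener'' directly presupposes $G\in\cS$, which is what you are trying to establish.
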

\begin{proof} 
Assume 1. Then by Lemma 5.1 of \cite{BK},  since the Fourier transform of $f$ is in $L^1(\R)$  and is pointwise positive, $f$ can be written as $g*g^*$, where $g$ is square integrable and has support in $[-A/2,A/2]$. One has $\widehat f(t)=\vert\widehat g(t)\vert^2$, and since $f\in C_c^\infty(\R)$, one gets $\widehat f(t)=O(\vert t \vert ^{-N})$ for any $N$. The same holds for $\widehat g(t)$  showing that $g$ is smooth. \newline
Conversely, the equality $\widehat f(t)=\vert\widehat g(t)\vert^2$ shows that $f$ is  positive definite, moreover its support is contained in $[-A,A]$.  \end{proof} 

This result applied to the  multiplicative group $\R_+^*$ (isomorphic to $\R$) gives, for any positive definite function $f\in C_c^\infty(\R_+^*)$ with support in a symmetric interval $I=[A^{-1},A]\subset \R_+^*$, a convolution square root with support in $\sqrt I=[A^{-1/2},A^{1/2}]$. 

Next, we investigate 
 the functional  on the convolution algebra $C_c^\infty(\R_+^*)$
\begin{equation}\label{sch22}
W_\infty(f)= -\int f(\rho^{-1})\tau(\rho) d^*\rho
\end{equation}
for test functions $f$ whose support is in the interval $[\frac 12,2]$. Moreover, we assume  the vanishing conditions
\begin{equation}\label{vanishing}
\int f(\rho)\rho^{\pm \frac 12}d^*\rho=0
\end{equation}
 to isolate on the left hand side of the explicit formula the contribution of the zeros of the Riemann zeta function. To understand why the support condition is needed,  we give an example of a test function $f$ on $\R_+^*$  of the form $f=g*g^*$ that fulfills  \eqref{vanishing}, but for which $W_\infty(f)<0$. We first determine the Fourier transform $\hat f(t):=\int f(\rho)\rho^{-it}d^*\rho$. With $\hat f(t)=\vert \hat g(t)\vert^2$ we let 
$$
\hat g(t)=(1+4t^2)e^{-t^2/4}, \qquad \hat f(t)=(1+4t^2)^2e^{-t^2/2}
$$
\begin{figure}[H]	\begin{center}
\includegraphics[scale=0.5]{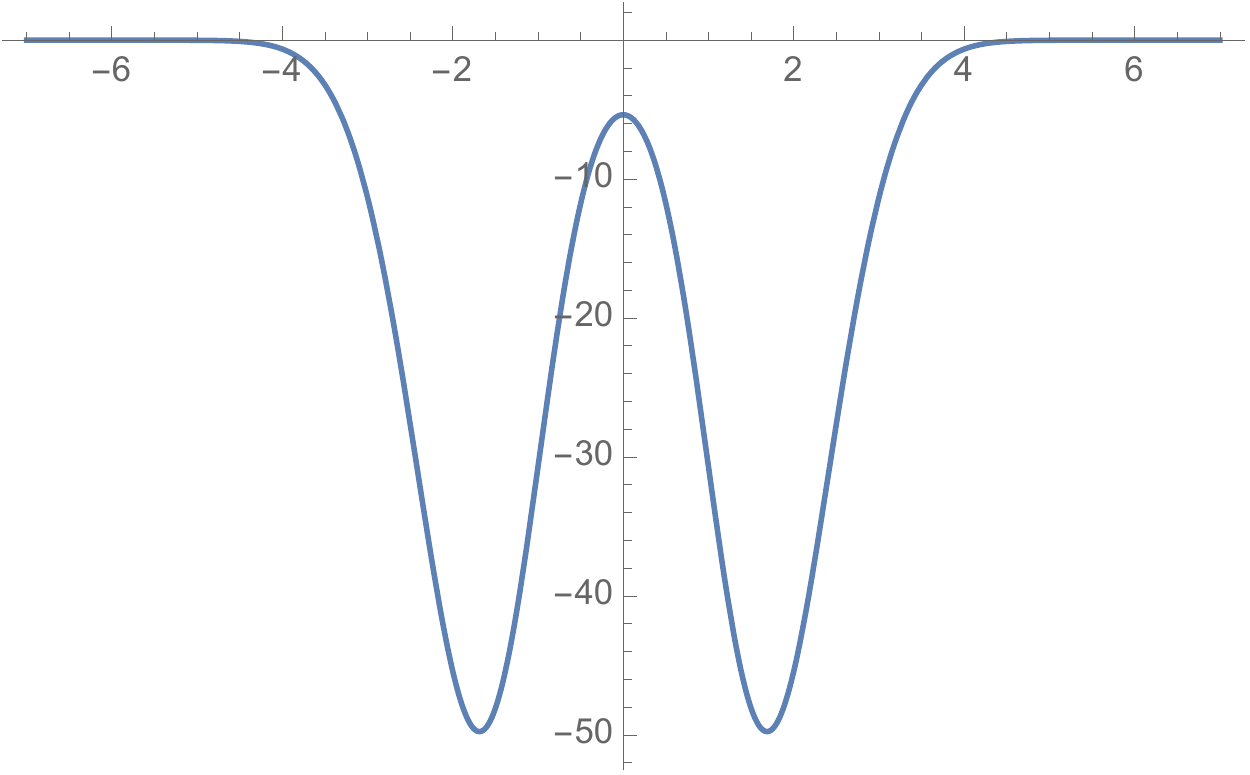}
\end{center}
\caption{Graph of $2\theta'(t)\hat f(t)$ in $[-7,7]$ \label{need1} }
\end{figure}
By construction one has $\hat f(\pm i/2)=0$, so that \eqref{vanishing} holds, moreover by \eqref{qdofu1}
$$
W_\infty(f)= \int \hat f(t)\frac{2\partial_t\theta(t)}{2 \pi}dt.
$$
The graph of $2\theta'(t)\hat f(t)$ in $[-7,7]$ (see Figure \ref{need1}) shows that $\hat f(t)$ is concentrated in the region where $\theta'(t)<0$ and the approximate value of the functional is $W_\infty(f)\sim -28.8971<0$.\newline
Next lemma shows that,  in the convolution algebra $C_c^\infty(\R_+^*)$, the ideal $\cJ$  defined by the vanishing condition \eqref{vanishing} is the range of a second order differential operator 

\begin{lem}\label{vanishing1} $(i)$~The vanishing conditions \eqref{vanishing} define an ideal $\cJ$ in the convolution algebra $C_c^\infty(\R_+^*)$.\newline
$(ii)$~Let $g\in C_c^\infty(\R_+^*)$, then $(-(\rho\partial_\rho)^2+\frac 14)g\in \cJ$ and its support is contained in the support of $g$.\newline
$(iii)$~Let $f\in C_c^\infty(\R_+^*)$ with support in an interval $I$, fulfill the vanishing conditions \eqref{vanishing}. Then there exists $g\in C_c^\infty(\R_+^*)$ with support in  
$I$ and such that \begin{equation}\label{Qop}Q(g):=(-(\rho\partial_\rho)^2+\frac 14)g=f.\end{equation}
One has $g= Y^**Y*f, $
where $Y(\rho)=0$ for $\rho<1$, $Y(\rho)=\rho^{\frac 12}$ for $\rho\geq 1$ and $ Y^*(\rho):=Y(\rho^{-1})$.\newline
$(iv)$~Let $f$ and $g$ as in $(iii)$, then $f$ is positive definite if and only if $g$ is positive definite.
\end{lem}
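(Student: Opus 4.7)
The lemma is a statement about the multiplicative group $\R_+^*$, so the natural framework is the multiplicative Fourier transform $\hat f(t) := \int f(\rho)\rho^{-it}d^*\rho$; the two vanishing conditions read $\hat f(\pm i/2) = 0$. For (i), this identifies $\cJ$ as the common kernel of two characters of the convolution algebra, hence an ideal. For (ii), integration by parts in the variable $u = \log\rho$ gives the symbol $it$ for $D = \rho\partial_\rho$ and therefore $t^2 + 1/4$ for $Q = -D^2 + 1/4$; since $t^2 + 1/4$ vanishes exactly at $t = \pm i/2$, we get $Qg \in \cJ$, and the support claim is immediate because $Q$ is a local differential operator.

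For (iii), I would exploit the factorization $Q = (\tfrac12 - D)(\tfrac12 + D)$ and construct $g$ via one-sided Green's functions. A short distributional computation, separating the smooth part of $Y$ (where $DY = \tfrac12 Y$) from its unit jump at $\rho = 1$, gives
\begin{equation*}
(\tfrac12 - D)Y = -\dirac_1, \qquad (\tfrac12 + D)Y^* = -\dirac_1,
\end{equation*}
where $\dirac_1$ is the Dirac mass at $1\in\R_+^*$ with respect to $d^*\rho$. Convolving, one obtains formally $Q(Y^**Y*f) = f$. The substantive step is to make this rigorous and to verify the support claim. Since $Y, Y^*$ are not integrable, one works directly with the one-sided integrals: for $f$ supported in $I = [A^{-1},A]$,
\begin{equation*}
(Y*f)(\rho) = \rho^{1/2}\int_0^\rho \nu^{-1/2}f(\nu)\,d^*\nu,
\end{equation*}
which vanishes trivially for $\rho < A^{-1}$, and for $\rho > A$ reduces to $\rho^{1/2}\hat f(-i/2)$, hence vanishes by the first vanishing condition. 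Iterating with $Y^*$: for $\rho < A^{-1}$, $(Y^**(Y*f))(\rho) = \rho^{-1/2}\int h(\mu)\mu^{1/2}d^*\mu$ where $h = Y*f$, and a single Fubini rearrangement expresses this integral as a linear combination of $\hat f(\pm i/2)$, which vanishes by the two conditions together. Smoothness of $g$ follows from rewriting $g = G*f$ with the continuous, locally integrable kernel $G(\lambda) = e^{-|\log\lambda|/2}$ (the fundamental solution of $Q$ obtained from $Y^* * Y$ at the level of Fourier transforms, since $1/(t^2+1/4)$ inverts to $e^{-|u|/2}$ on the additive line $u = \log\rho$).

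For (iv), the identity $\hat f(t) = (t^2 + 1/4)\hat g(t)$ gives the equivalence at once, since $t^2 + 1/4 > 0$ on $\R$. The main obstacle is precisely the support computation in (iii): each of the two one-sided convolutions produces a potential tail outside $I$, and one needs both vanishing conditions, one per factor $Y$ and $Y^*$, to cancel them. Handling these tails correctly, given that $Y$ and $Y^*$ themselves are non-integrable on $\R_+^*$, is the key delicate point and the reason the hypothesis takes the precise two-condition form stated.
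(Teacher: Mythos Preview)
Your proof is correct and follows essentially the same route as the paper: both construct $g$ via the two one-sided integrals $k=Y*f$ and $g=Y^**k$, verify the support of each using the vanishing conditions (the paper uses an integration by parts where you use Fubini to get $\int \mu^{1/2}h(\mu)\,d^*\mu = A\,\hat f(-i/2)-\hat f(i/2)$), and then check $Qg=f$ by differentiating. Your explicit factorization $Q=(\tfrac12-D)(\tfrac12+D)$ together with the Green's-function reading $(\tfrac12-D)Y=-\dirac_1$, $(\tfrac12+D)Y^*=-\dirac_1$ makes the structure more transparent than the paper's bare two-step differentiation, but the content is the same.
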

\begin{proof} $(i)$~For any complex power $z$ the functional $\int f(\rho)\rho^{z}d^*\rho$ defines a character of the convolution algebra $C_c^\infty(\R_+^*)$. The vanishing conditions \eqref{vanishing} thus define the intersection of the kernels of two characters, \ie an ideal. \newline
$(ii)$~Notice that the (linear) kernel of the operator $-(\rho\partial_\rho)^2+\frac 14$ acting on  distributions\footnote{\ie here the dual of $C_c^\infty(\R_+^*)$ which is strictly larger than the space $\cS'(\R_+^*)$ of tempered distributions, and $\rho^{\pm \frac 12}\notin \cS'(\R_+^*)$} on $\R_+^*$  contains the functions $\rho^{\pm \frac 12}$. Using integration by parts, since $g$ has compact support, one gets
$$
\int \left((-(\rho\partial_\rho)^2+\frac 14)g(\rho)\right)\rho^{\pm \frac 12}d^*\rho
=\int g(\rho)\left((-(\rho\partial_\rho)^2+\frac 14)\rho^{\pm \frac 12}\right)d^*\rho=0.
$$
Moreover $(-(\rho\partial_\rho)^2+\frac 14)g(\rho)$ vanishes identically outside the support of $g$. \newline
$(iii)$~The vanishing conditions \eqref{vanishing} give $\int_I v^{- \frac 12} f(v) \, d^*v=0$ and this shows that the function $k(u):=u^{\frac 12}\int_{0 }^u v^{- \frac 12} f(v) \, d^*v$  vanishes when $u\notin I$.   Thus the support of $k$ is contained in $I$.  Using again integration by parts and \eqref{vanishing} one obtains
$$
\int_I u^{\frac 12}k(u)d^*u=\int_I \left(\int_{0 }^u v^{- \frac 32} f(v) \, dv\right ) du=-\int_I u^{- \frac 32}f(u) udu=-\int_I u^{ \frac 12}f(u) d^*u=0.
$$
  Thus  one again derives that   $g(\rho):=\rho^{- \frac 12}\int_{\rho }^\infty u^{\frac 12}k(u)d^*u$ also has support in $I$. 
Moreover, one has 
\begin{align*}
\rho\partial_\rho g(\rho)&=- \frac 12 g(\rho)-\rho^{- \frac 12}\rho \int_{0 }^\rho v^{- \frac 12} f(v) \, d^*v,\\
(\rho\partial_\rho)^2g(\rho)&=- \frac 12 \rho\partial_\rho g(\rho)-\rho\partial_\rho\left(\rho^{\frac 12} \int_{0 }^\rho v^{- \frac 12} f(v) \, d^*v\right)=\\
&=- \frac 12 \rho\partial_\rho g(\rho)-\frac 12 \left(\rho^{\frac 12} \int_{0 }^\rho v^{- \frac 12} f(v) \, d^*v\right)-f(\rho)= \frac 14 g(\rho)-f(\rho).
\end{align*}
This gives $Q(g)=(-(\rho\partial_\rho)^2+\frac 14)g=f$.  Note that by construction one has: $k=Y*f$, where $Y(\rho)=0$ for $\rho<1$ and $Y(\rho)=\rho^{\frac 12}$ for $\rho\geq 1$  and that $g= Y^**Y*f$.\newline
$(iv)$~follows since the Fourier transforms are related by the equality $(\frac 14+t^2)\widehat g(t)=\widehat f(t)$.
\end{proof} 
For a symmetric  interval $I\subset \R_+^*$ we let $C_c^\infty(I)\subset C_c^\infty(\R_+^*)$ be the subspace of functions whose support is contained in $I$. 
 \begin{defn} \label{defnposfunctional}
Let $E \subset C_c^\infty(\R_+^*)$ be a subspace and $\cL$ a linear form on $E$. Then $\cL$ is said to be positive if  $\cL(f)\geq 0$ for any positive definite $f\in E$. \end{defn}
 
 Next proposition  plays a central role. It gives a criterion to test the positivity of a functional $\phi$ after imposing the vanishing conditions \eqref{vanishing}, \ie  on the intersection $C_c^\infty(I)\cap \cJ$, by testing the  positivity  of $\phi\circ Q$ under the same support conditions \ie on $C_c^\infty(I)$.
 
\begin{prop}\label{vanishing2} Let $\phi$ be a functional on $C_c^\infty(\R_+^*)$ and $I$ a symmetric interval. Then the restriction of $\phi$ to $C_c^\infty(I)\cap \cJ$ is positive  if and only if the functional $\phi\circ Q$ is positive  on $C_c^\infty(I)\subset C_c^\infty(\R_+^*)$.	
\end{prop}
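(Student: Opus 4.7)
The plan is to show that the differential operator $Q=-(\rho\partial_\rho)^2+\frac14$ realizes a bijection between $C_c^\infty(I)$ and $C_c^\infty(I)\cap\cJ$ that preserves the property of being positive definite, after which the equivalence reduces to a tautology.

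First I would establish surjectivity and range control. By Lemma~\ref{vanishing1}$(ii)$, for any $g\in C_c^\infty(I)$ the element $Q(g)$ lies in $\cJ$ and its support is contained in the support of $g$, hence in $I$. So $Q$ maps $C_c^\infty(I)$ into $C_c^\infty(I)\cap \cJ$. Conversely, Lemma~\ref{vanishing1}$(iii)$ tells us that any $f\in C_c^\infty(I)\cap\cJ$ arises as $Q(g)$ for some $g\in C_c^\infty(\R_+^*)$ whose support is again contained in $I$; this gives the surjectivity $Q\colon C_c^\infty(I)\twoheadrightarrow C_c^\infty(I)\cap\cJ$.

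Second, I would invoke Lemma~\ref{vanishing1}$(iv)$: with $f=Q(g)$ as above, one has the pointwise identity $(\tfrac14+t^2)\widehat g(t)=\widehat f(t)$ on Fourier transforms, so $\widehat f\geq 0$ if and only if $\widehat g\geq 0$. In view of Definition~\ref{defnposdef}, this says exactly that $f$ is positive definite if and only if the preimage $g$ is. Combined with the previous step, the map $Q$ induces a bijection between the cone of positive definite elements of $C_c^\infty(I)$ and the cone of positive definite elements of $C_c^\infty(I)\cap\cJ$.

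To conclude, suppose $\phi\circ Q$ is positive on $C_c^\infty(I)$ in the sense of Definition~\ref{defnposfunctional}. Given any positive definite $f\in C_c^\infty(I)\cap\cJ$, choose the preimage $g\in C_c^\infty(I)$ from the previous step; then $g$ is positive definite, so $\phi(f)=\phi(Q(g))=(\phi\circ Q)(g)\geq 0$, showing $\phi|_{C_c^\infty(I)\cap\cJ}$ is positive. Conversely, if $\phi|_{C_c^\infty(I)\cap\cJ}$ is positive, then for any positive definite $g\in C_c^\infty(I)$ the function $Q(g)$ lies in $C_c^\infty(I)\cap\cJ$ and is positive definite, hence $(\phi\circ Q)(g)=\phi(Q(g))\geq 0$. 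The two steps give the desired equivalence. The only substantive input is Lemma~\ref{vanishing1}; no additional obstacle is expected here, since the construction of $g$ from $f$ and the preservation of positive definiteness have already been isolated in the preparatory lemma.
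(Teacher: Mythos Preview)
Your proof is correct and follows essentially the same route as the paper: both directions are deduced from Lemma~\ref{vanishing1}, using part $(ii)$ to see that $Q$ sends $C_c^\infty(I)$ into $C_c^\infty(I)\cap\cJ$, part $(iii)$ for the inverse construction, and part $(iv)$ for the equivalence of positive definiteness. A minor remark: you phrase the situation as a ``bijection'' of positive-definite cones but only argue surjectivity; that is harmless since your actual concluding paragraph never uses injectivity.
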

\begin{proof} Assume that the restriction of $\phi$ to $C_c^\infty(I)\cap \cJ$ is positive and let $g\in C_c^\infty(I)$ be positive definite. Then $Qg$ is also positive definite by Lemma \ref{vanishing1} $(iv)$. Moreover by  $(i)$ of the same lemma one has: $Qg\in C_c^\infty(I)\cap \cJ$. Thus $\phi(Qg)\geq 0$.\newline
Conversely, assume  that the functional $\phi\circ Q$ is positive  on $C_c^\infty(I)\subset C_c^\infty(\R_+^*)$ and	 let $f\in C_c^\infty(I)\cap \cJ$ be positive definite.  Let  $g\in C_c^\infty(I)\subset C_c^\infty(\R_+^*)$, with $Qg=f$ (Lemma \ref{vanishing1} $(iii)$), then by $(iv)$ of the same lemma, $g$ is positive definite so that $\phi(f)=\phi(Qg)\geq 0$.\end{proof} 

As just proved in the proposition, for a given functional $\phi\in C_c^\infty(\R_+^*)$ the positivity of $\phi$ on $C_c^\infty(I)$ implies the positivity of $\phi\circ Q$ on $C_c^\infty(I)$.
Next, we explain why it is easier  to obtain the positivity of $\phi\circ Q$  on $C_c^\infty(I)\subset C_c^\infty(\R_+^*)$ than the positivity of $\phi$ (on $C_c^\infty(I)$).  

Consider the functional
\begin{equation}\label{sch23}
D(f)= \int f(\rho^{-1})\delta(\rho) d^*\rho.
\end{equation}
It follows from Corollary \ref{corlittlesq} that  the functional $L=D+W_\infty$, with $W_\infty$ as in \eqref{sch22}, is positive  on $C_c^\infty(\R_+^*)$. One derives from Proposition \ref{vanishing2} the following implication
\begin{equation}\label{control}
D\circ Q\leq 0 \ \ \text{on}\ \ C_c^\infty(I)~\Longrightarrow~ W_\infty\geq 0\ \ \text{on}\ \ C_c^\infty(I)\cap \cJ.
\end{equation}
Next theorem shows  that the quadratic form $D\circ Q(\xi*\xi^*)$ associated to $D\circ Q$ is essentially negative. Therefore one  needs to impose only finitely many linear conditions on test functions to obtain $D\circ Q\leq 0$ and hence  $W_\infty\geq 0$ on $C_c^\infty(I)\cap \cJ$.

\begin{thm}\label{thmqkey1} Let $I\subset \R_+^*$ be a symmetric and bounded interval. There exists a compact operator $K_I$ in the Hilbert space $L^2(\sqrt I, d^*\rho)$ such that for any vector $\xi\in L^2(\sqrt I, d^*\rho)$ one has\footnote{We use the convention that the inner product $\langle \xi\mid \eta\rangle$ is antilnear in $\xi$ (and liner in $\eta$)} 
 \begin{equation}\label{thmQprime}
	D\circ Q(\xi*\xi^*)=\langle \xi \mid (-2\, \id +K_I)\xi\rangle =-2\Vert \xi \Vert^2+\langle \xi\mid K_I\,\xi\rangle.
\end{equation}
\end{thm}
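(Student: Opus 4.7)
The plan is to reduce $D\circ Q$ to a quadratic form on $L^2(\sqrt I,d^*\rho)$ whose integral kernel is (the lift of) the distribution $Q\delta$, and then to read off the decomposition $-2\,\id + K_I$ by identifying the singular part of $Q\delta$ at $\rho=1$.

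First I would move the operator $Q$ from $f$ onto $\delta$. Since $\rho\partial_\rho$ is skew-adjoint with respect to $d^*\rho$ and commutes with the inversion $\rho\mapsto \rho^{-1}$, the operator $Q=-(\rho\partial_\rho)^2+\tfrac14$ is formally self-adjoint and inversion-invariant. Combined with the symmetry $\delta(\rho)=\delta(\rho^{-1})$ from Proposition~\ref{proplittlesq}$(ii)$, substitution $\sigma=\rho^{-1}$ gives
\[
D\circ Q(f)=\int (Qf)(\rho^{-1})\delta(\rho)\,d^*\rho = \int f(\sigma)(Q\delta)(\sigma)\,d^*\sigma.
\]
For $\xi\in C_c^\infty(\sqrt I)$ one has $(\xi*\xi^*)(\sigma)=\int \xi(\sigma\rho)\overline{\xi(\rho)}\,d^*\rho$, and the change of variable $\tau=\sigma\rho$ rewrites $D\circ Q(\xi*\xi^*)$ as $\langle\xi\mid T\xi\rangle$, where $T$ is the convolution-type integral operator on $L^2(\sqrt I,d^*\rho)$ with kernel $k(\rho,\tau)=(Q\delta)(\tau\rho^{-1})$. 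Because the symmetric interval $I$ is bounded, the set $\sqrt I\cdot\sqrt I^{\,-1}$ is a bounded neighbourhood of $1$ in $\R_+^*$, and everything takes place on this compact region.

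The central step is to analyze $Q\delta$ as a distribution. Setting $u=\log\rho$, $\tilde\delta(u):=\delta(e^u)$, one has $\rho\partial_\rho=\partial_u$ so $Q=-\partial_u^2+\tfrac14$. The expansion \eqref{sch18.5} gives $\tilde\delta\,'(0^+)=1$, and the symmetry $\tilde\delta(-u)=\tilde\delta(u)$ forces $\tilde\delta\,'(0^-)=-1$; away from $u=0$, $\tilde\delta$ is $C^\infty$ by the explicit formula \eqref{sch18}. Hence $\tilde\delta$ is continuous and piecewise smooth with a first-derivative jump of $+2$ at $u=0$, so distributionally
\[
\partial_u^2\tilde\delta = (\partial_u^2\tilde\delta)_{\mathrm{reg}}+2\,\delta_0,
\qquad
Q\delta = -2\,\delta_1 + h,
\]
where $\delta_1$ is the Dirac mass at $\rho=1$ relative to $d^*\rho$ and $h(\rho)$ is the (bounded, piecewise smooth) regular part of $Q\delta$ on $\R_+^*$. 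The convolution operator with kernel $\delta_1(\tau\rho^{-1})$ is simply the identity on $L^2(\sqrt I,d^*\rho)$, so splitting $T=-2\,\id+K_I$ isolates the operator $K_I$ with kernel $h(\tau\rho^{-1})$ on $\sqrt I\times \sqrt I$. Boundedness of $h$ on the compact set $\sqrt I\cdot\sqrt I^{\,-1}$ shows that $h(\tau\rho^{-1})$ is in $L^2(\sqrt I\times \sqrt I,d^*\rho\,d^*\tau)$, hence $K_I$ is a Hilbert--Schmidt operator, in particular compact, yielding
\[
D\circ Q(\xi*\xi^*)=-2\|\xi\|^2+\langle\xi\mid K_I\xi\rangle.
\]

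The main subtle point I expect is step three: correctly identifying the coefficient of the Dirac mass in $Q\delta$ from the one-sided expansion of $\delta$ at $\rho=1$, and then showing that the resulting regular part $h$ really is bounded (not merely locally integrable) on a neighbourhood of $1$ so that $K_I$ is genuinely Hilbert--Schmidt. The expansion \eqref{sch18.5} already provides enough control on the next Taylor coefficients to make this elementary; everything else is bookkeeping with the convolution structure of $\R_+^*$ and the self-adjointness of $Q$.
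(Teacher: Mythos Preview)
Your proposal is correct and follows essentially the same approach as the paper: pass to the additive picture via $u=\log\rho$, move $Q$ onto $\delta$ by self-adjointness, read off the coefficient $-2$ of the Dirac mass from the jump $\tilde\delta'(0^+)-\tilde\delta'(0^-)=2$ given by \eqref{sch18.5}, and conclude that the remaining kernel $h(\tau\rho^{-1})$ is Hilbert--Schmidt on the bounded square $\sqrt I\times\sqrt I$. The paper carries this out in exactly the same way (see equation \eqref{Qprime} and the surrounding computation, which is also redone by elementary integration by parts in Remark~\ref{remQprime}), so there is nothing to add.
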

\begin{proof} We use the group isomorphism $\exp:\R\to \R_+^*$ to transfer  the statement to the convolution algebra $C_c^\infty(\R)$ and the interval $I':= \log I$. The operator $Q$ becomes \[Q_+:=-\partial_x^2+\frac 14\]
and the intent is to analyse the functional $D_+\circ Q_+$ on $C_c^\infty(I')\subset C_c^\infty(\R)$  where 
$$
D_+(f):=\int f(x)\delta(\exp(\vert x\vert))dx,\qquad \forall f\in C_c^\infty(\R).
$$
 We use the symmetry $\delta(\rho)=\delta(\rho^{-1})$ to reformulate the integrand. By integration by parts one obtains
 $$
 D_+(Q_+f)=\int Q_+f(x)\delta(\exp(\vert x\vert))dx=\int f(x)Q_+\delta(\exp(\vert x\vert))dx
 $$
 where $Q_+\delta(\exp(\vert x\vert))$ is a distribution. 
 One  has 
 $$
\delta(\exp(\vert x\vert))= 2 \left(\frac{\text{Si}(4 \pi )}{4 \pi }+1\right)+\vert x\vert+\frac{\left(-9 \text{Si}(4 \pi )-64 \pi ^3+72 \pi \right) x^2}{144 \pi }+O\left(\vert x\vert^3\right).
 $$
 Thus $Q_+\delta(\exp(\vert x\vert))=(-\partial_x^2+\frac 14)\delta(\exp(\vert x\vert))$ gives the sum of $-2\dirac_0$ (where $\dirac_0$ is the Dirac distribution at $0$) and the even function  which coincides with $(-\partial_x^2+\frac 14)\delta(\exp( x))$ for $x\geq 0$. Equivalently, one has 
 \begin{equation}\label{Qprime}
	 D_+(Q_+f)=-2f(0)+\int_0^\infty (f(x)+f(-x))Q_+\delta(\exp(x))dx.
\end{equation}
 Now we let $f=\xi*\xi^*$, where  $\xi\in L^2(\sqrt I, d^*\rho)$. Then: $f(0)=(\xi*\xi^*)(0)=\Vert \xi \Vert^2$. Moreover the following formula defines a compact operator $K_I$  in the Hilbert space $L^2(\sqrt I, d^*\rho)$ 
$$
\langle \xi\mid K_I(\eta)\rangle :=\int_1^\infty\left( (\xi^**\eta)(\rho)+(\xi^**\eta)(\rho^{-1})\right) Q\delta(\rho)d^*\rho 
$$
which gives \eqref{thmQprime}, using \eqref{Qprime}. To prove that $K_I$ is a compact operator we first show that it
 is given by a Schwartz kernel $\tilde K_I(v,u)$. One has  
$$
 (\xi^**\eta)(\rho)=\int \overline{\xi(u^{-1})}\eta(\rho u^{-1})d^*u, \ \ 
 \int_1^\infty (\xi^**\eta)(\rho) Q\delta(\rho)d^*\rho=\int_J \overline{\xi(v)}\eta(u)Q\delta(u/v)d^*ud^*v
$$
where $J=\{(v,u)\in \sqrt I\times \sqrt I\mid u/v\geq 1\} $. Similarly one gets 
\begin{align*}
 (\xi^**\eta)(\rho^{-1})&=\int \overline{\xi(u^{-1})}\eta(\rho^{-1} u^{-1})d^*u, \\  
 \int_1^\infty (\xi^**\eta)(\rho^{-1}) Q\delta(\rho)d^*\rho&=\int_{J'} \overline{\xi(v)}\eta(u)Q\delta(v/u)d^*ud^*v
\end{align*}
where $J'=\{(v,u)\in \sqrt I\times \sqrt I\mid v/u\geq 1\} $. This shows that the Schwartz kernel $\tilde K_I(v,u)$ is  defined as follows
$$
\tilde K_I(v,u)=\begin{cases}Q\delta(u/v) & \text{if $u\geq v$}, \\Q\delta(v/u) &\text{if $v\geq u$.}
\end{cases}
$$
Since the interval $\sqrt I$ is bounded, the function $\tilde K_I(v,u)$ is square integrable and hence the operator $K_I$ is of Hilbert-Schmidt class and hence compact. 
\end{proof}
\begin{rem}\label{remQprime} Since the equality \eqref{Qprime} plays a key role in the sequel we give  an elementary proof  without appealing to distribution theory. Let $k(x)$ be an even function on $\R$ whose restriction to $[0,\infty)$ is a smooth function, then for any $f\in C_c^\infty(\R)$ one has
\begin{align*}
\int_\R f''(x)k(x)dx&=\int_{-\infty}^0 f''(x)k(x)dx+\int_0^\infty f''(x)k(x)dx\\
\int_0^\infty f''(x)k(x)dx&=\bigl[f'(x)k(x)\bigr]_0^\infty -\int_0^\infty f'(x)k'(x)dx=-f'(0)k(0)-\int_0^\infty f'(x)k'(x)dx=\\
&=-f'(0)k(0)-\bigl[f(x)k'(x)\bigr]_0^\infty+\int_0^\infty f(x)k''(x)dx=\\
&=-f'(0)k(0)+f(0)k'(0^+)+\int_0^\infty f(x)k''(x)dx
\end{align*}
where  $k'(0^+):=\displaystyle{\lim_{\epsilon\to 0\atop \epsilon >0}}\frac{k(\epsilon)-k(0)}{\epsilon} $.  Similarly one has
\begin{align*}
\int_{-\infty}^0 f''(x)k(x)dx&=\bigl[f'(x)k(x)\bigr]_{-\infty}^0 -\int_{-\infty}^0 f'(x)k'(x)dx=f'(0)k(0)-\int_{-\infty}^0 f'(x)k'(x)dx=\\
&=f'(0)k(0)-\bigl[f(x)k'(x)\bigr]_{-\infty}^0+\int_{-\infty}^0 f(x)k''(x)dx=\\
&=f'(0)k(0)-f(0)k'(0^-)+\int_0^\infty f(x)k''(x)dx.
\end{align*}
Thus in the global sum the boundary terms $-f'(0)k(0)$ and $f'(0)k(0)$ cancel out but the boundary  terms $f(0)k'(0^+)$ and $-f(0)k'(0^-)$ do not, due to the discontinuity of the first derivative of $k$. Indeed, since  $k(x)$ is an even function one has: $k'(0^-)=-k'(0^+)$, thus is $k'(0^+)\neq 0$ the first derivative is discontinuous at zero. Letting then  $k(x):=\delta(\exp(\vert x\vert))$, one obtains \eqref{Qprime}.	
\end{rem}
 \begin{figure}[H]	\begin{center}
\includegraphics[scale=0.8]{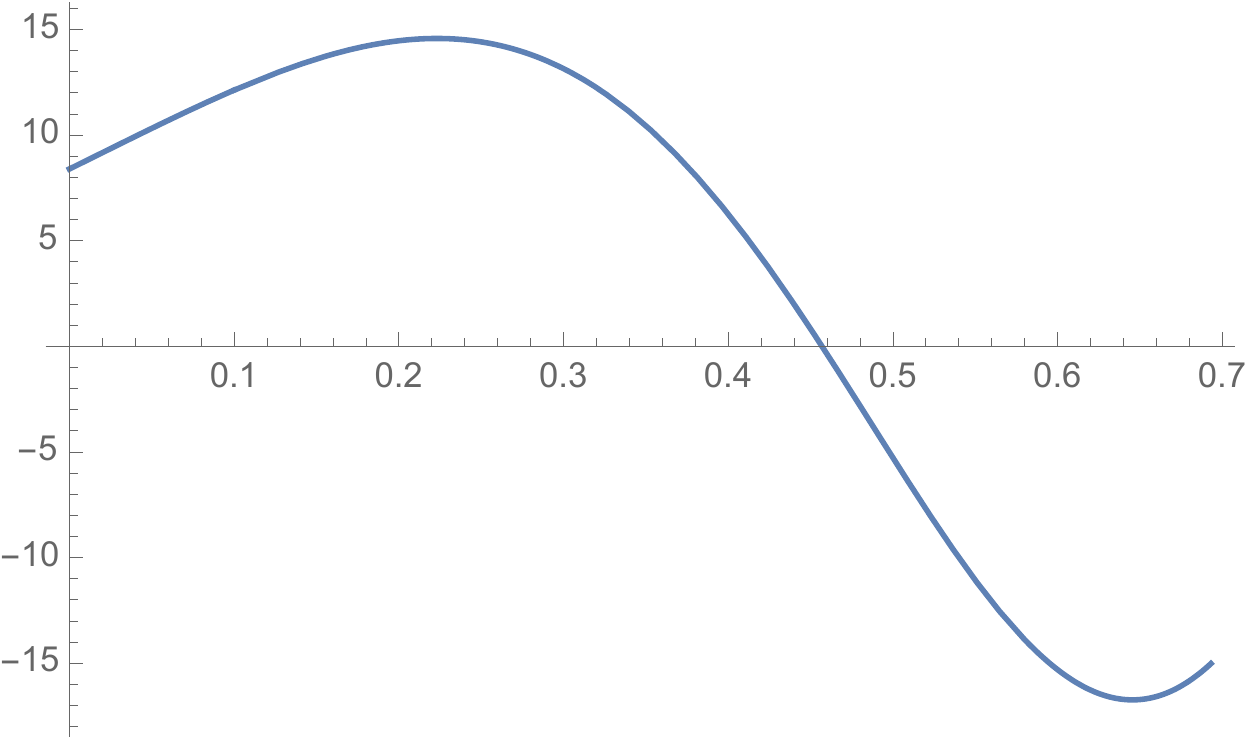}
\end{center}
\caption{Graph of $Q_+\delta(\exp(x))$ in $[0,\log 2]$ \label{qprime} }
\end{figure}

\begin{cor}\label{qeasy} One has  $D\circ Q\leq 0 \ \ \text{on}\ \ C_c^\infty(I)$, where $I=[u^{-1},u]$, $u=1.10246$.
\end{cor}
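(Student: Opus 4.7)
The plan is to combine Theorem~\ref{thmqkey1} with the Boas--Kac correspondence (Proposition~\ref{boaskac}) to reduce the statement to a spectral bound on the compact operator $K_I$. By Proposition~\ref{boaskac}, every positive definite $f\in C_c^\infty(I)$ can be written as $\xi*\xi^*$ with $\xi\in C_c^\infty(\sqrt I)\subset L^2(\sqrt I, d^*\rho)$, so Theorem~\ref{thmqkey1} yields
\[
D\circ Q(f)=-2\|\xi\|^2+\langle\xi\mid K_I\,\xi\rangle.
\]
It therefore suffices to establish the operator inequality $K_I\leq 2\,\mathrm{Id}$ on $L^2(\sqrt I,d^*\rho)$.

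To bound $\langle\xi\mid K_I\,\xi\rangle$ I would use the integral representation of $K_I$ exhibited in the proof of Theorem~\ref{thmqkey1}, specialized to $\xi=\eta$:
\[
\langle\xi\mid K_I\,\xi\rangle
=\int_1^\infty\bigl((\xi^**\xi)(\rho)+(\xi^**\xi)(\rho^{-1})\bigr)\,Q\delta(\rho)\,d^*\rho
=2\int_1^{u}\mathrm{Re}\bigl((\xi^**\xi)(\rho)\bigr)\,Q\delta(\rho)\,d^*\rho.
\]
Here the reduction of the domain to $[1,u]$ follows from $\mathrm{Supp}(\xi^**\xi)\subset I=[u^{-1},u]$ (since $\xi$ is supported in $\sqrt I$), and the appearance of $\mathrm{Re}$ uses the identity $(\xi^**\xi)(\rho^{-1})=\overline{(\xi^**\xi)(\rho)}$ valid for the Hermitian positive definite function $\xi^**\xi$. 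The standard bound $|(\xi^**\xi)(\rho)|\leq(\xi^**\xi)(1)=\|\xi\|^2$ for positive definite functions then yields
\[
\langle\xi\mid K_I\,\xi\rangle\leq 2\|\xi\|^2\int_1^{u}|Q\delta(\rho)|\,d^*\rho.
\]

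Consequently $D\circ Q\leq 0$ on $C_c^\infty(I)$ as soon as the scalar inequality $\int_1^u|Q\delta(\rho)|\,d^*\rho\leq 1$ is verified. The remaining step is purely computational: the closed-form expression \eqref{sch18} for $\delta$ on $[1,\infty)$ makes $Q\delta$ explicit (smooth on $(0,\infty)$ in the logarithmic coordinate, see Figure~\ref{qprime}); for $u$ slightly larger than $1$ one has $Q\delta\geq 0$ on $[1,u]$, so the absolute value can be dropped, and a direct integration locates the largest admissible value at $u=1.10246$. The main (and essentially the only) obstacle in the argument is this explicit numerical estimate; the conceptual input is the sharpness of the elementary bound $|\phi(\rho)|\leq\phi(1)$ for positive definite $\phi$ together with the support reduction $\mathrm{Supp}(\xi^**\xi)\subset I$.
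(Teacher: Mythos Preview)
Your proof is correct and follows essentially the same route as the paper. The paper works directly with formula~\eqref{Qprime} and the elementary bound $|f(x)|\le f(0)$ for positive definite $f$, without first decomposing $f=\xi*\xi^*$; your detour through Boas--Kac and Theorem~\ref{thmqkey1} reaches the identical scalar criterion $\int_0^{\log u}|Q_+\delta(\exp x)|\,dx=\int_1^u|Q\delta(\rho)|\,d^*\rho\le 1$, after which the numerical verification is the same.
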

\begin{proof} We adopt the same strategy as at the beginning of the proof of the above theorem. Thus we test the negativity  $D_+\circ Q_+$ on $C_c^\infty([-\log u,\log u])\subset C_c^\infty(\R)$.
When the function $f\in C_c^\infty(\R)$ is positive definite  \ie the matrix $f(x_i-x_j)$ is a positive matrix for any $x_i\in \R$, and this implies in particular that $\vert f(x)\vert \leq f(0)$ for all $x\in \R$. It follows that the functional $D_+\circ Q_+$ is positive on $C_c^\infty(I)$ where 
$I$ is a symmetric interval $[-s,s]$, as long as 
$$
\int_0^s \vert Q_+\delta(\exp(x))\vert dx\leq 1.
$$
One verifies numerically that this holds for $s=0.097542$, which gives $e^s\sim u$.\end{proof} 

\begin{rem}\label{improve} $(i)$~One can improve the constant $u$ of Corollary~\ref{qeasy} using the  bound given by Theorem~2 of \cite{BK}\footnote{A typo corrected in \cite{BKe} indicates that one should use the ceiling function $\left\lceil \frac{s}{x}\right\rceil$ in the formula.}
$$
\vert f(x)\vert \leq f(0)\,\cos \left(\frac{\pi }{\left\lceil \frac{s}{x}\right\rceil +1}\right)
$$
for any positive definite function $f$  with support in the interval $[-s,s]$. The condition on $s$ now becomes
$$
\int_0^s \vert Q_+\delta(\exp(x))\vert \,\cos \left(\frac{\pi }{\left\lceil \frac{s}{x}\right\rceil +1}\right)
 dx\leq 1.
$$
This holds for $s=0.14043$ and improves the value of $u$ of Corollary~\ref{qeasy} to $1.15077$.	

$(ii)$~The functional  $D\circ Q$ is not negative on $[2^{-1},2]$. 
To see this we construct a positive definite function $f\in C_c^\infty(\R)$, which has support in $[-\log 2,\log 2]$, with $D_+(Q_+f)>0$. Let $\epsilon >0$, $\phi_\epsilon(x)$ be a positive smooth function of integral equal to $1$ and with support in $[-\epsilon,\epsilon]$. We let 
$$
g_\epsilon= \phi_\epsilon* 1_{[\epsilon,Log 2-\epsilon]}, \qquad g_\epsilon(x)=\int_\epsilon^{Log 2-\epsilon} \phi_\epsilon(x-t)dt.
$$
By construction one has $0\leq g_\epsilon\leq 1$, $g_\epsilon\in C_c^\infty([0,\log 2])$ and   
$g_\epsilon(x)=1$ for $x\in [2\epsilon,\log 2-2\epsilon]$. 

Let $f_\epsilon:=g_\epsilon*g_\epsilon^*$. One has 
$$
f_\epsilon(-x)=f_\epsilon(x), \qquad f_\epsilon(x)=\int g_\epsilon(y)g_\epsilon(x+y)dy.
$$
Then $f_\epsilon$ is positive in the convolution algebra $C_c^\infty(\R)$, and has support in $[-\log 2,\log 2]$. One obtains the inequality 
$$
\vert f_\epsilon(x)-( \log 2-\vert x\vert)\vert\leq 4 \epsilon,\qquad \forall x\in [-\log 2,\log 2].
$$
The functional $D_+(Q_+f_\epsilon)$ is given by \eqref{Qprime} and thus depends continuously on $\epsilon >0$. The limit when $\epsilon \to 0$ is $\sim 2.98699$.
\end{rem}

\section{Moving $\Delta$ inside $\Sigma$}\label{sectlsqmove}

In the previous section we proved that the functional $D(f)= \int f(\rho^{-1})\delta(\rho) d^*\rho$  is  the difference $L-W_\infty$  of the positive functional $L$ in \eqref{sch19}  and the Weil functional $W_\infty$  \eqref{sch22}.  The implication \eqref{control} shows that in order to prove Weil's positivity one needs to control the sign of the functional $D\circ Q$, where the differential operator $Q=-(\rho\partial_\rho)^2+\frac 14$ implements the vanishing conditions \eqref{vanishing} (Proposition \ref{vanishing2}). Moreover, in Theorem~\ref{thmqkey1} we proved that the functional $D\circ Q$ is essentially negative since it is represented by the  operator $-2\, \id +K_I$, where $K_I$ is a compact operator depending on the interval $I$ on which the test functions are supported.

In this section we refine the decomposition $W_\infty=L-D$  (Theorem~\ref{devil}) in the form: $W_\infty=S-E$, where $S\leq L$ is still a positive trace-functional, and the negativity of the   ``remainder" $E$, will be easier to handle later since $E\leq D$. 
This refinement plays a crucial role in the explicit computations of Section \ref{sectspectrum}. 

The  main idea is to implement   the geometry of pairs of projections in Hilbert space to ``move" the contribution of the small square $\Delta$ inside the big square $\Sigma$. 

The starting point is the following Lemma~\ref{onequantum} which allows one to relate the trace $\delta(\rho)$ of \eqref{chirem} with the cutoff projections introduced in \cite{Co-zeta}. Here, as above, $P$ denotes the projection operator  given in  $L^2(\R_+^*,d^*\lambda)$ as    multiplication by the characteristic function $1_{[1,\infty)}$ and  $\widehat P=(\fourier_{e_\R}^w)^{-1}P\fourier_{e_\R}^w$. 

\begin{lem}\label{onequantum}
	For $\rho\geq 1$ one has \begin{equation}\label{disponequa}\delta(\rho)=\tr\left(\vrep(\rho^{-1})(1-\widehat P)\,(1-P)\right).\end{equation}
\end{lem}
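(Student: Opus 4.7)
My plan is to combine three algebraic identities that have already appeared in the excerpt, then use cyclicity of the (formal) trace. The whole argument is short once one recognizes that $\frac{1}{2}(u_\infty^*\,\qd u_\infty)^g$ admits a very clean expression in terms of the two projections $P,\widehat P$.

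First, I would derive the key algebraic identity
\begin{equation*}
\tfrac{1}{2}(u_\infty^*\,\qd u_\infty)^g \;=\; 1 - \widehat P - P .
\end{equation*}
This is extracted from the proof of Proposition~\ref{proplittlesq}(iii): Lemma~\ref{technfourier} gives $\fourier_{e_\R}^w = I\circ u_\infty^g$, whence $\widehat P=(u_\infty^*)^g(1-P)u_\infty^g$, and therefore $1-\widehat P=(u_\infty^*)^gPu_\infty^g$. Subtracting $P$ and using $[P,u_\infty^g]=\tfrac12(\qd u_\infty)^g$ from Lemma~\ref{sch3}(ii) yields the displayed formula.

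Second, I would invoke the identity \eqref{sch13.5}, proved in Proposition~\ref{proplittlesq}(i), which states that for $\rho\geq 1$
\begin{equation*}
\vrep(\rho^{-1}) - P\vrep(\rho^{-1})P \;=\; (1-P)\,\vrep(\rho^{-1}).
\end{equation*}
Substituting these two identities into Definition~\ref{chinese} gives, for $\rho\geq 1$,
\begin{equation*}
\delta(\rho)=\tr\bigl((1-P)\vrep(\rho^{-1})(1-\widehat P - P)\bigr).
\end{equation*}

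Third, I would cycle the projection $(1-P)$ under the trace (justified by the same trace-class argument used in the proof of Proposition~\ref{proplittlesq}(iii), where one needs the Schwartz-class smoothness of the Fourier transform and the boundedness of $\widehat P$ to pass factors around — pointwise in $\rho$ the computation is read off the Schwartz kernel directly). This yields
\begin{equation*}
\delta(\rho)=\tr\bigl(\vrep(\rho^{-1})(1-\widehat P - P)(1-P)\bigr),
\end{equation*}
and the final step is the purely algebraic simplification $(1-\widehat P - P)(1-P)=(1-\widehat P)(1-P)$, which follows at once from $P(1-P)=0$.

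The only nontrivial point is the cyclicity step: a priori neither $\vrep(\rho^{-1})$ nor the individual projections are trace class, and the ``trace'' has to be interpreted through the Schwartz kernel of $\vrep(\rho^{-1})\tfrac12(u_\infty^*\,\qd u_\infty)^g$ as already done in the proof of Proposition~\ref{proplittlesq}. Since both sides of the desired identity are defined as the integral of the diagonal of the same Schwartz kernel (after the algebraic rewriting), this cyclicity is legitimate, but it is the step I would write out most carefully.
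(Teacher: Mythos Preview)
Your proof is correct and follows essentially the same route as the paper: both use \eqref{sch13.5} to replace $\vrep(\rho^{-1})-P\vrep(\rho^{-1})P$ by $(1-P)\vrep(\rho^{-1})$, rewrite $\tfrac12(u_\infty^*\,\qd u_\infty)^g=(u_\infty^*)^gPu_\infty^g-P=1-\widehat P-P$ via Lemma~\ref{technfourier} and $I\circ P\circ I=1-P$, and then cycle $(1-P)$ and kill the $P(1-P)$ term. Your explicit flagging of the cyclicity issue is appropriate; the paper handles it in the same way, relying on the trace-class justifications already established in Proposition~\ref{proplittlesq}.
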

\begin{proof} Since $\rho\geq 1$ it follows from \eqref{sch13.5} that
$$
\vrep(\rho^{-1})-P \vrep(\rho^{-1})P=(1-P)\vrep(\rho^{-1})
$$
so that one obtains
\begin{align*}
\delta(\rho)&=\tr\left(\left(\vrep(\rho^{-1})-P \vrep(\rho^{-1})P\right)\frac 12\, (u^*_\infty\,\qd u_\infty)^g\right)=\tr\left((1-P)\vrep(\rho^{-1})\frac 12\,( u^*_\infty\,\qd u_\infty)^g\right)=\\
&=\tr\left((1-P)\vrep(\rho^{-1})
\, ((u^g_\infty)^*\,P\, u_\infty^g-P)\right).
\end{align*}
Now we use the equalities $\fourier_{e_\R}^w=I\circ u_\infty^g$  (see \eqref{FwIPhi}) and  $I\circ P\circ I=1-P$ and get   $$(u^g_\infty)^*\,P\, u_\infty^g=(\fourier_{e_\R}^w)^{-1}(1-P)\fourier_{e_\R}^w=(1-\widehat P).$$
Using the cyclic property of the trace together with $P(1-P)=0$ we finally have
$$
\delta(\rho)=\tr\left((1-P)\vrep(\rho^{-1})
\, ((u^g_\infty)^*\,P\, u_\infty^g-P)\right)
=\tr\left(\vrep(\rho^{-1})(1-\widehat P)\,(1-P)\right).
$$
We thus obtain \eqref{disponequa}.
\end{proof} 
Next, we link  the two projections $1-P$, $1-\widehat P$  with  the pair $\cP_\Lambda$ and $\widehat \cP_\Lambda$, associated to the cutoff parameter $\Lambda$ of \cite{Co-zeta} (see also \cite{CMbook} Chapter 2, \S 3.3), for $\Lambda=1$.

 We switch to the Hilbert space $L^2(\R)_{\rm ev}$ and use the isomorphism $w$ of \eqref{isow} to rewrite  \eqref{disponequa} in $L^2(\R)_{\rm ev}$. Let $\rep:=w^{-1}\vrep w$ be the unitary representation $\vrep$ conjugated  by the isomorphism $w$, its action  is given by\begin{equation}\label{vrepdefnadd}
(\rep(\lambda)\,\xi)(v):=\,\lambda^{-1/2}\,\xi(\lambda^{-1}\,v),\qquad \forall \xi \in
L^2(\R)_{\rm ev}\,.
\end{equation}
The projection $P$ (still denoted by the same letter), becomes the multiplication by the characteristic function of the interval complement $\{x\in \R \mid  \vert x\vert \geq1\}$  and $\widehat P$ becomes  $\fourier_{e_\R}^{-1}P\fourier_{e_\R}$.
 These projections are related to the projections $\cP_\Lambda$ and $\widehat \cP_\Lambda$, 
 by the equalities 
\begin{equation}\label{complementproj}
P=1-\cP_1, \ \ \widehat P=1-\widehat \cP_1.
\end{equation}
By implementing these notations  Lemma \ref{onequantum} states, for $\rho \geq 1$ 
\begin{equation}\label{1quantum}
	\delta(\rho)=\tr\left(\rep(\rho^{-1})\widehat \cP_1\,\cP_1\right).
\end{equation}
We recall the results explained in \opcit to understand the pair of projections $\widehat \cP_1,\,\cP_1$: we refer to \opcit for the proof
(the generalities on pairs of projections in Hilbert space follow from  Lemma 2.3 of \cite{CMbook})

\begin{lem}\label{pairofproj}
$(i)$~Giving a pair of orthogonal projections $P_i$, $i=1,2$ on a
Hilbert space $\cH$ is equivalent to giving a unitary
representation of the dihedral group $\Gamma = \Z / 2\Z \, *\, \Z /
2\Z$ (the free product of two copies of the group $\Z / 2\Z$). These
irreducible unitary representations are parameterized by an angle
$\alpha \in [0, \pi / 2]$.\newline
$(ii)$~There exists a unique operator $\alpha$, $0\leq \alpha\leq \pi / 2$, commuting with $P_i$, $i=1,2$ such that
\begin{equation}
\sin(\alpha) = \, \vert P_1 - \, P_2 \vert. \label{sinalpha}
\end{equation}
Moreover one has
\begin{equation}
P_1\,P_2\,P_1=\cos^2(\alpha) P_1   \label{cosalpha.0}\,.
\end{equation}
\end{lem}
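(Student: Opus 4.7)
The plan for part $(i)$ rests on the standard correspondence between orthogonal projections and self-adjoint involutions. Given projections $P_1, P_2$ on $\cH$, set $U_i := 2P_i - 1$. Each $U_i$ is a unitary with $U_i^2 = \id$, so the pair $(U_1, U_2)$ defines a unitary representation of the free product $\Gamma = \Z/2\Z * \Z/2\Z$; conversely $P_i = (U_i + 1)/2$ recovers the projections from such a representation. To classify irreducibles, I would use that $\Gamma$ is the infinite dihedral group, generated by $U_1, U_2$ with $(U_1 U_2)^n$ generating a normal cyclic subgroup of index two. Standard character theory then shows that every irreducible unitary representation of $\Gamma$ is either one-dimensional (four such, given by independent $\pm$ choices of $U_1$ and $U_2$) or two-dimensional, in which case up to unitary equivalence $U_1 U_2$ is conjugate to a rotation by an angle $2\alpha$ with $\alpha \in (0, \pi/2)$; the boundary cases $\alpha \in \{0, \pi/2\}$ degenerate into pairs of one-dimensional representations.

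For part $(ii)$, the key algebraic identity is
\begin{equation*}
(P_1 - P_2)^2 = P_1 + P_2 - P_1 P_2 - P_2 P_1,
\end{equation*}
from which a direct calculation yields
\begin{equation*}
P_1 (P_1 - P_2)^2 = P_1 - P_1 P_2 P_1 = (P_1 - P_2)^2 P_1,
\end{equation*}
and symmetrically for $P_2$. Hence $(P_1 - P_2)^2$ commutes with both projections. Since $\|P_1 - P_2\| \leq 1$, the positive operator $|P_1 - P_2|$ has spectrum in $[0,1]$, so by continuous functional calculus
\begin{equation*}
\alpha := \arcsin\bigl(|P_1 - P_2|\bigr)
\end{equation*}
is a well-defined self-adjoint operator with spectrum in $[0, \pi/2]$, commuting with both $P_i$ and satisfying $\sin(\alpha) = |P_1 - P_2|$. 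Uniqueness is immediate from the injectivity of $\sin$ on $[0, \pi/2]$.

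For the formula $P_1 P_2 P_1 = \cos^2(\alpha) P_1$: rearranging the identity above gives
\begin{equation*}
P_1 P_2 P_1 = P_1 - P_1 (P_1 - P_2)^2 = P_1 - \sin^2(\alpha)\, P_1 = \cos^2(\alpha)\, P_1,
\end{equation*}
where the second equality uses that $\sin^2(\alpha) = (P_1 - P_2)^2$ commutes with $P_1$.

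The main conceptual point requiring care is the match between the abstract angle parameter of the irreducible classification in $(i)$ and the operator $\alpha$ constructed in $(ii)$. I would handle this by invoking the spectral theorem for $\alpha$: since $\alpha$ lies in the center of the von Neumann algebra generated by $P_1, P_2$, its spectral projections yield the direct integral decomposition of the dihedral representation, and on each spectral fibre one recovers either a two-dimensional irreducible at angle $\alpha(\lambda) \in (0, \pi/2)$, or a one-dimensional piece at the endpoints. Apart from the standard verification that this decomposition is indeed irreducible on each fibre (which reduces to a $2\times 2$ matrix computation once one picks orthonormal bases in the ranges of $P_1$ and $P_1^\perp$), the rest of the argument is purely formal.
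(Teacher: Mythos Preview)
Your argument is correct and is exactly the standard one. The paper does not actually prove this lemma in the text; it simply writes ``we refer to \opcit for the proof (the generalities on pairs of projections in Hilbert space follow from Lemma 2.3 of \cite{CMbook})'', and the proof you have supplied---the involution/dihedral dictionary $U_i=2P_i-1$ for $(i)$, and for $(ii)$ the identity $P_1(P_1-P_2)^2=(P_1-P_2)^2P_1=P_1-P_1P_2P_1$ followed by functional calculus $\alpha=\arcsin|P_1-P_2|$---is precisely the argument underlying that reference.
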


\begin{defn}
	The operator $\alpha$ uniquely defined by \eqref{sinalpha} is called the angle operator between $P_1$ and $P_2$ and denoted:  $\angle(P_1,P_2)$. 
\end{defn}

As discovered by D. Slepian and H. Pollack \cite{Slepian,Slepian0}  there is a second-order differential operator
${\bf W}$ on $\R$, which commutes with both $\cP_{1}$ and
$\widehat{\cP}_{1}$: it is defined by
\begin{equation}
({\bf W}\xi)(x) = \,- \partial (( 1- x^2) \partial )\,
\xi(x) + (2 \pi   x)^2 \, \xi(x) .  \label{WLambdaq}
\end{equation}
The operator ${\bf W}$ commutes with the Fourier transform $\fourier_{e_\R}$ since 
$$
(\fourier_{e_\R}^{-1}\partial\fourier_{e_\R}\xi)(x)=-2 \pi i x\xi(x), \ \ \left(-\partial^2+ 
\fourier_{e_\R}^{-1}(-\partial^2)\fourier_{e_\R}\right)\xi(x)=-\partial^2\xi(x)+(2 \pi   x)^2 \, \xi(x).
$$
The angle operator $\alpha=\angle(\cP_{1},\widehat{\cP}_{1})$ fulfills
\begin{equation}
\cP_1\,\widehat \cP_1\,\cP_1=\cos^2(\alpha) \cP_1   \label{cosalpha}\,.
\end{equation}
The eigenfunctions of $\alpha
$ are determined using the prolate spheroidal wave functions $\text{\textit{PS}}_{2n,0}(2 \pi ,x )$ with bandwidth parameter $c=2 \pi$: these are  even functions  (\ie the integer index $2n$ is even in the traditional notation).   They give eigenvectors for the truncated Fourier transform according to the equality  (see \cite{Slepian0, Sl, Slepian, Rokhlin, Wang, Wang1})
\begin{equation}\label{prolateeq}
	\int_{-1}^1\text{\textit{PS}}_{2n,0}(2 \pi ,x ) \exp (i 2 \pi  x  \omega )dx=\lambda(n) \text{\textit{PS}}_{2n,0}(2 \pi ,\omega ).
\end{equation}
The eigenvalues $\lambda(n)$ are the $\lambda_{2n}^c$ ($c=2\pi$) in the notations of \cite{Wang}. They are given numerically by the list $$
\lambda(0)=0.999971,\ \lambda(1)=-0.979485,\ \lambda(2)=0.524086,\ \lambda(3)=-0.0589766, $$ $$\lambda(4)=0.00273233,\ \lambda(5)=-0.0000762914, \ldots 
$$
and all the further ones decay very fast to $0$ (see \eqref{rapid-decay}). The equality \eqref{prolateeq} means that, using the restriction $\phi_n$ of  $\text{\textit{PS}}_{2n,0}(2 \pi ,x )$ to $[-1,1]$ as an element of $\cP_1L^2(\R)_{\rm ev}$, one has 
\begin{equation}\label{cosalphan}
\cP_1\fourier_{e_\R}\cP_1\phi_n=\lambda(n)\cP_1\phi_n.
\end{equation}
In $L^2(\R)_{\rm ev}$ the Fourier transform $\fourier_{e_\R}$ is its own inverse so that  $\widehat \cP_1=\fourier_{e_\R}^{-1}\cP_1\fourier_{e_\R}=\fourier_{e_\R}\cP_1\fourier_{e_\R}$. Thus by \eqref{cosalphan} one gets:   $\cP_1\fourier_{e_\R}\cP_1\fourier_{e_\R}\cP_1\phi_n=\lambda(n)^2\cP_1\phi_n$ and 
\begin{equation}\label{cosalphan1}
\cP_1\widehat{ \cP_1}\cP_1\phi_n=\lambda(n)^2\cP_1\phi_n.
\end{equation}
By construction of the angle operator $\alpha=\angle(\cP_{1},\widehat{\cP}_{1})$ one  has  \eqref{cosalpha}, thus
it follows from \eqref{cosalphan1} that the non-zero eigenvalues $\alpha(n)$ of $\alpha$ are given by: $\cos\, \alpha(n)=\vert \lambda(n)\vert$. The sequence $\vert \lambda(n)\vert$ is of rapid decay, and one has more precisely the inequality (see \cite{Rokhlin},  Theorem 14, and Appendix \ref{appendix-cv})
\begin{equation}\label{rapid-decay}
	\vert \lambda(n)\vert\leq \frac{2^{2 n} \pi ^{2 n+\frac{1}{2}} ((2 n)!)^2}{(4 n)! \Gamma \left(2 n+\frac{3}{2}\right)}\sim (4 n+1)^{-2 n-\frac{1}{2}} (e \pi )^{2 n+\frac{1}{2}}.
\end{equation}

 Note that \eqref{cosalpha} does not determine $\alpha$ on the orthogonal complement of the linear span of the ranges of  $\cP_1$ and $\widehat \cP_1$. On this subspace, which is  Sonin's space $S(1,1)$ of Definition \ref{defnsonine}, both   $\cP_1$ and $\widehat \cP_1$ are $=0$ and thus by \eqref{sinalpha} one has  $\angle(\cP_{1},\widehat{\cP}_{1})\vert_{S(1,1)}=0$. 
 
\begin{defn}\label{defnsonine}For $\alpha,\beta>0$, let  Sonin's space $S(\alpha,\beta)\subset L^2(\R)_{\rm ev}$ be defined by\begin{equation}\label{sonine}S(\alpha,\beta):=\{\xi\in L^2(\R)_{\rm ev}\mid \xi(q)=0, \ \forall q, \vert q \vert \leq \alpha, \  (\fourier_{e_\R}\xi)(p)=0, \ \forall p,  \vert p \vert \leq \beta\}.\end{equation}	\end{defn}

Given two vectors $\xi,\eta\in \cH$ in a Hilbert space $\cH$ we shall use the Dirac notation
$\vert \xi \rangle \langle \eta \vert$ for the rank one projection 
\begin{equation}\label{rankone}
\vert \xi \rangle \langle \eta \vert(\eta'):= \xi \, \langle \eta\mid \eta'\rangle,\qquad \forall \eta'\in \cH,
\end{equation}
where by convention the inner product $\langle \eta\mid \eta'\rangle$ is anti-linear in the first vector and linear in $\eta'$.

\begin{prop}\label{devil0} $(i)$~Let $\xi_n=\cP_1\phi_n/\Vert\cP_1\phi_n\Vert$,  $\eta_n=\fourier_{e_\R}\xi_n$ and $\psi_n=P\eta_n$.  One has \begin{equation}\label{chirem0.5}\cP_1\eta_n=\cP_1\fourier_{e_\R}\xi_n=\lambda(n)\xi_n.
\end{equation}
$(ii)$~For $\rho\geq 1$
\begin{equation}\label{chirem1}
\delta(\rho)=\tr\left(\rep(\rho^{-1})\widehat \cP_1\,\cP_1\right)=\sum_n\left( \lambda(n)^2\langle \xi_n\mid \rep(\rho^{-1})\xi_n\rangle +\lambda(n)\langle \xi_n\mid \rep(\rho^{-1})\psi_n\rangle\right)
\end{equation}
$(iii)$~The functions $\psi_n=P\fourier_{e_\R}\xi_n$ are real valued, pairwise orthogonal and: $\Vert \psi_n\Vert=\sqrt{1-\lambda(n)^2}$, 
	\begin{equation}\label{smaller}
\sum \lambda(n)^2 \vert \zeta_n \rangle \langle \zeta_n \vert\leq P\widehat P P, \qquad \zeta_n=\frac{1}{\sqrt{1-\lambda(n)^2}}\psi_n
\end{equation}
 $(iv)$~Let $\tau(n):=\frac{\lambda(n)}{\sqrt{1-\lambda(n)^2}}$, one has 
\begin{equation}\label{hattrick}
\langle \xi_n \mid \rep(\rho^{-1})\xi_n\rangle=
\langle \zeta_n \mid  \rep(\rho^{-1})\zeta_n\rangle+\tau(n)(\langle \xi_n \mid \rep(\rho^{-1})\zeta_n)\rangle+\langle \zeta_n\mid  \rep(\rho^{-1})\xi_n\rangle).
\end{equation}	
\end{prop}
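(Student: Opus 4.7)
For (i), the plan is to normalise the eigenvalue equation \eqref{cosalphan}: from $\cP_1\fourier_{e_\R}\cP_1\phi_n=\lambda(n)\cP_1\phi_n$ and $\xi_n=\cP_1\phi_n/\Vert\cP_1\phi_n\Vert$, the identity $\cP_1\xi_n=\xi_n$ immediately yields $\cP_1\fourier_{e_\R}\xi_n=\lambda(n)\xi_n$, which is \eqref{chirem0.5}. The completeness and double orthogonality (on $\R$ and on $[-1,1]$) of the even prolate spheroidal wave functions supplies the additional fact, useful below, that $\{\xi_n\}$ is an orthonormal basis of the range of $\cP_1$; since $\widehat\cP_1=\fourier_{e_\R}\cP_1\fourier_{e_\R}$ and $\fourier_{e_\R}$ is unitary, the family $\{\eta_n\}=\{\fourier_{e_\R}\xi_n\}$ is then an orthonormal basis of the range of $\widehat\cP_1$.

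For (ii), I would use the rank-one decompositions
\begin{equation*}
\cP_1=\sum_n\vert\xi_n\rangle\langle\xi_n\vert,\qquad \widehat\cP_1=\sum_n\vert\eta_n\rangle\langle\eta_n\vert,
\end{equation*}
combined via (i) to get $\widehat\cP_1\,\cP_1=\sum_n\lambda(n)\vert\eta_n\rangle\langle\xi_n\vert$. Splitting $\eta_n=\cP_1\eta_n+P\eta_n=\lambda(n)\xi_n+\psi_n$ (since $P=1-\cP_1$) and taking the trace against $\rep(\rho^{-1})$ via the formula $\tr(\rep(\rho^{-1})\vert v\rangle\langle u\vert)=\langle u\mid\rep(\rho^{-1})v\rangle$ produces \eqref{chirem1}; the first equality in \eqref{chirem1} is just Lemma \ref{onequantum} transferred to $L^2(\R)_{\rm ev}$ through the isomorphism $w$.

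For (iii), reality of $\psi_n$ is inherited from the reality of the PSWFs because $\fourier_{e_\R}$ sends real even functions to real even functions. The norm $\Vert\psi_n\Vert=\sqrt{1-\lambda(n)^2}$ and pairwise orthogonality follow by expanding $\langle\psi_n\mid\psi_m\rangle=\langle\eta_n\mid(1-\cP_1)\eta_m\rangle$ and applying (i) twice to get $\langle\eta_n\mid\cP_1\eta_m\rangle=\lambda(n)\lambda(m)\delta_{nm}$. For the operator inequality \eqref{smaller}, the rank-one expansion of $\widehat\cP_1$ gives
\begin{equation*}
P\widehat\cP_1 P=\sum_n\vert\psi_n\rangle\langle\psi_n\vert=\sum_n(1-\lambda(n)^2)\vert\zeta_n\rangle\langle\zeta_n\vert,
\end{equation*}
hence $P\widehat P P=P-\sum_n(1-\lambda(n)^2)\vert\zeta_n\rangle\langle\zeta_n\vert$; subtracting $\sum_n\lambda(n)^2\vert\zeta_n\rangle\langle\zeta_n\vert$ leaves $P-\sum_n\vert\zeta_n\rangle\langle\zeta_n\vert\geq 0$, which holds because $\{\zeta_n\}$ is an orthonormal system inside the range of $P$.

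The delicate step is (iv), which I would attack as a purely algebraic identity followed by a short symmetry argument. Substituting $\zeta_n=(\eta_n-\lambda(n)\xi_n)/\sqrt{1-\lambda(n)^2}$ into the right-hand side of \eqref{hattrick} and expanding the four resulting inner products, the cross terms in $\langle\xi_n\mid\rep(\rho^{-1})\eta_n\rangle$ and $\langle\eta_n\mid\rep(\rho^{-1})\xi_n\rangle$ cancel between $\langle\zeta_n\mid\rep(\rho^{-1})\zeta_n\rangle$ and the two $\tau(n)$-terms, leaving exactly
\begin{equation*}
\frac{\langle\eta_n\mid\rep(\rho^{-1})\eta_n\rangle-\lambda(n)^2\langle\xi_n\mid\rep(\rho^{-1})\xi_n\rangle}{1-\lambda(n)^2}.
\end{equation*}
So \eqref{hattrick} is equivalent to the single symmetry $\langle\eta_n\mid\rep(\rho^{-1})\eta_n\rangle=\langle\xi_n\mid\rep(\rho^{-1})\xi_n\rangle$. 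Using $\eta_n=\fourier_{e_\R}\xi_n$ and the intertwining $\fourier_{e_\R}\,\rep(a)\,\fourier_{e_\R}=\rep(a^{-1})$ on $L^2(\R)_{\rm ev}$ (an immediate consequence of $\fourier_{e_\R}^2=\id$ and the scaling rule for the Fourier transform), the left-hand side equals $\langle\xi_n\mid\rep(\rho)\xi_n\rangle$; the reality of $\xi_n$ together with the change of variable $y=\rho^{-1}x$ inside the integral defining that inner product then matches it with $\langle\xi_n\mid\rep(\rho^{-1})\xi_n\rangle$, closing the argument. I expect this dual use of the $\fourier_{e_\R}$-intertwining together with reality of $\xi_n$ to be the only really conceptual point; everything else is bookkeeping inside the dihedral-group geometry of the pair $(\cP_1,\widehat\cP_1)$.
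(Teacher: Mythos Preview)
Your proof is correct, and for parts (i), (ii), (iv) it follows essentially the same route as the paper: normalisation of \eqref{cosalphan}, the rank-one expansion $\widehat\cP_1\cP_1=\sum\lambda(n)\vert\eta_n\rangle\langle\xi_n\vert$ with the split $\eta_n=\lambda(n)\xi_n+\psi_n$, and the Fourier intertwining $\fourier_{e_\R}\rep(\rho)\fourier_{e_\R}=\rep(\rho^{-1})$ combined with reality of $\xi_n$ to obtain $\langle\eta_n\mid\rep(\rho^{-1})\eta_n\rangle=\langle\xi_n\mid\rep(\rho^{-1})\xi_n\rangle$. The paper runs (iv) forward from this symmetry and expands $\eta_n=\psi_n+\lambda(n)\xi_n$; you run it backward from the right-hand side and reduce to the same symmetry. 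These are the same argument.

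Your treatment of (iii) is genuinely different and more economical. You use $P\widehat\cP_1P=\sum_n\vert\psi_n\rangle\langle\psi_n\vert$ directly, whence $P\widehat PP-\sum_n\lambda(n)^2\vert\zeta_n\rangle\langle\zeta_n\vert=P-\sum_n\vert\zeta_n\rangle\langle\zeta_n\vert\geq 0$ because $\{\zeta_n\}$ is orthonormal inside the range of $P$. This cleanly proves \eqref{smaller}. The paper instead goes through the dihedral geometry: it shows $\xi_n,\eta_n,\psi_n\in E_n$ (the two-dimensional eigenspace of the angle operator), identifies $\zeta_n$ as a normalised eigenvector of $P\widehat PP$ with eigenvalue $\lambda(n)^2$, and obtains the exact spectral decomposition $P\widehat PP=\sum_n\lambda(n)^2\vert\zeta_n\rangle\langle\zeta_n\vert+{\bf S}$ with ${\bf S}$ the Sonin projection. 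Your shortcut suffices for the proposition as stated, but be aware that the paper's sharper identity \eqref{spectral} (not just the inequality) is what is actually used in the proof of Theorem \ref{devil}, specifically to establish \eqref{hattrickter}. If you keep your argument for (iii), you will need to supply that identification separately when you reach Theorem \ref{devil}; it amounts to checking that the orthogonal complement of $\{\zeta_n\}$ inside the range of $P$ is exactly the Sonin space, which follows since $(\oplus_n E_n)^\perp=S(1,1)$ and $PE_n=\C\zeta_n$.
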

\begin{proof} $(i)$~This follows from \eqref{cosalphan}.\newline
 $(ii)$~The vectors $\xi_n$ form an orthonormal basis of the range of $\cP_1$ so that 
$$
\cP_1=\sum  \vert \xi_n \rangle \langle \xi_n \vert, \ \ \widehat \cP_1\,\cP_1=\sum  \vert \widehat \cP_1\xi_n \rangle \langle \xi_n \vert
$$
and, using the equality \eqref{cosalphan}, $\cP_1\fourier_{e_\R}\cP_1\phi_n=\lambda(n)\cP_1\phi_n$, one has 
\begin{equation}\label{chirem1.5}
\widehat \cP_1\xi_n=\fourier_{e_\R}\cP_1\fourier_{e_\R}\xi_n=\lambda(n)\fourier_{e_\R}\xi_n=\lambda(n)\eta_n
\end{equation}
which thus gives 
\begin{equation}\label{chirem2}
\widehat \cP_1\,\cP_1=\sum \lambda(n) \vert \eta_n \rangle \langle \xi_n \vert.
\end{equation}
 Hence we obtain using \eqref{1quantum} and \eqref{chirem2}
$$
\delta(\rho)=\tr\left(\rep(\rho^{-1})\widehat \cP_1\,\cP_1\right)=\sum \lambda(n)\langle \xi_n\mid \rep(\rho^{-1})\eta_n\rangle.
$$
One has the orthogonal sum:  $\eta_n=\fourier_{e_\R}\xi_n=P\fourier_{e_\R} \xi_n+\cP_1\fourier_{e_\R} \xi_n$ and $\cP_1\fourier_{e_\R}\xi_n=\lambda(n)\xi_n$. Thus $
\eta_n=\psi_n+\lambda(n)\xi_n
$
which gives \eqref{chirem1} and
\begin{equation}\label{chirem3}
\widehat \cP_1\,\cP_1=\sum \lambda(n)^2 \vert \xi_n \rangle \langle \xi_n \vert+\sum \lambda(n) \vert \psi_n \rangle \langle \xi_n \vert.
\end{equation}
 $(iii)$~One has 
$$
\Vert P\fourier_{e_\R} \xi_n\Vert^2+\Vert \cP_1\fourier_{e_\R} \xi_n\Vert^2=\Vert \fourier_{e_\R} \xi_n\Vert^2=\Vert  \xi_n\Vert^2=1
$$
$$
\Vert  \psi_n\Vert^2=\Vert P\fourier_{e_\R} \xi_n\Vert^2=1-\lambda(n)^2, \ \ \Vert  \psi_n\Vert=\sqrt{1-\lambda(n)^2}.
$$
For $n\neq m$ one has, by unitarity of the Fourier transform $\fourier_{e_\R}$,
$$
0=\langle \fourier_{e_\R} \xi_n,\fourier_{e_\R} \xi_m\rangle= \langle \eta_n,\eta_m\rangle=\langle \psi_n+\lambda(n)\xi_n,\psi_m+\lambda(m)\xi_m\rangle
=\langle \psi_n,\psi_m\rangle
$$
so that the $\zeta_n=\frac{1}{\sqrt{1-\lambda(n)^2}}\psi_n$ form an orthonormal family. 
By \eqref{complementproj} the two dimensional irreducible representations $\Pi_n$ of the infinite dihedral group with generators 
$$
\cU=1-2\cP_1, \qquad \cV=1-2 \widehat \cP_1
$$
are the same if one replaces the pair of projections $(\cP_1,\widehat \cP_1)$ by the pair $(P,\widehat P)$ and the angle operator is the same: $\angle(\cP_1,\widehat \cP_1)=\angle(P,\widehat P)$. Let $E_n$ be the two dimensional eigenspace associated to  $\Pi_n$
$$
E_n:=\{\xi \in L^2(\R)_{\rm ev}\mid \vert  \cP_1-\widehat \cP_1\vert(\xi)=\sqrt{1-\lambda(n)^2}\ \ \xi\}.
$$
Let us show that $\xi_n\in E_n$. The operator $\cP_1\widehat \cP_1\cP_1$ is positive and when restricted to the range of $\cP_1$ it has simple spectrum with eigenvalues 
$\lambda(n)^2$. By \eqref{cosalphan} the eigenvectors are the $\xi_n$ but the one dimensional space $\cP_1 E_n\subset E_n$ is also an eigenspace of the operator $\cP_1\widehat \cP_1\cP_1$ for the same eigenvalue $\lambda(n)^2$. Thus one gets 
$$
\xi_n\in \cP_1 E_n\subset E_n.
$$
It follows from \eqref{chirem1.5} that 
$$
\eta_n=\lambda(n)^{-1}\widehat \cP_1\xi_n\in \widehat\cP_1 E_n\subset E_n
$$
and since $\eta_n=\psi_n+\lambda(n)\xi_n$ we get that $\psi_n\in E_n$. It follows that 
$\zeta_n$ is a normalized eigenvector for the angle operator which gives 
$$
P\widehat P P \zeta_n=\lambda(n)^2 P \zeta_n=\lambda(n)^2 \zeta_n.
$$
The spectral decomposition of the positive operator $P\widehat P P$ is of the form 
\begin{equation}\label{spectral}
	P\widehat P P=\sum \lambda(n)^2 \vert \zeta_n \rangle \langle \zeta_n \vert+R
\end{equation}
where $R$ is the restriction of $P\widehat P P$ to the orthogonal complement of the subspace $\oplus_n E_n\subset L^2(\R)_{\rm ev}$. The operator $R$ is the orthogonal projection on  Sonin's space $S(1,1)$ of Definition \ref{defnsonine}. Note that by construction the vectors $\xi_n$  are all orthogonal to $S(1,1)$ and so are  $\eta_n=\fourier_{e_\R} \xi_n$ and $\zeta_n= \frac{1}{\sqrt{1-\lambda(n)^2}}(\eta_n-\lambda(n)\xi_n)$.\newline
$(iv)$~We use $\rep(\rho)=\fourier_{e_\R}^{-1} \rep(\rho^{-1})\fourier_{e_\R}$ and the fact that $\xi_n$ is a real valued function, to get 
\begin{align*}
\langle \xi_n\mid \rep(\rho^{-1})\xi_n\rangle &=\langle \rep(\rho) \xi_n\mid \xi_n\rangle=\langle \xi_n \mid 
\rep(\rho)\xi_n\rangle =\langle \xi_n\mid 
\fourier_{e_\R}^{-1} \rep(\rho^{-1})\fourier_{e_\R}\xi_n\rangle=\langle\eta_n \mid \rep(\rho^{-1})\eta_n\rangle=\\
&=\langle (\psi_n+\lambda(n)\xi_n) \mid\rep(\rho^{-1})(\psi_n+\lambda(n)\xi_n) \rangle
=\lambda(n)^2\langle \xi_n\mid \rep(\rho^{-1}) \xi_n\rangle+\\
&+\langle \psi_n\mid \rep(\rho^{-1}) \psi_n\rangle+\langle \lambda(n)\xi_n\mid \rep(\rho^{-1})\psi_n\rangle+\langle  \psi_n\mid \rep(\rho^{-1})\lambda(n)\xi_n\rangle.
\end{align*}
Thus one gets
$$
(1-\lambda(n)^2)\langle \xi_n\mid \rep(\rho^{-1}) \xi_n\rangle=
\langle \psi_n\mid \rep(\rho^{-1}) \psi_n\rangle+\lambda(n)\left(\langle  \xi_n \mid \rep(\rho^{-1})\psi_n\rangle+\langle  \psi_n\mid \rep(\rho^{-1})\xi_n\rangle \right)
$$
and since $\zeta_n=\frac{1}{\sqrt{1-\lambda(n)^2}}\psi_n$ one obtains, with $\tau(n):=\frac{\lambda(n)}{\sqrt{1-\lambda(n)^2}}$
$$
\langle \xi_n\mid \rep(\rho^{-1})\xi_n\rangle=
\langle \zeta_n\mid \rep(\rho^{-1})\zeta_n\rangle+\tau(n)(\langle \xi_n\mid \rep(\rho^{-1})\zeta_n\rangle+\langle  \zeta_n\mid \rep(\rho^{-1})\xi_n\rangle)
$$
which is \eqref{hattrick}. \end{proof} 

\begin{rem}\label{sym}
$(i)$~Equality \eqref{chirem1} implies in particular that $\delta(1)=\sum \lambda(n)^2$, \ie 
$$
2 \left(\frac{\text{Si}(4 \pi )}{4 \pi }+1\right)=\sum \lambda(n)^2
$$
and one checks numerically that both sides\footnote{We are only using the even prolate functions, the sum of squares of eigenvalues including the odd ones is $4$.} are $\sim 2.237484835$. One has, by \eqref{chirem0.5}, $\cP_1\fourier_{e_\R}\xi_n=\lambda(n)\xi_n$ which implies that $\psi_n(1)=\fourier_{e_\R}\xi_n(1)=\lambda(n)\xi_n(1)$. The function $\fourier_{e_\R}\xi_n$ is smooth, as the Fourier transform of a function with compact support. The derivative, at $\rho=1^+$, of the function $\sum \lambda(n)\langle  \xi_n\mid \rep(\rho^{-1})\psi_n\rangle$ which appears in \eqref{chirem1}, is  $\sum \lambda(n)\psi_n(1)\xi_n(1)$. This is thus equal to $\sum \lambda(n)^2\xi_n(1)^2$ which is numerically $\sim 2$. To prove that it is equal to $2$, note that the derivative of $\delta(\rho)$ at $\rho=1^+$ is equal to $1$ by \eqref{sch18.5}, and that the derivative of $\langle \xi_n\mid \rep(\rho^{-1})\xi_n\rangle$ is (using $\xi$ instead of $\xi_n$)
$$
\partial_\rho\left( \rho^{1/2} \int_0^{\rho^{-1}}\xi(x)\xi(\rho x)dx \right)=
\sqrt{\rho } \left(\int_0^{\frac{1}{\rho }} x \xi (x) \xi '(\rho  x) \, dx-\frac{\xi (1) \xi \left(\frac{1}{\rho }\right)}{\rho ^2}\right)+\frac{\int_0^{\frac{1}{\rho }} \xi (x) \xi (\rho  x) \, dx}{2 \sqrt{\rho }},
$$
which for $\rho=1$ gives, using integration by parts, 
$$
-\xi (1)^2+\int_0^1 x \xi (x) \xi '(x) \, dx+\frac{1}{2} \int_0^1 \xi (x)^2 \, dx
=-\frac{1}{2}\xi (1)^2.
$$
It follows that the contribution of the sum $\sum \lambda(n)^2\langle \xi_n\mid \rep(\rho^{-1}) \xi_n\rangle$ to the derivative at $\rho=1^+$ is $-\frac{1}{2}\sum \lambda(n)^2\xi_n(1)^2$ so that \eqref{chirem1} implies:  $\sum \lambda(n)^2\xi_n(1)^2=2$.
\newline 
$(ii)$~Note that for $\rho\geq 1$ one has
\begin{equation}\label{sym1}
\langle   \psi_n\mid \rep(\rho^{-1})\xi_n\rangle=0, \ \langle  \zeta_n\mid \rep(\rho^{-1})\xi_n\rangle=0,
\end{equation}	
since $\xi_n(\rho x)=0$ for $\vert x\vert >1$ so that  
$$
\langle   \psi_n\mid \rep(\rho^{-1})\xi_n\rangle=\rho^{1/2}\int \xi_n(\rho x)\psi_n(x)dx=0.
$$
	Thus one can rewrite \eqref{chirem1} in a more symmetric manner replacing the term 
	$\langle  \xi_n\mid \rep(\rho^{-1})\psi_n\rangle$ (multiplied by $\lambda(n)$) with the symmetric form (using the fact that the $\xi_n, \psi_n$  are real valued)
	$$
	\langle  \xi_n\mid \rep(\rho^{-1})\psi_n\rangle+\langle  \psi_n\mid \rep(\rho^{-1})\xi_n\rangle=\langle  \xi_n\mid \rep(\rho^{-1})\psi_n\rangle+\langle \xi_n\mid \rep(\rho)\psi_n\rangle	$$
	which is invariant under $\rho\mapsto \rho^{-1}$ so that, after this replacement, \eqref{chirem1} is valid for all $\rho \in \R_+^*$.
\end{rem}

Next theorem refines the local trace formula  \eqref{sch13}

\begin{thm}\label{devil} Let $\bf S$  be the orthogonal projection of $L^2(\R)_{\rm ev}$ on the closed subspace $S(1,1)$. The following functional is  positive  
\begin{equation}\label{sonine1}
\tr(\rep(f){\bf S})=W_\infty(f)+\int f(\rho^{-1})\epsilon(\rho) d^*\rho,\qquad \forall f \in C_c^\infty(\R_+^*),
\end{equation}	
where $W_\infty$ is as in \eqref{sch22}, $\epsilon(\rho)$ is the function of $\rho\in \R_+^*$, with $\epsilon(\rho^{-1})=\epsilon(\rho)$, which is given, for $\rho \geq 1$, by 
\begin{equation}\label{sonine0}
\epsilon(\rho)=\sum \frac{\lambda(n)}{\sqrt{1-\lambda(n)^2}}\langle \xi_n\mid  \rep(\rho^{-1})\zeta_n\rangle.
\end{equation}
\end{thm}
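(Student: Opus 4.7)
The plan is to exploit the spectral decomposition \eqref{spectral} of $P\widehat P P$ to isolate ${\bf S}$ as
\[
{\bf S} = P\widehat P P - \sum_n \lambda(n)^2\,|\zeta_n\rangle\langle\zeta_n|,
\]
and then trace against $\rep(f)$. Conjugated by $w$, identity \eqref{sch13} reads $\tr(\rep(f)\,P\widehat P P) = W_\infty(f) + \int f(\rho^{-1})\delta(\rho)\,d^*\rho$, which handles the first term. For the series contribution, I write $\sum_n\lambda(n)^2\langle\zeta_n|\rep(f)\zeta_n\rangle = \int f(\rho)\Phi(\rho)d^*\rho$ with $\Phi(\rho) := \sum_n\lambda(n)^2\langle\zeta_n|\rep(\rho)\zeta_n\rangle$. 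Since each $\zeta_n$ is real (Proposition \ref{devil0}(iii)) and $\rep$ is unitary, the matrix coefficient $\langle\zeta_n|\rep(\rho)\zeta_n\rangle$ is invariant under $\rho\mapsto\rho^{-1}$, and the change of variable $\rho\mapsto\rho^{-1}$ on $\R_+^*$ rewrites this integral as $\int f(\rho^{-1})\Phi(\rho^{-1})d^*\rho$. Combining the two pieces yields exactly \eqref{sonine1} with
\[
\epsilon(\rho) := \delta(\rho) - \sum_n \lambda(n)^2 \langle\zeta_n|\rep(\rho^{-1})\zeta_n\rangle,
\]
and the symmetry $\epsilon(\rho^{-1})=\epsilon(\rho)$ is inherited from $\delta(\rho^{-1})=\delta(\rho)$ (Proposition \ref{proplittlesq}(ii)) together with $\Phi(\rho^{-1})=\Phi(\rho)$.

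It remains to match this intrinsic $\epsilon$ to the explicit form \eqref{sonine0} on $\rho\ge 1$. I substitute the expansion of $\delta(\rho)$ given in \eqref{chirem1} of Proposition \ref{devil0}(ii), replace each $\langle\xi_n|\rep(\rho^{-1})\xi_n\rangle$ via the identity in (iv) of the same proposition, and drop the $\langle\zeta_n|\rep(\rho^{-1})\xi_n\rangle$ terms thanks to the vanishing \eqref{sym1} of Remark \ref{sym}, valid precisely for $\rho\ge 1$. The coefficient of $\langle\xi_n|\rep(\rho^{-1})\zeta_n\rangle$ then collapses via
\[
\frac{\lambda(n)^3}{\sqrt{1-\lambda(n)^2}}+\lambda(n)\sqrt{1-\lambda(n)^2} = \frac{\lambda(n)}{\sqrt{1-\lambda(n)^2}} = \tau(n),
\]
recovering \eqref{sonine0}.

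Positivity of $\tr(\rep(f){\bf S})$ on positive-definite $f$ is then immediate: writing $f = g*g^*$ with $g\in C_c^\infty(\R_+^*)$ gives $\rep(f)=\rep(g)\rep(g)^*$, and cyclicity together with ${\bf S} = {\bf S}^* = {\bf S}^2$ yields
\[
\tr(\rep(f){\bf S}) = \tr\bigl({\bf S}\rep(g)\rep(g)^*{\bf S}\bigr) \ge 0,
\]
the operator under the trace being the manifestly positive $({\bf S}\rep(g))({\bf S}\rep(g))^*$.

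The principal technical point I expect to handle is trace-class bookkeeping. Proposition \ref{proplittlesq}(iv), transported to $L^2(\R)_{\rm ev}$ via the unitary $w$, supplies trace-class status of $\rep(f)P\widehat P P$ for $f\in C_c^\infty(\R_+^*)$; the complementary piece $\sum_n\lambda(n)^2|\zeta_n\rangle\langle\zeta_n|$ is trace class by the super-exponential decay \eqref{rapid-decay}, so $\rep(f){\bf S}$ is a bona fide difference of trace-class operators and cyclicity may be applied freely. The same decay ensures uniform convergence of $\Phi(\rho)$ on compact subsets of $\R_+^*$, legitimizing the interchange of summation and integration used to define $\epsilon(\rho)$ and to match it to \eqref{sonine0}.
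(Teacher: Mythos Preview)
Your proposal is correct and follows essentially the same route as the paper: both use the spectral decomposition \eqref{spectral} to write ${\bf S}=P\widehat P P-\sum_n\lambda(n)^2|\zeta_n\rangle\langle\zeta_n|$, combine this with \eqref{sch13} to obtain $\epsilon(\rho)=\delta(\rho)-\sum_n\lambda(n)^2\langle\zeta_n\mid\rep(\rho^{-1})\zeta_n\rangle$, and then reduce to \eqref{sonine0} via \eqref{chirem1}, \eqref{hattrick}, and the vanishing \eqref{sym1}. One small slip: there is no part (iv) in Proposition~\ref{proplittlesq}; the trace-class justification you need is the argument inside the proof of part (iii) (or Proposition~\ref{lemsch7}(iv)).
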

\begin{proof} By \eqref{sch13} one has, rewriting the formula in $L^2(\R)_{\rm ev}$,  
\begin{equation}\label{sch13bis}
W_\infty(f)+\int f(\rho^{-1})\delta(\rho)d^*\rho=\tr\left(\rep(f)P\widehat PP\right).\end{equation}
By \eqref{chirem1} one has, for $\rho\geq 1$, 
\begin{equation}\label{chirem1bis}
\delta(\rho)=\sum\left( \lambda(n)^2\langle   \xi_n\mid \rep(\rho^{-1})\xi_n\rangle +\lambda(n)\sqrt{1-\lambda(n)^2}\langle  \xi_n\mid \rep(\rho^{-1})\zeta_n\rangle\right).
\end{equation}
By \eqref{hattrick}, one has 
\begin{equation}\label{hattrickbis}
\langle \xi_n \mid \rep(\rho^{-1})\xi_n\rangle=
\langle \zeta_n \mid  \rep(\rho^{-1})\zeta_n\rangle+\tau(n)(\langle \xi_n \mid \rep(\rho^{-1})\zeta_n)\rangle+\langle \zeta_n\mid  \rep(\rho^{-1})\xi_n\rangle).
\end{equation}
This gives for $\rho\geq 1$, 
\begin{equation}\label{chirem1ter}
\delta(\rho)=\sum \lambda(n)^2\langle  \zeta_n\mid \rep(\rho^{-1})\zeta_n\rangle +\sum  T_n,
\end{equation}
where 
$$
T_n=\lambda(n)\sqrt{1-\lambda(n)^2}\langle  \xi_n\mid \rep(\rho^{-1})\zeta_n\rangle
+\lambda(n)^2\tau(n)(\langle  \xi_n\mid \rep(\rho^{-1})\zeta_n)\rangle+\langle  \zeta_n  \mid\rep(\rho^{-1})\xi_n\rangle).
$$
Since $\rho\geq 1$ one has, by \eqref{sym1}:
$
\langle  \zeta_n\mid \rep(\rho^{-1})\xi_n\rangle=0
$.
Thus one gets, using\footnote{Note that for $-1< \lambda <1$ one has $\lambda\sqrt{1-\lambda^2}+\lambda^2\frac{\lambda}{\sqrt{1-\lambda^2}}=\frac{\lambda}{\sqrt{1-\lambda^2}}$} $\tau(n)=\frac{\lambda(n)}{\sqrt{1-\lambda(n)^2}}$  
$$
T_n=\frac{\lambda(n)}{\sqrt{1-\lambda(n)^2}}\langle   \xi_n\mid \rep(\rho^{-1})\zeta_n\rangle.
$$
Thus we obtain,  for $\rho\geq 1$  
\begin{equation}\label{chirem1terbis}
\delta(\rho)=\sum \lambda(n)^2\langle  \zeta_n\mid \rep(\rho^{-1})\zeta_n\rangle +\epsilon(\rho).
\end{equation}
Since both $\delta$, $\epsilon$ and the terms $\langle  \zeta_n\mid \rep(\rho^{-1})\zeta_n\rangle$ are invariant under $\rho\mapsto \rho^{-1}$, we thus obtain, for any test function $f\in C_c^\infty(\R_+^*)$
$$
\int f(\rho^{-1})\delta(\rho)d^*\rho=\int f(\rho^{-1})\epsilon(\rho)d^*\rho+\sum \lambda(n)^2\langle  \zeta_n\mid \rep(f)\zeta_n\rangle.
$$ 
Using \eqref{sch13bis} we conclude that 
$$
W_\infty(f)+\int f(\rho^{-1})\epsilon(\rho)d^*\rho+\sum \lambda(n)^2\langle \zeta_n\mid \rep(f)\zeta_n\rangle=\tr\left(\rep(f)P\widehat PP\right).
$$
This gives the required formula \eqref{sonine0} provided we prove that 
\begin{equation}\label{hattrickter}
\tr\left(\rep(f)P\widehat PP\right)=\tr(\rep(f){\bf S})+\sum \lambda(n)^2\langle \zeta_n\mid \rep(f)\zeta_n\rangle.
\end{equation}
This follows from the spectral decomposition \eqref{spectral} of the operator  $P\widehat PP$ since   Sonin's space $S(1,1)$ is the eigenspace of $P\widehat PP$ for the eigenvalue $1$, so that $R=\bf S$. \end{proof} 

\begin{figure}[H]	\begin{center}
\includegraphics[scale=0.5]{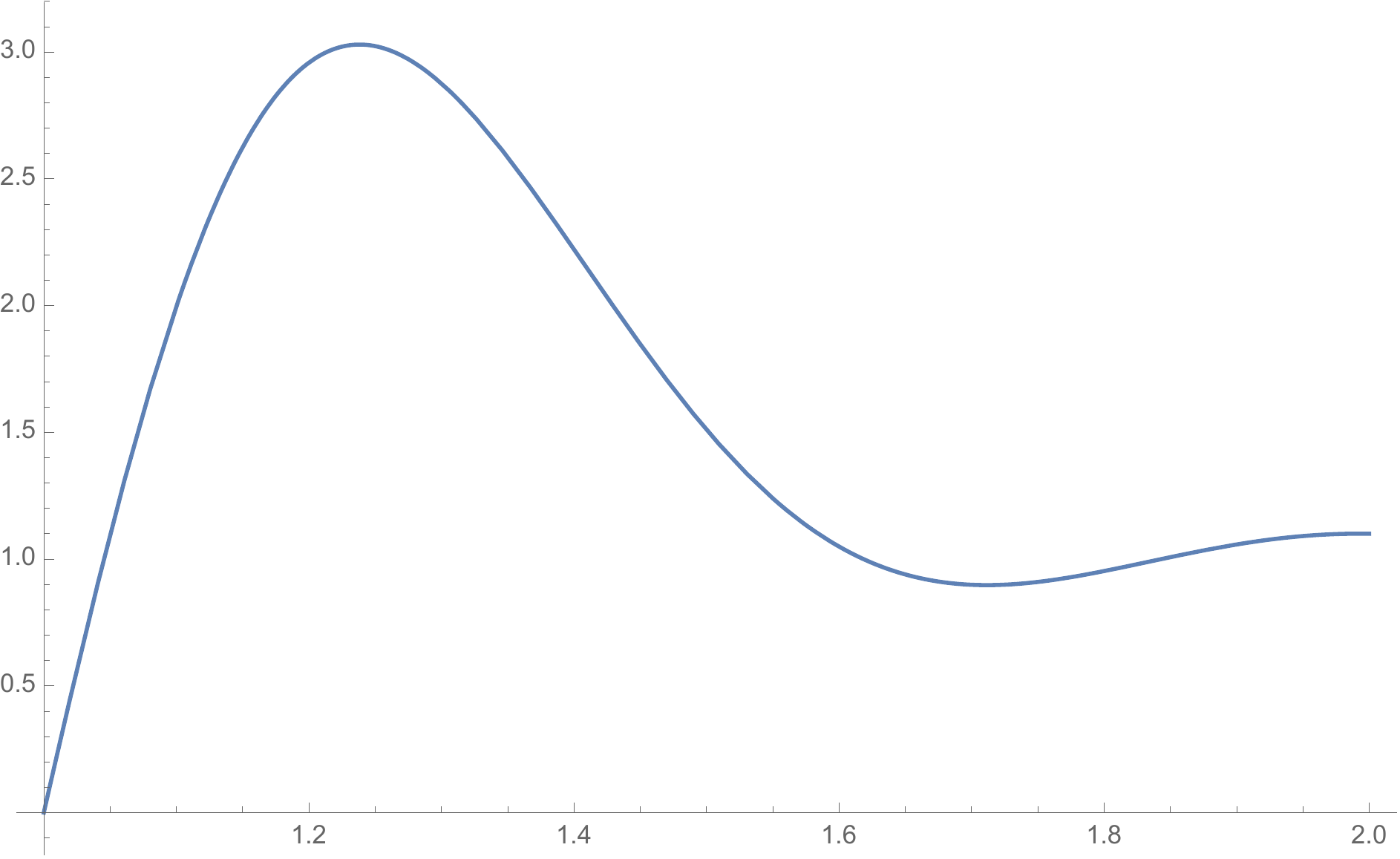}
\end{center}
\caption{Graph of $\epsilon(\rho)$ in $[1, 2]$. \label{epsilongraph} }
\end{figure}

\section{The functional $E\circ Q$ and the compact operator $\kf_I$}\label{sectcompactK}
Let $E$ be the functional defined on $C_c^\infty(\R_+^*)$ by 
\begin{equation}\label{Efunctional} 
	E(f):=\int f(\rho^{-1})\epsilon(\rho) d^*\rho,\qquad  \forall f \in C_c^\infty(\R_+^*).
\end{equation}
By Theorem \ref{devil} the functional $W_\infty+E$ is positive. Thus, to control $W_\infty$ after imposing the vanishing conditions we need, by Proposition \ref{vanishing2}, to analyse the functional $E\circ Q$,  with $Q$ as in \eqref{Qop}. By \eqref{sonine0} the function $\epsilon(\rho)$ is a sum of coefficients of the representation of the multiplicative group $\R_+^*$ by scaling. Thus we start by investigating how the operator $Q$ acts on such coefficients. We first assume (Lemma \ref{devil1}) that the vectors involved are smooth functions and then compute in Lemma \ref{devil3} the boundary terms in the case of the vectors involved in \eqref{sonine0}.

The operator $\cD:=D_u^2+D_u$, where $D_u(f)x:=xf'(x)$ is the scaling Hamiltonian, commutes with the Fourier transform $\fourier_{e_\R}$ since
		 $$
		 \fourier_{e_\R}^{-1}D_u \fourier_{e_\R}\xi(x)=-\partial_x(x\xi(x))=((-1-D_u)\xi)(x).
		 $$
		 One  has 
		\begin{equation}\label{kahane}
	\cD(f)(x)= x^2 f''(x)+2 xf'(x)
		\end{equation}
thus $\cD(f)(0)=0$. Since $\cD$ commutes with the Fourier transform  the range $\cD(\cS(\R))$ consists of functions  fulfilling the two conditions
\begin{equation}\label{twocond}
	f(0)=\hat f(0)=0.
		\end{equation}
 Note also that, for the inner product on $L^2(\R)_{\rm ev}$, the adjoint of $D_u$ is $D_u^*(f)x):=-\partial_x(xf(x))$ and thus $\cD^*=\cD$.
 
\begin{lem}\label{devil1} Let $\xi, \eta \in  L^2(\R)_{\rm ev}$ be smooth functions. Then with $Q$ as in \eqref{Qop} and $\cD:=D_u^2+D_u$, one has
\begin{equation}\label{sonineconst}
 \langle  \eta\mid \rep(Q f)\xi\rangle= -\langle   \eta\mid \rep(f)\cD\xi\rangle,\qquad \forall f \in C_c^\infty(\R_+^*).
\end{equation}	
\end{lem}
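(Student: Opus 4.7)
The plan is to compute how the operator $Q = -(\rho\partial_\rho)^2 + \tfrac14$, acting on the variable $\lambda$, interacts with the map $\lambda\mapsto \rep(\lambda)\xi$. Since $f\in C_c^\infty(\R_+^*)$ and $Q$ is formally self-adjoint with respect to $d^*\lambda$, I would move $Q$ off of $f$ and onto the operator-valued function $\lambda\mapsto \rep(\lambda)\xi$ by two integrations by parts, with no boundary terms because of compact support. Then the identity reduces to a pointwise differential identity $Q_\lambda\,\rep(\lambda) = -\rep(\lambda)\,\cD$ as operators on smooth vectors, which can be checked by direct computation.

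The key step is the computation of $\lambda\partial_\lambda\rep(\lambda)$. Using $(\rep(\lambda)\xi)(v) = \lambda^{-1/2}\xi(\lambda^{-1}v)$ and applying $\lambda\partial_\lambda$ in the variable $\lambda$, I obtain
\begin{equation*}
\lambda\partial_\lambda(\rep(\lambda)\xi)(v) = -\tfrac12\lambda^{-1/2}\xi(\lambda^{-1}v) - \lambda^{-1/2}(\lambda^{-1}v)\,\xi'(\lambda^{-1}v) = -\rep(\lambda)\bigl(\tfrac12+D_u\bigr)\xi(v).
\end{equation*}
Since $\tfrac12 + D_u$ acts on the $v$-variable it commutes with $\lambda\partial_\lambda$, so iterating gives $(\lambda\partial_\lambda)^2\rep(\lambda) = \rep(\lambda)(\tfrac12+D_u)^2$, and hence
\begin{equation*}
Q_\lambda\,\rep(\lambda) = -\rep(\lambda)\bigl(\tfrac12+D_u\bigr)^2 + \tfrac14\rep(\lambda) = -\rep(\lambda)\bigl(D_u + D_u^2\bigr) = -\rep(\lambda)\,\cD,
\end{equation*}
using $(\tfrac12+D_u)^2 = \tfrac14 + D_u + D_u^2$.

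For the integration by parts, I would note that $\rho\partial_\rho$ is anti-self-adjoint with respect to $d^*\rho$ on $\R_+^*$ (indeed $\int (\rho\partial_\rho f)g\,d^*\rho = -\int f(\rho\partial_\rho g)\,d^*\rho$ for $f,g\in C_c^\infty(\R_+^*)$), so $Q$ is self-adjoint on $C_c^\infty(\R_+^*)$ with respect to the Haar measure. Pairing against the smooth vector $\xi$ and then against $\eta$ under the integral, and using smoothness of $\xi$ (so that $\cD\xi$ makes sense pointwise), I would conclude
\begin{equation*}
\rep(Qf)\xi = \int (Qf)(\lambda)\,\rep(\lambda)\xi\,d^*\lambda = \int f(\lambda)\,Q_\lambda\bigl(\rep(\lambda)\xi\bigr)\,d^*\lambda = -\int f(\lambda)\,\rep(\lambda)\cD\xi\,d^*\lambda = -\rep(f)\,\cD\xi,
\end{equation*}
and then taking the inner product with $\eta$ gives the claimed formula \eqref{sonineconst}.

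The only subtlety is to justify differentiating under the integral sign and swapping $Q_\lambda$ onto $\rep(\lambda)\xi$; this is harmless here because $f$ has compact support in $\R_+^*$, $\xi$ is smooth, and $\rep(\lambda)\xi$ depends smoothly on $\lambda$ in a way that is locally uniform on the support of $f$. No regularity issue at $\lambda=1$ or at the boundary of $\R_+^*$ arises, so no boundary terms appear. The subsequent lemma (\ref{devil3}) is where the boundary contributions must be carefully tracked for the non-smooth vectors $\xi_n,\zeta_n$ of interest, but for the present statement on smooth vectors the argument is clean.
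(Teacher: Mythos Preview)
Your proof is correct and follows essentially the same approach as the paper. Both arguments hinge on the identity $\lambda\partial_\lambda\rep(\lambda)=-\rep(\lambda)(\tfrac12+D_u)$ (equivalently, in the paper's parametrization, $\rho\partial_\rho\rep(\rho^{-1})=\rep(\rho^{-1})(\tfrac12+D_u)$), iterate it, and then move $Q$ across the $d^*\lambda$-integral using the formal self-adjointness of $Q$ and the compact support of $f$; the paper merely phrases this via the scalar function $k(\rho)=\langle\eta\mid\rep(\rho^{-1})\xi\rangle$ rather than as an operator identity, but the content is identical.
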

\begin{proof} Using the self-adjointness of  $Q$ for the inner product in $L^2(\R_+^*,d^*\rho)$ together with the commutation of $Q$ with the inversion $I$, one has
$$
\langle   \eta\mid \rep(Q f)\xi\rangle=\int Qf(\rho^{-1})\langle   \eta\mid \rep(\rho^{-1})\xi\rangle d^*\rho=\int f(\rho^{-1})(Qk)(\rho)d^*\rho,
$$
where the function $k(\rho)$ is defined as 
$$
k(\rho):=\langle  \eta\mid \rep(\rho^{-1})\xi\rangle=\rho^{1/2}\int \xi(\rho x)\overline{\eta(x)}dx.
$$
To obtain a formula for $Qk$, we apply $Q$ to the function $g(\rho):=\rho^{1/2}\xi(\rho x)$ and use the equality
$$
\rho\partial_\rho\left( \rho^{1/2}\xi(\rho x)\right)=\frac 12\left( \rho^{1/2}\xi(\rho x)\right)+ \rho^{1/2}(D_u\xi)(\rho x)
$$
which holds since 
$$
\rho\partial_\rho\left(\xi(\rho x)\right)=\rho x\,\xi'(\rho x)=(D_u\xi)(\rho x).
$$
We thus obtain
\begin{align*}
\rho\partial_\rho\left(\rho\partial_\rho\left(\rho^{1/2}\xi(\rho x)\right)\right)&=
\rho\partial_\rho\left(\frac 12\left( \rho^{1/2}\xi(\rho x)\right)\right)
+\rho\partial_\rho\left(\rho^{1/2}(D_u\xi)(\rho x) \right)=\\
&=\frac 14\left( \rho^{1/2}\xi(\rho x)\right)+\frac 12 \rho^{1/2}(D_u\xi)(\rho x)+\rho\partial_\rho\left(\rho^{1/2}(D_u\xi)(\rho x) \right)=\\
&=\frac 14\left( \rho^{1/2}\xi(\rho x)\right)+\frac 12 \rho^{1/2}(D_u\xi)(\rho x)+\frac 12 \rho^{1/2}(D_u\xi)(\rho x)+\rho^{1/2}(D_u^2\xi)(\rho x).
\end{align*}
Thus, using \eqref{Qop}, we have 
$$
Qg(\rho)=-(\rho\partial_\rho)^2g(\rho)+\frac 14 g(\rho)=-\rho^{1/2}(D_u^2\xi)(\rho x)-\rho^{1/2}(D_u\xi)(\rho x)=-\rho^{1/2}(\cD\xi)(\rho x).
$$
This gives in turn
$$
Qk(\rho)=-\rho^{1/2}\int (\cD\xi)(\rho x)\overline{\eta(x)}dx=
-\langle  \eta\mid \rep(\rho^{-1})(\cD\xi)\rangle
$$
and 
$$
\langle   \eta\mid \rep(Qf)\xi\rangle=\int f(\rho^{-1})(Qk)(\rho)d^*\rho
=-\int f(\rho^{-1})\langle  \eta\mid \rep(\rho^{-1})(\cD\xi)\rangle  d^*\rho=-\langle  \eta\mid \rep(f)\cD\xi\rangle.
$$
which proves \eqref{sonineconst}. \end{proof} 

One has $D_u^*=-1-D_u$,  thus $-\cD=D_uD_u^*=D_u^*D_u$. Moreover $D_u$ commutes with the scaling action, hence with $\rep(f)$, thus one obtains, at the formal level
\begin{equation}\label{sonineconst1}
 \langle  \eta\mid \rep(Qf)\xi\rangle= \langle  D_u \eta\mid \rep(f)D_u\xi\rangle= \langle  D_u^*\eta\mid \rep(f)D_u^*\xi\rangle.
 \end{equation}
 In principle, we would like to apply \eqref{sonineconst1} to  \eqref{sonine0} and get a formula for $E \circ Q$, but the functions $\xi_n$ and $\zeta_n$ do not belong to the domain of the operator $D_u$, since both have a discontinuity at $x=\pm 1 $. This means that some boundary terms appear and 
we are going to compute them. Since we handle even functions, we concentrate on the positive real axis and  we only consider the scaling parameter $\rho \in [1,2]$.  

\begin{lem}\label{devil3} Let $\xi\in C^\infty((0,1])$ and  $\zeta\in C^\infty([1,\infty))$ be smooth and real valued functions. Extend first $\xi,\zeta$ to $[0,\infty)$  as follows: $\xi(x):=0$ for $x>1$ and $\zeta(x):=0$ for $x<1$. Then, with $Q$ as in \eqref{Qop} and $k(\rho)=\rho^{1/2}\int_0^\infty\xi(x)\zeta(\rho x)dx$, for $\rho \in (1,2]$,  one has 
\begin{equation}\label{sonineconst3}
(Qk)(\rho)=\rho^{1/2}\int_{\rho^{-1}}^1(D_u\xi)(x)(D_u\zeta)(\rho x)dx+\rho^{-1/2}(D_u\xi)(\rho^{-1})\zeta(1)-\rho^{1/2}\xi(1)(D_u\zeta)(\rho).
\end{equation}	
\end{lem}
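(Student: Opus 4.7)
The proof is essentially a careful bookkeeping exercise extending Lemma \ref{devil1} to vectors with jump discontinuities. Lemma \ref{devil1} showed that for smooth $\xi,\eta$ the action of $Q$ on the coefficient $\langle \eta\mid\rep(\rho^{-1})\xi\rangle$ reduces to inserting $-\cD$ on $\xi$, because the factor $\rho^{1/2}\zeta(\rho x)$ satisfies the scaling identity $(\rho\partial_\rho)(\rho^{1/2}\zeta(\rho x))=\tfrac{1}{2}\rho^{1/2}\zeta(\rho x)+\rho^{1/2}(D_u\zeta)(\rho x)$, whence iterating yields $Q(\rho^{1/2}\zeta(\rho x))=-\rho^{1/2}(\cD\zeta)(\rho x)$. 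In the present situation the only new feature is that the extensions of $\xi$ and $\zeta$ by zero across $x=1$ produce a variable endpoint: the support conditions reduce the integral to
\[
k(\rho)=\rho^{1/2}\int_{\rho^{-1}}^{1}\xi(x)\zeta(\rho x)\,dx,\qquad \rho\in(1,2],
\]
and this endpoint must be tracked under $\rho\partial_\rho$.

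The plan is to compute $\rho\partial_\rho k$ directly by the Leibniz rule for integrals with variable limit, which produces a bulk term involving $(D_u\zeta)(\rho x)$ plus a boundary contribution $\rho^{-1/2}\xi(\rho^{-1})\zeta(1)$ coming from differentiating the lower limit $\rho^{-1}$. Applying $\rho\partial_\rho$ a second time, one again picks up a new lower-endpoint contribution and reuses the scaling identity on the bulk integrand, producing $(D_u^2\zeta)(\rho x)$ inside. Assembling $Qk=-(\rho\partial_\rho)^2k+\tfrac14 k$, the ``diagonal'' $\tfrac14$ term and various $\tfrac12$-factors generated by the $\rho^{1/2}$ prefactor cancel in exactly the same pattern as in Lemma \ref{devil1}, and one is left with
\[
(Qk)(\rho)=-\rho^{1/2}\!\int_{\rho^{-1}}^{1}\!\xi(x)(\cD\zeta)(\rho x)\,dx+\rho^{-1/2}(D_u\xi)(\rho^{-1})\zeta(1)-\rho^{-1/2}\xi(\rho^{-1})(D_u\zeta)(1),
\]
once one rewrites $\rho\partial_\rho(\rho^{-1/2}\xi(\rho^{-1}))$ as $-\tfrac12\rho^{-1/2}\xi(\rho^{-1})-\rho^{-1/2}(D_u\xi)(\rho^{-1})$.

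The final step is to move one derivative off $\zeta$ via integration by parts, using that $(\cD\zeta)(y)=(y^2\zeta'(y))'$, equivalently $(\cD\zeta)(\rho x)=\rho^{-1}\partial_x\bigl[(\rho x)(D_u\zeta)(\rho x)\bigr]$. Integrating by parts on $[\rho^{-1},1]$ produces the endpoint terms
\[
\bigl[\xi(x)(\rho x)(D_u\zeta)(\rho x)\bigr]_{\rho^{-1}}^{1}\;=\;\rho\,\xi(1)(D_u\zeta)(\rho)-\xi(\rho^{-1})(D_u\zeta)(1),
\]
together with the bulk integral $\int_{\rho^{-1}}^{1}(D_u\xi)(x)(D_u\zeta)(\rho x)\,dx$. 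Substituting back, the boundary term $\rho^{-1/2}\xi(\rho^{-1})(D_u\zeta)(1)$ cancels exactly against the corresponding contribution from the integration by parts, and the remaining pieces assemble into the asserted formula \eqref{sonineconst3}.

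The only real obstacle is the sign/scale bookkeeping: one must track four pairs of boundary terms (two from each Leibniz differentiation, two from the integration by parts) and verify that the single $(D_u\zeta)(1)$-contribution survives, the $\zeta(1)$-contribution survives, and the $\xi(1)(D_u\zeta)(\rho)$ term appears with the stated sign. Everything else is mechanical, and the identity holds pointwise on $(1,2]$ without any distributional subtlety since the extended functions agree with their smooth pieces on the open interval of integration.
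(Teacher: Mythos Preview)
Your proof is correct and follows essentially the same route as the paper: reduce to the finite interval $[\rho^{-1},1]$, differentiate twice with the Leibniz rule to pick up the endpoint contributions, obtain $(Qk)(\rho)=-\rho^{1/2}\int\xi(\cD\zeta)\,dx$ plus boundary terms, and then shift one derivative from $\zeta$ to $\xi$ by integration by parts. The paper records the integration by parts via the identity $\int_a^b (D_u f)g+\int_a^b f(1+D_u)g=[xfg]_a^b$ applied to $f=\xi$, $g(x)=(D_u\zeta)(\rho x)$, whereas you use the equivalent packaging $(\cD\zeta)(\rho x)=\rho^{-1}\partial_x[(\rho x)(D_u\zeta)(\rho x)]$; these are the same computation.

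One small slip in your closing paragraph: you write that ``the single $(D_u\zeta)(1)$-contribution survives,'' but two paragraphs earlier you (correctly) say it cancels. It is the $\zeta(1)$-term $\rho^{-1/2}(D_u\xi)(\rho^{-1})\zeta(1)$ and the $\xi(1)(D_u\zeta)(\rho)$-term that survive, while both $(D_u\zeta)(1)$-contributions cancel against each other.
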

\begin{proof} One has $k(\rho)=\rho^{1/2}\int_{\rho^{-1}}^1\xi(x)\zeta(\rho x)dx$, since $\xi(x)\zeta(\rho x)=0$ for $x\notin [\rho^{-1},1]$. Furthermore,
\begin{align*}
(\rho\partial_\rho)\left(\rho^{1/2}\int_{\rho^{-1}}^1\xi(x)\zeta(\rho x)dx\right)&=
\frac 12\left(\rho^{1/2}\int_{\rho^{-1}}^1\xi(x)\zeta(\rho x)dx\right)+ \rho^{-1/2}\xi(\rho^{-1})\zeta(1)+\\
&+\left(\rho^{1/2}\int_{\rho^{-1}}^1\xi(x)(D_u\zeta)(\rho x)dx\right),
\end{align*}
using for the computation of $\rho^{1/2}\rho\partial_\rho\left(\int_{\rho^{-1}}^1\xi(x)\zeta(\rho x)dx\right)$ the equality
\begin{align*}
&\int_{(\rho+\epsilon)^{-1}}^{1}\xi(x)\zeta((\rho+\epsilon) x)dx-\int_{\rho^{-1}}^1\xi(x)\zeta(\rho x)dx=\\
&=\int_{(\rho+\epsilon)^{-1}}^{\rho^{-1}}\xi(x)\zeta((\rho+\epsilon) x)dx+
\int_{\rho^{-1}}^1\xi(x)(\zeta((\rho+\epsilon) x)-\zeta(\rho x))dx.
\end{align*}
By continuity of $\zeta$ in $[1,2]$ one has, 
$$
\frac 1 \epsilon \int_{(\rho+\epsilon)^{-1}}^{\rho^{-1}}\xi(x)\zeta((\rho+\epsilon) x)dx
\stackbin[\epsilon \to 0^+]{\longrightarrow}{} \rho^{-2}\xi(\rho^{-1})\zeta(1).
$$ 
Iterating this formula one obtains, for $k(\rho)=\rho^{1/2}\int_{\rho^{-1}}^1\xi(x)\zeta(\rho x)dx$ 
$$
(\rho\partial_\rho)^2k(\rho)=
\frac 12(\rho\partial_\rho)k(\rho)+ (\rho\partial_\rho)\left(\rho^{-1/2}\xi(\rho^{-1})\zeta(1)\right)
+(\rho\partial_\rho)\left(\rho^{1/2}\int_{\rho^{-1}}^1\xi(x)(D_u\zeta)(\rho x)dx\right)
$$
$$
=\frac 14\left(\rho^{1/2}\int_{\rho^{-1}}^1\xi(x)\zeta(\rho x)dx\right)+\frac 12 \rho^{-1/2}\xi(\rho^{-1})\zeta(1)+\frac 12\left(\rho^{1/2}\int_{\rho^{-1}}^1\xi(x)(D_u\zeta)(\rho x)dx\right)+
$$
$$
+(\rho\partial_\rho)\left(\rho^{-1/2}\xi(\rho^{-1})\zeta(1)\right)+\frac 12\left(\rho^{1/2}\int_{\rho^{-1}}^1\xi(x)(D_u\zeta)(\rho x)dx\right)+ \rho^{-1/2}\xi(\rho^{-1})(D_u\zeta)(1)+
$$
$$
+\left(\rho^{1/2}\int_{\rho^{-1}}^1\xi(x)(D_u^2\zeta)(\rho x)dx\right).
$$
This gives 
$$
(Qk)(\rho)=-\rho^{1/2}\int_{\rho^{-1}}^1\xi(x)((D_u^2+D_u)\zeta)(\rho x)dx-B,
$$
where the boundary term is 
$$
B=\frac 12 \rho^{-1/2}\xi(\rho^{-1})\zeta(1)+(\rho\partial_\rho)\left(\rho^{-1/2}\xi(\rho^{-1})\zeta(1)\right)+\rho^{-1/2}\xi(\rho^{-1})(D_u\zeta)(1).
$$
Using the equality
$$
(\rho\partial_\rho)\left(\rho^{-1/2}\xi(\rho^{-1})\zeta(1)\right)=-\frac 12 \rho^{-1/2}\xi(\rho^{-1})\zeta(1)-\rho^{-3/2}\xi'(\rho^{-1})\zeta(1)
$$
one obtains
$$
B=\rho^{-1/2}\xi(\rho^{-1})(D_u\zeta)(1)-\rho^{-1/2}(D_u\xi)(\rho^{-1})\zeta(1).
$$
The integration by parts, which gives the adjoint of $D_u$ as $-1-D_u$, is 
$$
\int_a^b D_u(f)(x)g(x)dx+\int_a^b f(x)((1+D_u)g)(x)dx=bf(b)g(b)-af(a)g(a)
$$
as can be seen by differentiating the product $(xf(x)g(x))'=xf'(x)g(x)+f(x)(g(x)+xg'(x))$.
We apply it with $f=\xi$ and $g(x)=D_u\zeta(\rho x)$ and use the commutation of $D_u$ with scaling to get
$$
\int_{\rho^{-1}}^1(D_u\xi)(x)(D_u\zeta)(\rho x)dx+\int_{\rho^{-1}}^1\xi(x)((D_u^2+D_u)\zeta)(\rho x)dx=\xi(1)(D_u\zeta)(\rho)-\rho^{-1}\xi(\rho^{-1})(D_u\zeta)(1).
$$
Hence we obtain 
\begin{align*}
(Qk)(\rho)&=\rho^{1/2}\int_{\rho^{-1}}^1(D_u\xi)(x)(D_u\zeta)(\rho x)dx-\rho^{1/2}\left(\xi(1)(D_u\zeta)(\rho)-\rho^{-1}\xi(\rho^{-1})(D_u\zeta)(1)\right) -B=\\
&=\rho^{1/2}\int_{\rho^{-1}}^1(D_u\xi)(x)(D_u\zeta)(\rho x)dx+\rho^{-1/2}(D_u\xi)(\rho^{-1})\zeta(1)-\rho^{1/2}\xi(1)(D_u\zeta)(\rho).
\end{align*}
which is the required formula. \end{proof} 

We thus derive the following formula for $Q\epsilon(\rho)$

\begin{prop}\label{propQe}
	For $\rho>1$ one has the equality \begin{equation}\label{qe}
Q\epsilon(\rho)=\sum \frac{\lambda(n)}{\sqrt{1-\lambda(n)^2}} T_n(\rho),\end{equation}
where 
\begin{equation}\label{sonineQ}
T_n(\rho)=\rho^{1/2}\int_{\rho^{-1}}^1(D_u\xi_n)(x)(D_u\zeta_n)(\rho x)dx+\rho^{-1/2}(D_u\xi_n)(\rho^{-1})\zeta_n(1)-\rho^{1/2}\xi_n(1)(D_u\zeta_n)(\rho) \end{equation}
\end{prop}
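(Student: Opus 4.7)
My strategy is to apply Lemma \ref{devil3} term by term to the series \eqref{sonine0} defining $\epsilon(\rho)$, and then use the super-exponential decay \eqref{rapid-decay} of the prolate eigenvalues to justify interchanging $Q$ with the summation.

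First, I would unfold each coefficient in \eqref{sonine0}. Using the normalization \eqref{innerltwoeven}, the scaling action \eqref{vrepdefnadd}, and the reality of $\xi_n$ and $\zeta_n$, one obtains
\[
k_n(\rho) := \langle \xi_n \mid \rep(\rho^{-1})\zeta_n\rangle = \rho^{1/2}\int_0^\infty \xi_n(x)\,\zeta_n(\rho x)\,dx,
\]
which is precisely the quantity to which Lemma \ref{devil3} applies. The support and smoothness hypotheses are satisfied: by construction $\xi_n = \cP_1\phi_n/\|\cP_1\phi_n\|$ is, up to a scalar, the restriction to $[-1,1]$ of the entire prolate function $\text{\textit{PS}}_{2n,0}(2\pi,\cdot)$, so $\xi_n \in C^\infty((0,1])$ and vanishes for $x>1$; dually $\zeta_n = \psi_n/\sqrt{1-\lambda(n)^2}$ with $\psi_n = P\fourier_{e_\R}\xi_n$, and since $\xi_n$ has compact support its Fourier transform is smooth on $\R$, so $\zeta_n \in C^\infty([1,\infty))$ and vanishes for $0 \leq x < 1$. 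Lemma \ref{devil3} then yields $(Q k_n)(\rho) = T_n(\rho)$ for each $n$ and each $\rho > 1$, with $T_n$ given by \eqref{sonineQ}.

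It then remains to interchange $Q = -(\rho\partial_\rho)^2 + \tfrac14$ with the infinite sum $\sum_n \tfrac{\lambda(n)}{\sqrt{1-\lambda(n)^2}} k_n(\rho)$. For this I would fix an arbitrary compact subinterval $J \subset (1,\infty)$ and verify uniform absolute convergence on $J$ of the series together with its first two $\rho\partial_\rho$-derivatives. Each of the three pieces making up $T_n(\rho)$ involves only sup-norms of $\xi_n$, $\zeta_n$ and of $D_u\xi_n$, $D_u\zeta_n$ on bounded sets; since the $\xi_n$ are normalized restrictions of a fixed family of entire prolate functions, and $\zeta_n$ is produced from $\xi_n$ by the unitary $\fourier_{e_\R}$ followed by the bounded multiplier $P$ and a normalization bounded away from zero except on a sparse subsequence, classical bounds for prolate functions show that these sup-norms grow at most polynomially in $n$. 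The prefactor $\lambda(n)/\sqrt{1-\lambda(n)^2}$ then decays faster than any polynomial by \eqref{rapid-decay}, legitimizing termwise application of $Q$ and yielding \eqref{qe}.

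I expect the only real subtlety to lie in the uniform control of $D_u\xi_n$ and $D_u\zeta_n$ near $x = 1$, where each has a jump but extends smoothly across from its own side. The one-sided values $D_u\xi_n(1^-)$ and $D_u\zeta_n(1^+)$ and the integrand in \eqref{sonineQ} are thus finite and, by the above polynomial growth versus super-exponential decay, are swamped by the prefactor $\lambda(n)/\sqrt{1-\lambda(n)^2}$. Beyond this convergence bookkeeping the proof reduces to a direct term-by-term invocation of Lemma \ref{devil3}.
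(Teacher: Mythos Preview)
Your proposal is correct and follows essentially the same approach as the paper: apply Lemma~\ref{devil3} term by term to the series \eqref{sonine0} (after unfolding the inner product via \eqref{innerltwoeven}) to obtain $Qk_n=T_n$, and then justify the interchange of $Q$ with the sum. The paper's proof is a one-line reference to \eqref{sonine0}, Lemma~\ref{devil3}, and \eqref{innerltwoeven}, with the convergence bookkeeping deferred to Appendix~\ref{appendix-cv}, where the polynomial bounds on $D_u\xi_n$, $D_u\zeta_n$ and the pointwise values at $1$ are made explicit (via \eqref{boundWang}, \eqref{Rokh}, and Schwarz) and combined with \eqref{rapid-decay}; your sketch of ``polynomial growth versus super-exponential decay'' is exactly that argument in outline.
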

\begin{proof} This follows from \eqref{sonine0} combined with Lemma \ref{devil3} and by recalling the normalization \eqref{innerltwoeven} of the inner product in $L^2(\R)_{\rm ev}$.
We refer to Appendix \ref{appendix-cv} for the proof of the convergence of the infinite series and for an explicit control of the remainder.\end{proof} 

When implementing \eqref{sonineQ} in a computer program, it is convenient to use the function $\xi_n^{\rm an}$ which is the analytic continuation of $\xi_n$. By \eqref{chirem0.5} one has  $\eta_n=\lambda(n)\,\xi_n^{\rm an}$,  thus  for $x\in [1,\infty)$, one derives
$$
\zeta_n(x)=\frac{1}{\sqrt{1-\lambda(n)^2}}\,\eta_n(x)=\frac{\lambda(n)}{\sqrt{1-\lambda(n)^2}}\xi_n^{\rm an}(x).
$$
Combining this with Proposition \ref{propQe}, one obtains the equality $
Q\epsilon(\rho)=\sum \frac{\lambda(n)^2}{1-\lambda(n)^2} C_n$,
where 
$$
C_n=\rho^{1/2}\int_{\rho^{-1}}^1(D_u\xi_n^{\rm an})(x)(D_u\xi_n^{\rm an})(\rho x)dx+\rho^{-1/2}(D_u\xi_n^{\rm an})(\rho^{-1})\xi_n^{\rm an}(1)-\rho^{1/2}\xi_n^{\rm an}(1)(D_u\xi_n^{\rm an})(\rho).
$$
Using the equality $D_u(f(x)=xf'(x)$ one gets the following formula for $C_n$
\begin{equation}\label{sonineQbis}
C_n=\rho^{1/2}\int_{\rho^{-1}}^1 x(\xi_n^{\rm an})'(x)\rho x(\xi_n^{\rm an})'(\rho x)dx+\rho^{-3/2}(\xi_n^{\rm an})'(\rho^{-1})\xi_n^{\rm an}(1)-\rho^{3/2}\xi_n^{\rm an}(1)(\xi_n^{\rm an})'(\rho).
 \end{equation}
This formula shows, in particular, that the function $Q\epsilon(\rho)$ is $0$ for $\rho=1$.\newline 
Next, we show that the function $\epsilon(\rho)$ which fulfills $\eqref{sonine0}$ for $\rho\geq 1$, and the symmetry $\epsilon(\rho^{-1})=\epsilon(\rho)$, has a jump in its derivative at $\rho=1$, \ie a behavior (at $\rho=1$) similar to that of the function $\delta(\rho)$.

\begin{lem}\label{epsilon'} The derivative of $\epsilon(\rho)$ at  $\rho=1^+$ is 
$$
\epsilon'(1^+)=\sum \frac{\lambda(n)^2}{1-\lambda(n)^2}\xi_n(1)^2.
$$	
\end{lem}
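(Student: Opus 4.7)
The plan is to differentiate the defining formula \eqref{sonine0} term by term. Since for $\rho\geq 1$ one has $\epsilon(\rho)=\sum \frac{\lambda(n)}{\sqrt{1-\lambda(n)^2}}\langle \xi_n\mid\rep(\rho^{-1})\zeta_n\rangle$, the problem reduces to computing, for each $n$, the one-sided derivative at $\rho=1^+$ of
\[
k_n(\rho):=\langle \xi_n\mid\rep(\rho^{-1})\zeta_n\rangle=\rho^{1/2}\int_{\rho^{-1}}^{1}\xi_n(x)\,\zeta_n(\rho x)\,dx,
\]
where the integration range reduces to $[\rho^{-1},1]$ because $\xi_n$ is supported in $[0,1]$ (after restriction to $[0,\infty)$ via evenness) while $\zeta_n$ is supported in $[1,\infty)$.

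The first step is to apply Leibniz's rule. Using $\partial_\rho(\rho^{-1})=-\rho^{-2}$, one gets
\[
k_n'(\rho)=\tfrac{1}{2}\rho^{-1/2}\!\!\int_{\rho^{-1}}^{1}\!\!\xi_n(x)\zeta_n(\rho x)dx+\rho^{-3/2}\xi_n(\rho^{-1})\zeta_n(1)+\rho^{1/2}\!\!\int_{\rho^{-1}}^{1}\!\!x\,\xi_n(x)\zeta_n'(\rho x)dx.
\]
At $\rho=1^+$ the two integral terms vanish (the range of integration collapses to a point), leaving the boundary contribution $k_n'(1^+)=\xi_n(1)\zeta_n(1)$. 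The second step is to invoke the identification recalled after \eqref{sonineQ}: $\eta_n=\lambda(n)\,\xi_n^{\rm an}$ together with $\zeta_n=\frac{1}{\sqrt{1-\lambda(n)^2}}\psi_n$ and $\psi_n=P\eta_n$ give
\[
\zeta_n(1)=\frac{\lambda(n)}{\sqrt{1-\lambda(n)^2}}\,\xi_n^{\rm an}(1)=\frac{\lambda(n)}{\sqrt{1-\lambda(n)^2}}\,\xi_n(1),
\]
where $\xi_n(1)$ denotes the common boundary value (which coincides with the analytic continuation from the left). Substituting yields
\[
\frac{\lambda(n)}{\sqrt{1-\lambda(n)^2}}\,k_n'(1^+)=\frac{\lambda(n)^2}{1-\lambda(n)^2}\,\xi_n(1)^2,
\]
and summing over $n$ produces the claimed formula.

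The third and only real step of care is the justification of exchanging the one-sided derivative and the infinite sum. Because of the rapid decay estimate \eqref{rapid-decay} for $\lambda(n)$, the coefficient $\lambda(n)/\sqrt{1-\lambda(n)^2}$ decays super-exponentially in $n$; coupled with the uniform bounds on $\xi_n$ and $\zeta_n$ and their derivatives (prolate spheroidal wave functions are smooth), one obtains locally uniform convergence in $\rho$ for both the series and its formal derivative on $[1,1+\varepsilon]$. The same convergence analysis carried out in Appendix \ref{appendix-cv} for $Q\epsilon(\rho)$ in Proposition~\ref{propQe} applies verbatim here, and this is the only nontrivial technical point. Once this is in place, term-by-term differentiation is legitimate and the stated identity follows.
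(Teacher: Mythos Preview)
Your proof is correct and follows essentially the same route as the paper's: the paper first substitutes $\zeta_n(x)=\frac{\lambda(n)}{\sqrt{1-\lambda(n)^2}}\xi_n^{\rm an}(x)$ to rewrite $\epsilon(\rho)=\sum\frac{\lambda(n)^2}{1-\lambda(n)^2}\rho^{1/2}\int_{\rho^{-1}}^1\xi_n^{\rm an}(x)\xi_n^{\rm an}(\rho x)\,dx$ and then differentiates, whereas you differentiate first and substitute afterwards, but the boundary-term computation $k_n'(1^+)=\xi_n(1)\zeta_n(1)$ is identical in both. Your explicit mention of the convergence justification matches what the paper places immediately after the proof via the bound \eqref{Rokh}.
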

\begin{proof} 
By \eqref{sonine0}, using the above functions $\xi_n^{\rm an}$,  one has   
$$
\epsilon(\rho)=\sum \frac{\lambda(n)^2}{1-\lambda(n)^2} \left( \rho^{1/2}\int_{\rho^{-1}}^1 \xi_n^{\rm an}(x)\xi_n^{\rm an}(\rho x) dx\right),
$$
and moreover $\xi_n^{\rm an}(1)=\xi_n(1)$. \end{proof} 
The convergence of the series is ensured by the inequality (see \cite{Rokhlin}, Theorem 12)
\begin{equation}\label{Rokh}
	\vert \xi_n(1)\vert \leq \sqrt{2n+\frac 12}.
\end{equation}
The  numerical values of the terms $t(n)=\frac{\lambda(n)^2}{1-\lambda(n)^2}\xi_n(1)^2$ are of the form 
$$
t(0)=11.9719,\ t(1)=8.77574,\ t(2)=2.20528,\ t(3)=0.0433983,\ t(4)=0.000125459\ldots
$$ 
and the total value is of the order of $22.9965$. As in \eqref{Qprime} we get, for the linear form 
\begin{equation}\label{Eprimedef}
E_+(f):=\int f(x)\epsilon(\exp(\vert x\vert))dx,\qquad \forall f\in C_c^\infty(\R)
\end{equation}
the expression 
\begin{equation}\label{Eprime}
	 E_+(Q_+f)=-2\epsilon'(1_+)f(0)+\int_0^\infty (f(x)+f(-x))(Q\epsilon)(\exp(x))dx.
\end{equation}

The equality $
Q\epsilon(\rho)=\sum \frac{\lambda(n)^2}{1-\lambda(n)^2} C_n$,  together with \eqref{sonineQbis} give a computable expression for $(Q\epsilon)(\rho)$. The graph, after division by $2\epsilon'(1_+)$ and passing to the additive scale, (\ie using $\rho=\exp(x)$) is  
\begin{figure}[H]	\begin{center}
\includegraphics[scale=0.7]{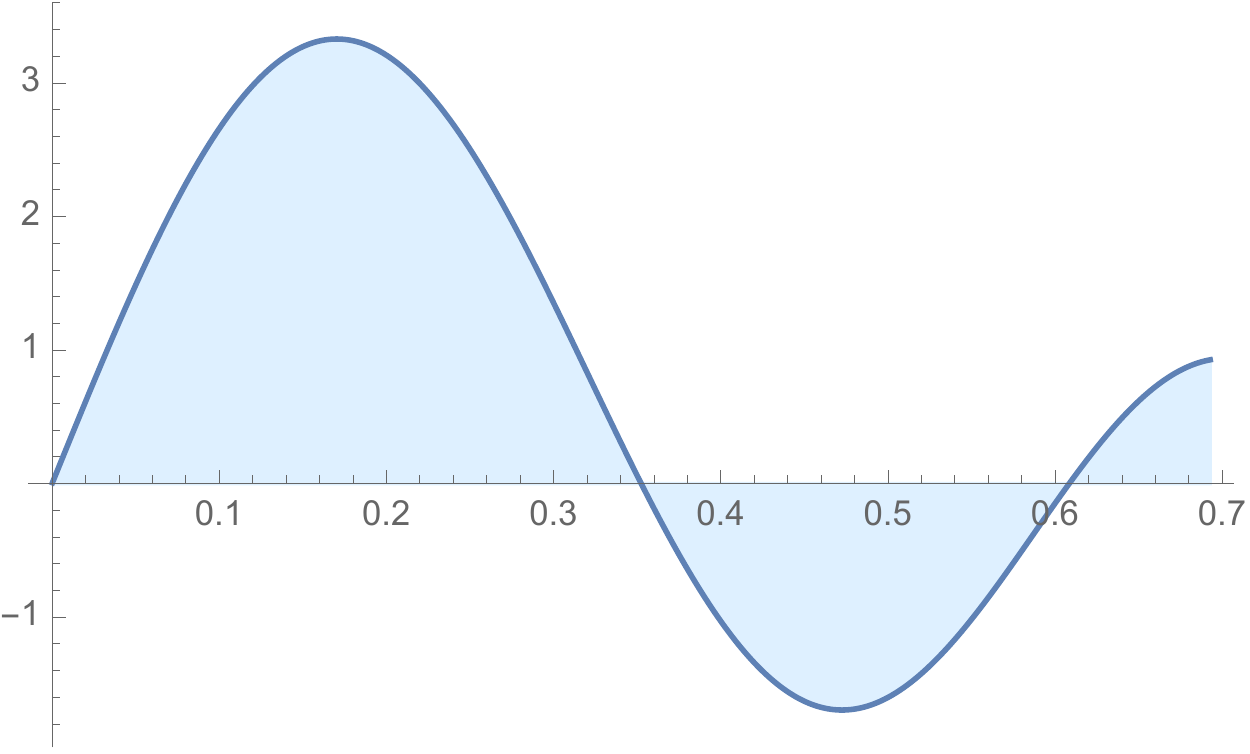}
\end{center}
\caption{Graph of $(Q\epsilon)(\exp(x))/(2\epsilon'(1_+))$ in $[0,\log 2]$. \label{qprime1} }
\end{figure}
In order to determine in which intervals $I$ the form $E_+(Q_+f)$ remains negative, we introduce the associated operator. 

\begin{prop}\label{propcompact} Let $I\subset [-\log 2,\log 2]$ be an interval of length $\leq \log 2$. \newline
$(i)$~The following equality defines a bounded operator $\nf_I$ in the Hilbert space $L^2(I,dx)$
\begin{equation}\label{quadform}
	\langle   \eta\mid \nf_I(\xi)\rangle= E_+(Q_+f), \ \ f=\eta^**\xi, \ \ f(v)=\int  \overline{\eta(x)}\xi(x+v) dx.
\end{equation}
$(ii)$~One has $\nf_I=-2\epsilon'(1_+)\left(\id - \kf_I\right)$, where $\kf_I$ is the compact operator defined by
\begin{equation}\label{opkf}
	\langle   \eta\mid \kf_I(\xi)\rangle=\frac {1}{2\epsilon'(1_+)}\int_{-\log 2}^{\log 2}\int  \overline{\eta(x)}\xi(x+v)  (Q\epsilon)(\exp(\vert v\vert))dxdv.
\end{equation}	
\end{prop}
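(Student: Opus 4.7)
The strategy is to plug $f=\eta^**\xi$ directly into the explicit formula \eqref{Eprime} for $E_+(Q_+f)$ and read off both the rank-one-like part $-2\epsilon'(1_+)\,\id$ and the integral kernel giving $\kf_I$. For smooth $\eta,\xi\in C_c^\infty(I)$, the function $f=\eta^**\xi$ lies in $C_c^\infty(I-I)\subset C_c^\infty([-\log 2,\log 2])$, so formula \eqref{Eprime} applies. One has $f(0)=\int \overline{\eta(x)}\xi(x)\,dx=\langle\eta\mid\xi\rangle$ and $f(v)=\int \overline{\eta(x)}\xi(x+v)\,dx$.

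The next step is to turn the half-line integral in \eqref{Eprime} into a full-line integral. Since $\epsilon(\rho^{-1})=\epsilon(\rho)$, the operator $Q=-(\rho\partial_\rho)^2+\frac14$ preserves this $\rho\leftrightarrow\rho^{-1}$ symmetry, so $(Q\epsilon)(\exp(\vert v\vert))$ is an even function of $v\in\R$. Substituting $v=-x$ in one of the two pieces gives
\begin{equation*}
\int_0^\infty (f(x)+f(-x))(Q\epsilon)(\exp(x))\,dx
=\int_\R f(v)(Q\epsilon)(\exp(\vert v\vert))\,dv,
\end{equation*}
and since $f$ is supported in $I-I\subset[-\log 2,\log 2]$, we may restrict the $v$-integration to $[-\log 2,\log 2]$. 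Inserting the expression for $f(v)$, the right-hand side becomes exactly $2\epsilon'(1_+)\langle\eta\mid\kf_I(\xi)\rangle$ in the notation of \eqref{opkf}. Combining this with the $-2\epsilon'(1_+)f(0)=-2\epsilon'(1_+)\langle\eta\mid\xi\rangle$ boundary term gives
\begin{equation*}
E_+(Q_+f)=\langle\eta\mid -2\epsilon'(1_+)(\id-\kf_I)\xi\rangle,
\end{equation*}
which proves the decomposition in (ii) as soon as $\kf_I$ is a well-defined bounded operator, and defines $\nf_I$ unambiguously on the dense subspace $C_c^\infty(I)$.

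For the final step I would show that $\kf_I$ is Hilbert--Schmidt, hence compact. Its integral kernel on $I\times I$ is
\begin{equation*}
K(x,y)=\frac{1}{2\epsilon'(1_+)}\,(Q\epsilon)(\exp(\vert y-x\vert)),
\end{equation*}
and $\vert y-x\vert$ ranges in $[0,\log 2]$ when $(x,y)\in I\times I$. By Proposition~\ref{propQe} together with the explicit formula \eqref{sonineQbis} and the rapid decay of $\lambda(n)$ (via \eqref{rapid-decay} and \eqref{Rokh}), the series defining $Q\epsilon$ converges uniformly and yields a continuous function on $[1,2]$; hence $K$ is bounded on the compact set $I\times I$, so $K\in L^2(I\times I)$ and $\kf_I$ is compact. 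Therefore $\nf_I=-2\epsilon'(1_+)(\id-\kf_I)$ is bounded, extending by continuity from $C_c^\infty(I)$ to all of $L^2(I,dx)$, which gives (i).

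The only mildly delicate point is justifying the uniform bound/continuity of $Q\epsilon$ on $[1,2]$ used to place $K$ in $L^2(I\times I)$; once one has the termwise formula \eqref{qe}--\eqref{sonineQ} of Proposition~\ref{propQe} and the known rapid decay of $\lambda(n)$, this is a straightforward majorant computation (of the type carried out in Appendix~\ref{appendix-cv}) and presents no real obstacle.
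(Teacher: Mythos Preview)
Your proof is correct and follows essentially the same route as the paper: apply \eqref{Eprime} to $f=\eta^**\xi$, use $f(0)=\langle\eta\mid\xi\rangle$ and the support constraint $\mathrm{supp}\,f\subset[-\log 2,\log 2]$ to obtain the decomposition $\nf_I=-2\epsilon'(1_+)(\id-\kf_I)$, then observe that the kernel $(Q\epsilon)(\exp|y-x|)$ is square-integrable on the bounded square $I\times I$ so $\kf_I$ is Hilbert--Schmidt. The paper simply refers back to the end of the proof of Theorem~\ref{thmqkey1} for this last compactness step, whereas you spell out the continuity of $Q\epsilon$ on $[1,2]$ via Proposition~\ref{propQe} and Appendix~\ref{appendix-cv}; both arguments yield the same conclusion.
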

\begin{proof} $(i)$~will follow from $(ii)$, since a compact operator is bounded.\newline 
$(ii)$~By applying \eqref{Eprime} to  $f(v)=\int  \overline{\eta(x)}\xi(x+v) dx $ one 
obtains 
$$
\langle   \eta\mid \nf_I(\xi)\rangle=-2\epsilon'(1_+)\langle   \eta\mid \xi\rangle+
\int_0^\infty (f(x)+f(-x))(Q\epsilon)(\exp(x))dx.
$$
Since the length of $I$ is $\leq \log 2$, the function $f(v)$ vanishes outside $[-\log 2,\log 2]$ and thus we obtain \eqref{opkf}. The proof of the compactness of the operator $\kf_I$ is the same as the argument developed at the end of the proof of Theorem \ref{thmqkey1}. This shows that $\kf_I$ is of Hilbert-Schmidt class. \end{proof} 
\begin{rem}\label{traceki} The function $Q\epsilon(\rho)$ is $0$ for $\rho=1$. This shows that the integral of diagonal values of the Schwartz kernel defining the compact operator $\kf_I$ is $0$ independently of the size of $I$. This result is a clear improvement on the compact operators involved in  Theorem \ref{thmqkey1} (see Figure~\ref{qprime}), whose trace is proportional to the length.	
\end{rem}

\section{Computation of the spectrum of the compact operator $\kf_I$}\label{sectspectrum}

This section is the most elaborate of the paper: it describes the computation  of the spectrum of the compact operator $\kf_I$ of \eqref{opkf}, for an interval $I\subset [-\log 2,\log 2]$ of length $\log 2$. In fact, we shall consider the interval $I=[-\frac{\log 2}{2},\frac{\log 2}{2}]$. The first difficulty that we encounter concerns  formula \eqref{qe} for the function $Q\epsilon(\rho)$  that involves the prolate spheroidal functions and their derivatives in a complicated way. Fortunately, it is accessible to computer calculations due to the fast decay \eqref{rapid-decay}; moreover we show in Appendix \ref{appendix-cv}  (Lemma \ref{lemesti}) that the sum of the first $11$ terms in the series gives a uniform approximation of the function $Q\epsilon(\rho)$ up to $10^{-11}$. The second difficulty analyzed in this section  has to do with the infinite dimensionality of the Hilbert space on which the operator $\kf_I$  acts. This seems, a priori, to preclude the use of  computational power  to understand its spectrum. Here, the strategy we follow  is to use our original idea of ``$q\to 1$" that has underlied, since the start,  our algebraic work \cite{Crh,CCscal1,CCsurvey}. Thus, we replace the multiplicative group $\R_+^*$ by the discrete subgroup  group $q^\Z$,  and we approximate the infinite dimensional space with  a finite dimensional one where the interval  $I$ (in additive notation) is replaced by the finite set $I_q$ of size $N\sim \log 2/\log q$, of integral multiples of $\epss=\log q$ which belong to $I$. It turns out that the  natural discretization  of the operator $\kf_I$ (\S \ref{sectdiscreteq}) is a Toeplitz matrix and one can investigate its spectrum numerically to see  how it varies, when $q\to 1$. What one  discovers is that, while for intervals of length a bit smaller  than $\log 2$  the largest eigenvalue $\lambda_{\rm max}$ of  $\kf_I$ is  less than $1$, so that $\nf_I$ is negative, this no longer holds true when the interval length gets closer to $\log 2$  (see Figure \ref{protrude}). By looking more closely at the spectrum of $\kf_I$, one  sees that the next to largest eigenvalue stays always smaller than $1$! The next step
then is  to use the powerful theory of self-adjoint Toeplitz matrices \cite{toeplitz,BW} which shows that the eigenvector $\xi$  associated to the largest eigenvalue fulfills a kind of ``baby version" of RH. If one views $\xi$ as a complex-valued function $\xi:I_q\to \C$ by forming the expression $$\tilde \xi (z)=\sum \xi(\log q^j)z^j\in \C[z,z^{-1}], $$
then all the zeros of this function are complex numbers of modulus  $1$. Their number is $N\sim \log 2/\log q$. By computing these $N$ roots, we noticed  that they get distributed as $(N+1)$-th roots of unity except for the trivial root $1$. Moreover, one has the symmetry: $z\mapsto z^{-1}$ due to the equality $\tilde \xi (z^{-1})=\tilde \xi (z)$. When one rescales the arguments taken in $[-\pi,\pi]$, by multiplication  by $(N+1)/2$, their pattern converges when  $q\to 1$ (see \eqref{tableangles} for the beginning of the patern). The general theory of Toeplitz matrices $T$ also shows \cite{toeplitz} that the above roots can be used as a discrete Fourier transform to provide a canonical formula for the lower rank positive Toeplitz matrix $\lambda_{\rm max}\id-T$, as a  linear combination with positive coefficients $d(j)$ of the form 
$$
\lambda_{\rm max}\id -T=\lambda_{\rm max}\sum d(j)e(j)
$$
 where the $e(j)$ are the one dimensional projections (also Toeplitz matrices)  associated to the above roots. It turns out that the pattern of the scalar coefficients $d(j)$ also converges when $q\to 1$ (see \eqref{tableds}).  The next step is taken in \S \ref{sectbasicapprox} and  consists of guessing,  from the discrete approximation and the above patterns for the roots and the coefficients $d(j)$, a function of the continuous parameter $\rho\in [\frac 12,2]$ which approximates $Q\epsilon(\rho)$. This guess is then verified numerically and gives, by a simple estimate, a good approximation of $\kf_I$ and  a good control of its spectrum using a finite rank operator  (see \S \ref{sectapproxfiniterank}). Finally, \S \ref{subsectmaxvect},  \ref{sectspec} contain the  computation of the spectrum of this finite rank operator as well as the eigenvector of maximal eigenvalue, and the proof that the  operator $\kf_I$ becomes $<1$ in the orthogonal complement of this vector. The proof of the main theorem (Theorem~\ref{mainthmfine}) is done in \S \ref{sectproof}.

\subsection{Discrete approximation for variable intervals}\label{sectdiscreteq}
We  discretize the framework by replacing $\R_+^*$ by $q^\Z$, where $q>1$ and then we let $q\to 1$. By setting $\epss=\log q$  we replace the interval $I=[0,a]$ by its finite intersection with the lattice $\epss \Z$,  whose elements $j \epss$  are  labeled by $j\in\{0,\ldots ,N\}$, where $N$ is the integer part of $a/\epss$. Then we  replace integrals by sums and consider, in the finite dimensional Hilbert space $\ell^2(\{0,\ldots ,N\})$,  the following quadratic form which is the discretized version of \eqref{opkf}
\begin{equation}\label{Eprimeqq}
	\cQ_q(\xi):=\epss \sum_{j=0}^{N}\sum_{k=-j}^{N-j} \overline{\xi(j)}\xi(j+k)(Q\epsilon)(q^{\vert k\vert}).
\end{equation}
Following Proposition \ref{propcompact}, one needs to compare $\cQ_q$ with the inner product 
$
2\epsilon'(1_+)\sum_{j=0}^N \overline{\xi(j)}\xi(j)
$. One expresses \eqref{Eprimeqq} as 
$$
\cQ_q(\xi)=\epss\langle \xi\mid \cT_q\xi\rangle,
$$
where the Toeplitz matrix  $\cT_q$ is of the form

\begin{equation}\label{toeplitzQ}
\cT_q= \begin{pmatrix}
Q\epsilon(1) & Q\epsilon(q) & Q\epsilon(q^2) & Q\epsilon(q^3) & \ldots& Q\epsilon(q^N) \\
Q\epsilon(q) & Q\epsilon(1)  & Q\epsilon(q) & Q\epsilon(q^2)&\ldots &Q\epsilon(q^{N-1})\\ 
Q\epsilon(q^2) & Q\epsilon(q) & Q\epsilon(1)  & Q\epsilon(q) & \ldots & Q\epsilon(q^{N-2})\\
Q\epsilon(q^3) & Q\epsilon(q^2) & Q\epsilon(q) & Q\epsilon(1)  & \ldots & Q\epsilon(q^{N-3})\\
\vdots & \vdots & \vdots & \vdots & \ddots & \vdots\\
Q\epsilon(q^N) & Q\epsilon(q^{N-1}) & Q\epsilon(q^{N-2}) & Q\epsilon(q^{N-3})  & \cdots & Q\epsilon(1) 
\end{pmatrix}
\end{equation}

Thus we shall compare the largest eigenvalue of $\frac {1}{2\epsilon'(1_+)}\epss\cT_q$ with $1$ since this tests  the positivity  of $\id -\kf_I$. 

\begin{figure}[H]	\begin{center}
\includegraphics[scale=0.6]{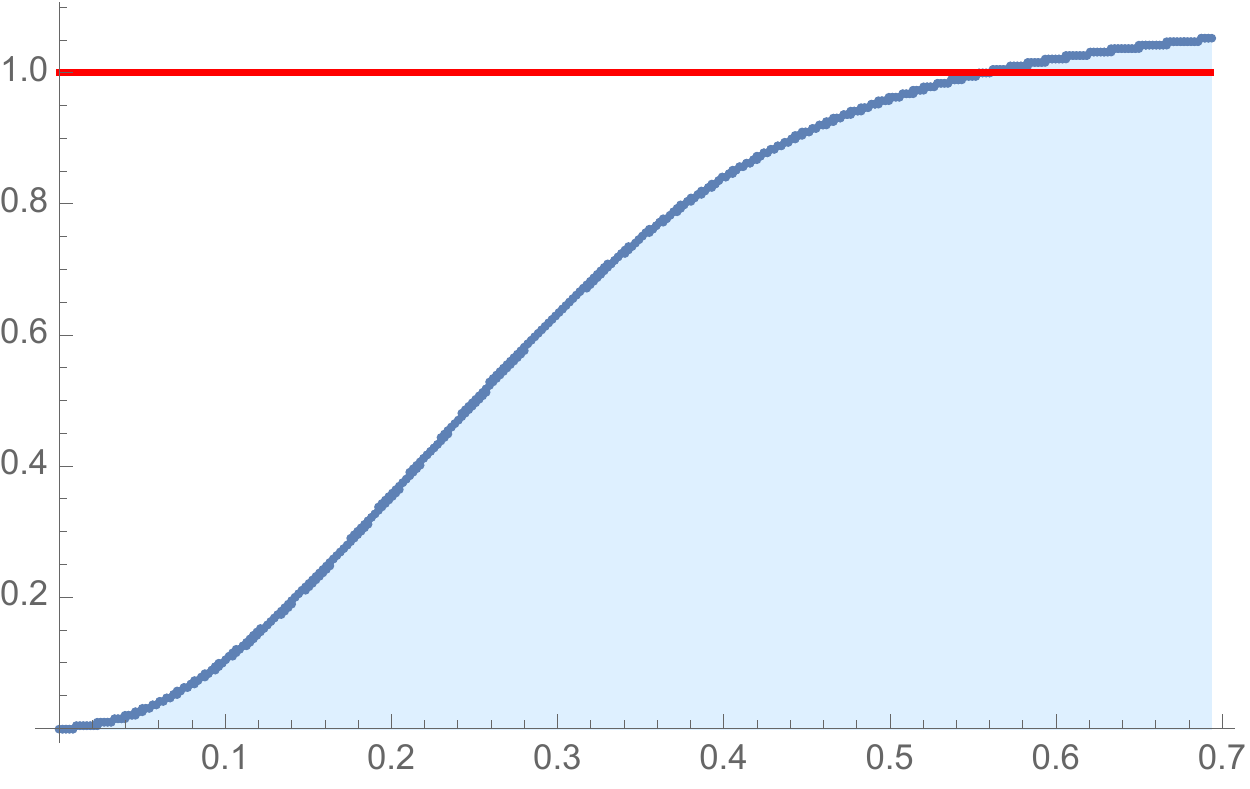}
\end{center}
\caption{Largest eigenvalue of $\frac {1}{2\epsilon'(1_+)}\epss\cT_q$,  for  $a\in [0,\log 2]$ and $q=\exp(10^{-3})$. \label{protrude} }
\end{figure}
Figure \ref{protrude} shows that, for $\epss=10^{-3}$ and $q=\exp(10^{-3})$, the largest eigenvalue  of $\frac {1}{2\epsilon'(1_+)}\epss\cT_q$, \ie $\lambda\sim 1.05177$,  slightly exceeds $1$ when one considers the full interval $[0,\log 2]$.

\subsection{Discrete approximation and Toeplitz matrices}\label{secttoeplitz}
We now  fix the interval $I=[-\frac 12 \log 2,\frac 12 \log 2]$: symmetric and of length $\log 2$. By computing the first eigenvalues of the Toeplitz matrix $\frac {1}{2\epsilon'(1_+)}\epss\cT_q$ one finds that the one next to the largest $\lambda=\lambda_1$ is $\lambda_2\sim 0.687925$: hence well below $1$. Thus   the lack of positivity of $\id -\kf_I$ is due to a single eigenvector $\zeta$. In the following part we  make use of the  general theory of Toeplitz matrices \cite{BW,toeplitz}. A first key classical result in the theory asserts that the eigenvector for the largest eigenvalue of a self-adjoint Toeplitz matrix is of a very special form since the associated polynomial equation has all its roots of modulus $1$. In our notations, this means that if we denote the components of the eigenvector $\zeta$ as $\zeta(\log q^j)=\zeta(j \omega)$ which are defined for $\vert j\vert \omega\leq \frac 12 \log 2$, and let $\tilde \zeta (z)=\sum \zeta(j\omega)z^j\in \C[z,z^{-1}]$, then one has the implication
\begin{equation}\label{rhsmall}
z\in \C\ \& \ \tilde \zeta (z)=0~ \Longrightarrow ~\vert z\vert =1.	
\end{equation}
When we first computed these zeros with the value of $q$ used  in \S \ref{sectdiscreteq} (\ie  $\epss=10^{-3}$, $q=\exp(10^{-3})$),  we found  that indeed these zeros are all of modulus $1$ and obey the symmetry $z\mapsto \bar z$, owing to the fact that the coefficients $\zeta(j\omega)$ are real (they also fulfill $\zeta(-j\omega)=\zeta(j\omega)$). With the symmetric choice of $I=[-\frac 12 \log 2,\frac 12 \log 2]$, the finite number $N$ of elements in $I\cap \omega \Z$ is odd $N=2m+1$ and the computation shows that the $N$ roots of $\tilde \zeta (z)=0$ resemble the non-trivial $N+1$-roots of unity, \ie all of them except $z=1$. Since they are  symmetric these  roots are best written in the form 
\begin{equation}\label{rootspol}
z_j^{\pm}=\exp(\pm \frac{2\pi i \alpha_j}{N+1} )	, \ \ j=1,\ldots m, 
\end{equation}
and it turns out that $z=-1$ is also a root and thus it should be added to this list of $2m$ elements.
In this way, one discovers  that when the roots are labeled as in \eqref{rootspol}, the obtained numbers $\alpha_j$, which depend on the choice of $q= \exp(\omega)$, stabilize when $q\to 1$ and  the difference $\alpha_j-j$ tends to zero when the index $j\to \infty$. The following is a table showing the first values  
 $\alpha_1,\alpha_2,\ldots$, indexed by the integral part \ie $j=$ IntegerPart$(\alpha_j)$: 
\begin{equation}\label{tableangles}
\begin{array}{lllll}
  \alpha _1 =1.33371  &  \alpha _2 =2.10964  &  \alpha _3 =3.07018  &  \alpha _4 =4.0524  &  \alpha _5 =5.04184  \\
  \alpha _6 =6.03484  &  \alpha _7 =7.02984  &  \alpha _8 =8.0261  &  \alpha _9 =9.0232  &  \alpha _{10} =10.0209  \\
  \alpha _{11} =11.019  &  \alpha _{12} =12.0174  &  \alpha _{13} =13.016  &  \alpha _{14} =14.0149  &  \alpha _{15} =15.0139  \\
  \alpha _{16} =16.013  &  \alpha _{17} =17.0123  &  \alpha _{18} =18.0116  &  \alpha _{19} =19.011  &  \alpha _{20} =20.0104  \\
  \alpha _{21} =21.0099  &  \alpha _{22} =22.0095  &  \alpha _{23} =23.0091  &  \alpha _{24} =24.0087  &  \alpha _{25} =25.0083  \\
  \alpha _{26} =26.008  &  \alpha _{27} =27.0077  &  \alpha _{28} =28.0074  &  \alpha _{29} =29.0072  &  \alpha _{30} =30.007  \\
  \alpha _{31} =31.0067  &  \alpha _{32} =32.0065  &  \alpha _{33} =33.0063  &  \alpha _{34} =34.0061  &  \alpha _{35} =35.006  \\
  \alpha _{36} =36.0058  &  \alpha _{37} =37.0056  &  \alpha _{38} =38.0055  &  \alpha _{39} =39.0053  &  \alpha _{40} =40.0052  \\
  \alpha _{41} =41.0051  &  \alpha _{42} =42.005  &  \alpha _{43} =43.0048  &  \alpha _{44} =44.0047  &  \alpha _{45} =45.0046  \\
  \alpha _{46} =46.0045  &  \alpha _{47} =47.0044  &  \alpha _{48} =48.0043  &  \alpha _{49} =49.0043  &  \alpha _{50} =50.0042  \\
  \alpha _{51} =51.0041  &  \alpha _{52} =52.004  &  \alpha _{53} =53.0039  &  \alpha _{54} =54.0039  &  \alpha _{55} =55.0038  \\
  \alpha _{56} =56.0037  &  \alpha _{57} =57.0037  &  \alpha _{58} =58.0036  &  \alpha _{59} =59.0035  &  \alpha _{60} =60.0035  \\
\end{array}
\end{equation}
The above list gives some idea of the numerical values of the $\alpha_j$. The precise numerical values (for $\omega=1/5000$) are not integers and involve more digits. They can be downloaded  at \href{https://www.dropbox.com/s/kjmra7t9ejet1pw/rangles5000.docx?dl=0}{\color{blue}link to download the angles}.   

\begin{figure}[H]	\begin{center}
\includegraphics[scale=0.8]{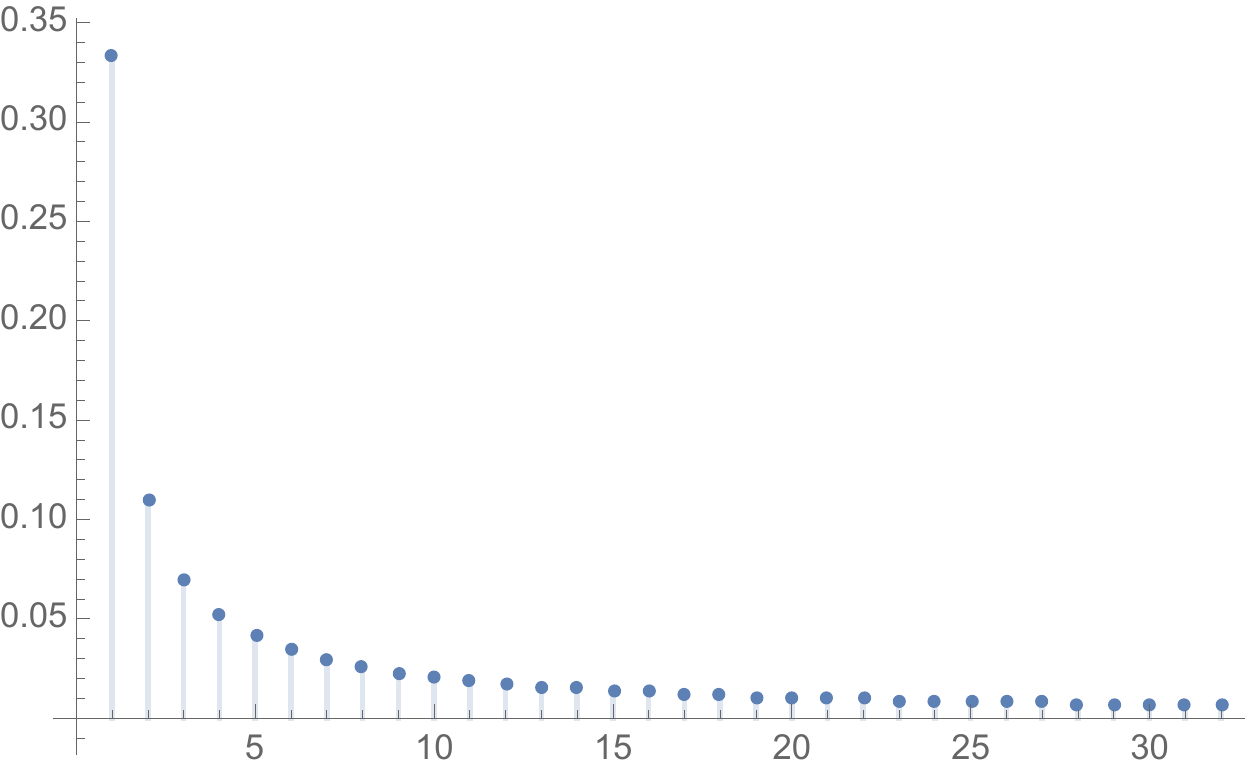}
\end{center}
\caption{Graph of $\alpha_j-j$. \label{angles2} }
\end{figure}
The second key classical  result \cite{BW,toeplitz} of the theory of Toeplitz matrices is that for a positive Toeplitz matrix $T$ of co-rank  $1$ (\ie rank equal to the dimension of the matrix minus one), the face of $T$ in the positive cone is a cone based on a simplex. It follows that  there is a unique decomposition of $T$ as a sum of elements of extreme rays of the positive cone. Moreover, the extreme rays of the  cone are the rank one (positive) Toeplitz matrices, and they are parametrized (up to a positive multiplicative scalar) by the unit circle $\{z\in \C\mid \vert z\vert=1\}$, formed by rank one projections $e(z)$. Moreover, the unique complex numbers of modulus one involved in the decomposition $T=\sum d(j) e(z_j)$ are precisely the zeros of the polynomial associated to the vector in the kernel of $T$. We use this result and apply it to the Toeplitz matrix 
\begin{equation}\label{toepapprox0}
S=\lambda\, \id -\frac {1}{2\epsilon'(1_+)}\epss\cT_q
\end{equation}
($\lambda $
 is the largest eigenvalue of $\frac {1}{2\epsilon'(1_+)}\epss\cT_q$). By construction and using the simplicity of the largest eigenvalue $\lambda $, the obtained Toeplitz matrix $S$ is of co-rank $1$ so that it admits a canonical decomposition of the form 
\begin{equation}\label{toepapprox}
S=\lambda \sum  \, d(j) e(z_j), \qquad \{z_j\}= \{z\in \C\mid \tilde \zeta (z)=0\}.
\end{equation}
We computed the list of positive scalars $d(j)$ corresponding to this unique decomposition and found that when $q\to 1$ they behave similarly to the angles, \ie when they are labeled by the corresponding $z_j$ they converge to a fixed value. 
We give a sample of these values in the next table \eqref{tableds}, where we use the same labeling as in \eqref{tableangles} so that terms correspond bijectively \begin{equation}\label{tableds}
{\small\begin{array}{ccccc}
  d(1)  =1.17111   &  d(2)  =1.12443   &  d(3)  =1.05904   &  d(4)  =1.03248   &  d(5)  =1.02052   \\
  d(6)  =1.01414   &  d(7)  =1.01033   &  d(8)  =1.00787   &  d(9)  =1.00619   &  d(10)  =1.005   \\
  d(11)  =1.00411   &  d(12)  =1.00344   &  d(13)  =1.00292   &  d(14)  =1.00251   &  d(15)  =1.00217   \\
  d(16)  =1.0019   &  d(17)  =1.00167   &  d(18)  =1.00148   &  d(19)  =1.00132   &  d(20)  =1.00119   \\
  d(21)  =1.00107   &  d(22)  =1.00097   &  d(23)  =1.00088   &  d(24)  =1.0008   &  d(25)  =1.00073   \\
  d(26)  =1.00067   &  d(27)  =1.00062   &  d(28)  =1.00057   &  d(29)  =1.00052   &  d(30)  =1.00048   \\
  d(31)  =1.00045   &  d(32)  =1.00042   &  d(33)  =1.00039   &  d(34)  =1.00036   &  d(35)  =1.00034   \\
  d(36)  =1.00031   &  d(37)  =1.00029   &  d(38)  =1.00027   &  d(39)  =1.00026   &  d(40)  =1.00024   \\
  d(41)  =1.00022   &  d(42)  =1.00021   &  d(43)  =1.0002   &  d(44)  =1.00018   &  d(45)  =1.00017   \\
  d(46)  =1.00016   &  d(47)  =1.00015   &  d(48)  =1.00014   &  d(49)  =1.00013   &  d(50)  =1.00013   \\
  d(51)  =1.00012   &  d(52)  =1.00011   &  d(53)  =1.0001   &  d(54)  =1.0001   &  d(55)  =1.00009   \\
  d(56)  =1.00008   &  d(57)  =1.00008   &  d(58)  =1.00007   &  d(59)  =1.00007   &  d(60)  =1.00006   \\
\end{array}}
\end{equation}

Again, this is just to give an idea of these values, and  (for $\omega=1/5000$) the precise numerical values involve more digits and can be downloaded at  \href{https://www.dropbox.com/s/e2bswzrh3tps00h/coefficients.docx?dl=0}{\color{blue}link to download the coefficients}.
To achieve a good control of the compact operator $\kf_I$  we then need to approximate the function $(Q\epsilon)(\exp(x))$ for all $x\in [0,\log 2]$ and not just on the finite set of multiples of $\omega$. In the next section we shall show how the above Toeplitz decomposition \eqref{toepapprox0} allows one to guess an efficient approximation of the function $(Q\epsilon)(\exp(x))$ by a finite trigonometric sum. This approximation is then shown, by a computer calculation, to give the required control.

\subsection{The basic approximation of $(Q\epsilon)(\exp(x))$}\label{sectbasicapprox}
By combining \eqref{toepapprox0} and \eqref{toepapprox},  the Toeplitz  matrix  $T_q=\frac {1}{2\epsilon'(1_+)}\epss\cT_q$ can be re-written in the form 
\begin{equation}\label{toepapprox1}
T_q=\lambda\, \left(\id -\sum d(j)e(z_j)\right)
\end{equation}
 where the $e(z_j)$ are the one dimensional Toeplitz projections  matrices  obtained by conjugating the one dimensional projection on the constant function by the unitary operators 
 $$
 (U(\alpha)\xi)(x):= \exp(i 2 \pi \alpha x/\log 2)\xi(x).
 $$ 
 This suggests that one can approximate the function $\chi(x):=(Q\epsilon)(\exp(x))/(2\epsilon'(1_+))$ in $[0,\log 2]$ by a trigonometric expression of the form 
\begin{equation}\label{approx0}
\tau(\lambda,\alpha,d,m)(x):=\frac{2\lambda}{\log 2} \left(\frac 12+\sum_{n=1}^m \left(\cos \frac{2 \pi  n x}{\log 2}-d(n)\cos \frac{2 \pi  \alpha_n x}{\log 2}\right) \right).
\end{equation}

The following fact holds

 \begin{fact}\label{approximation}
	The distance in $L^1([0,\log 2],dx)$ of the function $\chi(x):=(Q\epsilon)(\exp(\vert x\vert))/(2\epsilon'(1_+))$ to the function $\tau(\lambda,\alpha,d,m)(x)$ of \eqref{approx0} (for $m=1732$, and with the values of the angles $\alpha_j$ and of the coefficients $d(j)$ fixed above) fulfills
		\begin{equation}\label{computerverif}
2\int_0^{\log 2}\vert\tau(\lambda,\alpha,d,m)(x)-\chi(x)\vert dx \sim 0.00122.
\end{equation} 
\end{fact}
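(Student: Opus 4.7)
The plan is to reduce this claim to a computer-verifiable numerical estimate, using two main ingredients already established in the paper: the explicit series formula for $Q\epsilon$ from Proposition~\ref{propQe}, and the rapid decay estimate from the appendix (Lemma~\ref{lemesti}) which guarantees that truncating the series at the $11$-th term gives a uniform error of order $10^{-11}$. Because the claim is a \emph{numerical} fact about an explicit $L^1$ distance (not a conceptual statement), the ``proof'' is essentially a controlled computation, and the only work lies in making the error bounds rigorous.

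First, I would use \eqref{qe} together with \eqref{sonineQbis} to express $\chi(x) = (Q\epsilon)(\exp(|x|))/(2\epsilon'(1_+))$ as a rapidly convergent series in $n$ involving the analytically continued prolate spheroidal functions $\xi_n^{\rm an}$ and their derivatives. By Lemma~\ref{lemesti}, truncating at $n = 10$ produces a computable function $\chi_{\mathrm{trunc}}(x)$ with $\|\chi - \chi_{\mathrm{trunc}}\|_{L^\infty([0,\log 2])} \leq C \cdot 10^{-11}$ for an explicit constant, so
\begin{equation*}
\int_0^{\log 2} |\chi - \chi_{\mathrm{trunc}}|\, dx \leq C \cdot 10^{-11} \cdot \log 2,
\end{equation*}
which is completely negligible compared to the target $0.00122$. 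The functions $\xi_n^{\rm an}$ for $n \leq 10$ can be evaluated to high precision using the standard series expansion for prolate spheroidal wave functions with bandwidth $c = 2\pi$ (as in \cite{Wang,Wang1,Rokhlin}).

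Second, I would fix the tabulated values of $\lambda$, $\{\alpha_j\}$, $\{d(j)\}$ for $j = 1, \ldots, 1732$ obtained by diagonalizing the Toeplitz matrix $\frac{1}{2\epsilon'(1_+)}\epss \cT_q$ at $\epss = 1/5000$ (as posted in the linked data files), and form $\tau(\lambda,\alpha,d,1732)(x)$ according to \eqref{approx0}. Then I would partition $[0,\log 2]$ into a very fine uniform grid $\{x_k\}_{k=0}^{M}$ with spacing $h$, and evaluate both $\chi_{\mathrm{trunc}}(x_k)$ and $\tau(x_k)$ at each grid point. Since $\chi_{\mathrm{trunc}}$ is smooth on $(0,\log 2]$ with controlled derivatives (from the Paley--Wiener character of each $\xi_n^{\rm an}$), and $\tau$ is a trigonometric polynomial of degree $\leq 1732 \cdot \max_j \alpha_j$, one obtains an a~priori bound on the derivative of $\tau - \chi_{\mathrm{trunc}}$ which turns the trapezoidal (or Simpson) sum $h \sum_k |\tau(x_k) - \chi_{\mathrm{trunc}}(x_k)|$ into a rigorous approximation of the $L^1$ integral with an error at most of order $h^2 \|(\tau - \chi_{\mathrm{trunc}})''\|_\infty$.

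The main obstacle is purely a matter of numerical hygiene: keeping enough digits of precision in $\xi_n^{\rm an}$ (whose derivatives appear), propagating errors through the series \eqref{sonineQbis}, and certifying the quadrature error on a mesh fine enough to separate the true value $\approx 0.00122$ from the various round-off and truncation contributions. With interval arithmetic (say in \texttt{Mathematica} with precision tracking, or via a package like \texttt{Arb}), these become routine. Once the computation is carried out and certified, \eqref{computerverif} follows. I would explicitly note that the claim is stated as a \textbf{Fact} rather than a theorem precisely because its content is a verified numerical inequality rather than a closed-form identity; the role of \eqref{computerverif} in the sequel is only that the right-hand side is small enough that $\tau(\lambda,\alpha,d,1732)$ approximates $\chi$ well enough in $L^1$ to make the finite-rank operator $T$ constructed from $\tau$ a good approximation to $\kf_I$ in operator norm, which is what Step 5 of the outline requires.
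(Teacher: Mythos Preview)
Your proposal is correct and follows essentially the same approach as the paper: the paper's proof states that it is a computer calculation of the $L^1$ norm of the difference, and that one first uses \eqref{computersafe1} of Appendix~\ref{appendix-cv} to replace $\chi$ by the sum of the first $11$ terms of the series defining $Q\epsilon$ (exactly your truncation at $n=10$). Your write-up is in fact more explicit about the numerical hygiene (quadrature error, derivative bounds, interval arithmetic) than the paper, which leaves these details implicit.
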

\begin{proof} The proof is a computer calculation of the $L^1([0,\log 2],dx)$ norm of the difference of the two functions.  The function $\tau(\lambda,\alpha,d,m)(x)$ oscillates (these oscillations are visible in the neighborhood of $\log 2$) but it otherwise approximates very well the function $\chi(x)$ as shown by the computer calculation of the $L^1$ norm  of the difference
  \begin{figure}[H]	\begin{center}
\includegraphics[scale=0.8]{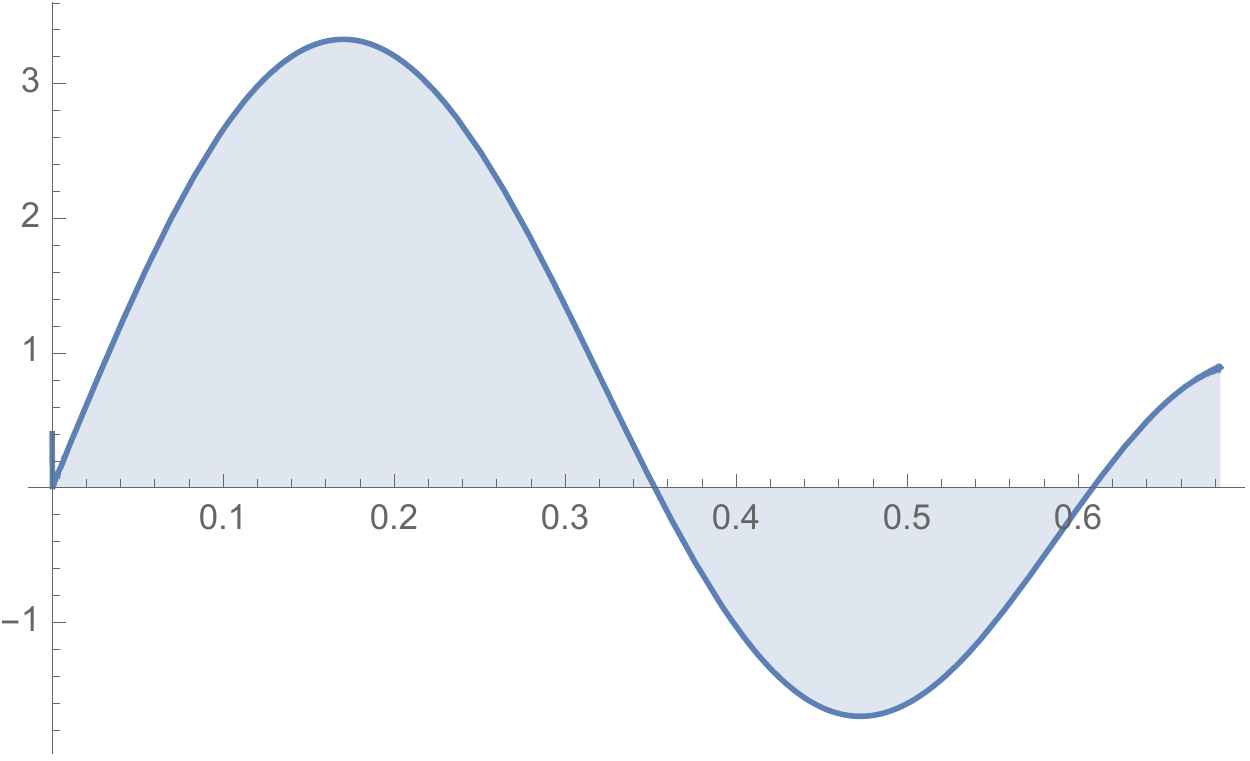}
\end{center}
\caption{Graph of $\tau(\lambda,\alpha,d,m)(x)$ in $[0,\log 2]$ for $m=1732$. \label{taufunctiongraph} }
\end{figure}
  
To justify  \eqref{computerverif}, one first uses \eqref{computersafe1} of Appendix \ref{appendix-cv} to replace, without any loss, the function $\chi(x):=(Q\epsilon)(\exp(x))/(2\epsilon'(1^+))$ using the contribution  of the first $11$ terms of the series \eqref{computersafe} defining $Q\epsilon$. 
\end{proof}

 \subsection{The approximation of $\kf_I$ by a finite rank operator $T$}\label{sectapproxfiniterank}
 Here, the goal is to estimate the quadratic form obtained when one replaces the function $\chi(x)$ by its approximation $\tau(\lambda,\alpha,d,m)(x)$.
 
 \begin{lem}\label{poisson} Let $f\in C_c^{\infty}(\R)_{\rm ev}$ be an even smooth function with support contained in the closed interval $[-\log 2,\log 2]$. Then,  after rearranging the order of summation, one obtains
 \begin{equation}\label{deltafunction}
\frac{2}{\log 2}\int_{-\log 2}^{\log 2} \left(\frac 12+\sum_1^\infty \cos \frac{ 2\pi  n x}{\log 2}\right)f(x)dx=f(0)
\end{equation} 	
 \end{lem}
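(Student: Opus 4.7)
My plan is to interpret the identity as the distributional Poisson summation formula applied to $f$. Set $L := \log 2$. The key fact is the Fourier expansion of the $L$-periodic Dirac comb, namely
\[
\sum_{k \in \Z}\dirac(x - kL) \;=\; \frac{1}{L}\sum_{n \in \Z} e^{2\pi i n x/L} \;=\; \frac{1}{L} + \frac{2}{L}\sum_{n=1}^\infty \cos\frac{2\pi n x}{L},
\]
equivalently $\tfrac12 + \sum_{n\geq 1}\cos(2\pi n x/L) = \tfrac{L}{2}\sum_{k\in\Z}\dirac(x - kL)$ in $\mathcal{D}'(\R)$. Pairing this distribution with $f$, extended by zero to all of $\R$, and multiplying by $2/L$ transforms the left-hand side of the lemma into $\sum_{k \in \Z} f(kL)$.

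Concretely, I would set $g(y) := f(Ly)$, which lies in $C_c^\infty(\R)_{\rm ev}$ with support in $[-1,1]$, hence in $\cS(\R)$ with $\hat g$ of rapid decay. Then Poisson summation gives
\[
\sum_{k \in \Z} f(kL) \;=\; \sum_{k \in \Z} g(k) \;=\; \sum_{n \in \Z}\hat g(n) \;=\; \frac{1}{L}\int_{\R} f(x)\,dx + \frac{2}{L}\sum_{n \geq 1}\int_{\R} f(x)\cos\frac{2\pi nx}{L}\,dx,
\]
where the even-ness of $f$ has been used to combine the $n$ and $-n$ terms. This is precisely the rearrangement alluded to in the statement, and it is unconditionally justified by the rapid decay of $\hat g$ (so both series converge absolutely).

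It then remains to compute $\sum_{k \in \Z} f(kL)$. Since $f$ is smooth with support contained in the closed interval $[-L,L]$, and $f$ vanishes identically on a neighborhood of $\R \setminus [-L,L]$, smoothness forces $f$ (and all its derivatives) to vanish at $\pm L$; in particular $f(\pm L)=0$. For $|k|\geq 2$ the points $kL$ lie outside $\mathrm{supp}(f)$, so $f(kL)=0$ as well. Only the term $k=0$ survives, contributing $f(0)$. Finally $\int_{\R} = \int_{-L}^{L}$ since $f$ is supported in $[-L,L]$, which yields \eqref{deltafunction}.

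The only mildly delicate point is the term-by-term interchange of sum and integral on the left-hand side; this is not a genuine obstacle because the partial sums $D_N(x) = \tfrac12 + \sum_{n=1}^{N}\cos(2\pi n x/L) = \sin((2N{+}1)\pi x/L)/(2\sin(\pi x/L))$ paired against the smooth function $f$ converge to $\tfrac{L}{2}\sum_k f(kL)$ by the standard pointwise convergence of Fourier series of smooth periodic functions (applied at $x = 0, \pm L$, where $f$ is smooth and, at the endpoints, vanishes to infinite order). Either route—direct Poisson or truncated Dirichlet kernel—delivers the identity without further work.
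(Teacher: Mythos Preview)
Your proof is correct and follows essentially the same route as the paper: both apply the Poisson summation formula for the lattice $\Z\log 2$ to reduce the left-hand side to $\sum_{k\in\Z} f(k\log 2)$ and then observe that only the $k=0$ term survives. You are in fact slightly more explicit than the paper in arguing why $f(\pm\log 2)=0$ (via smoothness at the boundary of the support), a point the paper glosses over.
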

\begin{proof} The equality follows by applying  Poisson's formula. Let $L= \Z\, \log 2$ be the lattice of integral multiples of $\log 2$ and $L^\perp=\Z/\log 2$ be the dual lattice. 
 The Poisson summation formula gives 
 $$
 \sum_L f(x)=\frac{1}{\log 2}\,\sum_{L^\perp}\widehat{ f}(y),\qquad 
 \widehat{ f}(y)= \int f(u)\exp(-2 \pi i uy)du.$$
 Since $f$ is even and its support is contained in the closed interval $[-\log 2,\log 2]$ one has,   
 $$
\sum_L f(x)=f(0), \qquad \widehat{f}(y)= \int_{-\log 2}^{\log 2}\exp(2 \pi i y x)f(x)dx=  \int_{-\log 2}^{\log 2}\cos(2 \pi  y x)f(x)dx
 $$
 and the Poisson formula thus gives 
 $$
 f(0)=\frac{1}{\log 2}\,\sum_{L^\perp}\widehat{ f}(y)=\frac{1}{\log 2}\sum_\Z\int_{-\log 2}^{\log 2}\cos\left(\frac{ 2\pi  n x}{\log 2}\right)f(x)dx
 $$
  which gives \eqref{deltafunction}.\end{proof} 
  
  Next, we consider the Hilbert space $\cH:=L^2([-\frac 12 \log 2,\frac 12 \log 2],dx)$. For $\alpha\in \R$ we let 
  \begin{equation}\label{xialpha}
  	\xi_\alpha(x):=(\log 2)^{-\frac 12}\exp(\frac{2 \pi i \alpha x}{\log 2}),\qquad \forall x\in [-\frac 12 \log 2,\frac 12 \log 2]
  \end{equation}
  and $\bp_\alpha=\vert \xi_\alpha\rangle\langle \xi_\alpha\vert$ be the associated orthogonal projection,
  $$
  \bp_\alpha(\xi)=\xi_\alpha\langle \xi_\alpha\mid \xi\rangle,\qquad \forall \xi \in \cH.
 $$
 One then has for any $\xi, \eta \in \cH$,  using the special form \eqref{xialpha} of the vector  $\xi_\alpha$ 
 $$
 \langle    \eta\mid \bp_\alpha(\xi)\rangle=
 \langle   \eta\mid \xi_\alpha\rangle\langle \xi_\alpha\mid \xi\rangle=\frac{1}{\log 2}\int_{I\times I}\overline{\eta(x)}\exp(\frac{2 \pi i \alpha x}{\log 2})\xi(y)\exp(\frac{-2 \pi i \alpha y}{\log 2})dx dy
 $$
 so that one obtains 
\begin{equation}\label{opkf1}
	 \langle    \eta\mid \bp_\alpha(\xi)\rangle=\frac{1}{\log 2}\int_{-\log 2}^{\log 2}\int  \overline{\eta(x)}\xi(x+v) \exp(\frac{-2 \pi i \alpha v}{\log 2})dxdv.
\end{equation}

The following lemma plays a key role in the approximation process

 \begin{lem}\label{approachk} Let $\tau(\lambda,\alpha,d,m)(x)$ be an approximation of the function $\chi(x)$ so that the $L^1$ norm of the difference $\tau(\lambda,\alpha,d,m)-\chi$ is $\leq \epsilon$. Then the compact operator $\kf_I$ of \eqref{opkf}, for $I=[-\frac 12 \log 2,\frac 12 \log 2]$, is at a norm distance less than $\epsilon$ from the finite rank operator
\begin{equation}\label{opT}
T=\lambda \ \sum_{n\in \Z} \left(\bp_n-d(\vert n\vert)\bp_{\alpha_n}\right). 
 \end{equation}
 Here, we set $\alpha_{-n}=-\alpha_n~\forall n$ and  $d(0)=0$; while for $n>m$, we set $\alpha_n=n$ and $d(n)=1$  so that all the terms in the above sum for $\vert n\vert>m$ vanish.
   \end{lem}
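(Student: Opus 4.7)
The plan is to identify $T$ as the integral operator with kernel $\tau(\lambda,\alpha,d,m)(v)$ and $\kf_I$ as the integral operator with kernel $\chi(v)$, and then deduce the operator norm bound from the $L^1$ norm of the kernel difference.

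First I would compute the Schwartz kernel of $T$. Formula \eqref{opkf1} gives
\[
\langle \eta\mid \bp_\alpha\xi\rangle = \frac{1}{\log 2}\int_{-\log 2}^{\log 2}\!\!\int_I \overline{\eta(x)}\,\xi(x+v)\,\exp\!\Big(\frac{-2\pi i\alpha v}{\log 2}\Big)\,dx\,dv,
\]
so the contribution of each $\bp_\alpha$ to the kernel in the variable $v$ is the single exponential $(\log 2)^{-1}\exp(-2\pi i\alpha v/\log 2)$. Using the conventions $\alpha_{-n}=-\alpha_n$, $d(|n|)=d(|{-n}|)$, $d(0)=0$, and the stipulation that $\alpha_n=n$, $d(n)=1$ for $n>m$ (so that $\bp_n-d(|n|)\bp_{\alpha_n}=0$ for $|n|>m$), the sum in \eqref{opT} is in fact finite and pairs of opposite indices combine into cosines:
\[
\sum_{n\in\Z}\Big(e^{-2\pi inv/\log 2}-d(|n|)e^{-2\pi i\alpha_nv/\log 2}\Big)=1+2\sum_{n=1}^m\Big(\cos\tfrac{2\pi n v}{\log 2}-d(n)\cos\tfrac{2\pi\alpha_nv}{\log 2}\Big).
\]
Multiplying by $\lambda/\log 2$ and comparing with \eqref{approx0}, I would conclude that the kernel of $T$ is exactly $\tau(\lambda,\alpha,d,m)(v)$, i.e.
\[
\langle \eta\mid T\xi\rangle = \int_{-\log 2}^{\log 2}\!\!\int_I \overline{\eta(x)}\,\xi(x+v)\,\tau(\lambda,\alpha,d,m)(v)\,dx\,dv.
\]

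On the other hand, the definition \eqref{opkf} says that the kernel of $\kf_I$ (in the variable $v$) is precisely $\chi(v)=(Q\epsilon)(\exp(|v|))/(2\epsilon'(1_+))$. Hence
\[
\langle \eta\mid (\kf_I-T)\xi\rangle = \int_{-\log 2}^{\log 2}\bigl(\chi(v)-\tau(v)\bigr)\Big(\int_I\overline{\eta(x)}\,\xi(x+v)\,dx\Big)\,dv.
\]
For the operator norm, I would extend $\xi$ by zero outside $I$ and apply Cauchy--Schwarz in the $x$-variable at fixed $v$, which yields $|\int_I\overline{\eta(x)}\xi(x+v)dx|\le \|\eta\|_2\|\xi\|_2$ uniformly in $v$. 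Integrating then gives
\[
\bigl|\langle \eta\mid (\kf_I-T)\xi\rangle\bigr|\le \|\chi-\tau\|_{L^1([-\log 2,\log 2])}\,\|\eta\|_2\,\|\xi\|_2\le \epsilon\,\|\eta\|_2\|\xi\|_2,
\]
hence $\|\kf_I-T\|_{\mathrm{op}}\le\epsilon$, as required.

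No serious obstacle is expected: the argument is a bookkeeping identification of the kernel of $T$ (where the only subtlety is verifying that the formally infinite sum in \eqref{opT} is a finite sum under the stated conventions and that the pairing $n\leftrightarrow -n$ produces the real cosines in \eqref{approx0}), followed by the standard bound of a convolution-type kernel operator by its $L^1$ mass. Lemma~\ref{poisson} is not invoked directly in the proof; its role is to motivate the Ansatz \eqref{approx0} by identifying $\frac{2}{\log 2}(\tfrac12+\sum_{n\ge 1}\cos(2\pi nx/\log 2))$ with a Dirac mass at $0$ on $[-\log 2,\log 2]$.
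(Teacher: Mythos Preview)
Your proposal is correct and follows essentially the same route as the paper: identify the Schwartz kernel of $T$ (via \eqref{opkf1} and the conventions on $\alpha_n$, $d(n)$) as the trigonometric sum $\tau(\lambda,\alpha,d,m)(v)$, recognize $\kf_I$ as the analogous operator with kernel $\chi(v)$, and bound $\|\kf_I-T\|$ by the $L^1$ norm of the kernel difference using Cauchy--Schwarz in the inner integral. Your remark that Lemma~\ref{poisson} is motivational rather than logically needed here is also accurate.
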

   \begin{proof} By \eqref{approx0}, one has
   $$
\tau(\lambda,\alpha,d,m)(v)=\frac{\lambda}{\log 2} \sum_{-m}^m \left(\exp(\frac{-2 \pi i n v}{\log 2})-d(\vert n\vert)\exp(\frac{-2 \pi i \alpha_n v}{\log 2})\right)
$$
   so that, by \eqref{opkf1}, the operator $T$ of \eqref{opT} on $\cH:=L^2([-\frac 12 \log 2,\frac 12 \log 2],dx)$ fulfills the equality
   \begin{equation}\label{opT1}
	\langle   \eta\mid  T(\xi)\rangle=\int_{-\log 2}^{\log 2}\int  \overline{\eta(x)}\xi(x+v)  \tau(\lambda,\alpha,d,m)(v)dxdv.
\end{equation}
The compact operator $\kf_I$ of \eqref{opkf} fulfills the same equality, with $\chi(x)$ in place of $\tau(\lambda,\alpha,d,m)(x)$ in the integral. Thus the norm of  $\kf_I-T$ is bounded by  the inequality 
\begin{equation}\label{opTbound}
\bigg\lvert\int_{-\log 2}^{\log 2}\int  \overline{\eta(x)}\xi(x+v)  a(v)dxdv\bigg\rvert \leq \Vert \xi \Vert \Vert \eta \Vert \int_{-\log 2}^{\log 2}\vert a(v)\vert dv 
\end{equation}
  which follows from the Schwarz inequality 
  $
  \bigg\lvert \int  \overline{\eta(x)}\xi(x+v)  dx\bigg\rvert \leq \Vert \xi \Vert \Vert \eta \Vert
  $.\end{proof}  
  
  \subsection{The eigenvector of maximal eigenvalue}\label{subsectmaxvect}
   In order to understand the finite rank operator $T$ of \eqref{opT} we first construct a vector $\zeta\in\cH$  orthogonal to all  vectors $\xi_{\alpha_n}$ for $n\neq 0$, using the conventions of Lemma \ref{approachk}: \ie for $n>m$ we set $\alpha_n=n$.  We first consider the infinite product 
   $$
   h(z):=\prod_{n>0}\left(1-\frac{z^2}{\alpha_n^2} \right) 
   $$
   which is convergent likewise the product defining  $\frac{\sin (\pi  z)}{\pi  z}$ and is, by construction, the product of $\frac{\sin (\pi  z)}{\pi  z}$ by a rational fraction whose role is to replace the zeros $\pm n$ for $n\in \{1,\ldots,m\}$, by the  $\pm \alpha_n$. We then  consider the Fourier transform of $h(z \log 2)$. We use the notations of Lemma \ref{approachk}.
   
    \begin{lem}\label{supportf} The Fourier transform $\psi(x)=\frac{1}{\log 2}\widehat{h}(\frac{x}{\log 2})$ of $h(z \log 2)$ has support in the interval $I=[-\frac 12 \log 2,\frac 12 \log 2]$. One has $T\psi= \lambda \psi$ and (using the conventions of Lemma \ref{approachk}) \begin{equation}\label{psiortho}
   \langle   \xi_{0}\mid \psi\rangle= (\log 2)^{-\frac 12}, \qquad \langle   \xi_{\alpha_n}\mid \psi\rangle=0,\qquad \forall n\neq 0.
   \end{equation}
   \end{lem}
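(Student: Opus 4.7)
The plan is to invoke Paley--Wiener for the support claim and then compute everything else by standard Fourier--series considerations on $I$. To see that $\psi$ is supported in $I$, first note that since $\alpha_n - n \to 0$ as $n\to\infty$, the series $\sum 1/\alpha_n^2$ converges and the infinite product defines an entire function; moreover, by the convention $\alpha_n = n$ for $n > m$, one can factorize
$$
h(z) \;=\; \frac{\sin(\pi z)}{\pi z}\cdot\prod_{n=1}^{m}\frac{1-z^2/\alpha_n^2}{1-z^2/n^2},
$$
where the rational factor is bounded on $\R$ (the apparent poles at $\pm n$, $1\leq n\leq m$, are cancelled by the corresponding zeros of $\sin(\pi z)$) and tends to a finite limit at infinity. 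Hence $h$ is entire of exponential type $\pi$ and lies in $L^2(\R)$, so $g(z) := h(z\log 2)$ is of type $\pi\log 2$ and in $L^2$. The Paley--Wiener theorem, with the Fourier convention \eqref{cFalpha}, then gives $\mathrm{supp}\,\psi \subset [-\tfrac{\log 2}{2},\tfrac{\log 2}{2}] = I$.

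Next I would compute $\langle\xi_\alpha \mid \psi\rangle$ for arbitrary $\alpha\in\R$. Since $\psi = \widehat g$, Fourier inversion and the evenness of $h$ yield
$$
\int_I \psi(x)\, e^{-2\pi i\alpha x/\log 2}\, dx \;=\; g(-\alpha/\log 2) \;=\; h(\alpha),
$$
so that $\langle\xi_\alpha\mid\psi\rangle = (\log 2)^{-1/2}\, h(\alpha)$. Specializing at $\alpha = 0$ gives $\langle\xi_0\mid\psi\rangle = (\log 2)^{-1/2}$ since $h(0)=1$, while specializing at $\alpha = \alpha_n$ with $n\neq 0$ gives $\langle\xi_{\alpha_n}\mid\psi\rangle = 0$ because $\pm\alpha_{|n|}$ is a zero of $h$ by construction; this establishes \eqref{psiortho}.

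For the eigenvalue identity $T\psi = \lambda\psi$, I would expand $\psi$ in the orthonormal basis $\{\xi_n\}_{n\in\Z}$ of $\cH = L^2(I)$,
$$
\psi \;=\; (\log 2)^{-1/2}\sum_{n\in\Z} h(n)\,\xi_n,
$$
which is a finite sum because the convention $\alpha_n = n$ for $|n|>m$ forces $h(n)=0$ there. Applying $T$ from \eqref{opT} term by term, each $\bp_n\psi = (\log 2)^{-1/2}h(n)\,\xi_n$ while $\bp_{\alpha_n}\psi = (\log 2)^{-1/2}h(\alpha_n)\xi_{\alpha_n} = 0$ for $n\neq 0$ by the previous step; the terms with $|n|>m$ cancel pairwise because the conventions give $\bp_{\alpha_n}=\bp_n$ and $d(|n|)=1$ there. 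What survives is exactly $\lambda\sum_{|n|\leq m}(\log 2)^{-1/2}h(n)\,\xi_n = \lambda\psi$. The one step requiring care is the Paley--Wiener argument: the bookkeeping of cancellations between the zeros of $\sin(\pi z)$ and the denominators $1-z^2/n^2$ must be done precisely so that $h$ is genuinely entire of type $\pi$ in $L^2(\R)$, with no extraneous contribution to the indicator diagram.
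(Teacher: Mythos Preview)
Your argument is correct. The only substantive difference from the paper is in how you prove the support claim: the paper rewrites the defining product as
\[
\prod_{0<n\leq m}\Bigl(1-\frac{z^2}{\alpha_n^2}\Bigr)\frac{\sin(\pi z)}{\pi z}=\prod_{0<n\leq m}\Bigl(1-\frac{z^2}{n^2}\Bigr)h(z),
\]
notes that the Fourier transform of the left side is supported in $[-\tfrac12,\tfrac12]$, and deduces that $\widehat h$ satisfies the constant-coefficient ODE $\prod_{0<n\leq m}\bigl(1+\partial^2/(2\pi n)^2\bigr)\widehat h=0$ outside $[-\tfrac12,\tfrac12]$; since all solutions are $1$-periodic and $\widehat h\in L^2$, it must vanish there. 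Your route---recognizing $h$ as $\frac{\sin(\pi z)}{\pi z}$ times a degree-zero rational function whose poles are absorbed by the zeros of the sine, and then invoking Paley--Wiener for an $L^2$ entire function of exponential type $\pi$---is more direct and standard; the paper's ODE argument is more self-contained in that it avoids citing Paley--Wiener. The remaining steps (the Fourier-inversion computation of $\langle\xi_\alpha\mid\psi\rangle=(\log 2)^{-1/2}h(\alpha)$, and the verification $T\psi=\lambda\psi$ from $\bp_{\alpha_n}\psi=0$ for $n\neq 0$ together with $\sum_n\bp_n=\id$) are essentially the same in both, though you spell out the finiteness of the Fourier expansion of $\psi$ explicitly, which is a nice touch.
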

   \begin{proof} By construction one has 
 \begin{equation}\label{fourierloc}
   \prod_{0<n\leq m}\left(1-\frac{z^2}{\alpha_n^2} \right)\frac{\sin (\pi  z)}{\pi  z}=\prod_{0<n\leq m}\left(1-\frac{z^2}{n^2} \right)h(z).
   \end{equation}
   The Fourier transform of $\frac{\sin (\pi  z)}{\pi  z}$ is the characteristic function of the interval $[-\frac 12,\frac 12]$, while  the Fourier transform of the left hand side of \eqref{fourierloc} is a distribution with support in the interval $[-\frac 12,\frac 12]$. Thus, by \eqref{fourierloc},  the Fourier transform $\widehat{h}$ of the function $h(z)$ fulfills the differential equation of degree $2m$
   $$
   \prod_{0<n\leq m}\left(1+\frac{\partial^2}{(2 \pi n)^2 }\right)\widehat{h}(x)=0,\qquad \forall x \notin [-\frac 12,\frac 12].
   $$
   Since the space of solutions of this differential equation is made by   functions which are linear combinations of the $2m$ trigonometric functions $\exp(\pm 2 \pi i n x)$ for $\vert n\vert\leq m, n\neq 0$, one sees that all these functions are periodic of period $1$; thus  since $\widehat{h}$ is square integrable it must vanish identically outside $[-\frac 12,\frac 12]$. Rescaling by $\log 2$, \ie using $\psi(x)=\frac{1}{\log 2}\widehat{h}(\frac{x}{\log 2})$, one obtains that $\psi$ has support in the interval $I=[-\frac 12 \log 2,\frac 12 \log 2]$. By Fourier inversion, one has for any $n\in \Z$, $n\neq 0$ 
   \begin{align*}
   \langle   \xi_{\alpha_n}\mid \psi\rangle&=(\log 2)^{-\frac 12}\int_I\psi(x)\exp(-\frac{2 \pi i \alpha_n x}{\log 2})dx=   \\
  &= (\log 2)^{-\frac 12}\int \widehat{h}(y)\exp(-2 \pi i \alpha_n y)dy=
(\log 2)^{-\frac 12}h(\alpha_n)=0
   \end{align*}
   which gives \eqref{psiortho} (since $h(1)=1\Rightarrow \langle   \xi_0\mid \psi\rangle=(\log 2)^{-\frac 12}$).  The  orthogonality of $\psi$ to all the vectors $\xi_{\alpha_n}$  shows using \eqref{opT} that $T\psi=\lambda\sum_\Z \bp_n \psi=\lambda \psi $, since the vectors $\xi_n$ form an orthonormal basis of $\cH$.
   \end{proof} 
   
   Note that the function $\psi$ is not normalized. The computation of the $L^2$ norms gives
   \begin{equation}\label{normpsi}
   	\Vert \psi \Vert_2 = (\log 2)^{-\frac 12}	\Vert h \Vert_2, \ \ 	\Vert h \Vert_2 \sim 1.05143, \ \ \text{for}\ m=1732
   \end{equation}

   \begin{figure}[H]	\begin{center}
\includegraphics[scale=0.8]{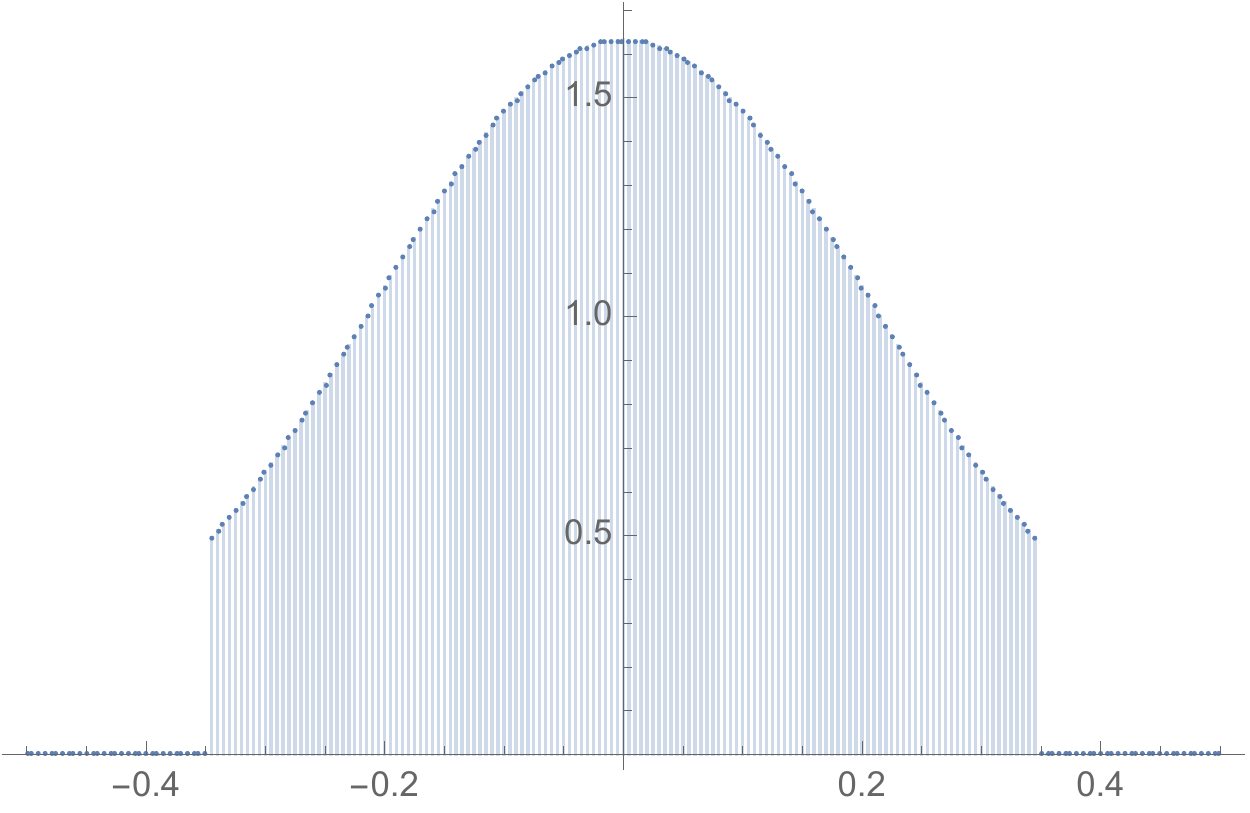}
\end{center}
\caption{Graph of $\zeta(x)=\psi(x)/\Vert \psi\Vert_2$ in $[-\frac 12 ,\frac 12 ]$. \label{psifunct} }
\end{figure}

The important numerical fact is 
\begin{fact}\label{factnum}
For $m=1732$, one has:  $\langle   \xi_{0}\mid \zeta\rangle\sim 0.94865$, where $\zeta(x)=\psi(x)/\Vert \psi\Vert_2$.
\end{fact}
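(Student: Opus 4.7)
The plan is to reduce the numerical claim to an identity expressing $\langle \xi_0 \mid \zeta\rangle$ purely in terms of $\Vert h\Vert_2$, and then evaluate that $L^2$ norm by a quadrature. From Lemma~\ref{supportf} one already has the closed form $\langle \xi_0 \mid \psi\rangle = (\log 2)^{-1/2}$, and the normalization \eqref{normpsi} gives $\Vert \psi\Vert_2 = (\log 2)^{-1/2}\,\Vert h\Vert_2$. Dividing these, the factor $(\log 2)^{-1/2}$ cancels and one obtains the clean identity
\[
\langle \xi_0 \mid \zeta \rangle \;=\; \frac{\langle \xi_0 \mid \psi\rangle}{\Vert \psi\Vert_2} \;=\; \frac{1}{\Vert h\Vert_2}.
\]
Thus the claim is equivalent to the numerical verification $\Vert h\Vert_2 \sim 1/0.94865$ for the specific function $h(z) = \prod_{n>0}(1 - z^2/\alpha_n^2)$ built from the first $m = 1732$ angles (with the convention $\alpha_n = n$ for $n > m$).

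To evaluate $\Vert h\Vert_2$ I would exploit the factorization
\[
h(z) \;=\; \frac{\sin(\pi z)}{\pi z}\,\prod_{n=1}^{m}\frac{1 - z^2/\alpha_n^2}{1 - z^2/n^2},
\]
which is finite because only the first $m$ factors differ from $1$, and which converges rapidly because the decay $\alpha_n - n \to 0$ (apparent in Figure~\ref{angles2} and quantified by the precomputed table) makes each ratio exponentially close to $1$. From here there are two equivalent computational routes: either integrate $|h(z)|^2$ on the real line directly by adaptive quadrature, profiting from the decay $1/z$ imposed by the sinc prefactor, or apply Parseval and integrate $|\widehat h(x)|^2$ over its compact support $[-\tfrac12,\tfrac12]$, where $\widehat h$ is obtained (up to a differential operator of order $2m$ in the spirit of \eqref{fourierloc}) from the box function.

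The conceptual content of the proof is essentially the identity $\langle \xi_0\mid\zeta\rangle = 1/\Vert h\Vert_2$ above; the main obstacle is purely computational, namely carrying out the evaluation of the $m = 1732$ factors to sufficient precision using the high-precision table of angles $\alpha_n$ made available via the link in Section~\ref{secttoeplitz}, and controlling rounding errors both in the product defining $h$ and in the oscillatory $L^2$ integral. Once $\Vert h\Vert_2$ is obtained to four significant digits, reading off its reciprocal yields the stated approximation $\langle \xi_0 \mid \zeta\rangle \sim 0.94865$, completing the verification.
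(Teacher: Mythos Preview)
Your proposal is correct and is precisely the reasoning the paper leaves implicit: Fact~\ref{factnum} is stated without proof because it follows at once from Lemma~\ref{supportf} and \eqref{normpsi} via the identity $\langle \xi_0\mid \zeta\rangle = \langle \xi_0\mid \psi\rangle/\Vert\psi\Vert_2 = 1/\Vert h\Vert_2$, together with the numerical evaluation of $\Vert h\Vert_2$. One small observation: the two numerical values quoted in the paper are not quite mutually consistent, since $1/1.05143\approx 0.95109$ rather than $0.94865$; this is a minor rounding discrepancy in the paper's numerics and does not affect your argument or the downstream use in Lemma~\ref{second}.
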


\subsection{Computation of the spectrum of $T$}\label{sectspec}
   The first method to compute the spectrum of the operator  $T$ of \eqref{opT} is to approximate this finite rank operator  using the orthogonal projection $P(n)$ on the linear span of the vectors $\xi_j$, for $\vert j\vert < n$. We use the following expression of the norm square $\Vert\xi_\alpha-P(n)\xi_\alpha\Vert^2$.
   
   \begin{lem}\label{projPn}  Let $(\log \Gamma)^{(2)}$ be the derivative of the logarithmic derivative of the $\Gamma$-function, then one has
   \begin{equation}\label{majorred}
  \Vert\xi_\alpha-P(n)\xi_\alpha\Vert^2= \pi ^{-2} \sin ^2(\pi \alpha)\left((\log \Gamma)^{(2)}(n-\alpha)+(\log \Gamma)^{(2)}(\alpha+n) \right),\qquad \forall \alpha\in [-n,n] \end{equation}
       \end{lem}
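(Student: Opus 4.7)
The plan is to expand $\xi_\alpha$ in the orthonormal basis $\{\xi_j\}_{j\in\Z}$ of $\cH$, compute the Fourier coefficients explicitly, and then recognize the resulting tail sum as a trigamma expression. Since $P(n)$ is the orthogonal projection onto the span of $\{\xi_j : |j|<n\}$ and the $\{\xi_j\}_{j\in\Z}$ form a complete orthonormal system in $\cH$, Parseval gives
$$\Vert \xi_\alpha - P(n)\xi_\alpha\Vert^2 = \sum_{|j|\geq n} |\langle \xi_j\mid \xi_\alpha\rangle|^2.$$

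Next, I would evaluate $\langle \xi_j\mid \xi_\alpha\rangle$ by direct integration over $I=[-\tfrac12\log 2,\tfrac12\log 2]$. After the change of variable $u = x/\log 2$, the integral reduces to the standard sinc integral
$$\langle \xi_j\mid \xi_\alpha\rangle = \int_{-1/2}^{1/2} e^{2\pi i(\alpha-j)u}\,du = \frac{\sin(\pi(\alpha-j))}{\pi(\alpha-j)}.$$
Using $\sin(\pi(\alpha-j))=(-1)^j\sin(\pi\alpha)$ gives
$$|\langle \xi_j\mid \xi_\alpha\rangle|^2 = \frac{\sin^2(\pi\alpha)}{\pi^2(\alpha-j)^2}.$$

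Substituting this into the Parseval sum and splitting the index set $\{|j|\geq n\}$ into the two disjoint pieces $\{j\geq n\}$ and $\{j\leq -n\}$, I obtain
$$\Vert \xi_\alpha - P(n)\xi_\alpha\Vert^2 = \frac{\sin^2(\pi\alpha)}{\pi^2}\left(\sum_{j\geq n}\frac{1}{(\alpha-j)^2}+\sum_{j\leq -n}\frac{1}{(\alpha-j)^2}\right).$$
The final step is to identify each tail with a value of the trigamma function via the classical series representation
$$(\log\Gamma)^{(2)}(z) = \sum_{k=0}^\infty \frac{1}{(z+k)^2}.$$
The change of index $k=j-n$ turns the first tail into $\sum_{k\geq 0}(n-\alpha+k)^{-2}=(\log\Gamma)^{(2)}(n-\alpha)$, and the change of index $k=-j-n$ turns the second into $\sum_{k\geq 0}(n+\alpha+k)^{-2}=(\log\Gamma)^{(2)}(n+\alpha)$. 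Combining these two expressions yields \eqref{majorred}.

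There is no serious obstacle: the only mild point is the convergence and positivity of the arguments $n\pm\alpha$ of the trigamma function, which is guaranteed by the hypothesis $\alpha\in[-n,n]$; at the endpoints $\alpha=\pm n$ one of the tails has a vanishing denominator at $j=\pm n$, but the prefactor $\sin^2(\pi\alpha)$ vanishes there, so the right-hand side of \eqref{majorred} is interpreted as the natural limit, consistent with $\xi_{\pm n}\in\mathrm{Ran}(P(n+1))\setminus \mathrm{Ran}(P(n))$.
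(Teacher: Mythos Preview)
Your proof is correct and follows essentially the same approach as the paper: compute the Fourier coefficients $\langle\xi_j\mid\xi_\alpha\rangle=\dfrac{\sin(\pi(\alpha-j))}{\pi(\alpha-j)}$, factor out $\sin^2(\pi\alpha)$, and identify the two tails $\sum_{j\geq n}$ and $\sum_{j\leq -n}$ with the trigamma series $(\log\Gamma)^{(2)}(n\mp\alpha)=\sum_{k\geq 0}(n\mp\alpha+k)^{-2}$. Your explicit handling of the endpoint case $\alpha=\pm n$ as a limit is a nice additional clarification that the paper omits.
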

   \begin{proof} The components of the vector $\xi_\alpha$ in the basis $\xi_m$ are given as follows 
   $$
   (\xi_\alpha)_k=\frac{1}{\log 2}\int _{-\frac{\log (2)}{2}}^{\frac{\log (2)}{2}} \exp \left(\frac{2 \pi  i (\alpha-k) x}{\log 2}\right)dx=\frac{\sin (\pi  (\alpha-k))}{\pi  (\alpha-k)}.
   $$
   One then uses  the identity, for $a<n$~ ($\sin ^2(\pi  (a-n))=\sin ^2(\pi a)$)
   $$
   \sum _{k=n}^{\infty } \left(\frac{\sin (\pi  (a-k))}{\pi  (a-k)}\right)^2=\frac{\sin ^2(\pi  (a-n)) }{\pi ^2}\sum _{k=0}^{\infty }(n-a+k)^{-2} =\pi ^{-2} \sin ^2(\pi a)(\log \Gamma)^{(2)}(n-a)
   $$
Similarly for $-n<a$ one has 
   $$
   \sum _{k=-\infty}^{-n } \left(\frac{\sin (\pi  (a-k))}{\pi  (a-k)}\right)^2=\pi ^{-2} \sin ^2(\pi a) \sum _{k=-\infty}^{-n } (a-k)^{-2}=\pi ^{-2} \sin ^2(\pi a)(\log \Gamma)^{(2)}(n+a)
   $$
   which gives \eqref{majorred}.
   \end{proof} 

  The equality
  $$
  P(n)\bp_\alpha(P(n)\xi) =\langle   \xi_\alpha\mid P(n)\xi\rangle P(n)\xi_\alpha
  =\langle   P(n) \xi_\alpha\mid \xi\rangle P(n)\xi_\alpha
  $$
  gives the simple estimate in operator norm 
  $$
  \Vert P(n)\bp_\alpha P(n)-\bp_\alpha\Vert \leq 2 \Vert\xi_\alpha-P(n)\xi_\alpha\Vert.
  $$
  This allows one to control the norm of the difference  $T-P(m)TP(m)$ as follows
\begin{equation}\label{opTcompressed}
\Vert T-P(m)TP(m)\Vert \leq 2\lambda \ \sum_{\vert n\vert <m} d(\vert n\vert)\Vert\xi_{\alpha_n}-P(m)\xi_{\alpha_n}\Vert.
 \end{equation}

Using \eqref{majorred} and the asymptotic behavior 
   $
   (\log \Gamma)^{(2)}(x)=\frac{1}{x}+\frac{1}{2 x^2}+O\left(\frac{1}{x}\right)^3
   $, one obtains a first control of $\Vert T-P(m)TP(m)\Vert$. Then, one can  compute the eigenvalues of the finite dimensional matrix $P(m)TP(m)$. We did it for $m=1733$, after dividing by $\lambda$, to check the highest eigenvalue to be $1$. One can then obtain the list of its eigenvalues;  the first few, when they are arranged in decreasing order, are the following
   $$
   \{1.,0.652824,0.027475,0.000290146,0.0000877245,0.0000756436\}.
   $$
Only the first three stand out as stable positive eigenvalues for $T/\lambda$. After multiplication by $\lambda$ these become
  \begin{equation}\label{opTeigen}
 \lambda=1.05158,\ \ \lambda_2=0.686494, \ \ \lambda_3=0.0288921.
    \end{equation}
One can also get the components $c_n$, on the basis of the $\xi_n$, of the eigenvector associated to the eigenvalue $\lambda$. These components are smaller than $10^{-4}$ for $n>30$ and their graph near $n=0$ (and for $n$ a bit further)  is reproduced here below  
 \begin{figure}[H]
\begin{minipage}[b]{0.45\linewidth}
\centering
\includegraphics[width=\textwidth]{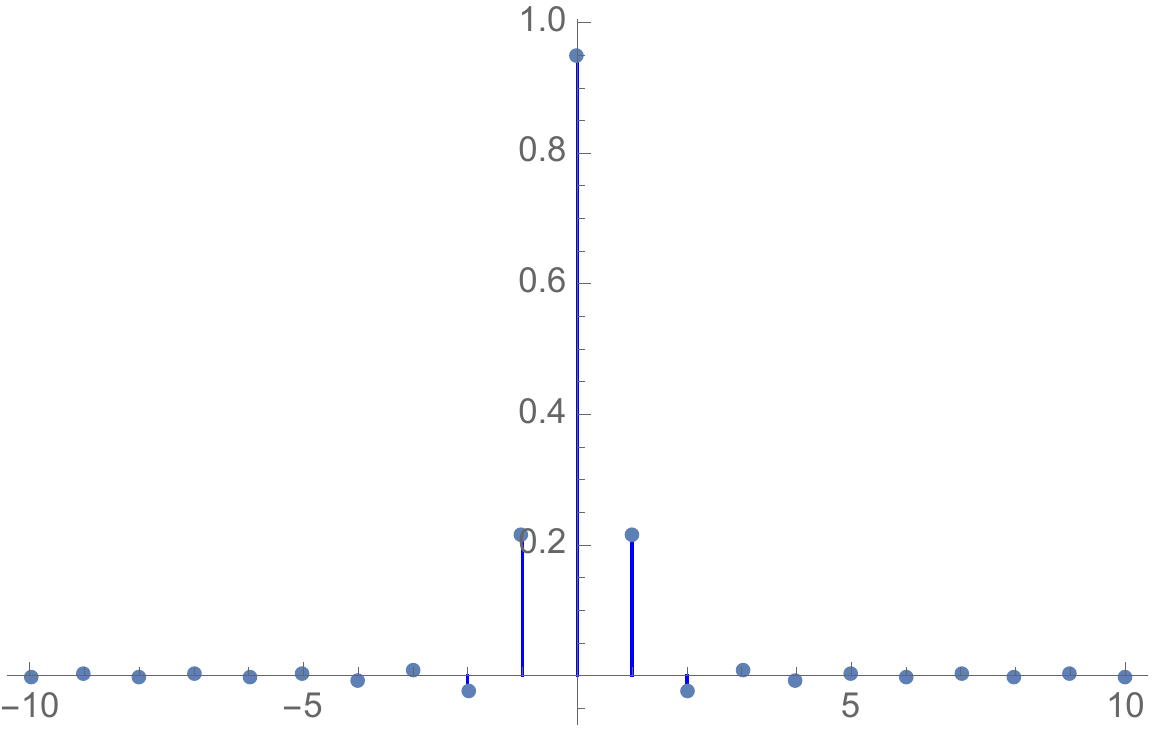}
\caption{\small{Graph of the components $c_n$ for $\vert n\vert <10$.}}
\label{eigencomps1}
\end{minipage}
\hspace{0.5cm}
\begin{minipage}[b]{0.45\linewidth}
\centering
\includegraphics[width=\textwidth]{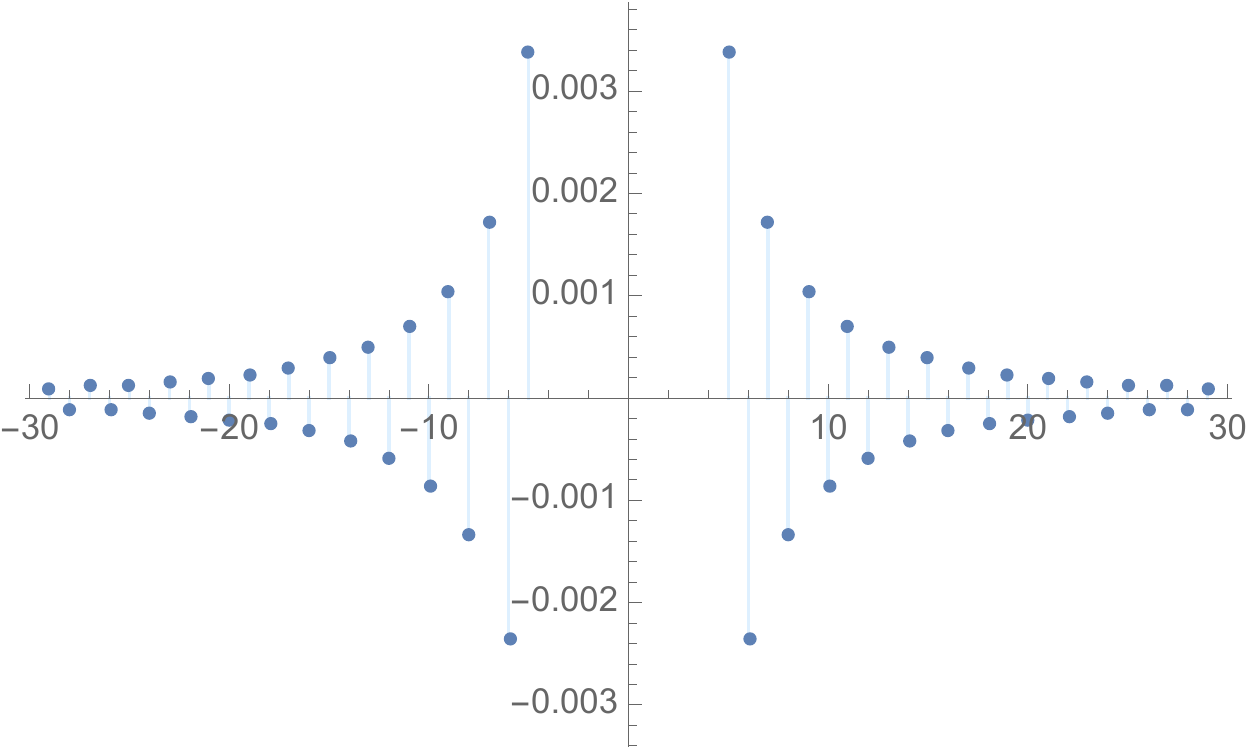}
\caption{\small{Graph of the components $c_n$ for $7<\vert n\vert <30$. } }
\label{eigencomps2}
\end{minipage}
\end{figure}

One also checks that the graph (Figure \ref{reconstructedeigen}) of the reconstructed function $\sum c_n\xi_n$ coincides with the graph (Figure \ref{psifunct}) of the theoretical eigenvector of \S \ref{subsectmaxvect}.
 \begin{figure}[H]	\begin{center}
\includegraphics[scale=0.7]{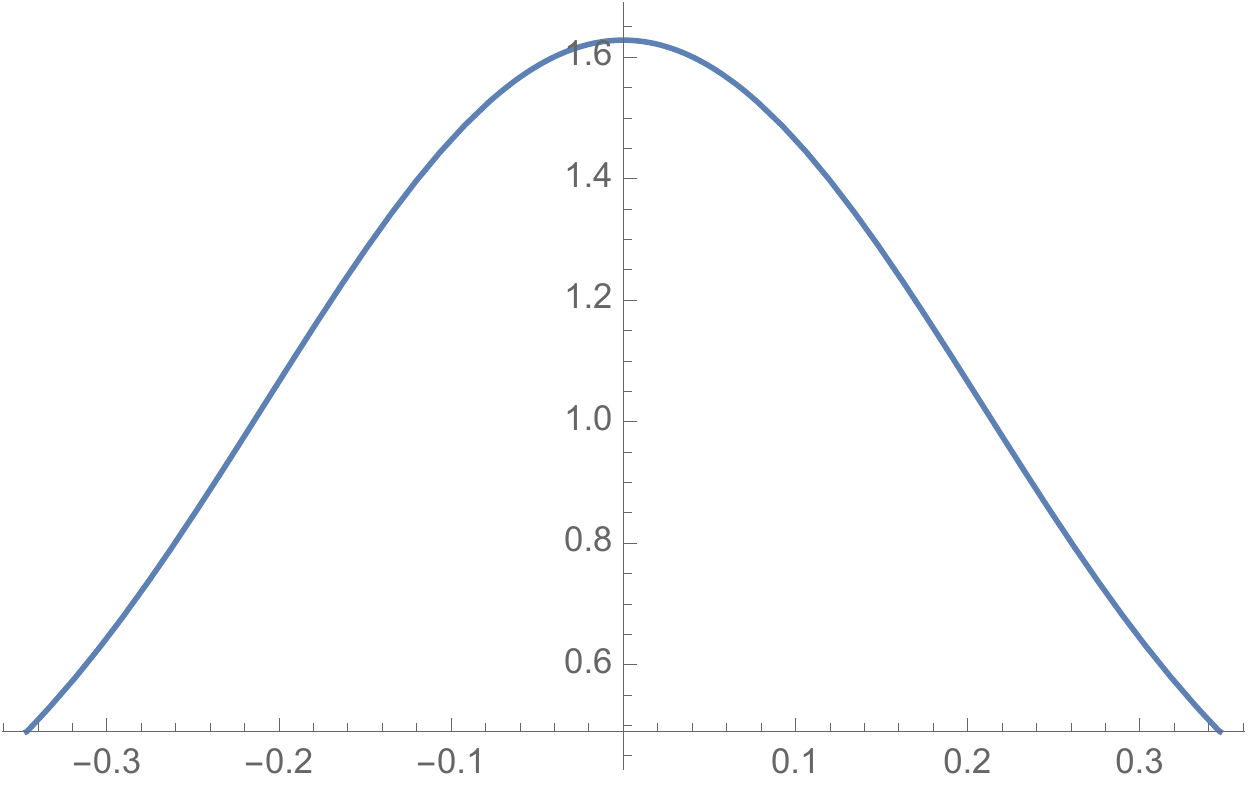}
\end{center}
\caption{Graph of the reconstructed function $\sum c_n\xi_n$. \label{reconstructedeigen} }
\end{figure}

The important fact is that the first component $c_0\sim 0.951067$ is very close to $1$.

\begin{rem}\label{remevenodd} The components $c_n$ fulfill the symmetry: $c_{-n}=c_n$ for all $n$. In fact, the finite dimensional real symmetric matrix $M=P(m)TP(m)$ fulfills the further symmetry $M_{-i,-j}=M_{i,j}$ \ie it commutes with the parity involution. It follows that the eigenvectors associated to simple eigenvalues are even or odd (with respect to the parity involution). One finds, for example, that the eigenvector associated to the second eigenvalue $\lambda_2$ is odd, \ie its components $c'_n$ fulfill $c'_{-n}=-c'_n$ for all $n$.	
\end{rem}

For our purposes, the estimate \eqref{opTcompressed} does not insure enough precision in the spectrum of $T$ and moreover the need to input the sum \eqref{opT} gives lengthy computations. 
We now describe a second  method to compute the spectrum of $T$ which improves the precision. 

We consider a new basis $(\zeta_n)$ of $\cH=L^2([-\frac 12 \log 2,\frac 12 \log 2],dx)$ which is no longer orthonormal. More precisely let, with the notation \eqref{xialpha},
$$
\zeta_0=\zeta, \qquad \zeta_k=\begin{cases}\xi_k, & \text{for~ $\vert k\vert>m$,}\\
\xi_{-\alpha_{\vert k\vert}} &\text{for ~$-m\leq k\leq - 1,$}\\
\xi_{\alpha_k} &\text{for~~$1\leq k\leq m$.}\end{cases}  
$$
One needs to check that the $\zeta_k$, for $\vert k\vert\leq m$, are linearly independent and this suffices to show that the $(\zeta_n)$ form a basis. In fact, by Lemma \ref{supportf}, $\zeta_0=\zeta$ is orthogonal to all $\zeta_j$ for $j\neq 0$. In this basis the inner product in $\cH$ is given by the matrix $J$: $J_{i,j}:=\langle \zeta_i \mid \zeta_j\rangle$. The operator \begin{equation}\label{G} G:= \sum_{n\in \Z} d(\vert n\vert)\bp_{\alpha_n}\end{equation} is such that 
$$
\langle \zeta_i \mid G\zeta_j\rangle=\sum_{n\in \Z} d(\vert n\vert)\langle \zeta_i \mid \bp_{\alpha_n}\zeta_j\rangle=\sum_{n\in \Z} d(\vert n\vert)\langle \zeta_i \mid \xi_{\alpha_n}\rangle \langle\xi_{\alpha_n}\mid \zeta_j\rangle=\sum_{n\in \Z} d(\vert n\vert)\langle \zeta_i \mid \zeta_n\rangle \langle\zeta_n\mid \zeta_j\rangle.
$$

\begin{lem}\label{lemspec} $(i)$~The spectrum of the operator $T$ of  \eqref{opT} is $\{\lambda_{\rm max} (1-\beta_j)\}$, where  $\beta_j$ are the eigenvalues of the matrix $A$:   $A_{n,k}:=d(\vert k\vert)\langle \zeta_n \mid \zeta_k\rangle$.\newline
$(ii)$~Let $N>m$. The eigenvalues of the matrix $A$ are approximated by the eigenvalues of the  matrix $A^{(N)}$ defined by: $A^{(N)}_{i,j}=A_{i,j}$ if
 $\vert i\vert \leq N$,  $\vert j\vert \leq N$ and  	$A^{(N)}_{i,j}=\delta_{i,j}$ otherwise, up to an error of $11\,\epsilon(N)$ where $$\epsilon(N)=\max (e(N),e'(N)), \qquad e(N)^2=\sum_{\vert j\vert\leq N}\epsilon(j,N), \ \ e'(N)^2=\sum_{\vert j\vert\leq N}d(\vert j\vert)^2\epsilon(j,N)$$
 with  $\epsilon(j,N):= \pi ^{-2} \sin ^2(\pi \alpha_j)\left((\log \Gamma)^{(2)}(N-\alpha_j)+(\log \Gamma)^{(2)}(\alpha_j+N) \right)$.\newline
 $(iii)$~The spectrum of $T$ is contained in $\{\lambda_{\rm max}\}\cup [-2,\lambda_2]$, where $\lambda_2\leq 0.772216$.
 \end{lem}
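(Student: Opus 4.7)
My plan treats the three parts in order, with part (ii) as the technical heart.

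For part (i), the identity $T=\lambda_{\max}(\mathrm{Id}-G)$ follows from $\sum_{n\in\Z}\bp_n=\mathrm{Id}$ on $\cH$ applied to \eqref{opT}. Since $d(0)=0$ and, for $n\neq 0$, the unit vector $\zeta_n$ equals $\xi_{\mathrm{sgn}(n)\alpha_{|n|}}$ (with the convention $\alpha_k=k$ for $|k|>m$), one has $\bp_{\alpha_n}=|\zeta_n\rangle\langle\zeta_n|$ and
\[
G\zeta_j=\sum_{n\neq 0}d(|n|)\,\zeta_n\langle\zeta_n\mid\zeta_j\rangle=\sum_k A_{k,j}\,\zeta_k.
\]
Thus $A$ is the matrix of $G$ in the basis $(\zeta_k)$, which is genuinely a basis of $\cH$ by Lemma~\ref{supportf} together with the linear independence of the vectors $\xi_{\pm\alpha_{|k|}}$. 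The operator spectrum of $G$ therefore agrees with the eigenvalues of $A$, giving $\mathrm{spec}(T)=\{\lambda_{\max}(1-\beta_j)\}$.

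For part (ii), I bound the operator norm of $E:=A-A^{(N)}$ for fixed $N>m$. On the corner $\{|i|,|j|>N\}$ one has $\zeta_i=\xi_i$ and $d(|i|)=1$, so $A$ and $A^{(N)}$ agree there; hence $E$ is supported on the two rectangular strips $\{|i|\leq N,\,|j|>N\}$ and $\{|i|>N,\,|j|\leq N\}$. I partition the short index into the three ranges $\{0\}$, $\{0<|k|\leq m\}$ and $\{m<|k|\leq N\}$, which decomposes $E$ into finitely many blocks whose nonzero entries all take the form $d(|k|)\langle\xi_{\pm\alpha_{|i|}}\mid\xi_j\rangle$ with $|j|>N$. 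Each such block is a partial projection onto the tail $(1-P(N))\xi_{\alpha_k}$, and Lemma~\ref{projPn} combined with Cauchy--Schwarz in the weights $d(|k|)$ bounds its Hilbert--Schmidt norm by $e(N)$ or $e'(N)$, hence by $\epsilon(N)$. A careful blockwise count (row strip vs.\ column strip, and the three ranges above) produces the sharp constant $11$, giving $\|E\|\leq 11\,\epsilon(N)$. Since the similarity $A\sim D^{1/2}JD^{1/2}$ makes $A$ Hermitian on the support of $D$, the Bauer--Fike theorem transfers the norm bound to eigenvalue displacement.

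For part (iii), the asymptotic $(\log\Gamma)^{(2)}(x)=x^{-1}+\tfrac{1}{2}x^{-2}+O(x^{-3})$ from Lemma~\ref{projPn} makes $\epsilon(N)$ decay rapidly, so choosing $N$ large enough renders $11\,\epsilon(N)$ much smaller than the expected gaps. I then diagonalize the finite block of $A^{(N)}$ numerically and apply (ii) to enclose $\mathrm{spec}(A)$ in narrow intervals around the computed eigenvalues. The vector $\zeta_0=\zeta$ lies in the kernel of $A$ by Lemma~\ref{supportf} and $d(0)=0$, which corresponds via (i) to the top eigenvalue $\lambda_{\max}$; the remaining eigenvalues of $A^{(N)}$ cluster near the values in \eqref{opTeigen}, forcing $\mathrm{spec}(T)\setminus\{\lambda_{\max}\}\subset[-2,\lambda_2]$ with $\lambda_2\leq 0.772216$ after absorbing the perturbation. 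The lower endpoint $-2$ follows from the crude bound $\|G\|\leq 1+\sup_n d(|n|)\leq 3$, giving $T\geq\lambda_{\max}(1-\|G\|)\geq-2$. The main obstacle is the decomposition and exact count in (ii): the individual estimates are routine Cauchy--Schwarz, but producing the constant $11$ (rather than a larger value inflated by double counting or logarithms) requires careful bookkeeping of the non-orthogonality of the $\zeta$-basis and the breakpoint at $|k|=m$ between the perturbed angles $\alpha_k$ and the integer tail.
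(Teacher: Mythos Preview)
Your outline for part (i) is essentially right, but note a small slip: from $G\zeta_j=\sum_{n\neq 0}d(|n|)\langle\zeta_n\mid\zeta_j\rangle\,\zeta_n$ the matrix of $G$ in the $\zeta$-basis is $DJ$, not $A=JD$ as defined in the statement. The two are similar (via $J$), so the conclusion survives; the paper makes this explicit by passing through $J^{-1/2}LJ^{-1/2}$ and showing $LJ^{-1}=A$.

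Part (ii) has a genuine gap, and it is precisely where you locate the constant $11$. The two off-diagonal strips of $E=A-A^{(N)}$ are bounded in Hilbert--Schmidt norm by $e(N)$ and $e'(N)$ respectively, so one obtains $\|A-A^{(N)}\|\le\epsilon(N)$ with constant $1$, not $11$; no ``careful blockwise count'' produces a factor larger than that. The difficulty is elsewhere: $A$ and $A^{(N)}$ are \emph{not} self-adjoint, so Weyl's inequality does not apply directly to them. The paper therefore conjugates by $J^{\pm 1/2}$ to obtain self-adjoint operators $\Pi=J^{-1/2}AJ^{1/2}$ and $\Pi_N=(J^{(N)})^{-1/2}A^{(N)}(J^{(N)})^{1/2}$, and it is the a priori conditioning of $J$ (numerically $s/r\approx 5.27$, established by a separate computation at $N=2000$) that turns $\epsilon(N)$ into $11\,\epsilon(N)$ via the estimate
\[
\|\Pi-\Pi_N\|\le \tfrac12(s/r)^{3/2}e(N)+(s/r)^{1/2}\epsilon(N)+\tfrac12(s/r)\,e(N).
\]
Your Bauer--Fike suggestion via $A\sim D^{1/2}JD^{1/2}$ (on the block $n\neq 0$, where $D$ is invertible since $d(0)=0$ but $\zeta_0$ already decouples by Lemma~\ref{supportf}) is actually a viable alternative: since $A=JD$ and $A^{(N)}=J^{(N)}D$, one has $D^{1/2}AD^{-1/2}=D^{1/2}JD^{1/2}$ and $D^{1/2}A^{(N)}D^{-1/2}=D^{1/2}J^{(N)}D^{1/2}$, both self-adjoint, with difference bounded by $\|D\|\,\|J-J^{(N)}\|\le 1.18\,e(N)$. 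This would give a \emph{better} constant than $11$, but you have not carried it out, and your stated mechanism for the $11$ is incorrect.

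In part (iii) the bound $\|G\|\le 1+\sup_n d(|n|)$ is not justified: the rank-one projections $\bp_{\alpha_n}$ are not mutually orthogonal, so the operator norm of their weighted sum is not controlled by the supremum of the weights. The paper obtains the needed a priori bound on $\|A\|$ (hence on the spectral radius of $G$) as a by-product of the $N=2000$ computation used to fix $r,s$; the lower endpoint $-2$ then follows loosely from $T=\lambda_{\max}(1-G)$ with $G\ge 0$ and $\|G\|\le s\approx 1.58$.
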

\begin{proof} By  \eqref{opT}: $T=\lambda_{\rm max}(\id-\sum_{n\in \Z} d(\vert n\vert)\bp_{\alpha_n})=\lambda_{\rm max}(\id-G)$, where    $ G$ is given in \eqref{G}.

Let $V:\ell^2(\Z)\to \cH$ be the linear map:  $V(\delta_n)=\zeta_n$ $\forall n\in \Z$, where $\delta_n$ is the canonical basis. One has by definition 
of the matrix $J$ ($J_{i,j}:=\langle \zeta_i \mid \zeta_j\rangle$), that 
$$
\langle V\eta \mid V\eta'\rangle=\langle \eta \mid J\eta'\rangle=\langle J^{\frac 12}\eta \mid J^{\frac 12}\eta'\rangle.
$$
This shows that $U:=VJ^{-\frac 12}:\ell^2(\Z)\to \cH$ is a unitary operator. 
The spectrum of $G$ (\eqref{G})  as an operator in $\cH$ is the same as the spectrum of the matrix $U^*GU=J^{-\frac 12}L J^{-\frac 12}$, where $L=V^*GV$ thus 
$$
L_{i,j}=\langle \delta_i\mid V^*GV \delta_j\rangle=\langle \zeta_i \mid G\zeta_j\rangle
=\sum_{n\in \Z} d(\vert n\vert)\langle \zeta_i \mid \zeta_n\rangle \langle\zeta_n\mid \zeta_j\rangle.
$$ 
The spectrum of $G$ is thus the same as that of the conjugate matrix $LJ^{-1}$. One has 
$$
(LJ^{-1})_{i,k}=\sum_{n,j\in \Z} d(\vert n\vert)\langle \zeta_i \mid \zeta_n\rangle \langle\zeta_n\mid \zeta_j\rangle (J^{-1})_{j,k}=\sum_{n\in \Z} d(\vert n\vert)\langle \zeta_i \mid \zeta_n\rangle \delta_{n,k},
$$
since for any $n\in \Z$ one has: $\sum \langle\zeta_n\mid \zeta_j\rangle (J^{-1})_{j,k}=(J \,J^{-1})_{n,k} =\delta_{n,k}$. Thus $(LJ^{-1})_{i,k}=d(\vert k\vert)\langle \zeta_i \mid \zeta_k\rangle$ and this proves $(i)$.\newline 
$(ii)$~For $\vert n\vert>m$ and $\vert k\vert>m$, one has: $d(\vert k\vert)=1$, $\zeta_n=\xi_n$, $\zeta_k=\xi_k$, so  $A_{n,k}=d(\vert k\vert)\langle \zeta_n \mid \zeta_k\rangle=\delta_{n,k}$. The entries $A_{i,j}-A^{(N)}_{i,j}$ of the matrix $A-A^{(N)}$ are non-zero only in the range\footnote{$[-N,N]^c$ denotes the complement of $[-N,N]$}
$$(i,j)\in [-N,N]\times [-N,N]^c \cup [-N,N]^c\times  [-N,N].
$$ 
Thus the operator norm of $A-A^{(N)}$ is less than the sup of the norms of the two blocks corresponding to $[-N,N]\times [-N,N]^c$ and $[-N,N]^c\times  [-N,N]$. In turns, the operator norm of these blocks is majored by their Hilbert-Schmidt norm, whose square is 
$$
\sum_{\vert i\vert\leq N,\vert j\vert> N} \vert A_{i,j}\vert^2=\sum_{\vert i\vert\leq N}  \Vert \zeta_i-P(N)\zeta_i\Vert^2, \ \sum_{\vert i\vert> N,\vert j\vert \leq  N} \vert A_{i,j}\vert^2=\sum_{\vert j\vert\leq N} d(\vert j\vert)^2 \Vert \zeta_j-P(N)\zeta_j\Vert^2.
$$
Since we assume $N>m$, one has $\zeta_0=P(N)\zeta_0$, and by \eqref{majorred}, for $ j\neq 0, \vert j\vert\leq N$,
$$
\Vert \zeta_j-P(N)\zeta_j\Vert^2=\epsilon(j,N):= \pi ^{-2} \sin ^2(\pi \alpha_j)\left((\log \Gamma)^{(2)}(N-\alpha_j)+(\log \Gamma)^{(2)}(\alpha_j+N) \right). 
$$
We thus obtain the following control on the operator norm 
\begin{equation}\label{aminusan}
	\Vert A-A^{(N)}\Vert \leq \max (e(N),e'(N)), \ e(N)^2=\sum_{\vert j\vert\leq N}\epsilon(j,N), \ \ e'(N)^2=\sum_{\vert j\vert\leq N}d(\vert j\vert)^2\epsilon(j,N).
\end{equation}
Let $J^{(N)}$ be the  matrix  defined by: $J^{(N)}_{i,j}=J_{i,j}$ if
 $\vert i\vert \leq N$,  $\vert j\vert \leq N$ and  	$J^{(N)}_{i,j}=\delta_{i,j}$ otherwise. By the same argument as above one obtains 
 \begin{equation}\label{jminusjn}
	\Vert J-J^{(N)}\Vert \leq  e(N).
	\end{equation}
	The matrices $J$ and  $J^{(N)}$ are strictly positive. Let $0<r<1<s$ be such that 
	\begin{equation}\label{apriori}
	\Spec J\subset [r,s], \qquad \Spec J^{(N)}\subset [r,s], \qquad  \Vert A\Vert \leq s,\quad  \Vert A^{(N)}\Vert \leq s.
	\end{equation}
 We shall provide the numerical values of $r,s$ later. Now we estimate the norm difference of the  matrices $\Pi=J^{-\frac 12}AJ^{\frac 12}$ and $\Pi_N=(J^{(N)})^{-\frac 12}A^{(N)}(J^{(N)})^{\frac 12}$ which, as shown below, are both positive. We use the equality for strictly positive operators $X$, $X'$
	$$
	X^{-\frac 12}=\frac{1}{\pi}\int_0^\infty (\lambda +X)^{-1}\lambda^{-\frac 12}d\lambda, \ \ X^{-\frac 12}-X'^{-\frac 12}=
	\frac{1}{\pi}\int_0^\infty (\lambda +X)^{-1}(X'-X) (\lambda +X')^{-1}\lambda^{-\frac 12}d\lambda
	$$
	This gives for $X, X'\geq r>0$ the estimate: $\Vert X^{-\frac 12}-X'^{-\frac 12}\Vert\leq \frac 12 \, r^{-\frac 32}\Vert X-X'\Vert $. Similarly
	$$
	X^{\frac 12}-X'^{\frac 12}=
	\frac{1}{\pi}\int_0^\infty (\lambda +X')^{-1}(X-X') (\lambda +X)^{-1}\lambda^{\frac 12}d\lambda
	$$
	which gives the estimate: $\Vert X^{\frac 12}-X'^{\frac 12}\Vert\leq \frac 12 \, r^{-\frac 12}\Vert X-X'\Vert $. Then we obtain using \eqref{apriori}, \eqref{aminusan}, \eqref{jminusjn} 
	$$
	\Vert \Pi-\Pi_N \Vert \leq  \Vert (J^{-\frac 12}-(J^{(N)})^{-\frac 12})AJ^{\frac 12}\Vert +  \Vert (J^{(N)})^{-\frac 12}(A-A^{(N)})J^{\frac 12}\Vert+
	\Vert (J^{(N)})^{-\frac 12}A^{(N)}(J^{\frac 12}-(J^{(N)})^{\frac 12})\Vert
	$$
	so that
	\begin{equation}\label{apriori1}
	\Vert \Pi-\Pi_N \Vert \leq  \frac 12 \, (s/r)^{\frac 32}e(N)+(s/r)^{\frac 12} \max (e(N),e'(N))+ \frac 12 \, (s/r)e(N):=\epsilon_1(N).
	\end{equation}
	The operator  $\id-\Pi$ is compact and self-adjoint, since it corresponds to the matrix $U^*GU=J^{-\frac 12}L J^{-\frac 12}$. The operators $A^{(N)}$, $J^{(N)}$ decompose as direct sums in the decomposition $\ell^2(\Z)=\ell^2([-N,N])\oplus \ell^2([-N,N]^c)$ and both act as identity in $\ell^2([-N,N]^c)$. Their actions in $\ell^2([-N,N])$ are given respectively by the matrices $d(\vert j\vert)\langle \zeta_i \mid \zeta_j\rangle$ and $\langle \zeta_i \mid \zeta_j\rangle$. Thus one derives  that the operator $J^{(N)}$ is positive and that, as above, 
	$$
	\sum_{n,j\in [-N,N]} d(\vert n\vert)\langle \zeta_i \mid \zeta_n\rangle \langle\zeta_n\mid \zeta_j\rangle ((J^{(N)})^{-1})_{j,k}=\sum_{n\in [-N,N]} d(\vert n\vert)\langle \zeta_i \mid \zeta_n\rangle \delta_{n,k},
	$$
	which shows that both $A^{(N)}J^{(N)}$ and $\Pi_N=(J^{(N)})^{-\frac 12}A^{(N)}(J^{(N)})^{\frac 12}$ are positive operators. Thus the operator  $\id-\Pi^{(N)}$ is compact and self-adjoint. Moreover, 
	 the norm of the difference $\Pi-\Pi_N$ is majored by $\epsilon_1(N)$. It follows (see \cite{Simon} Theorem 1.7)
	that the eigenvalues $\lambda_j, \lambda_j^{(N)}$ of $\id-\Pi$ and $\id-\Pi^{(N)}$  arranged in decreasing order, fulfill the inequality:  $\vert\lambda_j- \lambda_j^{(N)}\vert \leq \epsilon_1(N)$, $\forall j$. But these eigenvalues are the same as those of the conjugate operators $\id-A$ and $\id-A^{(N)}$. It remains to determine $0<r<1<s$ such that \eqref{apriori} holds. Note first that if $0<r<1<s$ are chosen so  that 
	\begin{equation}\label{apriori2}
	\Spec J\subset [r,s], \qquad  \Vert A\Vert \leq s, 
	\end{equation}
	then \eqref{apriori} holds since $J^{(N)}=P(N)JP(N)+(1-P(N))$; so  $r\leq J\leq s$ implies $r\leq J^{(N)}\leq s$. Also one has: $A^{(N)}=P(N)AP(N)+(1-P(N))$, so that $\Vert A^{(N)}\Vert \leq \Vert A\Vert$. One then takes $N=2000$ and uses \eqref{aminusan} and \eqref{jminusjn}. Note that the matrix $A^{(N)}$ is not self-adjoint and to bound its norm one uses its decomposition as a sum of a symmetric and an antisymmetric matrix, together with the computation of the eigenvalues of both, which gives the upper bounds $1.533$ and $0.0285$ for their norms. The value of $\max (e(N),e'(N))$ for $N=2000$ is $\sim 0.017$ which provides the bound $s=1.578$. One finds that the eigenvalues of $J^{(N)}$, for $N=2000$, are inside the interval $[0.313,1.346]$ and using $e(N)\sim 0.0145$ one gets $r=0.299$. This gives $s/r\sim 5.27$ and,  by \eqref{apriori1}, $\epsilon_1(N)\leq 11\, \epsilon(N)$: the required bound.\newline
	$(iii)$~We use $(ii)$ and take $N=10 000$. One gets $\epsilon(N) \sim 0.00740487$, while the first non-zero eigenvalue of  the matrix $A^{(N)}$ is $\beta_2^{(N)}=0.347112$. Thus by $(ii)$ the first non-zero eigenvalue of  the matrix $A$ is   $\beta_2\geq \beta_2^{(N)}-11 \epsilon(N)\sim 0.265658$. This shows that the second eigenvalue $\lambda_2=\lambda_{\rm max}(1-\beta_2)$ of $T$ fulfills  $\lambda_2\leq 0.772216$. 
  \end{proof}

\subsection{Proof of Theorem \ref{mainthmintro}}\label{sectproof}
The above computation of the spectrum of the compact operator $T$ together with Lemma \ref{approachk} and the estimate \eqref{computerverif} provide the needed information on the spectrum of the compact operator   $\kf_I$, since both $T$ and $\kf_I$ are self-adjoint. Then,  by \cite{Simon} Theorem 1.7, with their eigenvalues arranged in decreasing order and for $\epsilon_1\simeq 0.00122$, one has: 
\begin{equation}\label{spectral0}
\Vert \kf_I-T\Vert \leq \epsilon_1,  \qquad \vert \lambda_n(\kf_I)-\lambda_n(T)\vert \leq \epsilon_1.
\end{equation}
These bounds  allow one to transfer the results of \S \ref{sectspec} from $T$, to $\kf_I$. Up to some computational imprecision which we evaluate later, the results on $T$ are as follows
\begin{enumerate}
\item The three largest eigenvalues of $T$ are given by \eqref{opTeigen} \ie
 $$\lambda_{\rm max}=1.05158,\ \ \lambda_2=0.686494, \ \ \lambda_3=0.0288921.$$
\item The inner product of $\zeta$ with the constant function\footnote{normalized to be of  norm $1$} $\xi_0$ is $\sim 0.94865$.	
\end{enumerate}
Let  $P_\zeta$ be the orthogonal projection onto $\zeta^\perp:=\{ \eta \in \cH\mid \langle \zeta \mid \eta\rangle =0\}$. The spectral decomposition
 \begin{equation}\label{spectral1}
 T=	\lambda_{\rm max}\vert \zeta\rangle \langle \zeta \vert +R, \qquad R\leq \lambda_2\, P_\zeta
 \end{equation}
shows that the quadratic form associated to $\id - T$ is given, using $\id=\vert \zeta\rangle \langle \zeta\vert +P_\zeta$, by
\begin{equation}\label{spectral2}
 \langle \xi\mid  (\id - T)\xi\rangle =(1-	\lambda_{\rm max})\vert  \langle \zeta \mid \xi \rangle \vert^2 +\langle P_\zeta\xi\mid (P_\zeta - R)\xi\rangle  \end{equation}
and since $R\leq \lambda_
2\, P_\zeta
$, the last term fulfills 
\begin{equation}\label{spectral3}
 \langle P_\zeta\xi\mid (P_\zeta - R)\xi\rangle  \geq (1-\lambda_2) \Vert P_\zeta\xi\Vert^2.
 \end{equation} 

Next lemma shows how to restore positivity in a quadratic form which is positive on a codimension one subspace, by  adding a rank one quadratic form.

\begin{lem}\label{first} Let $\cH$ be a Hilbert space, $\phi,\psi\in \cH$ be unit vectors and $P_\phi$ the orthogonal projection on $\phi^\perp:=\{ \eta \in \cH\mid \langle \phi \mid \eta\rangle =0\}$. Let  $a,b,c\in \R_+$. Then the following quadratic form on $\cH$
$$
B(\xi):=-b \vert\langle \phi \mid \xi\rangle\vert^2+ a\vert\langle \psi \mid \xi\rangle\vert^2+c \Vert P_\phi(\xi)\Vert^2
$$
 is positive if and only if 
 \begin{equation}\label{posconds}
 a +c \geq b, \qquad 	b (a +c)\leq a  (b+c)\vert\langle \phi \mid \psi\rangle\vert^2.
 \end{equation}	
When \eqref{posconds} holds one has: $B(\xi)\geq \epsilon \Vert \xi \Vert^2$ $\forall \xi\in \cH$, where 
\begin{equation}\label{posconds2}
	2\epsilon=a-b+c-\left((a+b+c)^2-4a (b+c)\vert\langle \phi \mid \psi\rangle\vert^2 \right)^\frac 12.
\end{equation}
\end{lem}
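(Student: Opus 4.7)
The plan is to reduce the problem to the spectral analysis of a $2\times 2$ Hermitian matrix. Setting $\alpha := \langle\phi\mid\psi\rangle$ and $\eta := \psi - \alpha\phi$, we have $\eta \perp \phi$ and $\|\eta\|^2 = 1-\vert\alpha\vert^2$. Any $\xi \in \cH$ decomposes uniquely as $\xi = u\phi + v\tilde\eta + \xi_\perp$, with $u,v\in\C$, $\tilde\eta := \eta/\|\eta\|$ (assuming $0 < \vert\alpha\vert < 1$; the boundary cases $\alpha = 0$ and $\vert\alpha\vert = 1$ are checked directly), and $\xi_\perp$ orthogonal to both $\phi$ and $\eta$. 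Since $\psi$ lies in $\mathrm{span}(\phi,\tilde\eta)$, one gets $\langle\psi\mid\xi\rangle = \bar\alpha u + \sqrt{1-\vert\alpha\vert^2}\,v$, independent of $\xi_\perp$. The quadratic form therefore splits as
\begin{equation*}
B(\xi) = \langle z \mid M_B\, z \rangle_{\C^2} + c\|\xi_\perp\|^2, \qquad z = (u,v)^T,
\end{equation*}
where
\begin{equation*}
M_B = \begin{pmatrix} a\vert\alpha\vert^2 - b & a\alpha\sqrt{1-\vert\alpha\vert^2} \\ a\bar\alpha\sqrt{1-\vert\alpha\vert^2} & a(1-\vert\alpha\vert^2)+c \end{pmatrix}.
\end{equation*}

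Because $c \geq 0$ and $\xi_\perp$ varies freely, positivity of $B$ on $\cH$ is equivalent to positivity of $M_B$. The standard criterion for $2\times 2$ Hermitian matrices splits this into $\mathrm{Tr}(M_B) = a - b + c \geq 0$ and $\det(M_B) \geq 0$. Expansion yields
\begin{equation*}
\det(M_B) = (a\vert\alpha\vert^2 - b)\bigl(a(1-\vert\alpha\vert^2)+c\bigr) - a^2\vert\alpha\vert^2(1-\vert\alpha\vert^2) = a(b+c)\vert\alpha\vert^2 - b(a+c),
\end{equation*}
which is precisely the second inequality in \eqref{posconds}.

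For the quantitative bound $B(\xi) \geq \epsilon\|\xi\|^2$, the same splitting gives $B(\xi) - \epsilon\|\xi\|^2 = \langle z \mid (M_B - \epsilon I_2) z\rangle_{\C^2} + (c-\epsilon)\|\xi_\perp\|^2$, so the sharp threshold is $\min(\lambda_{\min}(M_B), c)$. For a $2\times 2$ Hermitian matrix with trace $a+c-b$ and the determinant computed above, one has $\lambda_{\min}(M_B) = \tfrac{1}{2}\bigl((a+c-b) - \sqrt{(a+c-b)^2 - 4\det(M_B)}\bigr)$, and the discriminant simplifies via the identity $(a+c-b)^2 + 4b(a+c) = (a+b+c)^2$ to $(a+b+c)^2 - 4a(b+c)\vert\alpha\vert^2$, reproducing \eqref{posconds2}. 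It remains to check $\lambda_{\min}(M_B) \leq c$, which follows from $(a+b+c)^2 - (a-b-c)^2 = 4a(b+c) \geq 4a(b+c)\vert\alpha\vert^2$: then the $\xi_\perp$ coefficient $c-\epsilon$ is non-negative and the bound extends from the two-dimensional plane to all of $\cH$.

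The main obstacle is essentially bookkeeping: recognizing that $\psi \in \mathrm{span}(\phi,\tilde\eta)$ genuinely reduces the infinite-dimensional problem to two dimensions, after which positivity and the sharp constant are read off directly from $M_B$. The only mildly subtle point is the verification that the value of $\epsilon$ produced by the $2\times 2$ analysis automatically satisfies $\epsilon \leq c$, which is what allows the orthogonal complement of the plane to be absorbed harmlessly into the positivity.
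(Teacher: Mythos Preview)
Your proof is correct and follows essentially the same approach as the paper: reduce to the two-dimensional subspace spanned by $\phi$ and $\psi$, write the quadratic form there as the $2\times 2$ Hermitian matrix $M_B$, and read off the positivity conditions from its trace and determinant; the verification that $\epsilon\leq c$ via the identity $(a+b+c)^2-(a-b-c)^2=4a(b+c)$ is also the same. The only cosmetic difference is that you keep $\alpha$ complex throughout whereas the paper first reduces to $\alpha$ real.
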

\begin{proof} For any $\xi \in \cH$ one has
the orthogonal decomposition 
$$\xi=P_\phi(\xi)+\phi\, \langle \phi \mid \xi\rangle=P_\phi(\xi)+e_\phi(\xi). $$
 Let $\psi_1= e_\phi(\psi)$ and $\psi_2= P_\phi(\psi)$. We can assume $\psi_2\neq 0$ since otherwise $B$ is positive iff $a\geq b$. We can also assume that the scalar product $\alpha=\langle \phi \mid \psi\rangle$ is real. Let then $E\subset \cH$ be the two dimensional space generated by $\phi$ and $\psi$. Since by hypothesis $c\geq 0$, it follows that $B$ is positive iff its restriction to $E$ is positive. In the orthonormal basis of $E$ given by $(\phi, \psi_2/\Vert\psi_2\Vert)$ the matrix which represents $B$ is 
 $$
 \left(
\begin{array}{cc}
 a \alpha ^2-b & a \alpha  \beta  \\
 a \alpha  \beta  & a \beta ^2+c \\
\end{array}
\right), \qquad \alpha=\langle \phi \mid \psi\rangle, \quad \beta= \Vert\psi_2\Vert.
$$
It is real symmetric, hence its eigenvalues are real: they are both positive iff the trace and the determinant are positive. One also has: $\alpha ^2+\beta^2=\Vert \psi\Vert^2=1$. Thus the trace is $a+c-b$. The determinant is: $ca\alpha^2-ba \beta^2-bc=a\alpha^2(b+c)-b(a+c)$. The inequality $B(\xi)\geq \epsilon \Vert \xi \Vert^2$ $\forall \xi\in \cH$ then follows from the formula for the eigenvalues of the above matrix, with $\epsilon$ being the small one, and  the inequality $\epsilon \leq c$ which follows from 
$$
(a+b+c)^2-4a (b+c)\vert\langle \phi \mid \psi\rangle\vert^2-(-a+b+c)^2=4 a  (b+c)\left(1-\vert\langle \phi \mid \psi\rangle\vert^2\right).
$$
Finally, for $\xi=\xi_1+\xi_2$, $\xi_1\in E$, $\xi_2\in E^\perp$ one has $B(\xi)=B(\xi_1)+B(\xi_2)\geq \epsilon \Vert \xi_1 \Vert^2+c \Vert \xi_2 \Vert^2$.
 \end{proof}
 
 \begin{lem}\label{second} Let $\nf_I=-2\epsilon'(1_+)\left(\id - \kf_I\right)$  be the operator in $\cH=L^2(I)$, $I=[-\frac 12 \log 2,\frac 12 \log 2]$  which represents the quadratic form associated to $E_+\circ Q_+$ as in Proposition \ref{propcompact}. Then, with $\gamma\sim 2.94355$, 
  \begin{equation}\label{negativeNI}
  \langle   \xi\mid \nf_I(\xi)\rangle	 \leq \gamma\vert\langle \xi_0 \mid \xi\rangle\vert^2,
  \quad \forall\xi \in \cH.
  \end{equation} 	
 \end{lem}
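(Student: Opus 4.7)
The plan is to transfer the detailed spectral information about the finite rank operator $T$ of Lemma \ref{approachk} to the compact operator $\kf_I$ via the norm estimate $\|\kf_I - T\| \leq \epsilon_1$ of \eqref{spectral0}, and then apply Lemma \ref{first} with the pair of unit vectors $(\phi,\psi) = (\zeta,\xi_0)$, where $\zeta$ is the normalized maximal eigenvector of $T$ from Lemma \ref{supportf} and $\xi_0$ is the constant function from \eqref{xialpha}. The numerical inputs I would rely on are: $\epsilon'(1_+) \sim 22.9965$ (Lemma \ref{epsilon'}), $\lambda_{\max} \sim 1.05158$ from \eqref{opTeigen}, the bound $\lambda_2 \leq 0.772216$ from Lemma \ref{lemspec}$(iii)$, the approximation error $\epsilon_1 \sim 0.00122$ of Fact \ref{approximation}, and $|\langle\xi_0 \mid \zeta\rangle| \sim 0.94865$ from Fact \ref{factnum}.

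First I would convert the spectral decomposition \eqref{spectral1} of $T$ into an operator inequality for $\id - \kf_I$. Since $T = \lambda_{\max}|\zeta\rangle\langle\zeta| + R$ with $R \leq \lambda_2 P_\zeta$, writing $P_\zeta = \id - |\zeta\rangle\langle\zeta|$ gives $\id - T \geq (1-\lambda_2)\id - (\lambda_{\max} - \lambda_2)|\zeta\rangle\langle\zeta|$, and combining with $\|\kf_I - T\| \leq \epsilon_1$ yields
\[
\id - \kf_I \,\geq\, (1-\lambda_2-\epsilon_1)\,\id - (\lambda_{\max}-\lambda_2)\,|\zeta\rangle\langle\zeta|.
\]
Multiplying by $-2\epsilon'(1_+) < 0$ and using $\|\xi\|^2 = |\langle \zeta|\xi\rangle|^2 + \|P_\zeta\xi\|^2$ converts the target inequality $\langle \xi|\nf_I\xi\rangle \leq \gamma|\langle \xi_0|\xi\rangle|^2$ into the positivity of the quadratic form
\[
B(\xi) \,=\, -b\,|\langle\zeta|\xi\rangle|^2 + \gamma\,|\langle\xi_0|\xi\rangle|^2 + c\,\|P_\zeta\xi\|^2,
\]
with $b = 2\epsilon'(1_+)(\lambda_{\max} - 1 + \epsilon_1)$ and $c = 2\epsilon'(1_+)(1 - \lambda_2 - \epsilon_1)$, both of which are positive with the numerical values above.

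At this point Lemma \ref{first} applies verbatim with $\phi = \zeta$, $\psi = \xi_0$, $a = \gamma$ and $|\langle \phi \mid \psi\rangle|^2 = |\langle\zeta \mid \xi_0\rangle|^2 \sim 0.9$. The first condition of \eqref{posconds} is automatic (the relevant difference $a+c-b$ is positive), while the second reduces, after observing that $c|\langle\zeta \mid \xi_0\rangle|^2 - b(1 - |\langle\zeta \mid \xi_0\rangle|^2)$ is positive for our constants, to the explicit lower bound
\[
\gamma \,\geq\, \frac{b\,c}{c\,|\langle\zeta \mid \xi_0\rangle|^2 - b\,(1-|\langle\zeta \mid \xi_0\rangle|^2)}.
\]
Plugging in the five numerical inputs fixed above gives the threshold value $\gamma \sim 2.94355$ for which $B \geq 0$, which is the content of \eqref{negativeNI}.

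The main obstacle I anticipate is numerical book-keeping rather than any conceptual step: the bounds $\lambda_2 \leq 0.772216$ of Lemma \ref{lemspec}$(iii)$ and $\|\kf_I - T\| \leq \epsilon_1$ of Lemma \ref{approachk} propagate with losses to both $b$ and $c$, and because $\zeta$ is defined as an eigenvector of the auxiliary operator $T$ and not of $\kf_I$ itself, the spectral perturbation estimate $|\lambda_j(\kf_I) - \lambda_j(T)| \leq \epsilon_1$ of \eqref{spectral0} must be carried through the rank-one perturbation argument; these losses explain why the value $\gamma \sim 2.94355$ produced by the threshold formula is slightly larger than the optimum that the raw numerical values of $\lambda_{\max}$, $\lambda_2$ and $|\langle\zeta \mid \xi_0\rangle|$ alone would suggest.
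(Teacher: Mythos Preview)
Your approach is essentially the same as the paper's: both reduce to Lemma~\ref{first} with $(\phi,\psi)=(\zeta,\xi_0)$ and transfer from $T$ to $\kf_I$ via the norm bound $\Vert\kf_I-T\Vert\leq\epsilon_1$. The only structural difference is the order in which the perturbation $\epsilon_1$ is absorbed. You fold $\epsilon_1$ into the parameters up front, taking $b=2\epsilon'(1_+)(\lambda_{\max}-1+\epsilon_1)$ and $c=2\epsilon'(1_+)(1-\lambda_2-\epsilon_1)$, and then invoke Lemma~\ref{first} at its threshold. The paper instead applies Lemma~\ref{first} to $\id-T$ with the unperturbed values $b=\lambda_{\max}-1$, $c=1-\lambda_2$, but chooses $a\sim 0.064$ strictly above the threshold so that the resulting form dominates $\epsilon_2\Vert\xi\Vert^2$ with $\epsilon_2\sim 0.00441>\epsilon_1$; the perturbation is then absorbed by this slack, and $\gamma=2\epsilon'(1_+)\cdot a\sim 2.94355$.

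One numerical point: your threshold formula does \emph{not} produce $\gamma\sim 2.94355$. With the inputs you list ($\lambda_{\max}\sim 1.05158$, $\lambda_2\leq 0.772216$, $\epsilon_1\sim 0.00122$, $\vert\langle\zeta\mid\xi_0\rangle\vert^2\sim 0.9$, $\epsilon'(1_+)\sim 22.9965$) one gets $\gamma_{\rm threshold}\sim 2.77$. This is harmless for the stated lemma, since any $\gamma$ above the threshold works and $2.94355>2.77$, but the specific value $2.94355$ in the statement originates from the paper's choice $a=0.064$ rather than from your formula.
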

\begin{proof} We first work with $T$ and apply Lemma \ref{first}, with $\phi=\zeta$, $\psi=\xi_0$.  
We determine the scalar $a>0$ to fulfill
\eqref{posconds} for $b=\lambda_{\rm max}-1$, $c=1-\lambda_2$ and the inner product of the two vectors given by $\langle \zeta\mid \xi_0\rangle$. Since for the above numerical values one has $c>b$, the condition $ a +c \geq b$ is automatic. The second condition of \eqref{posconds} is 
$$
a \left( (c+b)\vert\langle \zeta\mid \xi_0\rangle\vert^2-b\right)\geq bc.
$$
For the above numerical values one has  $b\sim 0.05158$,  $\langle \zeta\mid \xi_0\rangle\sim 0.94865$. By Lemma \ref{lemspec} one has $c>0.227784$, then   using \eqref{posconds2}, the following inequality becomes valid, for  
$a \sim 0.064$, $\epsilon_2 \sim 0.00441$
$$
\langle   \xi\mid (\id -T)\xi\rangle	 +a \vert\langle \xi_0 \mid \xi\rangle\vert^2\geq \epsilon_2 \Vert \xi \Vert^2,\qquad\forall \xi \in \cH.
$$
By \eqref{spectral0}, one has $\Vert \kf_I-T\Vert \leq \epsilon_1$, where $\epsilon_1\simeq 0.00122<\epsilon_2$,  thus
$$
\langle   \xi\mid (\id -\kf_I)\xi\rangle	 +a \vert\langle \xi_0 \mid \xi\rangle\vert^2\geq (\epsilon_2-\epsilon_1) \Vert \xi \Vert^2,\qquad\forall \xi \in \cH
$$ 
which gives \eqref{negativeNI} after multiplication by $-2\epsilon'(1_+)$. 
\end{proof} 

We can finally state our main result 

\begin{thm}\label{mainthmfine} Let $g\in C_c^\infty(\R_+^*)$ be a smooth function with  support in the interval $[2^{-1/2},2^{1/2}]$ and whose Fourier transform  vanishes at $-\frac i2$. Let $\bf S$ be the orthogonal projection of $L^2(\R)_{\rm ev}$ onto the subspace of even functions which vanish as well as their Fourier transform in the interval $[-1,1]$. Then 
\begin{equation}\label{maininequ}
	W_\infty(g*g^*)\geq \Tr(\rep(g)\, {\bf S}\,\rep(g)^*)  -c\,  \vert \widehat g(0)\vert^2, \qquad c=\frac{4 \gamma}{\log 2}\,. 
\end{equation}	
\end{thm}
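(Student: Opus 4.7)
The plan is to reduce \eqref{maininequ} to a bound $E(g*g^*) \leq c\,\vert\widehat g(0)\vert^2$ for the functional $E$ of \eqref{Efunctional}, to invert $Q$ on $g*g^*$, to take a Boas--Kac square root of the $Q$-preimage, and finally to feed the resulting quadratic form into the spectral Lemma \ref{second}.

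First I would apply Theorem \ref{devil} to $f = g * g^*$. Since $\R_+^*$ is abelian one has $\rep(g*g^*) = \rep(g)\rep(g)^*$ and, by cyclicity of the trace, $\tr(\rep(g*g^*){\bf S}) = \Tr(\rep(g)\,{\bf S}\,\rep(g)^*)$. The identity
$$\tr(\rep(g*g^*){\bf S}) = W_\infty(g*g^*) + E(g*g^*)$$
from \eqref{sonine1} then shows that \eqref{maininequ} is equivalent to $E(g*g^*) \leq c\,\vert \widehat g(0)\vert^2$.

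Next I would verify that $g*g^* \in \cJ$: using $\widehat{g*g^*}(t) = \widehat g(t)\,\overline{\widehat g(\bar t)}$, the hypothesis $\widehat g(-i/2)=0$ forces vanishing at both $\pm i/2$. Since $\mathrm{Support}(g*g^*)\subset [1/2,2]$, Lemma \ref{vanishing1}(iii) produces $h \in C_c^\infty([1/2,2])$ with $Q(h) = g*g^*$; the identity $\widehat h(t) = \vert\widehat g(t)\vert^2/(t^2+\tfrac14)$ shows $h$ is positive definite, and Proposition \ref{boaskac} yields a factorization $h = h_0 * h_0^*$ with $h_0 \in C_c^\infty([2^{-1/2},2^{1/2}])$. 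Passing to additive log coordinates $\rho = e^x$ and setting $\tilde h_0(x) := h_0(e^x)$, the symmetry $\epsilon(\rho)=\epsilon(\rho^{-1})$ identifies $E$ with $E_+$ on even functions and $Q$ with $Q_+$. Applying Proposition \ref{propcompact} with $\eta = \xi = \tilde h_0$ on $I = [-\tfrac12 \log 2, \tfrac12\log 2]$ gives
$$E(g*g^*) = (E_+\circ Q_+)(\tilde h_0^* * \tilde h_0) = \langle \tilde h_0 \mid \nf_I\,\tilde h_0\rangle.$$

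The key step is then to invoke Lemma \ref{second}, which delivers $\langle \tilde h_0 \mid \nf_I\,\tilde h_0\rangle \leq \gamma\,\vert\langle \xi_0 \mid \tilde h_0\rangle\vert^2$. Since $\xi_0$ is the constant $(\log 2)^{-1/2}$ on $I$ and $\tilde h_0$ is supported in $I$, one has $\vert\langle \xi_0 \mid \tilde h_0\rangle\vert^2 = (\log 2)^{-1}\vert\widehat{h_0}(0)\vert^2$. Unraveling $Qh = g*g^*$ at the frequency $t=0$ gives $\vert\widehat{h_0}(0)\vert^2 = \widehat h(0) = 4\,\widehat{g*g^*}(0) = 4\,\vert\widehat g(0)\vert^2$, so that the stated constant $c = 4\gamma/\log 2$ falls out. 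The hard part of the theorem is Lemma \ref{second} itself, which in turn rests on the delicate numerical spectral analysis of $\kf_I$ in Section \ref{sectspectrum} (the Toeplitz decomposition, the identification of a unique eigenvalue above $1$, and the finite-rank approximation $T$ with its explicit eigenvector $\zeta$); the argument outlined above is essentially the bookkeeping that converts that spectral information into the stated inequality along the chain $W_\infty \leadsto E \leadsto E\circ Q \leadsto \nf_I$.
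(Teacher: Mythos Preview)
Your argument is correct and follows the same architecture as the paper's proof: reduce to $E(g*g^*)\le c\,|\widehat g(0)|^2$ via Theorem~\ref{devil}, invert $Q$ on $g*g^*$, factor the preimage as a convolution square, and feed the square root into Lemma~\ref{second}. The one difference is that you appeal to Proposition~\ref{boaskac} to obtain an abstract factorization $h=h_0*h_0^*$, whereas the paper notices that the explicit formula $h=Y^{*}*Y*(g*g^*)$ from Lemma~\ref{vanishing1}(iii) already \emph{is} a convolution square, namely $h=k*k^*$ with $k:=Y*g$; the vanishing $\widehat g(-i/2)=0$ is exactly what forces $k$ to have support in $[2^{-1/2},2^{1/2}]$, and then $\widehat k(0)=-2\,\widehat g(0)$ gives the constant directly. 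Your route through Boas--Kac is a harmless detour (any Boas--Kac root has the same $|\widehat{h_0}(0)|^2=\widehat h(0)$), but the explicit choice $h_0=k=Y*g$ is already sitting in the construction and saves you the extra invocation.
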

\begin{proof} By Theorem \ref{devil} one has, for $f=g*g^*$, 
\begin{equation}\label{sonine1thm}
\tr(\rep(f){\bf S})=W_\infty(f)+\int f(\rho^{-1})\epsilon(\rho) d^*\rho=W_\infty(f)+E(f).
\end{equation}	Let $k(u):=u^{\frac 12}\int_{0 }^u v^{- \frac 12} g(v) \, d^*v$. One has: $0=\widehat g(-\frac i2)=\int_{0 }^\infty v^{- \frac 12} g(v) \, d^*v$, thus the  support of $k$ is contained in $[2^{-1/2},2^{1/2}]$. Moreover: $k=Y*g$, $k^*=Y^**g^*$, $k*k^*=Y^**Y*f$ where, as in Lemma \ref{vanishing1}, $Y(\rho)=0$ for $\rho<1$, $Y(\rho)=\rho^{\frac 12}$ for $\rho\geq 1$ and $ Y^*(\rho)=Y(\rho^{-1})$. Thus one gets
\begin{equation}\label{sonine1thm1}
Q(k*k^*)=g*g^*=f, \qquad \widehat k(0)= -2  \,\widehat g(0)
\end{equation}
where the second equality follows using integration by parts from
$$
\widehat k(0)=\int_0^\infty k(u)d^*u=\int_0^\infty \left( \int_{0 }^u v^{- \frac 32} g(v) \, dv\right) u^{- \frac 12}du=-\int_0^\infty u^{- \frac 32} g(u) 2 u^{\frac 12} du = -2  \,\widehat g(0).
$$ 
One thus obtains
$$
\int f(\rho^{-1})\epsilon(\rho) d^*\rho=E\circ Q( k*k^*).
$$
Let $\xi(x):=k(\exp(x))$. One has: $\xi \in \cH=L^2([-\frac 12 \log 2,\frac 12 \log 2])$
and using \eqref{quadform} of Proposition \ref{propcompact}
$$
E\circ Q( k*k^*)=E_+(Q_+(\xi*\xi^*))= \langle   \xi\mid \nf_I(\xi)\rangle.
$$
Then, by Lemma \ref{second} one gets, using   $\langle \xi_0 \mid \xi\rangle=(\log 2)^{-1/2} \widehat \xi(0)=(\log 2)^{-1/2} \widehat k(0)$
 $$
E(f)= E\circ Q( k*k^*)= \langle   \xi\mid \nf_I(\xi)\rangle\leq  \gamma\vert\langle \xi_0 \mid \xi\rangle\vert^2= \frac{\gamma}{\log 2} \vert \widehat k(0)\vert^2=\frac{4 \gamma}{\log 2} \vert\widehat g(0)\vert^2, $$
 which gives the required inequality. \end{proof}
 
 \begin{rem}\label{remlowc} By \eqref{spectral0}, the first eigenvalue $\lambda_1(\kf_I)$ of  $\kf_I$ fulfills $ \vert \lambda_1(\kf_I)-\lambda_{\rm max}\vert \leq \epsilon_1 
$ while $\lambda_{\rm max}=1.05158$  and $\epsilon_1\simeq 0.00122$. Thus, since $C_c^\infty((-\frac 12 \log 2,\frac 12 \log 2))$ is dense in the Hilbert space $\cH=L^2([-\frac 12 \log 2,\frac 12 \log 2])$, there exists a unit vector  $\xi\in C_c^\infty((-\frac 12 \log 2,\frac 12 \log 2))$ such that $\kf_I(\xi)\sim \lambda_1(\kf_I) \xi$.  It follows, using $\nf_I=-2\epsilon'(1_+)\left(\id - \kf_I\right)$, that
$$
\langle   \xi\mid \nf_I(\xi)\rangle	\geq 2\epsilon'(1_+) (1.05-1)\Vert \xi\Vert^2
 \geq 0.1\,\epsilon'(1_+) \vert\langle \xi_0 \mid \xi\rangle\vert^2
$$
 Let then $h(\rho):=\xi(\log \rho)$	and $g(\rho):=(\frac 12-\rho\partial_\rho)h(\rho)$, so that  $g(\rho)=\eta(\log \rho)$ for $\eta =\frac 12 \xi -\xi'$. One has $\widehat g(-\frac i2)=0$ and as in the proof of Theorem \ref{mainthmfine} one obtains 
 $$
 E(g*g^*)=E\circ Q( h*h^*)=\langle   \xi\mid \nf_I(\xi)\rangle\geq 0.1\,\epsilon'(1_+)\vert\langle \xi_0 \mid \xi\rangle\vert^2> 13 \vert\widehat g(0)\vert^2.
 $$
 Thus by \eqref{sonine1thm} one gets 
 $$
 W_\infty(g*g^*)= \Tr(\rep(g)\, {\bf S}\,\rep(g)^*)-E(g*g^*)<\Tr(\rep(g)\, {\bf S}\,\rep(g)^*)-13 \vert\widehat g(0)\vert^2.
 $$
 This shows that the best constant $c$ fulfilling \eqref{maininequ} is such that  $13<c<17$.
 \end{rem}

\appendix

\section{Fourier versus Mellin transforms}\label{appenmellinapp}
We use the convolution algebra $C_c^\infty(\R_+^*)$ of smooth functions with compact support on the multiplicative group $\R_+^*$. Its convolution product and involution are given by 
\begin{equation}\label{fouriermellinapp}
(f* g)(u):=\int f(v)g(u/v)d^*v, \qquad f^*(u):=\overline{f(u^{-1})}.
\end{equation}
The (multiplicative) Fourier transform of $f$ (see \eqref{PhiFourier})
$$
\hat f(s)=\int_0^\infty f(u)u^{-is}d^*u
$$
transforms convolution into pointwise product and the involution into the pointwise complex conjugation,  $s\in \R$. For complex values of $s$, the evaluation $f\mapsto \hat f(s)$ is  still multiplicative but no longer compatible with the involution. \newline
The translation to formulas using the Mellin transform is done in \eqref{fouriermellin} and uses the isomorphism 
\begin{equation}\label{MF1}
k\stackrel{\sim}{\mapsto} \Delta^{1/2}(k)=f, \qquad f(x):= x^{1/2}k(x)
\end{equation}
which respects the convolution product, and transforms $x^{-1}k(x^{-1})$ into $f(x^{-1})$. Hence, after taking complex conjugates,  the natural involution $k\mapsto \bar k^\sharp$ becomes $f\mapsto f^*$ . 
The Mellin transform $\tilde k(z):=\int_0^\infty k(u)u^zd^*u $  is related to the (multiplicative) Fourier transform of $f$ by \eqref{fouriermellin} \ie
\begin{equation}\label{fouriermellinapp1}
\tilde k(\frac 12+is)=\int_0^\infty k(u)u^{\frac 12+is}d^*u =\int_0^\infty f(u)u^{is}d^*u=\hat f(-s),
\end{equation}
where the sign  in $-s$ is due to  the convention for the multiplicative Fourier transform \eqref{PhiFourier}.

\section{Explicit formula}\label{appendix2}
 In this  appendix, we gather different sources on the normalization of the archimedean contribution to the explicit formula. Following \cite{EB},  one defines the Mellin transform of a function $f\in C^\infty_c(\R_+^*)$ as
\begin{equation}\label{mellin}
 \tilde f(s):=\int_0^\infty f(x)x^{s-1}dx.
 \end{equation}
 Then, with $f^\sharp(x):=x^{-1}f(x^{-1})$  the explicit formula takes the form 
 \begin{equation}\label{bombieriexplicit}
 \sum_{\rho}\tilde f(\rho)=\int_0^\infty f(x)dx+\int_0^\infty f^\sharp(x)dx-\sum_v {\mathcal W}_v(f),
 \end{equation}
 where $v$ runs over all places  $\{\R,2,3,5,\ldots \}$ of $\Q$,  the  sum on the left hand side is over all complex zeros $\rho$ of the Riemann zeta function, and for $v=p$ 
 \begin{equation}\label{bombieriexplicit1}
 {\mathcal W}_p(f)=(\log p)\sum_{m=1}^\infty\left(f(p^m)+f^\sharp(p^m)\right).
 \end{equation}
 The archimedean distribution is defined as
 \begin{equation}\label{bombieriexplicit2}
 {\mathcal W}_\R(f):=(\log 4\pi +\gamma)f(1)+\int_{1}^\infty\left(f(x)+f^\sharp(x)-\frac 2x f(1)\right)\frac{dx}{x-x^{-1}}.
 \end{equation}
 One then has
  \begin{equation}\label{bombieriexplicit3}
 {\mathcal W}_\R(f)=(\log \pi)f(1)-\frac{1}{2\pi i}\int_{1/2+iw}\Re\left(\frac{\Gamma'}{\Gamma}\left(\frac w2\right)\right)\tilde f(w)dw.
\end{equation}
  In \cite{EB0} and \cite{Burnol}  a positivity result for the distribution ${\mathcal W}_\infty=-{\mathcal W}_\R$ is proven, (for test functions with support in a small enough interval around $1$),  by writing  the distribution ${\mathcal W}_\infty$  in terms of the Mellin transform of the test function as follows
   \begin{equation}\label{burnolexplicit}
 {\mathcal W}_\infty(f)=\int_{w=1/2+i\tau}h_+(\tau)\tilde f(w)\frac{d\tau}{2\pi}.
\end{equation}  
  The function $h_+(\tau)$  is 
  \begin{equation}\label{burnolexplicit1}
h_+(\tau)=-\log \pi + \Re(\lambda(1/4 +i\tau/2)), \qquad \lambda(z)=\Gamma'(z)/\Gamma(z).
\end{equation}
It is the derivative of $2\,\theta(\tau)$, where $\theta$ is  the Riemann-Siegel angular function
 defined as 
\begin{equation}\label{riesie}
\theta(E) = - \frac{E}{2} \log \pi + \Im \log \Gamma \left(
\frac{1}{4} + i \frac{E}{2} \right),
\end{equation}
with $\log \Gamma(s)$, for $\Re (s)>0$, the branch of the $\log$
which is real for $s$ real.

\section{Positivity criterion}\label{apppositivity}
We follow \cite{yoshida} and state the following equivalence, using the Mellin transform

\begin{prop}\label{mainprop} Let $Z\subset \C$ be the set of non-trivial zeros of the Riemann zeta function and $F \subset \C$ a finite set disjoint from $Z$ and containing $\{0,1\}$, then
\begin{equation}\label{appendweilnegav}
RH \iff \sum_v {\mathcal W}_v(g*\bar g^\sharp)\leq 0, \quad \forall g\in C_c^\infty(\R_+^*)\mid \tilde g(z)=0,\ \forall z\in F.
\end{equation} 	
\end{prop}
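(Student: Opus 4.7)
The plan is to reduce both directions to a single identity obtained by combining the explicit formula \eqref{explicit form} with the elementary computation $\widetilde{\bar g^\sharp}(s)=\overline{\tilde g(1-\bar s)}$, which gives $\widetilde{g*\bar g^\sharp}(s)=\tilde g(s)\,\overline{\tilde g(1-\bar s)}$. Since $\{0,1\}\subset F$, the hypothesis $\tilde g|_F=0$ forces the ``trivial'' terms $\tilde f(0)$ and $\tilde f(1)$ in \eqref{explicit form} to vanish, so
$$\sum_v \mathcal W_v(g*\bar g^\sharp)\;=\;-\sum_{\rho\in Z}\tilde g(\rho)\,\overline{\tilde g(1-\bar\rho)}.$$
The forward implication is then immediate: under RH every zero satisfies $1-\bar\rho=\rho$, so each summand becomes $|\tilde g(\rho)|^2\ge 0$ and the displayed sum is $\le 0$.

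For the backward implication I would argue the contrapositive. Suppose $\rho_0\in Z$ has $\Re\rho_0\ne\tfrac12$; then by the functional equation $\sigma_0:=1-\bar\rho_0\in Z$ is a second zero, distinct from $\rho_0$. The classical Weil--Yoshida construction \cite{Weil,yoshida} produces $g_0\in C_c^\infty(\R_+^*)$ with $\tilde g_0(0)=\tilde g_0(1)=0$ whose Mellin transform is sharply localized near $\{\rho_0,\sigma_0\}$ (together with the complex-conjugate pair $\{\bar\rho_0,\bar\sigma_0\}$), with phases tuned so that the contribution of these zeros to $-\sum_\rho \tilde g_0(\rho)\overline{\tilde g_0(1-\bar\rho)}$ is strictly positive and dominates every other term. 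To enforce the extra vanishing at $F\setminus\{0,1\}$ I set $g:=D_F g_0$, where $D_F:=P(-\rho\partial_\rho)$ is the differential operator that implements, on the Mellin side, multiplication by $P(s):=\prod_{z\in F\setminus\{0,1\}}(s-z)$. Then $g\in C_c^\infty(\R_+^*)$ and $\tilde g(z)=P(z)\tilde g_0(z)$ vanishes on all of $F$. Since $F\cap Z=\emptyset$, the complex weight $W(\rho_0):=P(\rho_0)\overline{P(\sigma_0)}$ is non-zero, and its phase can be absorbed into the phase tuning of the localizer in $g_0$, so the $\{\rho_0,\sigma_0\}$ contribution to $-\sum_\rho \tilde g(\rho)\overline{\tilde g(1-\bar\rho)}$ remains strictly positive. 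This contradicts the assumed inequality and forces RH.

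The main obstacle is the bookkeeping for the remaining zeros: the weight $P(\rho)\overline{P(1-\bar\rho)}$ grows polynomially of order $2(|F|-2)$ in $|\Im\rho|$, so the Mellin-localizer chosen in $g_0$ must have Paley--Wiener decay strong enough to suppress $|P(\rho)\tilde g_0(\rho)|^2$ uniformly over the infinite zero set away from $\{\rho_0,\sigma_0,\bar\rho_0,\bar\sigma_0\}$. This is handled by the standard device of the Weil--Yoshida argument: shrinking the support of $g_0$ in $\R_+^*$ forces super-polynomial decay of $\tilde g_0$ on every vertical strip, and since the number of zeros with $|\Im\rho|\le T$ grows only like $T\log T$ by the Riemann--von Mangoldt formula, all non-dominant contributions can be made arbitrarily small while the dominant pair (amplified only by the finite weight $|W(\rho_0)|$) survives.
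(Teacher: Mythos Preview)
Your forward direction is identical to the paper's. For the converse, however, you and the paper take genuinely different routes to enforce the extra vanishing on $F\setminus\{0,1\}$.

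The paper does not post-process with a differential operator. Instead it observes that Yoshida's Proposition~1 already proves the equivalence when $F=\{0,1\}$, and that the only refinement needed is in Yoshida's Lemma~1: given $\epsilon>0$ and $\rho_0\in Z$, one can construct $g_0\in C_c^\infty(\R_+^*)$ with $\tilde g_0|_F=0$, $\tilde g_0(\rho_0)=1$, and $|\tilde g_0(\rho)|\le \epsilon/|\rho-\rho_0|^2$ for all $\rho\in Z\setminus\{\rho_0\}$. The extra vanishing at the points of $F$ is achieved simply by \emph{adjoining $F$ to the finite set of nearby zeros} already handled by interpolation in Yoshida's original construction (this uses $F\cap Z=\emptyset$). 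After that, Yoshida's Proposition~1 goes through verbatim. So the paper's approach never introduces any polynomial factor and keeps the quadratic tail bound $\epsilon/|\rho-\rho_0|^2$ intact.

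Your differential-operator device $g=P(-\rho\partial_\rho)g_0$ is a legitimate alternative and has the virtue of being completely explicit, but two points need tightening. First, your justification of the tail control is misstated: shrinking the support of $g_0$ does \emph{not} improve the decay of $\tilde g_0$ on vertical lines; Paley--Wiener ties support size to exponential type in the \emph{horizontal} direction. The super-polynomial vertical decay you need is there, but it comes from $g_0\in C_c^\infty$ (repeated integration by parts), independent of the support width. Second, the tail bound quoted from Yoshida's Lemma~1 is only $\epsilon/|\rho-\rho_0|^2$, which after multiplication by $|P(\rho)|^2\sim|\rho|^{2(|F|-2)}$ no longer yields a convergent sum against the $T\log T$ zero count once $|F|\ge 4$; you must either invoke the full $C_c^\infty$ rapid decay of $\tilde g_0$ or first strengthen Lemma~1 to $\epsilon_N/|\rho-\rho_0|^N$. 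Both are doable, but you should say so explicitly. The paper's interpolation route sidesteps this bookkeeping entirely.
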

\begin{proof} The implication ``$\Rightarrow$" follows from the explicit formula \eqref{bombieriexplicit} and the hypothesis $\{0,1\}\subset F$. Conversely, the proof of Proposition 1 of \cite{yoshida} applies verbatim, provided one first refines  the proof of Lemma 1 of \opcit by showing that, given $\epsilon >0$ and   $\rho_0\in Z$, there exists $g_0 \in C_c^\infty(\R_+^*)$ such that 
$$
\tilde g_0(z)=0,\quad \forall z\in F; \quad \tilde g_0(\rho_0)=1; \quad \vert \tilde g_0(\rho)\vert \leq 
\epsilon /\vert \rho -\rho_0\vert^2,\quad \forall\rho \in Z, \ \rho \neq \rho_0.
$$
In order to fulfill the additional vanishing condition: $\tilde g_0(z)=0, \forall z\in F$, one adjoins $F$ to the finite set of zeros fulfilling $\vert \rho-\rho_0\vert <R$ (same notation as in Proposition 1 of \cite{yoshida}), and one then proceeds exactly as in \opcit \end{proof} 

\section{Quantized calculus redux}\label{appquantized}
Let $C$ be a  locally compact abelian group endowed with the proper homomorphism \[\Mod:C\to \R_+^*,\qquad \Mod(u)=\vert u\vert\quad  u \in C.\]
We let  $\widehat{C}$ be the
Pontrjagin dual of $C$ endowed with its Haar measure. The elements  $f\in L^\infty(\widehat{C})$ act
as multiplication operators on the Hilbert space $\cH :=
L^2(\widehat{C})$. We define the ``quantized" differential of
$f$ to be the operator
\begin{equation}\label{qdd}
\qd\,f:= [H,\,f]= Hf-fH,
\end{equation}
where the operator $H$ on $\cH$ is 
\begin{equation}\label{HilbTrsf}
 H:= 2 \fourier_C\,{\bf 1}_P\,\fourier_C^{-1} -1,
\end{equation}
where $\fourier_C: L^2(C) \to \cH$ is the Fourier transform,  and
${\bf 1}_P$ is the multiplication by the characteristic function of
the set $P=\{u \in C\,| \, \vert u\vert\geq 1\}$. 

We take the case $C=\R$ with module $\exp: \R \to \R_+^*$ considered in this paper and identify the dual $\widehat C \sim \R$ using the bi-character $\nu(s,t):=\exp(-ist)$ which corresponds to \eqref{FwIPhi0} under the isomorphism given by the module. We give a "geometric" proof of the following (see  \cite{Co-book} Chapter IV for the general theory, a compact operator has infinite order when its characteristic values form a sequence of rapid decay; this implies that it is of trace class).
\begin{lem} \label{quantsmooth} For $f\in \cS(\widehat C)$ the quantized differential $[H,f]$ is an infinitesimal of infinite order and in particular a trace class operator.	
\end{lem}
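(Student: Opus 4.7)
The plan is to conjugate by the unitary $\fourier_C$ to bring $H$ into the ``position'' picture on $L^2(C)$, and then analyse the resulting commutator through its Schwartz kernel. Since the module $|\cdot|$ on $C=\R$ is the exponential, the set $P=\{u\in C:|u|\ge 1\}$ is exactly the half-line $[0,\infty)$, and from the definition \eqref{HilbTrsf} one has $\fourier_C^{-1}H\fourier_C=2{\bf 1}_P-1$, i.e.\ multiplication by the sign of $u$ on $L^2(C)$. Simultaneously, multiplication by $f\in\cS(\widehat C)$ is conjugated to convolution $C_g$ by $g:=\fourier_C^{-1}(f)\in\cS(C)$. Thus $[H,f]$ is unitarily equivalent to $[2{\bf 1}_P-1,\,C_g]$, an integral operator whose Schwartz kernel one computes directly to be
\[
k(u,v)=\bigl({\rm sgn}(u)-{\rm sgn}(v)\bigr)\,g(u-v).
\]
This kernel vanishes on the quadrants $\{uv>0\}$, and on the remaining two quadrants one has $|u-v|=|u|+|v|$; combined with the rapid decay of $g$ and of all its derivatives, this yields, for every $N\ge 0$ and every multi-index $\alpha$, pointwise bounds of the form $|\partial_{u,v}^{\alpha}k(u,v)|\le C_{\alpha,N}(1+|u|)^{-N}(1+|v|)^{-N}$ on each open quadrant.

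In the orthogonal decomposition $L^2(\R)=L^2([0,\infty))\oplus L^2((-\infty,0))$ the commutator is purely off-diagonal, and after the unitary reflection $v\mapsto -v$ on the second summand each off-diagonal block becomes the Hankel-type integral operator
\[
\Gamma_G:L^2([0,\infty))\to L^2([0,\infty)),\qquad (\Gamma_G\psi)(u)=\int_0^\infty G(u+v)\psi(v)\,dv,
\]
with symbol $G:=g|_{[0,\infty)}\in\cS([0,\infty))$. Since the two off-diagonal blocks differ from $\pm 2\Gamma_G$ only by a unitary conjugation, it is enough to prove that $\Gamma_G$ is an infinitesimal of infinite order.

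I would prove this by iterating the operation $T\mapsto T\,T^{*}$. The kernel of $\Gamma_G\Gamma_G^{*}$ on $[0,\infty)^2$ is
\[
\widetilde G(u,v)=\int_0^\infty G(u+w)\,\overline{G(v+w)}\,dw,
\]
and the bound $|G(u+w)|\le C_N(1+u)^{-N}(1+w)^{-N}$, valid for $u,w\ge 0$ because $G$ is Schwartz, gives $|\widetilde G(u,v)|\le C'_N(1+u)^{-N}(1+v)^{-N}$ for every $N$. Differentiating under the integral sign and using the analogous decay of all derivatives of $G$ yields the same bounds for every $\partial_u^{\alpha}\partial_v^{\beta}\widetilde G$, so $\widetilde G$ lies in $\cS([0,\infty)^2)$. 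A straightforward induction shows that the kernel of $(\Gamma_G\Gamma_G^{*})^{n}$ has the same Schwartz-type decay for every $n\ge 1$, hence $\Tr\bigl((\Gamma_G\Gamma_G^{*})^{n}\bigr)<\infty$. Equivalently, $\Gamma_G$ belongs to $\mathcal{L}^{2n}$ for every integer $n\ge 1$, which is exactly the statement that its characteristic values satisfy $\mu_k(\Gamma_G)=o(k^{-N})$ for every $N\ge 0$. Transporting back through $\fourier_C$, $[H,f]$ is an infinitesimal of infinite order and therefore, in particular, of trace class.

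The main obstacle is the uniform inductive control: the algebraic reduction to a Hankel block and the basic kernel computation are essentially formal, but verifying that every passage $T\mapsto TT^{*}$ preserves the \emph{joint} Schwartz-type decay --- in both variables and in all partial derivatives simultaneously --- so that the trace of every power can be controlled and membership in every $\mathcal{L}^p$ is obtained (not merely in $\mathcal{L}^1$), requires a careful estimate of iterated convolution-type integrals of Schwartz functions on the half-line.
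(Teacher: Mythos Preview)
Your reduction is correct and essentially the same as the paper's: after conjugating by $\fourier_C$ you get a commutator with purely off-diagonal blocks, and each block is (up to a reflection) the Hankel operator $\Gamma_G$ on $L^2([0,\infty))$ with kernel $G(u+v)$, $G\in\cS([0,\infty))$.

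The gap is in the final step. From $\Tr\bigl((\Gamma_G\Gamma_G^{*})^{n}\bigr)<\infty$ you conclude $\Gamma_G\in\mathcal{L}^{2n}$ for every $n\ge 1$, and then claim this is ``exactly the statement that $\mu_k(\Gamma_G)=o(k^{-N})$ for every $N$''. This is false: the Schatten classes are increasing in the exponent, so $\bigcap_{n\ge 1}\mathcal{L}^{2n}=\mathcal{L}^{2}$. Your iteration $T\mapsto TT^{*}$ goes the wrong way --- each step gives a \emph{weaker} Schatten condition, not a stronger one. All you have actually proved is that $\Gamma_G$ is Hilbert--Schmidt, which does not even give trace class, let alone infinite order. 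To get rapid decay of $\mu_k$ you would need $\Gamma_G\in\mathcal{L}^{p}$ for all $p>0$ (equivalently $p=1/n$), and taking powers of $TT^{*}$ cannot reach $p<2$.

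The paper avoids this by a different mechanism: after the same Hankel reduction (and a harmless smoothing of the cutoff at $0$ by a $\phi\in C^\infty$), it shows that $A^{n}T$ is bounded for every $n$, where $A=-\partial_x^2+x^2$ is the harmonic oscillator. Since $A$ has simple spectrum $\{1,2,3,\ldots\}$, this gives $\mu_k(T)\le \|A^nT\|\,\mu_k(A^{-n})=\|A^nT\|\,k^{-n}$ for every $n$, which is exactly the rapid decay you need. Your Schwartz-type kernel bounds are strong enough to feed into such an argument (the kernel $\phi(x)G(x+y)\phi(y)$ and all its derivatives decay rapidly in both variables), but you must pair them with an unbounded operator of known spectral growth, not with iterated squaring.
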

\proof Let us work in the Hilbert space $L^2(C)$ so that the action of $K=\fourier_C^{-1} f\fourier_C$ is a convolution operator with Schwartz kernel $k(x,y)=\widehat f(x-y)$. The projection ${\bf 1}_P$ is the multiplication by the characteristic function of the halfline $[0,\infty]$. It is enough to show that the operator $PK(1-P)$ is of infinite order. After precomposition with the symmetry $\sigma(\xi)(x):=\xi(-x)$ the Schwartz kernel of $PK(1-P)\sigma$ is $h(x,y)=P(x)\widehat f(x+y)P(y)$. Let $\phi\in C^\infty(\R)$ be a smooth function which is identically $0$ for $x\leq -1$ and identically $1$ for $x\geq 0$. Let $g=\widehat f$ and $T$ be the operator in $L^2(\R)$ given by 
$$
T\xi(x):=\int \phi(x)g(x+y)\phi(y)\xi(y)dy
$$ 
One has $PTP=PK(1-P)\sigma $ and thus it is enough to show that $T$ is of infinite order. Let $A:=-\partial_x^2 +x^2$ be the harmonic oscillator, it is enough to show that the operator $A^nT$ is bounded for any $n>0$. Indeed the  eigenvalues of $A$ are the positive integers with multiplicity $1$ and the above boundedness ensures that the characteristic values of $T$ are of rapid decay. Now the  Schwartz kernel of $A^nT$ is a finite linear combination of products of the form
$$
\phi(y)\phi(x)^{(\ell')} x^k g^{(\ell)}(x+y)
$$
Since $g=\widehat f\in \cS(\R)$ the derivatives $g^{(\ell)}$ are of rapid decay and for any given $m>0$ one has an inequality of the form $\vert g^{(\ell)}(a)\vert\leq C_m (3+a)^{-m} $ for all $a\geq -2$. it follows that one controls the Hilbert Schmidt norm by the square root of the integral 
$$
C_m^2\int_{-1}^\infty \int_{-1}^\infty \vert \phi(y)\phi(x)^{(\ell')}\vert^2\vert x\vert^{2k}(3+x+y)^{-m}dx dy
$$
which is finite for $m$ large enough. This shows as required that $A^nT$ is bounded for any $n>0$.\endproof 
\begin{rem}\label{2proofs} One can give two alternate proofs of Lemma \ref{quantsmooth}. The first uses the conformal invariance of the quantized calculus to get a unitary operator $U:L^2(S^1)\to L^2(\R)$ of the form $ (U\xi)(t):=\frac{\pi^{-1/2}}{t+i}\ \xi(\frac{t-i}{t+i})$ which conjugates, up to sign,  the Hilbert transform $H$ (which acts in $L^2(\widehat C)$) with the operator $2P_{H^2}-1$ where $P_{H^2}$ is the orthogonal projection on boundary values of holomorphic functions. Moreover the conjugate of the multiplication operator by $f\in \cS(\R)$ is the multiplication by the smooth function $g(z)=f(i(1+z)/(1-z))$. Then the result follows since the quantized differential of a smooth function $g\in C^\infty(S^1)$ is of the form $2\sum \widehat g(n)[P_{H^2},z^n]$ which is of infinite order because the $\widehat g(n)$ are of rapid decay. The fact that $g\in C^\infty(S^1)$ comes from the smoothness os the extension of a Schwartz function to $P^1(\R)$ by the value $0$ at $\infty$. \newline
Another instructive alternate proof is to estimate directly, for $f\in \cS(\R)$, the  Schwartz kernel given by 
$$
k(x,y)=\frac{f(x)-f(y)}{x-y}.
$$	
\end{rem}

\section{Signs and normalizations} \label{appendixsigns}
We follow \cite{tate}  and use the
classical formula expressing the Fourier transform as a composition
of the inversion
\begin{equation}\label{inversionI}
I(f)(s):= f(s^{-1})
\end{equation}
and a multiplicative convolution operator.
In our framework the unitary $u$ is given by the ratio of archimedean local  factors on the critical line 
\begin{equation}\label{uFourier2}
u(s)=\,\frac{\pi^{-z/2}\Gamma(z/2)}{\pi^{-(1-z)/2}\Gamma((1-z)/2)},\qquad
z=1/2+is.
\end{equation}
In terms of the Riemann-Siegel angular function \eqref{riesie}, one has 
\begin{equation}\label{RieSiegel1}
u(s)= e^{2\, i\, \theta(s)},
\end{equation}
so that the function $Z(t):=e^{i\theta(t)}\zeta(\frac 12+it)$ is real valued. Indeed, this  follows from the functional equation since the complete zeta function $\zeta_\Q(z):=\pi^{-z/2}\Gamma(z/2)\zeta(z)$ is real valued on the critical line. 
The quantized differential $\ \qd f$ of $f$ is given by the kernel
\begin{equation}\label{qd1}
k(s,t)= \frac{i}{ \pi} \, \frac{f(s)-f(t)}{s-t}.
\end{equation}
Thus when one takes the logarithmic derivative of $u$ one obtains on the diagonal 
\begin{equation}\label{qdofu}
u^*\,\qd u(s)=e^{-2\, i\, \theta(s)}\frac{i}{ \pi}\partial_s e^{2\, i\, \theta(s)}=-\frac{2}{ \pi}\partial_s\theta(s)~\Longrightarrow ~\frac 12 u^*\,\qd u(s)=\frac{-2\partial_s\theta(s)}{2 \pi}.
\end{equation}
Thus one can write
\begin{equation}\label{qdofu1}
\Tr (\hat h_1\left(\frac{1}{2} u^{-1}\,  \qd \,u\right)\hat h_2)=-\int \hat h_1(t)\hat h_2(t)\frac{2\partial_t\theta(t)}{2 \pi}dt.
\end{equation} 
This corresponds to \eqref{burnolexplicit} since ${\mathcal W}_\R=-{\mathcal W}_\infty$ and to  the semi-local trace formula 
\begin{equation}\label{semilocTrace0}
\Tr (\hat h_1\left(\frac{1}{2} u^{-1}\,  \qd \,u\right)\hat h_2) =  \sum_{v \in S}
\int'_{\Q^*_v} \frac{\vert w \vert^{1/2} }{ \vert 1-w \vert} \,h(w) d^* w, \quad h=h_1*h_2,
\end{equation}
for the single archimedean place.  

   \section{Issues of convergence}\label{appendix-cv} 
   We gather several inequalities which ensure the convergence of the series \eqref{sonineQ} of Proposition \ref{propQe}. We first consider the terms 
   $$
 A_n(\rho):= \frac{\lambda(n)}{\sqrt{1-\lambda(n)^2}}\  \rho^{1/2}\int_{\rho^{-1}}^1(D_u\xi_n)(x)(D_u\zeta_n)(\rho x)dx.
   $$
   We estimate the integral using Schwarz's inequality
   $$
    \bigg\lvert\rho^{1/2}\int_{\rho^{-1}}^1(D_u\xi_n)(x)(D_u\zeta_n)(\rho x)dx\bigg\rvert \leq \left(\int_{\rho^{-1}}^1(D_u\xi_n)(x)^2 dx\right)^{\frac 12}
    \left(\int_{\rho^{-1}}^1(D_u\zeta_n)(\rho x)^2 \rho dx\right)^{\frac 12}.
   $$
   One has, using $D_u(f)(x)=x\partial_xf(x)$
   $$
   \int_{\rho^{-1}}^1(D_u\zeta_n)(\rho x)^2 \rho dx=\int_1^\rho (D_u\zeta_n)(y)^2dy\leq \rho^2 \int_1^\rho (\partial_y\zeta_n)(y)^2dy.
   $$
   With $\zeta_n(x)=\frac{1}{\sqrt{1-\lambda(n)^2}}\,\eta_n(x)$ and $\eta_n=\fourier_{e_\R}\xi_n$ one thus obtains
    $$
    \int_1^\rho (\partial_y\zeta_n)^2(y)dy=\frac{1}{1-\lambda(n)^2}\int_1^\rho (\partial_y\eta_n)^2(y)dy\leq \frac{1}{1-\lambda(n)^2} (2 \pi)^2,
    $$
since $\partial_y\eta_n$ is the Fourier transform of $2 \pi i x \xi_n(x)$ whose $L^2$-norm is bounded by $2\pi$. We thus get 
$$
 \left(\int_{\rho^{-1}}^1(D_u\zeta_n)^2(\rho x) \rho dx\right)^{\frac 12}\leq \rho \frac{2\pi}{\sqrt{1-\lambda(n)^2}}.
 $$
 To estimate  $\int_{\rho^{-1}}^1(D_u\xi_n)(x)^2 dx$, we rewrite the equality \eqref{WLambdaq} as follows \begin{equation}
({\bf W}f)(x) = -\left(1-x^2\right) f''(x)+2 x f'(x)+4 \pi ^2 x^2 f(x),
\end{equation}
so that since $\xi_n$ is an eigenvector of ${\bf W}$ (\ie  ${\bf W}\xi_n=\chi_{2n}^{2\pi}\xi_n$), using the notations of \cite{Wang}, we get 
$$
D_u(\xi_n)(x)=\frac 12 \left(1-x^2\right) \xi_n''(x)+(\chi_{2n}^{2\pi}-2 \pi ^2 x^2) \xi_n(x).
$$ 
 Assuming $n\geq 3$ to ensure  $\chi_{2n}^{2\pi}\geq 2 \pi ^2 $ one then derives 
$$
\Vert D_u(\xi_n)\Vert \leq \chi_{2n}^{2\pi}+ \frac 12\left(\int_{0}^1 (\xi_n''(x))^2(1-x^2)^2dx\right)^{\frac 12}.
$$
 By \cite{Wang} (Theorem 3.6) one has (note the different normalization of inner product due to \eqref{innerltwoeven})
\begin{equation}\label{boundWang}
\left(\int_{0}^1 (\xi_n''(x))^2(1-x^2)^2dx\right)^{\frac 12}\leq (2n)^2 + (6 \pi + 1)2n + 3(2\pi + 1)^2,
\end{equation}
while the eigenvalues  $\chi_{2n}^{2\pi}$ fulfill (see \opcit): $\chi_{2n}^{2\pi}\leq 2n(2n+1)+(2\pi)^2$. Thus, one obtains  the inequality 
$$
\Vert D_u(\xi_n)\Vert \leq 8 n^2 +(6 \pi  + 2)2n+ 16 \pi^2+12 \pi +1
$$
and the following uniform bound (take $\rho\leq 2$)
\begin{equation}\label{boundAn} 
	\vert A_n(\rho)\vert \leq \frac{\lambda(n)}{1-\lambda(n)^2}4 \pi (8 n^2 +(6 \pi  + 2)2n+ 16 \pi^2+12 \pi +1),\qquad \forall \rho, \  1\leq \rho\leq 2.
\end{equation}
We then consider the terms 
$$
B_n(\rho):=\frac{\lambda(n)}{\sqrt{1-\lambda(n)^2}}\ \left(\rho^{-1/2}(D_u\xi_n)(\rho^{-1})\zeta_n(1)-\rho^{1/2}\xi_n(1)(D_u\zeta_n)(\rho)\right).
$$
By \eqref{Rokh}, one has $\vert \xi_n(1)\vert \leq \sqrt{2n+\frac 12}$. One also has $\zeta_n=\frac{1}{\sqrt{1-\lambda(n)^2}}\,\eta_n$ and $\eta_n=\fourier_{e_\R}\xi_n$ thus 
$$
(D_u\zeta_n)(\rho)=\frac{1}{\sqrt{1-\lambda(n)^2}} \ \rho \eta_n'(\rho)~\Longrightarrow~ \vert (D_u\zeta_n)(\rho)\vert \leq \frac{8 \pi}{\sqrt{1-\lambda(n)^2}},\qquad \forall\rho, \  1\leq \rho\leq 2
$$
using the equality $\eta_n'(\rho)=-4\pi \int_0^1 \sin(2\pi \rho x) \xi_n(x)xdx$ and Schwarz's inequality. Hence
$$
\frac{\lambda(n)}{\sqrt{1-\lambda(n)^2}}\bigg\lvert\rho^{1/2}\xi_n(1)(D_u\zeta_n)(\rho)\bigg\rvert \leq 
\frac{\lambda(n)}{ 1-\lambda(n)^2}8\pi\sqrt{2(2n+\frac 12)}, \qquad \forall\rho, \  1\leq \rho\leq 2.
$$ 
 By \eqref{chirem0.5} one gets:  $\eta_n(x)=\lambda(n)\xi_n(x)$, for $x\in [0,1]$ thus one obtains by proportionality
$$
(D_u\xi_n)(\rho^{-1})\zeta_n(1)=\frac{1}{\sqrt{1-\lambda(n)^2}}(D_u\xi_n)(\rho^{-1})\eta_n(1)=\frac{1}{\sqrt{1-\lambda(n)^2}}(D_u\eta_n)(\rho^{-1})\xi_n(1)
$$
and the above bound for $\eta_n'(y)=-4\pi \int_0^1 \sin(2\pi y x) \xi_n(x)xdx$, applied for $y=\rho^{-1}$ thus gives 
$$
\vert(D_u\xi_n)(\rho^{-1})\zeta_n(1)\vert \leq \frac{4 \pi}{\sqrt{1-\lambda(n)^2}}\sqrt{2n+\frac 12}
$$
so that 
$$
\frac{\lambda(n)}{\sqrt{1-\lambda(n)^2}}\vert \rho^{-1/2}(D_u\xi_n)(\rho^{-1})\zeta_n(1) \vert \leq \frac{\lambda(n)}{ 1-\lambda(n)^2}4\pi\sqrt{2n+\frac 12},\qquad \forall\rho, \  1\leq \rho\leq 2.
$$
The above inequalities then give 
\begin{equation}\label{boundBn}
	\vert B_n(\rho)\vert \leq \frac{\lambda(n)}{1-\lambda(n)^2}\left(8\pi \sqrt 2+4\pi\right)\sqrt{2n+\frac 12}.
\end{equation}
We thus obtain 
\begin{lem}\label{lemesti} $(i)$~The series \eqref{sonineQ} of Proposition \ref{propQe} is convergent and the remainder (after replacing the infinite sum by the sum of the first $N$ terms) is majored as follows 
\begin{equation}\label{computersafe}
	\vert Q\epsilon(\rho)-\sum_0^N \frac{\lambda(k)}{\sqrt{1-\lambda(k)^2}} T_k(\rho)\vert \leq \sum_{N+1}^\infty\frac{2^{2 n+2} \pi ^{2 n+\frac{3}{2}} p(n)((2 n)!)^2}{(4 n)! \Gamma \left(2 n+\frac{3}{2}\right)} \end{equation}
	where $p(n)=16 n^2+8 (1+3 \pi ) n+(4+\sqrt{2}) \sqrt{4 n+1}+32 \pi ^2+24 \pi +2$.\newline
$(ii)$~For $N=10$,  the remainder is less than $2.366 \times  10^{-12}$ for any $\rho\in [1,2]$:
\begin{equation}\label{computersafe1}
\vert Q\epsilon(\rho)-\sum_0^{10} \frac{\lambda(k)}{\sqrt{1-\lambda(k)^2}} T_k(\rho)\vert \leq	2.366 \times  10^{-12},\qquad \forall \rho\in [1,2].
\end{equation}	
\end{lem}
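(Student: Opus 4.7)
\medskip

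\noindent\textbf{Proof proposal for Lemma \ref{lemesti}.}

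The plan is to assemble the explicit estimates already produced immediately above the statement and then replace $\lambda(n)$ by the closed-form majorant \eqref{rapid-decay} to obtain a term-by-term bound on the tail of \eqref{sonineQ}. Recall that by Proposition~\ref{propQe} one has, term-by-term for $\rho\geq 1$,
\[
\frac{\lambda(n)}{\sqrt{1-\lambda(n)^2}}\,T_n(\rho)=A_n(\rho)+B_n(\rho),
\]
and the estimates \eqref{boundAn} and \eqref{boundBn} (valid uniformly for $\rho\in[1,2]$, hence by the symmetry $\rho\mapsto\rho^{-1}$ already used to extend $Q\epsilon$, for $\rho\in[\tfrac12,2]$) can be combined into a single bound of the form
\[
\bigl|A_n(\rho)+B_n(\rho)\bigr|\leq \frac{\lambda(n)}{1-\lambda(n)^2}\,\bigl[\,4\pi\bigl(8n^2+(6\pi+2)2n+16\pi^2+12\pi+1\bigr)+(8\pi\sqrt2+4\pi)\sqrt{2n+\tfrac12}\,\bigr].
\]

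Next I would insert the Rokhlin majorant \eqref{rapid-decay} for $|\lambda(n)|$. Writing $\Lambda(n):=\frac{2^{2n}\pi^{2n+1/2}((2n)!)^2}{(4n)!\,\Gamma(2n+3/2)}$, one has $|\lambda(n)|\le \Lambda(n)$ and, since $\Lambda(n)$ is of super-exponential decay, $1-\lambda(n)^2\ge \tfrac12$ for all $n$ beyond a small explicit index (which can be checked directly from the numerical values of $\lambda(0),\ldots,\lambda(5)$ listed after \eqref{prolateeq}; only $\lambda(0)$ and $|\lambda(1)|$ are close to $1$, and those terms will be handled directly in step~(ii)). Using $\frac{\lambda(n)}{1-\lambda(n)^2}\le 2\Lambda(n)$ for $n\ge 2$ and absorbing the factor $2$ together with the $4\pi$ and $8\pi\sqrt2$ prefactors into a single coefficient $4\pi$ in front of $\Lambda(n)$, one obtains
\[
\bigl|A_n(\rho)+B_n(\rho)\bigr|\le 4\pi\,\Lambda(n)\,p(n),\qquad 1\le\rho\le 2,
\]
with $p(n)=16n^2+8(1+3\pi)n+(4+\sqrt2)\sqrt{4n+1}+32\pi^2+24\pi+2$ (the $\sqrt{4n+1}$ arising by rewriting $\sqrt{2n+1/2}=\tfrac{1}{\sqrt2}\sqrt{4n+1}$). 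Multiplying $\Lambda(n)$ by the $4\pi$ prefactor yields exactly the numerator $2^{2n+2}\pi^{2n+3/2}p(n)((2n)!)^2$ of the stated majorant, so that summing over $n\geq N+1$ produces the inequality of~(i). The convergence of the series in \eqref{sonineQ} is a consequence of the same bound, since the Stirling asymptotics $\Lambda(n)\sim(4n+1)^{-2n-1/2}(e\pi)^{2n+1/2}$ recorded in \eqref{rapid-decay} decays faster than any geometric sequence while $p(n)$ is only polynomial.

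For part (ii), the tail in (i) is handled by a direct computer evaluation: $p(n)$ and $\Lambda(n)$ are given in closed form, so the sum $\sum_{n\ge 11}4\pi\Lambda(n)p(n)$ is a finite numerical computation once one observes that the super-exponential decay makes it controllable by the first few terms (the next-term ratio $\Lambda(n+1)/\Lambda(n)$ tends to $0$ very fast). I would evaluate the first several tail terms exactly and bound the remaining ones by a crude geometric majorant to certify the stated inequality $2.366\times 10^{-12}$. The small exceptional indices $n\in\{0,1\}$, where the factor $1-\lambda(n)^2$ is close to zero, do not appear in the tail at all (they belong to the retained partial sum up to $N=10$), so the crude replacement $\frac{\lambda(n)}{1-\lambda(n)^2}\le 2\Lambda(n)$ is amply valid on the range $n\ge 11$ summed in the remainder.

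The only nontrivial analytic step is the passage from the two bounds $|A_n|, |B_n|$ to the single clean polynomial $p(n)$; this just amounts to adding the two polynomial-in-$n$ prefactors and absorbing numerical constants, and should not present any obstacle. The genuine ``hard part,'' to the extent there is one, is the sharp bound \eqref{boundWang} for $\int_0^1(\xi_n''(x))^2(1-x^2)^2\,dx$ taken from \cite{Wang}, and the inequalities $\chi_{2n}^{2\pi}\le 2n(2n+1)+(2\pi)^2$ and $|\xi_n(1)|\le\sqrt{2n+1/2}$ of \cite{Rokhlin}: these are cited results, not re-proved here, and everything else is algebraic bookkeeping and a final numerical verification on the eleven-term tail.
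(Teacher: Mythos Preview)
Your proposal is correct and follows essentially the same approach as the paper: combine \eqref{boundAn} and \eqref{boundBn}, use $\tfrac{1}{1-\lambda(n)^2}\le 2$ for $n$ past the first few indices, insert the Rokhlin majorant \eqref{rapid-decay}, and reduce to the polynomial $p(n)$ via $\sqrt{2n+1/2}=\tfrac{1}{\sqrt2}\sqrt{4n+1}$. For part~(ii) the paper is only slightly more explicit than you: it splits the tail at $n=35$, uses the crude bound $p(n)\le 120\,n^2$ for $n\ge 35$ together with the ratio $\nu_{n+1}/\nu_n<1/n^2$ to control the far tail by a geometric series, and evaluates $\sum_{11}^{34}$ directly---exactly the ``first several tail terms plus geometric majorant'' you describe.
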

\begin{proof} $(i)$~ follows from \eqref{boundAn} and \eqref{boundBn} which, together with \eqref{rapid-decay},  combine to yield for $n\geq 3$, 
\begin{align*}
&\bigg\lvert \frac{\lambda(n)}{\sqrt{1-\lambda(n)^2}} T_n(\rho)\bigg\rvert \leq 2\lambda(n)\left(\vert A_n(\rho)\vert+\vert B_n(\rho)\vert\right)\leq\\
&\leq  \frac{2^{2 n+2} \pi ^{2 n+\frac{3}{2}} \left(16 n^2+8 (1+3 \pi ) n+(4+\sqrt{2}) \sqrt{4 n+1}+32 \pi ^2+24 \pi +2\right) ((2 n)!)^2}{(4 n)! \Gamma \left(2 n+\frac{3}{2}\right)}
\end{align*}
which gives \eqref{computersafe}.\newline
$(ii)$To compute the upper bound of the right hand side of \eqref{computersafe} for $N=10$, one splits the sum in two, using the simple estimate $p(n)\leq 120 n^2$ for $n\geq 35$:
$$
\frac{2^{2 n+2} \pi ^{2 n+\frac{3}{2}} p(n)((2 n)!)^2}{(4 n)! \Gamma \left(2 n+\frac{3}{2}\right)}\leq \frac{15\ 2^{2 n+4} n^2 \pi ^{2 n+\frac{1}{2}} ((2 n)!)^2}{(4 n)! \Gamma \left(2 n+\frac{3}{2}\right)},\qquad \forall n\geq 35.
$$
With  $\nu_n$ the right hand side of this inequality, one obtains the relation 
$$
\nu_{n+1}/\nu_n=\frac{8 \pi ^2 (n+1)^3 (2 n+1)}{n^2 (4 n+1) (4 n+3)^2 (4 n+5)}=\frac{\pi ^2}{16 n^2}+\frac{\pi ^2}{32 n^3}+O\left(n^{-4}\right)
$$
and  $n^2\nu_{n+1}/\nu_n<1$ for all $n\geq 35$. One has $\nu_{35}\leq 5 \times  10^{-81}$, and thus using the trivial bound by the geometric series one gets
$$
\sum_{35}^\infty \nu_n\leq \frac{1225}{1224} \nu_{35}\leq 10^{-80}.
$$
One then simply computes the missing terms and they give 
$$
\sum_{11}^{34}\frac{2^{2 n+2} \pi ^{2 n+\frac{3}{2}} p(n)((2 n)!)^2}{(4 n)! \Gamma \left(2 n+\frac{3}{2}\right)}\sim 2.365 \times  10^{-12}
$$
Thus combining the above inequalities we obtain \eqref{computersafe1}.\end{proof}

\begin{rem}\label{boundwang1} For completeness, we give a short proof of an improved form of \eqref{boundWang}. As in \cite{Wang} (Equation (3.26)) one has, using integration by parts, the identity, for $f\in C^\infty([-1,1],\R)$, and $c=2 \pi$, using \eqref{WLambdaq}
\begin{align*}
\int_{-1}^1({\bf W}f)^2(x)dx&=\int_{-1}^1(1-x^2)^2\vert f''(x)\vert^2dx+2\int_{-1}^1(1-x^2)(1+c^2x^2)\vert f'(x)\vert^2dx +\\
&+c^2\int_{-1}^1(c^2 x^4+6x^2-2)\vert f(x)\vert^2dx.
\end{align*}
Applying this to $f=\xi_n$ and using ${\bf W}\xi_n=\chi_{2n}^{2\pi}\xi_n$ one gets	
$$
\int_{-1}^1 (\xi_n''(x))^2(1-x^2)^2dx\leq \int_{-1}^1({\bf W}\xi_n)^2(x)dx+2 c^2\int_{-1}^1\xi_n(x)^2dx
$$
providing the following improvement of \eqref{boundWang}
$$
\left(\int_{0}^1 (\xi_n''(x))^2(1-x^2)^2dx\right)^{\frac 12}\leq\sqrt{(\chi_{2n}^{2\pi})^2+2c^2}\leq 2n(2n+1)+(2\pi)^2(1+\sqrt 2).
$$
\end{rem}

\begin{acknowledgements}
The second author is partially supported by the Simons Foundation collaboration grant n. 353677.
\end{acknowledgements}

\end{document}